\definecolor{DarkGreen}{RGB}{0, 164, 0}
\DeclareMathAlphabet{\mathbbm}{U}{bbm}{m}{n}
\newcommand{\executeiffilenewer}[3]{%
\ifnum\pdfs

trcmp{\pdffilemoddate{#1}}%
{\pdffilemoddate{#2}}>0%
{\immediate\write18{#3}}\fi%
}
\newcommand{%
\executeiffilenewer{.svg}{.pdf}%
{inkscape -z -D --file=.svg %
--export-pdf=.pdf --export-latex}%
\input{.tex}%
}[1]{%
\executeiffilenewer{#1.svg}{#1.pdf}%
{inkscape -z -D --file=#1.svg %
--export-pdf=#1.pdf --export-latex}%
\input{#1.tex}%
}
\newtheorem{introthm}{Theorem}
\newtheorem*{thmno}{Theorem}
\newtheorem*{propno}{Proposition}
\newtheorem{thm}{Theorem}[section]
\newtheorem{cor}[thm]{Corollary}
\newtheorem{prop}[thm]{Proposition}
\theoremstyle{definition}
\newtheorem{rmk}[thm]{Remark}
\newtheorem{defi}[thm]{Definition}
\newtheorem{ex}[thm]{Example}
\theoremstyle{definition}
\newcommand{\cmd}[1]{\begin{verbatim} #1 \end{verbatim}}
\newcommand{\monAUX}[1]{\mbox{\texttt{#1}}}
\newcommand{\mon}[1]{$\boxed{#1}$\color{red}\expandafter\monAUX\expandafter{\detokenize{#1}}\color{black}\ }
\newcommand{\exc}[1]{{E{(#1)}}}			%(Support of the) Exceptional divisor of a resolution #1 
\newcommand{\primes}[1]{{\mathcal{P}(#1)}}		%Exceptional primes of a resolution #1
\newcommand{\ExcZ}[1]{{\mathcal{E}(#1)_{\nZ}}}		%Exceptional divisors of a resolution #1
\newcommand{\ExcQ}[1]{{\mathcal{E}(#1)_{\nQ}}}		%Exceptional Q-divisors of a resolution #1
\newcommand{\ExcR}[1]{{\mathcal{E}(#1)_{\nR}}}		%Exceptional R-divisors of a resolution #1
\newcommand{\genprimes}[1]{{\mathcal{P}(#1)}}		%Exceptional primes of the singularity #1
\newcommand{\bra}[1]{{\langle#1\rangle}}				%Bracket applied to #1
\newcommand{\logfunc}{\rho}				%logarithmic cosine function
\newcommand{\dgr}[1]{{\Gamma_{#1}}}			%dual graph associated to the good resolution #1
\newcommand{\branches}[1]{{\mathcal{B}(#1)}}		%Set of branches at the singularity #1
\newcommand{\udb}[1]{{u_{#1}}}				%(Candidate) ultrametric distance on branches (associated to the base branch #1)	
\newcommand{\Vgr}[1]{{\mathcal{V}(#1)}}				%Vertices of a graph #1
\newcommand{\Egr}[1]{{\mathcal{A}(#1)}}				%Edges of a graph #1
\newcommand{\bvt}[1]{{\mc{BV}(#1)}}			%brick-vertex tree of a connected graph #1
\newcommand{\cvxh}[1]{{\on{Conv}(#1)}}			%convex hull of a set #1
\newcommand{\FVal}[1]{{\hat{\mc{V}}_{#1}^*}}		%Set of valuations of the singularity #1 up to multiplicative constant
\newcommand{\Val}[1]{{\mc{V}_{#1}}}			%Set of valuations of the singularity #1 up to multiplicative constant
\newcommand{\udv}[1]{{u_{#1}}}				%(Candidate) ultrametric distance on valuation spaces (associated to the base valuation #1)
\newcommand{\Rbvt}[1]{{\mc{BV}(#1)}}			%universal brick-vertex tree of a connected graph #1
\newcommand{\Rbv}{{i_{\on{bv}}}}				%natural projection to the universal brick-cut tree
\newcommand{\apex}[1]{{v_{#1}}}				%apex associated to a brick in the universal brick-vertex tree
\newcommand{\strt}[2]{{#1}_{#2}}			%Strict transform of #1 on the model #2
\newcommand{\exct}[2]{{#1}_{#2}^{ex}}			%Exceptional transform of #1 on the model #2
\newcommand{\etoile}[1]{{\on{Star}(#1)}}
\newcommand{\skel}[1]{\mc{S}_{{#1}}}
\newcommand{\skeldiv}[1]{\mc{S}_{{#1}}^*}
\newcommand{\cF}{\mathcal{F}}
\newcommand{\cO}{\mathcal{O}}
\newcommand{\divi}{\mathrm{div}}
\newcommand{\inte}{\mathrm{int}}
\renewcommand{\pr}{\on{pr}}
\newcommand{\refprop}[1]{\hyperref[prop:#1]{Proposition~\ref*{prop:#1}}}
\newcommand{\refthm}[1]{\hyperref[thm:#1]{Theorem~\ref*{thm:#1}}}
\newcommand{\refcor}[1]{\hyperref[cor:#1]{Corollary~\ref*{cor:#1}}}
\newcommand{\refdef}[1]{\hyperref[def:#1]{Definition~\ref*{def:#1}}}
\newcommand{\refrmk}[1]{\hyperref[rmk:#1]{Remark~\ref*{rmk:#1}}}
\newcommand{\reflem}[1]{\hyperref[lem:#1]{Lemma~\ref*{lem:#1}}}
\newcommand{\refsec}[1]{\hyperref[sec:#1]{Part~\ref*{sec:#1}}}
\newcommand{\reffig}[1]{\hyperref[fig:#1]{Figure~\ref*{fig:#1}}}
\newcommand{\refex}[1]{\hyperref[ex:#1]{Example~\ref*{ex:#1}}}
\newcommand{\refssec}[1]{\hyperref[ssec:#1]{Section~\ref*{ssec:#1}}}
\newcommand{\refpropss}[3]{\hyperref[prop:#1]{Propositions~\ref*{prop:#1}}, \ref{prop:#2}, \ref{prop:#3}}
\newcommand{\refprops}[2]{\hyperref[prop:#1]{Propositions~\ref*{prop:#1}} and \ref{prop:#2}}
\newcommand{\refcors}[2]{\hyperref[cor:#1]{Corollaries~\ref*{cor:#1}} and \ref{cor:#2}}
\newcommand{\reffigs}[2]{\hyperref[fig:#1]{Figures~\ref*{fig:#1}} and \ref{fig:#2}}
\newcommand{\refexs}[2]{\hyperref[ex:#1]{Examples~\ref*{ex:#1}} and \ref{ex:#2}}
\newcommand{\refdefs}[2]{\hyperref[def:#1]{Definitions~\ref*{def:#1}} and \ref{def:#2}}
\renewcommand{\emph}{\textbf} 
\title{Ultrametric properties for valuation spaces of normal surface singularities}
\author{Evelia R. Garc\'ia Barroso}
\address{Dpto. Matem\'aticas, Estad\'{\i}stica e Investigaci\'on Operativa, Universidad de La Laguna}
\email{\href{mailto:ergarcia@ull.es}{ergarcia@ull.es}}
\author{Pedro D. Gonz\'alez P\'erez}
\address{Dpto. \'Algebra, Geometr\'{\i}a y Topolog\'{\i}a, Universidad Complutense de Madrid}
\email{\href{mailto:pgonzalez@mat.ucm.es}{pgonzalez@mat.ucm.es}}
\author{Patrick Popescu-Pampu}
\address{Laboratoire Paul Painlev\'e, Universit\'e de Lille}
\email{\href{mailto:patrick.popescu-pampu@univ-lille.fr}{patrick.popescu-pampu@univ-lille.fr}}
\author{Matteo Ruggiero}
\address{Institut de Math\'{e}matiques de Jussieu-Paris Rive Gauche,  Universit\'e Paris Diderot}
\email{\href{mailto:matteo.ruggiero@imj-prg.fr}{matteo.ruggiero@imj-prg.fr}}
\date{24 July 2019}
\subjclass[2010]{14B05 (primary), 14J17, 32S25}
\keywords{Arborescent singularity, B-divisor, Birational geometry, Block, Brick, 
  Cut-vertex, Cyclic element, Intersection number, Normal surface singularity, Semivaluation, 
  Tree, Ultrametric, Valuation}
\begin{document}

%\linenumbers %to number all lines

\begin{center}
      {\bf  \small This paper will appear in 
    Trans. of the American Math. Soc. \\ DOI: https://doi.org/10.1090/tran/7854 }
    \end{center}
\bigskip

\thispagestyle{empty}

\begin{abstract}
{ 
      Let $L$ be a fixed branch  -- that is, an irreducible germ of curve -- 
      on a normal surface singularity $X$. If $A,B$ are two other 
      branches, define $\udb{L}(A,B) := \dfrac{(L \cdot A) \: (L \cdot B)}{A \cdot B}$,
      where $A \cdot B$ denotes the intersection number of $A$ and $B$. 
      Call $X$  \textit{arborescent} if all the dual graphs of its  good resolutions are trees. 
      In a previous paper, the first three authors extended a 1985 theorem of 
      P\l oski by proving that whenever $X$ is arborescent, the function $\udb{L}$ 
      is an \textit{ultrametric} on the set of branches on $X$ different from $L$. In the present 
      paper we prove that, conversely, if  $\udb{L}$ is an ultrametric, then 
      $X$ is arborescent. We also show that for \textit{any} normal surface singularity, 
      one may find arbitrarily large sets of branches on $X$, characterized uniquely 
      in terms of the topology of the resolutions of their sum, in restriction to which 
      $u_L$ is still an ultrametric. Moreover, we describe the associated tree in terms 
      of the dual graphs of such resolutions. Then we extend our setting by allowing 
      $L$ to be an arbitrary \textit{semivaluation} on $X$ and by defining  
      $\udb{L}$ on a suitable space of semivaluations. We prove that any such 
      function is again an ultrametric if and only if $X$ is arborescent, and 
      without any restriction on $X$ we exhibit special  
      subspaces of the space of semivaluations in restriction to which 
      $\udb{L}$ is still an ultrametric. 
      }
\end{abstract}

\maketitle

\vspace{-3mm}

\tableofcontents

% \pagebreak
\section*{Introduction}
\bigskip

Let $X$ be a \textit{normal surface singularity}, which will mean for us throughout the paper 
a germ of normal complex analytic surface. 
A \textit{branch} on it is an irreducible germ of formal curve on $X$. 
In his 1985 paper \cite{ploski:1985}, P\l oski proved a theorem which may be reformulated 
in the following way:

\begin{thmno}
If $X$ is smooth, then the map which associates to any pair $(A,B)$ of branches on it the quotient $\dfrac{m(A) \:  m(B)}{A \cdot B}$ of the product of their multiplicities by their intersection number, is an ultrametric on the set of branches on $X$.
\end{thmno}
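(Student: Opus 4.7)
My plan is to prove the strong triangle inequality for $u$, equivalently, writing $c(A,B) := (A\cdot B)/(m(A)m(B))$, to prove the ``reverse'' ultrametric inequality $c(A,C) \geq \min\bigl(c(A,B),\, c(B,C)\bigr)$ for any three pairwise distinct branches $A,B,C$ on the smooth germ $X = (\mathbb{C}^2,0)$. The key conceptual point is that, since $X$ is smooth, the dual graph of any good resolution is automatically a tree, and $c(A,B)$ admits a natural interpretation as a monotone function of the meeting point of $A$ and $B$ in this tree.

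I would first fix a simultaneous embedded resolution $\pi \colon Y \to X$ of the reduced divisor $A+B+C$, and denote by $E_0$ the exceptional component of the initial blow-up of the origin, and by $E_A, E_B, E_C$ the unique exceptional components met by the strict transforms of $A,B,C$ respectively. For the divisorial valuation $\nu_E$ associated to an exceptional component $E$, defined by $\nu_E(f) := \mathrm{ord}_E(f\circ\pi)$, the formulas
\[
A\cdot B \;=\; \nu_{E_B}(A) \;=\; \nu_{E_A}(B), \qquad m(A) \;=\; \nu_{E_0}(A)
\]
follow readily from the projection identity $A\cdot B = \pi^* A \cdot \tilde B$, the disjointness of the strict transforms of distinct branches in an embedded resolution, and the transversality of $\tilde B$ with $E_B$ at a single point.

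Next, I would invoke the standard structure theorems for the valuative tree of Favre and Jonsson: associating to each branch $A$ the normalized curve valuation $\hat\nu_A := m(A)^{-1}\,\mathrm{ord}_A$, which is an endpoint of the rooted valuative tree (with root $\mathrm{ord}_0$), one has the skewness formula
\[
c(A,B) \;=\; \alpha\bigl(\hat\nu_A \wedge \hat\nu_B\bigr),
\]
where $\wedge$ denotes the infimum in the rooted tree and $\alpha$ is the skewness function, which is strictly increasing along every geodesic emanating from the root. This formula can be deduced from the displayed identities above by traversing the paths from $E_0$ to $E_A$ and $E_B$ in the dual tree $T$ and computing step by step how $\nu_E(A)$ and $\nu_E(\mathfrak{m})$ evolve under each blow-up. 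The ultrametric property of $u$ then reduces to a short combinatorial fact about rooted trees: for three points $p_A,p_B,p_C$ in a rooted tree, the three pairwise infima $p_A\wedge p_B,\ p_B\wedge p_C,\ p_A\wedge p_C$ take at most two distinct values, with two of them equal to some vertex $w_0$ and the third equal to a descendant $w_1$ of $w_0$. Applying the monotone function $\alpha$, two of $c(A,B),c(B,C),c(A,C)$ coincide while the third is at least as large, which immediately implies the desired inequality.

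The main obstacle is the skewness identity $c(A,B)=\alpha(\hat\nu_A\wedge\hat\nu_B)$, which packages all the intersection-theoretic data into a single tree-valued invariant. It can be proved either by a direct induction on the number of blow-ups composing $\pi$, or more concretely by a Puiseux-series computation comparing the two branches through a common uniformizing parameter --- the approach taken essentially in P\l oski's original paper. Once this identification is granted, the ultrametric property becomes a near-tautology about rooted trees, a structure which is available here precisely because the ambient surface is smooth (and whose analogue in the singular case is the arborescence hypothesis that the authors characterize in the main result of the present paper).
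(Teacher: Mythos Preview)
Your argument is correct: the identity $c(A,B)=\alpha(\hat\nu_A\wedge\hat\nu_B)$ is a standard fact from the Favre--Jonsson theory of the valuative tree, and once it is in hand the ultrametric inequality is indeed an immediate consequence of the rooted-tree lemma you state. However, your route differs from the one the paper takes. The paper does not prove P\l oski's theorem directly; it deduces it from the more general statement that $u_L$ is an ultrametric on any \emph{arborescent} singularity (Theorem~\ref{thm:initultram}, itself a special case of Theorem~\ref{thm:ultramthm}), by observing that smooth germs are arborescent and that one may test the ultrametric property on finite sets, choosing for $L$ a smooth branch transversal to all branches in such a set (so that $L\cdot A=m(A)$). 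The paper's engine is the inequality $\bra{u,v}\bra{v,w}\le\bra{v,v}\bra{u,w}$ of Proposition~\ref{prop:crucial} together with its equality characterisation in terms of separation in the dual graph, which drives the $4$-point condition for the angular distance $\rho$. Your approach short-circuits this by working directly with the skewness on the valuative tree---conceptually the same tree-monotonicity idea, but packaged via $\alpha$ rather than via $\rho$ and the bracket. What your route buys is directness in the smooth case; what the paper's route buys is that the same argument handles all arborescent singularities uniformly, without needing the specific normalisation by multiplicity or the particular skewness formula.
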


The first three authors proved  in  \cite[Theorem 4.18]{gbgppp:2016}  that this result generalizes to the case of 
\textit{arborescent singularities}, which are the normal surface singularities 
whose good resolutions (with simple normal crossing exceptional divisors) 
have trees as dual graphs: 

\begin{thmno}
Let $X$ be an arborescent singularity and $L$ a fixed branch on it. 
Then the map $\udb{L}$ which associates to any pair $(A,B)$ of branches on $X$ 
the quotient $\dfrac{(L \cdot A) \: (L \cdot B)}{A \cdot B}$, 
is an ultrametric on the set of branches on $X$ distinct from $L$.
\end{thmno}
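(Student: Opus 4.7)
The plan is to translate the ultrametric inequality into a combinatorial identity on the dual tree of a good embedded resolution and then extract it from a \emph{median} analysis on that tree. Given three branches $A,B,C$ distinct from $L$, I would first choose a good embedded resolution $\pi\colon\widetilde{X}\to X$ of $L+A+B+C$ in which the four strict transforms meet the exceptional divisor transversally at four distinct smooth points lying on four distinct components $E_{v_L}, E_{v_A}, E_{v_B}, E_{v_C}$. By the arborescence hypothesis the dual graph $\Gamma$ is a tree. For each vertex $v$ of $\Gamma$, let $\checkbrh{v}$ denote the $\bQ$-divisor supported on the exceptional locus dual to $E_v$, characterized by $-\checkbrh{v}\cdot E_w=\delta_{vw}$; it exists because the intersection form on the exceptional locus is negative definite. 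The projection formula then identifies $A\cdot B$ with $-\checkbrh{v_A}\cdot\checkbrh{v_B}$, so the entire problem is reduced to a statement about entries of $-M_\Gamma^{-1}$, where $M_\Gamma$ is the intersection matrix of $\Gamma$.

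The next step is to use a product-type formula for the entries of $-M_\Gamma^{-1}$ valid when $\Gamma$ is a tree: for any two vertices $v,w$, the number $-\checkbrh{v}\cdot\checkbrh{w}$ factors along the unique geodesic $[v,w]$, with numerator given by determinants of intersection matrices of the connected components of $\Gamma\setminus[v,w]$ and denominator $|\det M_\Gamma|$. Once this factorization is substituted simultaneously for the three pairs $(v_L,v_A)$, $(v_L,v_B)$ and $(v_A,v_B)$ into the definition of $\udb{L}(A,B)$, I expect the contributions from the portions of $[v_L,v_A]$ and $[v_L,v_B]$ lying past their branching point to cancel in pairs, leaving a quantity depending only on the initial common segment $[v_L, c_{AB}]$, where $c_{AB}$ is the median of $\{v_L,v_A,v_B\}$ in $\Gamma$; moreover, this dependence should be monotone in $c_{AB}$ along the geodesic emanating from $v_L$.

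The conclusion will then follow from a classical tree fact: among the three medians $c_{AB}, c_{AC}, c_{BC}$ of the triples $\{v_L,v_A,v_B\}$, $\{v_L,v_A,v_C\}$, $\{v_L,v_B,v_C\}$, two coincide and the third lies on the geodesic from $v_L$ to their common value. Combined with the monotonicity above, this yields directly the ultrametric inequality $\udb{L}(A,C)\le\max\{\udb{L}(A,B),\udb{L}(B,C)\}$.

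The hard part will be establishing that $\udb{L}(A,B)$ really does factor through the median $c_{AB}$: this requires an explicit cancellation of determinants of tree-structured intersection submatrices via Cramer's rule, together with a careful positivity analysis of the surviving factors along $[v_L,c_{AB}]$ to guarantee the monotonicity claim. All of the genuine work is concentrated in that computation; once it is in hand, arborescence is used only through the single fact that in a tree, three geodesics from a common basepoint to three other vertices meet pairwise in two coinciding medians and a third one lying between the basepoint and them.
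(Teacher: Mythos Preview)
Your plan is sound and will produce a correct proof, but it follows a different route from the one the present paper takes. You reduce everything to an explicit determinant formula for the entries of $-M_\Gamma^{-1}$ that is specific to trees, then argue by a direct cancellation that $\udb{L}(A,B)$ depends only on the median $c_{AB}$ and is monotone in it; this is essentially the computational strategy of the original paper \cite{gbgppp:2016}. By contrast, the present paper never writes down such a determinant formula. Its key engine is the inequality of \refprop{crucial},
\[
\bra{u,v}\,\bra{v,w}\ \le\ \bra{v,v}\,\bra{u,w},
\]
valid on \emph{any} good model (arborescent or not), with equality precisely when $v$ separates $u$ from $w$. From this, the cancellation you anticipate becomes a three-line computation rather than a Cramer-rule exercise: if $c=c_{AB}$ is the median of $\{l,a,b\}$ in the dual tree, then applying the equality case three times gives
\[
\udb{L}(A,B)=\frac{\bra{l,a}\,\bra{l,b}}{\bra{a,b}}=\frac{\bra{l,c}^2}{\bra{c,c}},
\]
and the required monotonicity along $[l,c]$ is then nothing but the Cauchy--Schwarz inequality \eqref{eqn:CS}. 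More importantly, the paper recasts the ultrametric property as the $4$-point condition for the angular distance $\logfunc$ (\refcor{equivultr}) and proves a general graph-theoretic result (\refthm{valblocks}) yielding tree-likeness whenever the brick-vertex tree has no brick of $\cvxh{\cF}$-valency $\ge 4$; the arborescent case then drops out as the trivial subcase with no bricks at all. So both approaches reach the same median conclusion, but yours is tailored to trees and relies on explicit determinantal identities, while the paper's is built on a single robust inequality that carries over verbatim to the non-arborescent and valuative generalisations of \refthm{ultramthm} and \refthm{udbv_val}.
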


Note that on arbitrary normal surface singularities the intersection 
numbers are defined in the sense of Mumford 
\cite{mumford:1961} and may take non-integral (but still rational) values. 

One may recover P\l oski's theorem as a particular case of the previous one.  
Indeed, smooth germs $X$ are arborescent and the ultrametric 
property of the quotients involved in P\l oski's theorem may be tested on any finite set of branches. 
Then it is enough to choose a smooth branch $L$ which is transversal 
to all the branches in a fixed finite set.

The main aspect of the approach of \cite{gbgppp:2016} was to express 
the intersection numbers of branches on a normal surface singularity $X$ in terms of 
intersection numbers of exceptional divisors on a resolution $X_{\pi}$ of $X$. What made 
ultimately everything work was the following inequality between the intersection 
numbers of the divisors of the basis $(\check{E}_u)_u$ 
of the vector space of real exceptional divisors of $X_{\pi}$ which is dual to 
the basis formed by the prime exceptional divisors $(E_u)_u$. 
This inequality (see \refprop{crucial}) was generalized by Gignac and the fourth author 
in \cite[Proposition 1.10]{gignac-ruggiero:locdynnoninvnormsurfsing}:

\begin{propno}
    {\it Let $X$ be a normal surface singularity and $X_{\pi}$ a good 
    resolution of it.  Let $E_u$, $E_v$ and $E_w$ 
     be not necessarily distinct exceptional prime divisors of $X_\pi$. 
     Then one has the inequality:
    \[
         (- \check{E}_u \cdot \check{E}_v)(- \check{E}_v \cdot \check{E}_w) \leq 
         (- \check{E}_v \cdot \check{E}_v)(- \check{E}_u \cdot \check{E}_w),
     \]
    with equality if and only if $v$ separates $u$ and $w$ 
   in the dual graph of $X_{\pi}$.
    }
\end{propno}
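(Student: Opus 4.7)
The plan is to recognize the inequality as a statement about the inverse of the negative intersection matrix, and then apply the Schur complement formula with respect to $v$. Let $M$ denote the intersection matrix $(E_u \cdot E_v)$, symmetric and negative definite by Mumford, and set $N := -M$. Then $N$ is a \emph{Stieltjes matrix}: symmetric positive definite with non-positive off-diagonal entries (distinct exceptional components meet non-negatively). From $\check{E}_u \cdot E_v = \delta_{uv}$ a short change-of-basis computation identifies $\check{E}_u \cdot \check{E}_v = (M^{-1})_{uv} = -(N^{-1})_{uv}$. Writing $G := N^{-1}$, so that $G_{uv} = -\check{E}_u \cdot \check{E}_v$, the statement to be proved becomes $G_{uv}\,G_{vw} \leq G_{vv}\,G_{uw}$, with equality iff $v$ separates $u$ and $w$ in the dual graph of $X_\pi$.

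Writing $N$ in block form (after a harmless reordering) with $v$ as the last index,
\[
N = \begin{pmatrix} A & b \\ b^\top & c \end{pmatrix}, \qquad A = N_{-v},
\]
the classical block-inverse formula, once its entries are matched with the Schur complement denominator $1/G_{vv} = c - b^\top A^{-1} b$ and with $(A^{-1}b)_u = -G_{uv}/G_{vv}$, $(A^{-1}b)_w = -G_{vw}/G_{vv}$, yields for $u, w \neq v$ the key identity
\[
G_{uw} \;=\; (A^{-1})_{uw} \;+\; \frac{G_{uv}\,G_{vw}}{G_{vv}}.
\]
The entire proposition thus reduces to the single assertion $(A^{-1})_{uw} \geq 0$, with equality iff $v$ separates $u$ and $w$.

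Since $A$ is a principal submatrix of the Stieltjes matrix $N$, it is itself a Stieltjes matrix, hence positive definite. Splitting $A = D - B$ with $D$ its positive diagonal and $B \geq 0$ off-diagonal, the similarity $D^{-1}B \sim D^{-1/2} B D^{-1/2}$ together with Perron--Frobenius and positive-definiteness of $A$ forces $\rho(D^{-1}B) < 1$. The Neumann series $A^{-1} = \sum_{k \geq 0}(D^{-1}B)^k D^{-1}$ then converges and expresses $(A^{-1})_{uw}$ as a non-negative sum indexed by walks from $u$ to $w$ in the graph underlying $A$, which is precisely the dual graph of $X_\pi$ with the vertex $v$ removed. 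Non-negativity is immediate; moreover $(A^{-1})_{uw}$ vanishes iff no such walk exists, i.e.\ iff $u$ and $w$ lie in distinct connected components of the dual graph after removing $v$ --- exactly the separation condition. The degenerate cases $u = v$ or $w = v$ yield trivial equalities on both sides.

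I anticipate the main technical point to be the equality case rather than the inequality itself: one must argue that $(A^{-1})_{uw}$ is \emph{strictly} positive whenever $u$ and $w$ lie in the same connected component of the graph of $A$, so that $(A^{-1})_{uw} = 0$ genuinely detects separation and does not vanish for some accidental algebraic reason. The walk expansion handles this cleanly, since connectedness of a component supplies at least one walk from $u$ to $w$, contributing a strictly positive summand to the series.
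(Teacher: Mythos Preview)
Your proof is correct and, at the structural level, agrees with the paper's: both reduce the inequality to showing that the $(u,w)$-entry of the inverse of the principal submatrix $A = N_{-v}$ is non-negative, with vanishing detected exactly by separation. The paper arrives at this same quantity geometrically, by recognising $\check{E}_u - \frac{\bra{\check{E}_u,\check{E}_v}}{\bra{\check{E}_v,\check{E}_v}}\check{E}_v$ as the dual of $E_u$ inside the hyperplane $\mathcal{H}_v = \operatorname{span}(E_a : a \neq v)$ for the restricted form, and then invokes its \refprop{dualcone} (the cone lemma: a basis with pairwise non-acute angles and no orthogonal splitting has its dual cone inside its primal cone) to conclude effectivity. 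Your Schur-complement identity $G_{uw} = (A^{-1})_{uw} + G_{uv}G_{vw}/G_{vv}$ is exactly the coordinate computation underlying that geometric statement.

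Where the two genuinely diverge is in how the positivity of $(A^{-1})_{uw}$ is established. The paper treats it as an instance of \refprop{dualcone}, applied separately to each connected component of $\dgr{\pi}\setminus\{v\}$; the orthogonal direct-sum decomposition of $\mathcal{H}_v$ along these components then gives the equality characterisation. Your route through the Neumann series $A^{-1} = \sum_{k\geq 0}(D^{-1}B)^k D^{-1}$ is more explicit and self-contained: it bypasses the cone lemma entirely and reads off both non-negativity and the equality case directly from the combinatorics of walks in $\dgr{\pi}\setminus\{v\}$. This buys you a cleaner treatment of the strict-inequality case (a single walk suffices), at the cost of the spectral-radius estimate $\rho(D^{-1}B)<1$, which the paper's cone argument avoids.
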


This inequality is also crucial in this paper, and has an intriguing  
reformulation in terms of spherical geometry (see \refprop{MW}). 

\medskip

Our paper  has two parts.
\refsec{branches} treats the case of the functions $\udb{L}$ restricted to finite 
sets of branches. 
 In  \refsec{valuations} we 
show how the results of the first part can be extended to 
the space of normalized semivaluations of $X$. Let us summarize our main results.
\medskip

% \begin{itemize}
    %  \item 
     We prove a converse of one of the main theorems of \cite{gbgppp:2016}, which stated that 
         $u_L$ is an ultrametric whenever $X$ is arborescent (see \refthm{arborcase}):
      
         \begin{introthm}
              {\it The normal surface singularity $X$ is arborescent if and only if 
                either one or all the functions $\udb{L}$, for varying branches $L$ on $X$, are ultrametrics. }
        \end{introthm}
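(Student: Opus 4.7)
The implication ``arborescent $\Rightarrow$ every $u_L$ is ultrametric'' is the result recalled above from \cite{gbgppp:2016}, so the plan concerns only the converse. Arguing by contrapositive, assume $X$ is not arborescent and fix an arbitrary branch $L$; the task is to exhibit three further branches $A, B, C$ for which the largest of $u_L(A,B)$, $u_L(A,C)$, $u_L(B,C)$ is attained exactly once, so that $u_L$ is not an ultrametric. Since this must be done for every $L$, it shows at once that neither ``one'' nor ``all'' the $u_L$ can be ultrametric when $X$ is non-arborescent.

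By resolution theory, choose a good resolution $\pi\colon X_\pi\to X$ on which $L$ is a curvette at a vertex $v_L$ of the dual graph $\Gamma$. Non-arborescence forces $\Gamma$ to contain a cycle, and blowing up an intersection point of two exceptional primes lying on that cycle inserts a new $-1$ vertex inside one of its edges; iterating, we may assume the cycle has length at least four. The equality clause of the Proposition displayed above---if $v_0$ separates $v_L$ from a vertex $w$, then, writing $[u,v] := -\check E_u\cdot\check E_v > 0$, one has $[v_L, w] = [v_L, v_0]\,[v_0, w]/[v_0, v_0]$---lets us ``push'' $v_L$ along the tree part of $\Gamma$: the ratios $u_L(A,B):u_L(A,C):u_L(B,C)$ are unchanged when $v_L$ is replaced by such a separating $v_0$. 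Iterating this reduction along the block--cut tree of $\Gamma$, we may arrange $v_L$ to lie on the cycle itself. Pick $v_A, v_B, v_C$ to be three further cycle vertices appearing in the cyclic order $v_L, v_A, v_B, v_C$, and let $A, B, C$ be generic curvettes at them. The identity $X\cdot Y = -\check E_{v_X}\cdot\check E_{v_Y}$ for curvettes at distinct vertices gives
\[
u_L(A,B) = \frac{[v_L,v_A]\,[v_L,v_B]}{[v_A,v_B]}, \quad u_L(A,C) = \frac{[v_L,v_A]\,[v_L,v_C]}{[v_A,v_C]}, \quad u_L(B,C) = \frac{[v_L,v_B]\,[v_L,v_C]}{[v_B,v_C]},
\]
and a short manipulation recasts the ultrametric failure as the strict comparison
\[
[v_L,v_B]\,[v_A,v_C] \;<\; \min\bigl([v_L,v_A]\,[v_B,v_C],\;[v_L,v_C]\,[v_A,v_B]\bigr);
\]
i.e.\ the ``crossing'' product pairing the two cyclically opposite vertices must be the strict minimum.

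The key input is again the Proposition above, which gives $[u,v][v,w] \leq [v,v][u,w]$ with equality if and only if $v$ separates $u$ from $w$ in $\Gamma$; since our cycle has length $\geq 4$, removing any one of its vertices leaves the others connected, so every triple taken from $\{v_L, v_A, v_B, v_C\}$ yields a strict triple-inequality. The main obstacle, and the real technical heart of the converse, is that this Proposition controls triples while the target statement is a quadruple inequality, so no single application suffices. My plan to bridge the gap is to write out the six relevant entries $[v_i, v_j]$ explicitly as Green's function values on the cycle, treating the complement of the cycle in $\Gamma$ as a Dirichlet boundary perturbation of the cycle's intersection matrix, and then verify the strict inequality by direct calculation; the symmetric model case of a pure $n$-cycle with uniform self-intersections (where one can diagonalise by circulant Fourier analysis) gives both the correct sign and a template for the estimate, and the perturbation argument shows the strict inequality persists when the cycle is embedded into a larger $\Gamma$. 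A more conceptual alternative, if available, is to observe that the ultrametricity of $u_L$ for all $L$ is equivalent to the four-point condition holding for $-\log[u,v]$ on $V(\Gamma)$, and that this condition, combined with the equality clause of the Proposition, would force $\Gamma$ to be a tree---contradicting non-arborescence.
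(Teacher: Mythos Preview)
The forward implication is correctly quoted. For the converse your approach has a real gap at the crucial quadruple inequality. You acknowledge that the displayed Proposition only controls triples, and you propose to reach the quadruple statement by an explicit Green's-function computation on the cycle followed by a perturbation argument when the cycle sits inside a larger $\Gamma$. Neither step is carried out, and the second is genuinely delicate: the brackets $[v_i,v_j]$ depend on the full intersection form of $\Gamma$, and the paper's own examples (the tetrahedral singularity and its modification in \refexs{tetrahedron_val}{tetrahedron_val_modif}) show that whether the four-point condition holds for a fixed vertex quadruple can flip when self-intersections are changed, so there is no uniform smallness to exploit. Your alternative sketch --- four-point condition on $V(\Gamma)$ plus the equality clause forces $\Gamma$ to be a tree --- is false as stated: on the symmetric tetrahedron all off-diagonal brackets coincide, so the four-point condition holds with equality on its four vertices, yet $\Gamma$ is not a tree (cf.\ \refex{tetrahedron}). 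To make that route work you would need the four-point condition on \emph{all} models simultaneously, which is essentially what you are trying to prove. There is also a smaller slip: your separation reduction can push $v_L$ only as far as a cut-vertex of the brick containing the cycle, which need not lie on the chosen cycle when the brick is strictly larger.

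The paper sidesteps all of this by abandoning curvettes for two of the four test branches. Given the cycle $\Theta$, one picks three consecutive vertices $m,a,p$ on it with $a$ not separating $v_L$ from $m$ or from $p$ (easy once $\Theta$ has length $\ge 4$; no reduction of $v_L$ is needed, one just avoids the entry vertex $d$ of the path from $v_L$ to $\Theta$). Take $A$ a curvette at $a$, but take $C_m$ and $C_p$ to be branches whose strict transforms pass through the nodes $E_a\cap E_m$ and $E_a\cap E_p$ and are \emph{tangent} to $E_a$ with intersection multiplicities $s$ and $t$. Their exceptional transforms are then $-\check E_m - s\check E_a$ and $-\check E_p - t\check E_a$, so the three four-point products become explicit affine functions of $s$ and $t$ whose dominant coefficients are of the form $\bra{a,a}\bra{b,\bullet}-\bra{a,b}\bra{a,\bullet}$. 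These coefficients are strictly positive precisely by the strict case of the triple Proposition (since $a$ does not separate $b=v_L$ from $m$ or $p$), so one first chooses $s$ large to force one strict inequality, then $t$ large with $s$ fixed to force the other. The free parameters $s,t$ are what turn two applications of the triple inequality into the required strict quadruple inequality, with no case analysis and no dependence on how the cycle is embedded in $\Gamma$. This is the content of \refprop{noud}; your proposal is missing this device.
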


         More generally, if $X$ is a normal 
         surface singularity, and  $\cF$ is a finite set of branches on $X$ containing a fixed branch $L$, 
         we show that $u_L$ 
         is an ultrametric on  $\cF \setminus \{L\}$ %this finite set
         whenever the dual graph of the total transform 
         of the sum of the branches in $\cF$ in an arbitrary embedded resolution of it 
         satisfies a topological condition. Its formulation uses the notion of 
         \textit{brick-vertex tree} $\bvt{G}$ of a finite connected graph $G$. It is a finite tree 
        containing the vertices of $G$ and other vertices called  \textit{brick vertices}, 
        which encodes the way the vertices of $G$ get separated by an arbitrary 
        one of them (see \refssec{blockvertextree}). 
       We prove that (see  \refthm{ultramthm}): 

% \medskip
\begin{introthm}
{\it If the convex hull $\cvxh{\cF}$ of the branches of $\cF$ in the brick-vertex tree of the dual 
graph of the chosen embedded resolution does not contain brick vertices of valency 
at least $4$ in $\cvxh{\cF}$, then $\udb{L}$ is an ultrametric in 
restriction to $\cF \setminus \{ L \}$. Moreover, in this case the rooted tree of 
$\udb{L}$ restricted to $\cF \setminus \{ L \}$ is isomorphic to $\cvxh{\cF}$, 
rooted at the vertex corresponding to $L$.} 
\end{introthm}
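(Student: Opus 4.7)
The plan is to reduce to intersection theory on an embedded resolution. First, I would fix an embedded resolution $X_{\pi}$ of $\sum_{A \in \cF} A$, and let $G$ denote the dual graph of its exceptional divisor. For each $A \in \cF$, let $u_A \in \Vgr{G}$ denote the vertex at which the strict transform of $A$ meets the exceptional divisor. Following the approach of \cite{gbgppp:2016}, one rewrites intersection numbers of branches in terms of the dual basis $(\check{E}_u)_u$: up to positive multiplicative constants that cancel in the ratio defining $\udb{L}$, we have
\[ \udb{L}(A,B) \;=\; \frac{(-\check{E}_{u_L}\cdot \check{E}_{u_A})\,(-\check{E}_{u_L}\cdot \check{E}_{u_B})}{-\check{E}_{u_A}\cdot \check{E}_{u_B}}. \]
Testing the ultrametric inequality for any triple $A, B, C \in \cF \setminus \{L\}$ thus reduces to a four-point inequality in the numbers $m_{uv} := -\check{E}_u \cdot \check{E}_v$, involving the vertices $u_L, u_A, u_B, u_C$.

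The main tool is \refprop{crucial}, which asserts $m_{uv}\, m_{vw} \leq m_{vv}\, m_{uw}$, with equality precisely when $v$ separates $u$ and $w$ in $G$. Applying this inequality iteratively along the paths in $\bvt{G}$ between the four vertices above, as in the arborescent case of \cite{gbgppp:2016}, reduces the ultrametric test to a combinatorial statement about the median of these vertices in $\bvt{G}$. When the median is a regular (non-brick) vertex of $\bvt{G}$, the equalities chain together and one immediately recovers the conclusion that the two largest among $\udb{L}(A,B)$, $\udb{L}(A,C)$, $\udb{L}(B,C)$ coincide.

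The main obstacle is the case when the median lies at a brick vertex of $\bvt{G}$: the vertices of the corresponding $2$-connected block of $G$ do not separate one another, so \refprop{crucial} supplies only strict inequalities, and in general $\udb{L}$ may fail to be ultrametric. Indeed, a direct construction shows that if some brick vertex has valency at least $4$ in $\cvxh{\cF}$, one can build four branches of $\cF$ whose configuration violates ultrametricity. Under the hypothesis of the theorem, every brick vertex in $\cvxh{\cF}$ has at most three populated directions there, and a case-by-case analysis inside each such block --- choosing cut-vertices of $G$ on the relevant paths, and combining the strict inequalities inside the block with the equality cases on the tree pieces of $\bvt{G}$ --- restores the ultrametric inequality. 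This three-direction analysis inside bricks is the delicate combinatorial heart of the proof, and the reason why the valency bound of $3$ is sharp.

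For the second assertion, concerning the rooted tree associated with $\udb{L}$ on $\cF \setminus \{L\}$, I would identify each internal node of that tree with the median in $\bvt{G}$ of the corresponding sub-family of branches of $\cF$. The equality cases of \refprop{crucial}, together with the formula displayed above, imply that two distinct sub-families produce distinct internal nodes if and only if their medians differ in $\bvt{G}$. The valency hypothesis guarantees that the medians appearing in this way exhaust exactly the vertices of $\cvxh{\cF}$, with no spurious coincidences; rooting at the vertex $u_L$ then yields the desired isomorphism with $\cvxh{\cF}$.
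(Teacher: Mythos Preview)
Your overall plan is in the right direction and close in spirit to the paper, but there are two genuine gaps and one structural difference worth noting.

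\textbf{Structural difference.} The paper does not work directly with the ultrametric inequality for $\udb{L}$. It first reformulates the problem symmetrically: by \refprop{reformultra} and \refcor{equivultr}, $\udb{L}$ is ultrametric on $\cF\setminus\{L\}$ if and only if the angular distance $\logfunc(u,v)=-\log\frac{\bra{u,v}^2}{\bra{u,u}\bra{v,v}}$ satisfies the \emph{four-point condition} \eqref{eqn:4pointmax} on $\cF_\pi$. This reduces everything to a purely graph-theoretic statement (\refthm{valblocks}) about a metric $\delta$ on $\Vgr{\Gamma}$ whose only input is the separation--additivity property. Your direct manipulation of the $m_{uv}$ is equivalent, but the logarithmic reformulation makes the symmetry in the four points manifest and avoids the asymmetric role of $L$ in the intermediate steps.

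\textbf{Gap 1: ``the median''.} For four points in a tree, the convex hull can have one \emph{or two} interior branch points (the $X$-shape versus the $H$-shape of \reffig{fivetrees}), so there is no single ``median''. The paper's proof of \refthm{valblocks} is a case analysis over the five possible shapes ($H$, $X$, $Y$, $F$, $C$) of $\cvxh{\overline{a},\overline{b},\overline{c},\overline{d}}$ in $\bvt{\Gamma}$. The delicate case is the $H$-shape: there the two $3$-valent points $\mu,\nu$ might both be brick-vertices, and the argument hinges on the observation that some \emph{genuine} vertex $p$ of $\Gamma$ lies on the segment $[\mu,\nu]$ (because edges of $\bvt{\Gamma}$ never join two brick-vertices). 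That $p$ separates $\{a,b\}$ from $\{c,d\}$ in $\Gamma$, which via \refprop{equivsep} and the equality case of \refprop{crucial} yields the strict $4$-point inequality. Your phrase ``choosing cut-vertices of $G$ on the relevant paths'' gestures at this, but does not identify \emph{which} cut-vertex, nor why one exists; this is exactly where the valency-$\leq 3$ hypothesis is used (it forces the unique $4$-valent point in the $X$-shape to be a non-brick vertex).

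\textbf{Gap 2: the tree isomorphism.} Your description of internal nodes as ``medians of sub-families'' is again imprecise, and the claim that the valency hypothesis ``guarantees that the medians exhaust exactly the vertices of $\cvxh{\cF}$'' is not justified. The paper obtains the isomorphism differently: by \refprop{addtree}, the tree hull of a tree-like metric is determined by which triangle inequalities and $4$-point inequalities are equalities, and the case analysis in \refthm{valblocks} checks shape by shape that these coincide with the separation pattern in $\bvt{\Gamma}$, hence with the shape of $\cvxh{\overline{\cF}}$.

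Finally, the sentence ``a direct construction shows that if some brick vertex has valency at least $4$ \ldots\ one can build four branches \ldots\ violating ultrametricity'' is not part of this theorem's proof; that is the content of \refprop{noud}, used for the converse direction in \refthm{arborcase}.
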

      %  \medskip

         Note that this result does 
         not involve intersection numbers or genera of prime exceptional divisors. 
         It is always satisfied when $X$ is arborescent, which allows to recover  
          \cite[Theorem 4.18]{gbgppp:2016}. 
         \medskip

Let us pass to the \textit{semivaluations} of $X$ considered in  \refsec{valuations}. 
Compared to valuations, they 
may achieve the value $+ \infty$ on other elements of the local ring of $X$ 
than simply $0$. Allowing to work not only with valuations, but also with semivaluations,  
has the advantage that any branch on $X$ has an associated semivaluation, which 
associates to an element of the local ring of $X$ the intersection number of its divisor with $L$. 
Also, any prime exceptional divisor of a normal crossings resolution of $X$ has an associated 
semivaluation, which is in fact a valuation.  
Therefore, the vertices of the dual graphs of the total transforms of the sums of 
finite sets of branches on $X$ embed naturally in the space of semivaluations of $X$. 
In fact, this embedding can be extended to the whole dual graph, seen as a topological space. 
It is more convenient to our purpose, as it was in the model case of smooth $X$ treated 
in Favre and Jonsson's book \cite{favre-jonsson:valtree}, to consider a space of 
\textit{normalized} semivaluations.  
The normalization condition is simply to consider only semivaluations 
which take the value $1$ on the maximal ideal 
of the local ring of $X$. It ensures that one gets a topological space of dimension $1$. 
\medskip

     % \item 
      We generalize \refthm{arborcase} to arbitrary semivaluations on $X$ 
         (see \refthm{arborcase_val}). Namely, we replace the branch $L$, seen 
         as a particular semivaluation, by an 
         arbitrary normalized semivaluation $\lambda$ on $X$, and we consider an analog 
         $\udb{\lambda}$ of the function $\udb{L}$, defined this time on the 
         space of normalized semivaluations which are distinct from $\lambda$.
         We prove that:
         
         %  \medskip
         \begin{introthm}
                {\it The normal surface singularity $X$ is arborescent if and only if either one or 
              all the functions $\udb{\lambda}$, for varying semivaluations $\lambda$ of $X$, are ultrametrics.} 
         \end{introthm}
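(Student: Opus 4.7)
The plan is to establish both directions of the equivalence by transferring the branch-level result \refthm{arborcase} to the space of normalized semivaluations, using the b-divisor formalism developed in the first half of the paper.

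For the ``only if'' direction, suppose $X$ is arborescent. The branch-case proof of \refthm{arborcase} reduces the ultrametric inequality for $\udb{L}$ to the fundamental inequality of \refprop{crucial} applied on a good resolution dominating the three branches involved. The first step is to promote this reduction to normalized semivaluations. Each normalized semivaluation $\lambda$ is encoded as a Weil b-divisor $Z_\lambda$ whose component on each good resolution $X_\pi$ is a non-negative combination of the dual divisors $\check{E}_u$, and the Mumford intersection pairing admits a continuous extension to pairs of such b-divisors. Since divisorial valuations are dense in the semivaluation space and the inequality of \refprop{crucial} holds for any triple of dual exceptional divisors, it propagates by continuity to any triple of normalized semivaluations. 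This key inequality then drives the same algebraic manipulation as in \refthm{arborcase} and yields the strong triangle inequality for $\udb{\lambda}(\mu, \nu) = (\lambda \cdot \mu)(\lambda \cdot \nu)/(\mu \cdot \nu)$.

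For the converse, assume $\udb{\lambda}$ is an ultrametric on normalized semivaluations distinct from some fixed $\lambda$. Branches on $X$ correspond to curve semivaluations, which embed in the semivaluation space, so by restriction the ultrametric property holds on this subset. When $\lambda$ is itself the curve semivaluation of a branch $L$, this restriction coincides with $\udb{L}$ on branches distinct from $L$, and \refthm{arborcase} immediately yields that $X$ is arborescent. For an arbitrary $\lambda$ we reduce to this case by specialization along the valuation tree: the quantities $\udb{\mu}(A,B)$ depend continuously on $\mu$ as it moves along a segment from $\lambda$ toward any chosen curve semivaluation $v_L$, and the ultrametric inequality being closed, ultrametricity of $\udb{\lambda}$ forces ultrametricity of $\udb{L}$ for a suitable branch $L$; \refthm{arborcase} then concludes.

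The main obstacle is the technical foundation for the forward direction: defining the intersection pairing on arbitrary normalized semivaluations and verifying its continuity, so that the inequality of \refprop{crucial} survives the extension from exceptional primes to semivaluations. Once this groundwork is in place, the spherical-geometric reformulation \refprop{MW} transports the combinatorial heart of the branch-case argument to the semivaluation setting essentially unchanged, and the ``all vs.\ one'' alternative is handled uniformly since both the forward and the converse directions treat $\lambda$ as generic.
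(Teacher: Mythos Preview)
The converse argument has a genuine gap. When $\lambda$ is not a curve semivaluation, you propose to move $\lambda$ along a segment toward a curve semivaluation $\nu_L$ and invoke continuity plus ``closedness of the ultrametric inequality.'' But closedness means: if a condition holds along a net approaching a limit, it holds at the limit. You are using it backwards: you only know ultrametricity at the single point $\lambda$ and want it at a \emph{different} point $\nu_L$. Continuity of $\mu \mapsto \udv{\mu}(A,B)$ does not transfer a closed condition from one value of the parameter to another. The paper handles this direction by a completely different route: assuming $X$ is not arborescent, it fixes $\lambda$ and then constructs three quasi-monomial valuations $\nu_1,\nu_2,\nu_3$ lying on a cycle of some skeleton for which the bracket inequalities fail strictly (the valuative analogue of \refprop{noud}). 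The construction is tailored to the given $\lambda$; nothing is transferred from the branch case.

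Your forward direction is also too thin. Extending \refprop{crucial} to arbitrary semivaluations by density and continuity is fine (this is \refprop{positivity_bdivisors}), but ``the same algebraic manipulation as in \refthm{arborcase}'' hides the actual content: in the branch case the implication (arborescent $\Rightarrow$ ultrametric) is a one-line citation of \refthm{initultram}, and the substance lies in finding a separating vertex so that the \emph{equality} case of the crucial inequality fires. The paper's proof of \refthm{arborcase_val} makes this explicit: since $X$ is arborescent, $\Val{X}$ is an $\nR$-tree, the convex hull of $\{\lambda,\nu_1,\nu_2,\nu_3\}$ has one of the five shapes of \reffig{fivetrees}, and a case analysis using \refprop{quadineg} (which packages the equality case) verifies the $4$-point condition. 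Your sketch never says where arborescence enters beyond the inequality itself, which holds for any normal surface singularity.
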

        % \medskip
     
    %  \item 
    We generalize \refthm{ultramthm} to arbitrary semivaluations on $X$ 
        (see \refthm{udbv_val}). Namely, we prove that for any normal surface 
        singularity $X$, any normalized semivaluation $\lambda$ on it, and any set $\cF$ (not necessarily 
        finite) of normalized semivaluations containing $\lambda$, the function 
        $\udb{\lambda}$ is an ultrametric in restriction to $\cF$ whenever $\cF$  
        satisfies a suitable topological condition in the space of normalized semivaluations of $X$. 
% \end{itemize}
% \medskip 
The topological conditions involved in the statements of  \refthm{ultramthm} and \refthm{udbv_val} 
are analogous, involving finite graphs in the first case and special types of {\em infinite graphs} in the 
second case. Let us compare both cases. 
\medskip

We show that the space of normalized semivaluations has a structure of 
\textit{connected graph of $\R$-trees of finite type}  (see \refprop{valXisagraph}). 
We extend the notion of \textit{brick-vertex tree} to such 
spaces (see \refssec{brickrtree}). In the case of the space of normalized semivaluations, there is only a 
finite number of brick vertices, which correspond bijectively to those of the dual graph 
of any normal crossings resolution of $X$. 
Using the brick-vertex tree of the space of normalized semivaluations of $X$, we 
prove analogs for the functions $\udb{\lambda}$ 
of the results formulated in terms of brick-vertex trees of finite graphs for the 
functions $\udb{L}$ (see \refssec{semivalgraphRtree}). 
In fact, the bricks are precisely the non-punctual \textit{cyclic elements} 
of the space of normalized semivaluations. 
In \refrmk{cyclelemtheory} we give historical details about the \textit{topological theory of 
cyclic elements}.

\medskip

In the whole paper, we deal for simplicity with \textit{complex} normal surface singularities. 
But our approach works also for singularities which are spectra of 
normal $2$-dimensional local rings defined over fields of arbitrary 
characteristic. Indeed, our treatment is ultimately based on the fact that the intersection matrix of a 
resolution of the singularity is negative definite (see \refthm{intersform} below), 
a theorem which is true in this greater generality, as shown by Lipman \cite[Lemma 14.1]{lipman:1969}.
For the description of semivaluation spaces associated to regular surface singularities 
over fields of any characteristic, we refer to Jonsson's paper \cite[Section 7]{jonsson:berkovich} 
-- see in particular its Section 7.11 for a discussion of the specificities of non-algebraically 
closed base fields.
Jonsson's approach can be directly generalized to any normal surface singularity defined 
over arbitrary fields, by applying 
his constructions to the sets of semivaluations centered at smooth points in any good resolution 
of the given singularity.

\medskip
\noindent\textbf{Acknowledgements.}
     {The authors would like to thank Charles Favre who, 
     viewing their papers \cite{gbgppp:2016} and 
     \cite{gignac-ruggiero:locdynnoninvnormsurfsing}, still in progress at that time, 
     suggested them to work together on a combination of both approaches. 
     The third author is grateful to Norbert A'Campo for a conversation which 
     allowed him to realize the spherical reformulation of the crucial inequality. 
     Part of the final revisions to the paper have been carried out while the fourth author 
     was visiting the Imperial College London and the Beijing International Center 
     for Mathematical Research. The fourth author would like to thank both institutions 
     for their welcoming.
     The authors are also grateful to the referee, whose remarks allowed to clarify 
     the paper. 
     This research was partially supported by the  French grants ANR-12-JS01-0002-01 SUSI, 
     ANR-17-CE40-0023-02 LISA, 
     ANR-17-CE40-0002-01 Fatou and  Labex CEMPI (ANR-11-LABX-0007-01), 
     and also by  the Spanish Projects  
     MTM2016-80659-P and MTM2016-76868-C2-1-P.

\section{Ultrametric distances on finite sets of branches} \label{sec:branches}

{Let $X$ be a normal surface singularity and $L$ a finite branch on it. 
Let $\udb{L}$ be the function introduced by the first three authors in \cite{gbgppp:2016}, 
which associates to every  pair $(A, B)$ of branches on $X$ which are different from $L$ 
the number  $(L \cdot A) \: (L \cdot B) (A \cdot B)^{-1}$. In this first part 
of the paper we study its behaviour on finite sets of branches on $X$. Our main 
results are that $\udb{L}$ is an ultrametric on any such set if and only if $X$ is 
\textit{arborescent} (see \refthm{arborcase}) 
and that even when $X$ is not arborescent, it is still an ultrametric in restriction to 
arbitrarily large sets of branches, which may be characterized topologically in terms 
of their total transform on any good resolution of their sum (see \refthm{ultramthm}). 
These theorems need a certain amount of preparation, which explains the need for 
a subdivision of this part into six sections. The content of each 
section is briefly described at its beginning. }

\bigskip
\subsection{Mumford's intersection number of divisors} \label{ssec:mumfordint}
$\:$  

\medskip
   In this section we recall \textit{Mumford's definition of intersection number} of 
   Weil divisors on a normal surface singularity $X$ (see \refdef{mumfint}). 
   This definition passes through an intermediate definition of \textit{total transform} 
   of such a divisor by a resolution of the singularity (see \refdef{totransf}), 
   which in turn uses basic properties of the intersection form 
   on such a resolution. That is why we begin the section by recalling the needed 
   theorems about the intersection theory on resolutions of $X$ 
   (see \refthm{intersform} and \refpropss{totinv}{negdual}{invdual}). We also introduce 
   many of the notions used elsewhere in the paper. The most important one for 
   the sequel is that of \textit{bracket} $\bra{u,v}$ of two prime divisorial valuations $u,v$ on 
   $X$ (see \refdef{bra}), which may be interpreted as Mumford's intersection 
   number of a pair of branches adapted to the two valuations (see \refprop{intexcep}). 
\bigskip

In the whole paper, we fix a {\bf normal surface singularity} $(X, x_0)$, that is, a germ of 
complex analytic normal surface. In particular, the germ is irreducible and has a 
representative which is smooth outside $x_0$.  In order to shorten the notations, 
most of the time we will write simply $X$ instead of $(X, x_0)$. 
We will denote by $\cO_X$ the local ring of $X$.

A {\bf branch} on $X$ is a germ at $x_0$ of irreducible formal curve lying 
on $X$. The set of branches on $X$ will be denoted by $\boxed{\branches {X}}$. 

By a {\bf divisor} on $X$ we will mean an integral 
Weil divisor, that is, an element of the free abelian group generated by the branches on $X$. 
As usual, a {\bf principal divisor} is the divisor $(f)$ of a formal meromorphic function $f$ on  $X$,  
that is, of an element of the fraction field of the completion of $\cO_X$ relative to its maximal ideal. 

A {\bf resolution} of $X$ is a proper bimeromorphic morphism $\pi \colon X_{\pi} \to X$ of 
complex analytic spaces, such that $X_{\pi}$ is smooth and $\pi$ is an isomorphism 
over $X \setminus \{x_0\}$.
If $\pi \colon X_{\pi} \to X$ is a resolution of $X$, we will say 
that $X_{\pi}$ is a {\bf model} of $X$. The 
{\bf reduced exceptional divisor of the resolution} $\pi$ will be denoted by $\boxed{\exc {\pi}}$ and 
its set of irreducible components by $\boxed{\primes {\pi}}$. By {\bf an exceptional divisor} on 
$X_{\pi}$ we mean, depending on the context, either an element of the abelian group 
$\boxed{\ExcZ {\pi}}$ freely generated by the elements of $\primes {\pi}$, of the associated 
$\Q$-vector space $\boxed{\ExcQ {\pi}}$, or of the associated $\R$-vector space 
$\boxed{\ExcR {\pi}}$.

The irreducible components of the 
reduced exceptional divisors of the various resolutions of $X$ will be called {\bf prime exceptional 
divisors}. By associating to a prime exceptional divisor its corresponding 
integer-valued valuation on the local ring $\cO_X$ (that is, the vanishing order along 
the divisor), we may identify $\primes {\pi}$ with a set of divisorial valuations on the local 
ring $\cO_X$ (see \refssec{valuation_spaces}). 
Therefore, denoting by $\boxed{E_u}$ the prime divisor on $X_{\pi}$ corresponding to 
$u \in \primes {\pi}$, we may think that $u$ also denotes the corresponding divisorial 
valuation on $\cO_X$. Whenever we will reason with several models at the same time, 
we will denote by $E^{\pi}_u$ instead of $E_u$ the prime divisor on the model 
$X_{\pi}$ corresponding to the divisorial valuation $u$.  
But when we will work with a fixed model, for simplicity we will drop from the 
notations this dependency on the model.  

We will say that the divisorial valuations $u$ on $\cO_X$ associated to prime divisors $E_u$ are 
{\bf prime divisorial valuations}. We will denote by $\boxed{\primes {X}}$ the set of prime 
divisorial valuations. It is the union of the subsets $\primes {\pi}$ of the set of 
divisorial valuations of $X$, when $\pi$ varies among the resolutions of $X$. 
If $u \in \primes {X}$ and $X_{\pi}$ is a model such that $u \in \primes {\pi}$, 
we say that $u$ {\bf appears on the model} $X_{\pi}$.

Given a resolution $\pi$ of $X$,  the intersection 
number of exceptional divisors of $X_{\pi}$ defines a symmetric bilinear form on the 
vector space $\ExcR {\pi}$, called its {\bf intersection form}. For simplicity, 
we will denote by $\boxed{D_1 \cdot D_2}$ the intersection 
number of the exceptional divisors $D_1$ and $D_2$, without mentioning the morphism 
$\pi$ explicitly. This convention may be motivated by the classical 
fact that \emph{the intersection number 
is birationally invariant} in the following sense:

\begin{prop}   \label{prop:totinv}
  If the model $X_{\pi_2}$ dominates the model $X_{\pi_1}$, then the 
  intersection number of two divisors of $X_{\pi_1}$ is equal 
  to the intersection number of their total transforms on $X_{\pi_2}$.
\end{prop}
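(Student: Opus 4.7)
The plan is to apply the projection formula to the proper bimeromorphic morphism $\rho \colon X_{\pi_2} \to X_{\pi_1}$ that realizes the domination. By assumption there is a unique such $\rho$ with $\pi_2 = \pi_1 \circ \rho$; it is proper, bimeromorphic, and a morphism between smooth surfaces. Every divisor on the smooth model $X_{\pi_1}$ is Cartier, so the total transform of a divisor $D$ on $X_{\pi_1}$ is by definition the pullback $\rho^* D$, and intersection numbers of exceptional divisors are well-defined because such divisors are compactly supported (inside $\exc{\pi_i}$).

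By bilinearity of the intersection form it suffices to treat the case of two prime exceptional divisors $D_1, D_2 \in \primes{\pi_1}$. The projection formula for the proper morphism $\rho$ then gives
$$\rho_* \bigl( \rho^* D_1 \cdot \rho^* D_2 \bigr) \: = \: D_1 \cdot \rho_* \rho^* D_2 \: = \: D_1 \cdot D_2,$$
where the second equality uses that $\rho$ has degree one, so $\rho_*\rho^* D_2 = D_2$. Taking degrees of both sides and using that proper pushforward preserves the degree of a zero-cycle with compact support, one obtains $\rho^* D_1 \cdot \rho^* D_2 = D_1 \cdot D_2$, which is the claim.

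The only real subtlety is justifying the projection formula in the complex-analytic rather than algebraic setting; the cycles involved lie in the compact exceptional fiber, so one can appeal to the standard formalism of intersection numbers for compactly supported cycles on smooth complex surfaces together with functoriality of proper pushforward. A more elementary route, which I would probably follow to keep the exposition self-contained, is to invoke Zariski's factorization theorem: any proper bimeromorphic morphism between smooth complex surfaces is a finite composition of point blow-ups. The statement is then proved by induction on the length of such a factorization, the base case being a single blow-up $\rho$ at a smooth point with exceptional divisor $E$. There one checks directly that $\rho^* D \cdot E = 0$ for every $D$ on $X_{\pi_1}$, that $E^2 = -1$, and that divisors disjoint from the blown-up point have pullbacks intersecting as on the base; combining these facts yields $\rho^* D_1 \cdot \rho^* D_2 = D_1 \cdot D_2$ for arbitrary $D_1, D_2$, which is the inductive step.
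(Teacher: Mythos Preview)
Your proof is correct and follows essentially the same approach as the paper: both apply the projection formula to the domination morphism together with the identity $\rho_*\rho^* D = D$. The paper's version is more terse and simply cites the projection formula from Hartshorne's appendix, while you add the (reasonable but not strictly needed) discussion of the analytic setting and the alternative route via Zariski factorization.
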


\begin{proof} 
  Let $\psi: X_{\pi_2} \to X_{\pi_1}$ be the domination morphism between 
   the two models. 
    Recall the \textit{projection formula}, comparing intersection numbers on the two models 
    (see Hartshorne \cite[Appendix A.1]{hartshorne:alggeo}): 
       \begin{equation} \label{eqn:projform} 
            D_2 \cdot \psi^* D_1 = \psi_* D_2 \cdot D_1
       \end{equation}
          for every $D_1 \in \ExcR {\pi_1}$ and $D_2 \in \ExcR {\pi_2}$ (the left hand side 
          being computed on $X_{\pi_2}$ and the right hand side on $X_{\pi_1}$). Here 
          $\psi^* D_1$ denotes the total transform of $D_1$ by the morphism $\psi$ 
          and $\psi_* D_2$ denotes the direct image of $D_2$ by the same morphism. 
       Consider now two divisors $A, B$ on $X_{\pi_1}$. Then:
           $$ \psi^* A \cdot \psi^* B = \left( \psi_* \psi^* A\right) \cdot B = A \cdot B,$$
        the first equality being a consequence of the projection formula \eqref{eqn:projform} 
        applied to $D_1 = B$, $D_2 = \psi^* A$ and the second equality being a 
        consequence of the fact that $\psi_* \psi^* A = A$. 
\end{proof}

Note that the previous assertion does not remain true if one replaces total transforms 
of divisors by strict transforms. In particular, for fixed $u,v \in \primes {X}$, 
the intersection number $E_u^{\pi} \cdot E_v^{\pi}$ depends on the model $X_{\pi}$ on which 
$E_u^{\pi}$ and $E_v^{\pi}$ appear. Compare this fact with \refprop{invdual} below. 

One has the following fundamental theorem concerning the intersection form 
on a fixed model (see Du Val \cite{duval:1944} 
and Mumford \cite{mumford:1961} in what concerns point (\ref{negdef}) and Zariski 
\cite[Lemma 7.1]{zariski:1962} in what concerns point (\ref{antinef})):

\begin{thm}   \label{thm:intersform}
   Let $X_{\pi}$ be a model of the normal surface singularity $X$.  

   \begin{enumerate}
        \item \label{negdef} 
            The intersection form on the vector space $\ExcR {\pi}$ is negative definite.
        \item \label{antinef} 
             If $D \in \ExcR {\pi}\setminus \{0\}$ is such that $D \cdot H \geq 0$ 
           for all effective divisors  $ H \in \ExcR {\pi}$, then $-D$ is effective and it is 
           of full support in the basis  $(E_u)_{u \in \primes {\pi}}$, that is, all the coefficients 
           of its decomposition in this basis are positive. 
    \end{enumerate} 
\end{thm}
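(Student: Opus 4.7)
The plan is to prove the negative-definiteness statement first and then derive the second assertion as a largely formal consequence. The key auxiliary object is an effective exceptional cycle $Z = \sum_u z_u E_u \in \ExcZ{\pi}$ of full support (all $z_u > 0$), with $Z \cdot E_u \leq 0$ for every $u \in \primes{\pi}$ and $Z \cdot E_{u_0} < 0$ for at least one $u_0$. I would build such a $Z$ by picking generators $f_1, \ldots, f_n$ of the maximal ideal of $\cO_X$ and decomposing each pullback $\pi^*(\divi f_i) = Z_{f_i} + T_{f_i}$ into its exceptional and strict-transform parts. The projection formula, together with $\pi_*(E_u) = 0$, gives $\pi^*(\divi f_i) \cdot E_u = 0$, and since $T_{f_i} \cdot E_u \geq 0$ this forces $Z_{f_i} \cdot E_u \leq 0$. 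Summing the $Z_{f_i}$ over a set of generators produces a $Z$ with full support, and strict negativity $Z \cdot E_{u_0} < 0$ is witnessed by any component $E_{u_0}$ met by one of the strict transforms.

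For the negative-definiteness, I would rewrite an arbitrary $Y = \sum_u a_u E_u \in \ExcR{\pi}$ in the form $Y = \sum_u b_u z_u E_u$ with $b_u := a_u / z_u$ and expand $Y \cdot Y$ through the symmetrization identity
\[
Y \cdot Y \;=\; \sum_u b_u^2 z_u (Z \cdot E_u) \;-\; \sum_{u < v} (b_u - b_v)^2 z_u z_v (E_u \cdot E_v).
\]
Both sums are non-positive: the first because $z_u > 0$ and $Z \cdot E_u \leq 0$, the second because $E_u \cdot E_v \geq 0$ whenever $u \neq v$. This already gives $Y \cdot Y \leq 0$. Equality forces the second sum to vanish, hence $b_u = b_v$ for every pair $u, v$ of adjacent vertices in the dual graph of $\pi$; connectedness of $\exc{\pi}$ (which follows from normality of $X$ by Zariski's connectedness theorem) then implies that all $b_u$ coincide with a common $t \in \bR$, and the first sum collapses to $t^2 (Z \cdot Z)$. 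Since $Z \cdot Z < 0$ by construction, this forces $t = 0$ and hence $Y = 0$.

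For the second assertion, I would decompose $D = D^+ - D^-$ into its positive and negative parts in $\ExcR{\pi}$, so that $D^+$ and $D^-$ are effective with disjoint supports in $\primes{\pi}$. Applying the hypothesis to $H = D^+$ gives
\[
0 \;\leq\; D \cdot D^+ \;=\; (D^+)^2 - D^+ \cdot D^-,
\]
where $D^+ \cdot D^- \geq 0$ (disjoint supports and non-negative intersections between distinct prime exceptional divisors), while $(D^+)^2 < 0$ unless $D^+ = 0$ by the first assertion. Therefore $D^+ = 0$ and $-D = D^-$ is effective. For the full-support claim, assume that $S := \mathrm{supp}(-D) \subsetneq \primes{\pi}$; connectedness of $\exc{\pi}$ then yields a prime $E_v$ with $v \notin S$ and $E_v \cdot E_u > 0$ for some $u \in S$, and a direct computation gives $D \cdot E_v < 0$, contradicting the hypothesis.

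\textbf{Main obstacle.} The substantive step is the negative-definiteness, and within it the production of the cycle $Z$ with full support and strictly negative self-intersection. Once this $Z$ and the connectedness of $\exc{\pi}$ are in hand, the symmetrization identity packages the argument cleanly and the second assertion reduces to linear-algebraic bookkeeping combined with the same connectedness input.
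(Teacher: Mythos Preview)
Your proof is correct; the argument for part~(\ref{negdef}) is essentially Mumford's original one, and the $D^+/D^-$ decomposition for part~(\ref{antinef}) is a clean direct argument. The paper itself does not actually prove this theorem: it records it as a known result, citing Du Val and Mumford for~(\ref{negdef}) and Zariski for~(\ref{antinef}). The only thing resembling a proof is a remark that~(\ref{antinef}) follows from a linear-algebra statement (\refprop{dualcone}): in a Euclidean space with a basis whose pairwise angles are right or obtuse and which cannot be split into two orthogonal nonempty subsets, the dual cone lies in the interior of the primal cone. The paper then explains how to instantiate this with $\ExcR{\pi}$ equipped with the opposite of the intersection form and the basis $(E_u)$, the non-splitting hypothesis being precisely the connectedness of $\exc{\pi}$.

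So your route to~(\ref{antinef}) differs from the paper's: you argue directly with the $D^+/D^-$ decomposition and negative definiteness, while the paper packages the same ingredients into the dual-cone formulation. Your approach is more self-contained and avoids introducing an auxiliary proposition; the paper's formulation has the advantage that \refprop{dualcone} is reused later in the proof of the crucial inequality (\refprop{crucial}), so isolating it pays off elsewhere. Both arguments ultimately rest on the same two facts: $E_u \cdot E_v \geq 0$ for $u \neq v$, and connectedness of the exceptional divisor.
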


The second statement is a consequence of the following theorem of linear algebra, which will be used in the proof of \refprop{crucial}
(one may verify easily that Zariski's proof in \cite[Lemma 7.1]{zariski:1962} 
transcribes immediately in a proof of it): 

\begin{prop}  \label{prop:dualcone}
   Let $\mathcal{E}$ be a Euclidean finite dimensional vector space. 
   Consider a basis $\mathcal{B}$ of $\mathcal{E}$ such that the plane angles 
   generated by any 
   pair of its vectors are right or obtuse. Assume moreover that $\mathcal{B}$ cannot 
   be partitioned into two non-empty subsets orthogonal to each other. 
   Denote by $\sigma$ the cone generated by $\mathcal{B}$ and 
   let $\check{\sigma}$ be the cone generated by the dual basis.  Then 
   $\check{\sigma} \setminus 0$ is included in the interior of $\sigma$. 
\end{prop}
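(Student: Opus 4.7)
The cone $\sigma$ is simplicial, being generated by a basis of $\mathcal{E}$, so its interior consists precisely of those vectors whose expansion in $\mathcal{B} = \{e_1, \ldots, e_n\}$ has strictly positive coordinates. Any $x \in \check\sigma \setminus \{0\}$ can be written as $x = \sum_i c_i \check{e}_i$ with $c_i \geq 0$ not all zero, and the dual-basis relations $\check{e}_i \cdot e_j = \delta_{ij}$ yield $x \cdot e_j = c_j \geq 0$ for every $j$. My plan is therefore to deduce the proposition from the following self-contained lemma, which is exactly the Euclidean abstraction of \refthm{intersform}(\ref{antinef}): \emph{if $D \in \mathcal{E} \setminus \{0\}$ satisfies $D \cdot e_j \geq 0$ for every $j$, then $D = \sum_j d_j e_j$ with all $d_j > 0$.}

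The first step toward the lemma is Zariski's sign-splitting trick, which will establish only that all $d_j \geq 0$. I would write $D = D_+ - D_-$ with $D_+ := \sum_{d_j > 0} d_j\, e_j$ and $D_- := \sum_{d_j < 0} (-d_j)\, e_j$ both lying in $\sigma$ and supported on disjoint subsets of $\mathcal{B}$. Then
$$\|D_-\|^2 \;=\; D_- \cdot D_+ \;-\; D_- \cdot D.$$
The first term is $\leq 0$ because each elementary product $e_j \cdot e_k$ arising in the expansion has $j \neq k$ (by disjointness of supports) and is hence $\leq 0$ by the obtuse-angle hypothesis. The second term is $\geq 0$ because $D_-$ is a non-negative combination of the $e_j$ and each $D \cdot e_j \geq 0$ by hypothesis. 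Thus $\|D_-\|^2 \leq 0$, forcing $D_- = 0$ and hence $d_j \geq 0$ for all $j$.

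To upgrade the $d_j$ to strictly positive, I would invoke indecomposability. Suppose for contradiction that some $d_\ell = 0$, and partition $\mathcal{B}$ into $P = \{e_k : d_k > 0\}$ and $Z = \{e_k : d_k = 0\}$, both nonempty (since $D \neq 0$ and $e_\ell \in Z$). If every vector of $P$ were orthogonal to every vector of $Z$, this would be the forbidden orthogonal partition of $\mathcal{B}$; so there exist $e_a \in P$ and $e_b \in Z$ with $e_a \cdot e_b < 0$ strictly, by the obtuse-angle hypothesis. Computing
$$D \cdot e_b \;=\; \sum_{k} d_k\, (e_k \cdot e_b) \;=\; \sum_{e_k \in P} d_k\, (e_k \cdot e_b),$$
every summand is $\leq 0$ (because $d_k > 0$ and $e_k \neq e_b$, hence $e_k \cdot e_b \leq 0$), and the summand for $k = a$ is strictly negative. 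Therefore $D \cdot e_b < 0$, contradicting the hypothesis on $D$.

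The whole argument is essentially bookkeeping once one has hit on the decomposition $D = D_+ - D_-$; that sign-splitting is the only genuinely non-obvious step, and is exactly the ingredient Zariski employs in \cite[Lemma~7.1]{zariski:1962}. Indecomposability enters solely to promote non-negativity of the $d_j$ to strict positivity, which is what places $D$ in the interior of $\sigma$ rather than merely on its boundary.
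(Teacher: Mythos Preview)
Your proof is correct and is precisely the Zariski argument the paper invokes. The paper does not actually write out a proof of this proposition; it only says that Zariski's proof of \cite[Lemma~7.1]{zariski:1962} transcribes directly, and your sign-splitting $D = D_+ - D_-$ together with the indecomposability step is exactly that transcription.
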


In order to get \refthm{intersform} (\ref{antinef}) from \refprop{dualcone}, one takes 
as Euclidean vector space $\mathcal{E}$ the space of exceptional divisors $\ExcR {\pi}$, 
endowed with the opposite of the intersection form and with the basis 
$(E_u)_{u \in \primes{\pi}}$. The hypothesis on the angles is satisfied because 
$E_u \cdot E_v \geq 0$ for all $u \neq v$. The hypothesis on the impossibility 
to partition the basis in two orthogonal non-empty subsets is equivalent 
to the connectedness of the exceptional divisor $\exc {\pi}$. In turn, this is a consequence of the 
hypothesis that $X$ is normal, as a special case of the so-called \textit{Zariski main  
theorem} (see \cite[Corollary 11.4]{hartshorne:alggeo}). 

If $D\in \ExcR {\pi}$ is a divisor such that $-D$ is effective, we will say that $D$ is {\bf anti-effective}. 
If $D \cdot H \geq 0$ for all effective divisors  $ H \in \ExcR {\pi}$, we will say that 
$D$ is {\bf nef (numerically eventually free)}. Usually one says in this case that $D$ is 
\emph{nef relative to the morphism $\pi$}, but in order to be concise we will drop 
the reference to $\pi$.

If $E_u$ is an exceptional prime divisor on the model $X_{\pi}$, 
we denote by $\boxed{\check{E}_u} \in \ExcQ {\pi}$ the dual divisor with respect 
to the intersection form. It is defined by:
\begin{equation} \label{eqn:dualdiv}
     \check{E}_u \cdot E_v = \delta_{u,v} \:  \mbox{ for all } v \in \primes{\pi},
\end{equation}
 where $\delta_{u,v}$ denotes Kronecker's delta. The existence and uniqueness  
 of this dual basis is a consequence of \refthm{intersform} (\ref{negdef}). 
 The fact that it lives in $\ExcQ {\pi}$ follows from the fact that all the intersection 
 numbers $E_u \cdot E_v$ are integers. One has the following immediate consequence 
 of formulae \eqref{eqn:dualdiv}:
     \begin{equation}  \label{eqn:devdiv}
           D = \sum_{v \in \primes {\pi} } \left( D \cdot \check{E}_v \right) E_v
     \end{equation}
 for all $D \in \ExcR {\pi}$. 
       
 As an immediate consequence of \refthm{intersform} (\ref{antinef}) 
 and of formula \eqref{eqn:devdiv} applied to the nef divisors $\check{E}_u$, we get:

 \begin{prop}  \label{prop:negdual}
       The divisors $\check{E}_u$ are anti-effective with full support  
       in the basis $(E_u)_{u \in \primes{\pi}}$, that is,  $\check{E}_u \cdot \check{E}_v < 0$ 
       for all $u,v \in \primes{\pi}$. 
   \end{prop}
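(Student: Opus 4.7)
The plan is to apply \refthm{intersform}(\ref{antinef}) to each divisor $\check{E}_u$ and then translate the conclusion using the expansion formula \eqref{eqn:devdiv}. The proof should be quite short since both ingredients are already in place.

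First I would check that each $\check{E}_u$ is nef. Any effective exceptional divisor $H \in \ExcR{\pi}$ can be written as $H = \sum_{v} h_v E_v$ with $h_v \geq 0$, so the defining relation $\check{E}_u \cdot E_v = \delta_{u,v}$ from \eqref{eqn:dualdiv} yields $\check{E}_u \cdot H = h_u \geq 0$. Since $\check{E}_u \neq 0$ (it is a basis element of $\ExcQ{\pi}$), the hypotheses of \refthm{intersform}(\ref{antinef}) are met.

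Next I would invoke that theorem to conclude that $-\check{E}_u$ is effective with full support in the basis $(E_v)_{v \in \primes{\pi}}$, meaning that in the expansion $\check{E}_u = \sum_{v} c_{u,v} E_v$ every coefficient $c_{u,v}$ is strictly negative. The last step is to identify these coefficients: applying formula \eqref{eqn:devdiv} to $D = \check{E}_u$ gives
\[
    \check{E}_u = \sum_{v \in \primes{\pi}} \left( \check{E}_u \cdot \check{E}_v \right) E_v,
\]
so that $c_{u,v} = \check{E}_u \cdot \check{E}_v$. Combining this with the strict negativity of the $c_{u,v}$ yields the desired inequality $\check{E}_u \cdot \check{E}_v < 0$ for all $u,v \in \primes{\pi}$.

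There is essentially no obstacle here: the whole content is packaged inside \refthm{intersform}(\ref{antinef}), whose proof via \refprop{dualcone} uses the connectedness of $\exc{\pi}$ (which follows from normality of $X$) to rule out the degenerate case where some coefficient could vanish. The only thing to be careful about is to verify that $\check{E}_u$ is genuinely nonzero before applying the theorem, which is automatic since the $\check{E}_u$ form a basis of $\ExcQ{\pi}$.
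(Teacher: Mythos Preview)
Your proposal is correct and follows exactly the approach indicated in the paper: one observes that each $\check{E}_u$ is nef (from \eqref{eqn:dualdiv}), applies \refthm{intersform}(\ref{antinef}) to obtain anti-effectivity with full support, and then reads off the coefficients via formula \eqref{eqn:devdiv}. Your write-up simply spells out these steps in slightly more detail than the paper's one-line justification.
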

   
   In contrast with the fact that the intersection numbers $E_u \cdot E_v$ depend on the 
   model on which they are computed, one has the following classical invariance property:
   
   \begin{prop}  \label{prop:invdual}
        Let $u,v \in \primes {X}$. Then the intersection number $\check{E}_u \cdot \check{E}_v$ 
        does not depend on the model on which it is computed. 
   \end{prop}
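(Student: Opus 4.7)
The plan is to reduce to the situation where one model dominates the other, and then use the projection formula \eqref{eqn:projform} together with the characterization \eqref{eqn:dualdiv} of the dual divisors $\check{E}_u$. Given two models on which both $u$ and $v$ appear, any third model dominating them both reduces the statement to the case of a proper bimeromorphic morphism $\psi \colon X_{\pi_2} \to X_{\pi_1}$ with $u, v \in \primes{\pi_1} \subset \primes{\pi_2}$. The goal then becomes $\checkbrhpi{u}{\pi_1} \cdot \checkbrhpi{v}{\pi_1} = \checkbrhpi{u}{\pi_2} \cdot \checkbrhpi{v}{\pi_2}$.

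The first key step I would carry out is to establish the pushforward compatibility
\[
     \psi_* \checkbrhpi{u}{\pi_2} = \checkbrhpi{u}{\pi_1}.
\]
By uniqueness of the dual basis on $X_{\pi_1}$ (a consequence of \refthm{intersform}(\ref{negdef})), it suffices to show that the intersection of the left hand side with $\brhpi{w}{\pi_1}$ equals $\delta_{u,w}$ for every $w \in \primes{\pi_1}$. Writing $\psi^* \brhpi{w}{\pi_1} = \brhpi{w}{\pi_2} + R$ with $R$ supported on the prime divisors contracted by $\psi$, i.e., those indexed by $\primes{\pi_2} \setminus \primes{\pi_1}$, and noting that $u \in \primes{\pi_1}$ differs from every index in the support of $R$, one gets $\checkbrhpi{u}{\pi_2} \cdot R = 0$ from \eqref{eqn:dualdiv}. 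The projection formula then yields $\psi_* \checkbrhpi{u}{\pi_2} \cdot \brhpi{w}{\pi_1} = \checkbrhpi{u}{\pi_2} \cdot \psi^* \brhpi{w}{\pi_1} = \delta_{u,w}$, as required.

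With this in hand, two applications of the projection formula give
\[
     \checkbrhpi{u}{\pi_1} \cdot \checkbrhpi{v}{\pi_1}
     = \psi_* \checkbrhpi{u}{\pi_2} \cdot \checkbrhpi{v}{\pi_1}
     = \checkbrhpi{u}{\pi_2} \cdot \psi^* \checkbrhpi{v}{\pi_1},
\]
and it remains only to replace $\psi^* \checkbrhpi{v}{\pi_1}$ by $\checkbrhpi{v}{\pi_2}$ on the far right. Setting $D := \psi^* \checkbrhpi{v}{\pi_1} - \checkbrhpi{v}{\pi_2}$ and applying $\psi_*$, the compatibility above (used with $v$ in place of $u$), together with $\psi_* \psi^* = \mathrm{id}$ on $\ExcR{\pi_1}$, gives $\psi_* D = 0$. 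Hence $D$ is supported on $\primes{\pi_2} \setminus \primes{\pi_1}$, and the same orthogonality argument as before forces $\checkbrhpi{u}{\pi_2} \cdot D = 0$. The main point of care—rather than a genuine obstacle—is keeping track of the distinction between strict and total transforms in the decomposition $\psi^* E = E + R$, which is precisely what made the statement non-trivial, as already emphasized right after \refprop{totinv}.
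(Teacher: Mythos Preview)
Your proof is correct and uses the same ingredients as the paper (the projection formula together with the defining property \eqref{eqn:dualdiv} of the dual divisors). The paper's organization is slightly more direct: rather than first establishing the pushforward identity $\psi_* \checkbrhpi{u}{\pi_2} = \checkbrhpi{u}{\pi_1}$ and then separately arguing that $\checkbrhpi{u}{\pi_2} \cdot D = 0$, it shows in one stroke the stronger pullback identity $\psi^* \checkbrhpi{u}{\pi_1} = \checkbrhpi{u}{\pi_2}$ (by checking intersections with every $\brhpi{w}{\pi_2}$ via the projection formula), after which the conclusion is immediate from \refprop{totinv}. Your step~3 in fact implicitly contains this identity, since once $\psi_* D = 0$ one can further check $D \cdot \brhpi{w}{\pi_2} = 0$ for all $w$ and conclude $D = 0$ by negative definiteness; the paper simply makes this the main statement from the outset.
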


   \begin{proof} 
       Let $\psi: X_{\pi_2} \to X_{\pi_1}$ be the domination morphism 
       between two models of $X$.  In this proof we 
       will not drop the reference to the model on which one works, using the notations 
       $E_u^{\pi_i}, \check{E}_u^{\pi_i}$ for $i \in \{1, 2 \}$. 
        In view of \refprop{totinv}, it is enough to show that if 
        $u \in \primes {\pi_1}$, then the divisor $\check{E}_u^{\pi_2}$  is the total transform of the
	divisor $\check{E}_u^{\pi_1}$.
        
        By the projection formula \eqref{eqn:projform}, one has: 
           $E_v^{\pi_2} \cdot \psi^* \check{E}_u^{\pi_1} = 0$
       for all $v \in \primes {\pi_2}\setminus \{u\}$ and 
            $ E_u^{\pi_2} \cdot \psi^* \check{E}_u^{\pi_1} 
             = \psi_* {E}_u^{\pi_2} \cdot \check{E}_u^{\pi_1}
             = E_u^{\pi_1} \cdot \check{E}_u^{\pi_1}  =1. $
       This shows that one has indeed $\psi^* \check{E}_u^{\pi_1} = \check{E}_u^{\pi_2}$. 
   \end{proof}

   The following definition is inspired by the approaches of 
Favre-Jonsson in \cite[Appendix A]{favre-jonsson:dynamicalcompactifications} 
and Jonsson \cite[Section 7.3.6]{jonsson:berkovich}: 

  \begin{defi}  \label{def:bra}
    Let $u,v$ be two possibly equal prime divisorial valuations of $X$. Their {\bf bracket} 
    is defined by: 
     $$\boxed{ \bra{u,v} }:= - \check{E}_u \cdot \check{E}_v \in \Q_+^*.$$
    Here $E_u$ and $E_v$ denote the representing divisors on a model on which  
    both of them appear. 
\end{defi}

By \refprop{invdual},   the bracket is independent of the choice of a model 
on which both $u$ and $v$ appear.   We get in this way a function: 
    $$
         \bra{\cdot, \cdot} \colon \genprimes{X} \times \genprimes{X} \to \Q_+^*.
    $$

Till now we have worked with total transforms of divisors living on models of $X$, 
that is, on smooth surfaces. Let us consider now the case of a divisor $A$ 
on $X$. If $A$ is a principal divisor, then one may define its total transform $\pi^* A$ 
by a resolution $\pi$ as the divisor of the pull-back of a defining function of $A$. 
The total transform is independent of the choice of defining function. 
Moreover, as a 
consequence of the projection formula \eqref{eqn:projform}, which is still true if one 
works with a proper birational morphism between normal surfaces, 
the intersection number of the total transform of $A$ with any exceptional divisor on $X_{\pi}$ is $0$.
This property was converted by Mumford \cite{mumford:1961} 
into a \textit{definition} of the total transform of a not necessarily principal divisor on $X$:

\begin{defi} \label{def:totransf}
   Let $A$ be a  divisor on $(X, x_0)$ and  $\pi: X_{\pi} \to X$ a resolution 
   of $X$. The {\bf total transform} of $A$ on $X^{\pi}$ is the $\Q$-divisor 
   $\pi^*A =  \strt{A}{\pi} +  \exct{A}{\pi}$ on $X^{\pi}$ such that:
     \begin{enumerate}
         \item $\boxed{ \strt{A}{\pi}} $ is the {\bf strict transform} of $A$ on $X^{\pi}$. 
            Its support is the closure of \linebreak $\pi^{-1}(|A| \setminus \{x_0\})$ in $X_{\pi}$, 
            each one of its irreducible components being endowed with the same 
            coefficient as its image in $X$. 
         \item The support of the {\bf exceptional transform} $\boxed{\exct{A}{\pi} } $ 
                of $A$ on $X^{\pi}$ 
               is included in the exceptional divisor $\exc {\pi}$. 
         \item $\pi^*A \cdot  E_u =0$ for each irreducible component $E_u$ of $\exc {\pi}$. 
     \end{enumerate}
\end{defi}

The fact that such a divisor exists and is unique comes from the fact that condition 
(3) of the definition may be written as a square linear system of equations whose 
unknowns are the coefficients of $\exct{A}{\pi}$ in the basis $(E_u)_{u \in \primes {\pi}}$ 
of $\ExcR {\pi}$, and whose matrix is the intersection matrix 
$(E_u \cdot E_v)_{u,v \in \primes {\pi}}$. This matrix is non-singular, by 
\refthm{intersform} (\ref{negdef}). Note that we make here a slight abuse 
of language, as one gets a matrix only after having chosen a total order on the set 
$\primes {\pi}$. 

Note also that in \refdef{totransf}, one allows $X_{\pi}$ to be \textit{any} 
model of $X$, without 
imposing it to be adapted in any sense to the divisor $A$. We say that $\pi$ is an {\bf embedded 
resolution} of $A$ if the total transform $\pi^* A$ is a divisor with normal crossings. 
In this case, each branch of $A$ has a strict transform on $X_{\pi}$ which 
intersects {\em transversally} a {\em unique} prime exceptional divisor.
Therefore, one has the following immediate consequence of \refdef{totransf}: 
 
 \begin{prop} \label{prop:branchcase}
    Assume that $A$ is a branch and that $\pi$ is an embedded resolution of it. 
    Let $E_a \in \primes {\pi}$ be the unique prime exceptional divisor which intersects the strict 
    transform of $A$.   Then: 
         $$  \exct{A}{\pi} = - \check{E}_a. $$
  \end{prop}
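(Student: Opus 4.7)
The plan is to determine $\exct{A}{\pi}$ by computing its intersection numbers against each prime exceptional divisor $E_u$, and then invoke non-degeneracy of the intersection form to conclude uniqueness.

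First I would use condition (3) of \refdef{totransf}, which states that $\pi^*A \cdot E_u = 0$ for every $u \in \primes{\pi}$. Since $\pi^* A = \strt{A}{\pi} + \exct{A}{\pi}$, this rewrites as
\[
\exct{A}{\pi} \cdot E_u = -\, \strt{A}{\pi} \cdot E_u \qquad \text{for every } u \in \primes{\pi}.
\]
Next I would analyze the right-hand side using the hypothesis that $\pi$ is an embedded resolution of the branch $A$. Since $A$ is irreducible with coefficient $1$, its strict transform is an irreducible curve appearing with coefficient $1$ in $\pi^*A$. The normal crossings hypothesis, together with the observation recorded just before the statement (the strict transform intersects transversally a unique prime exceptional divisor, namely $E_a$), yields $\strt{A}{\pi} \cdot E_a = 1$ and $\strt{A}{\pi} \cdot E_u = 0$ for every $u \neq a$. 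Equivalently, $\strt{A}{\pi} \cdot E_u = \delta_{a,u}$.

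Combining the two displays gives $\exct{A}{\pi} \cdot E_u = -\delta_{a,u}$ for all $u \in \primes{\pi}$. On the other hand, the defining relation \eqref{eqn:dualdiv} for the dual basis reads $\check{E}_a \cdot E_u = \delta_{a,u}$, whence $(-\check{E}_a) \cdot E_u = -\delta_{a,u}$. Thus the two exceptional $\Q$-divisors $\exct{A}{\pi}$ and $-\check{E}_a$ have identical intersection numbers against every element of the basis $(E_u)_{u \in \primes{\pi}}$.

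Finally, the intersection form on $\ExcR{\pi}$ is negative definite by \refthm{intersform}(\ref{negdef}), hence non-degenerate; so the linear form $D \mapsto D \cdot (\exct{A}{\pi} + \check{E}_a)$ vanishes on a basis of $\ExcR{\pi}$ only if $\exct{A}{\pi} + \check{E}_a = 0$. This gives $\exct{A}{\pi} = -\check{E}_a$, as required. There is no real obstacle here: the statement is essentially a direct identification, and the only substantive ingredients are the embedded-resolution hypothesis (used to pin down the intersections of the strict transform) and non-degeneracy of the intersection form (used for uniqueness).
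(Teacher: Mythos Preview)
Your proof is correct and follows exactly the approach the paper has in mind: the statement is presented there as an immediate consequence of \refdef{totransf}, and your argument spells out precisely this deduction---using condition (3) to compute $\exct{A}{\pi}\cdot E_u = -\delta_{a,u}$ from the transversality of the strict transform, and then invoking non-degeneracy of the intersection form to identify $\exct{A}{\pi}$ with $-\check{E}_a$.
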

  
  Let us introduce the following denomination for the divisor $E_a$: 
  
  \begin{defi}  \label{def:reprdiv}
    Let $A$ be a branch on $X$ and $\pi$ be an embedded resolution of it. 
    The unique prime exceptional divisor $E_a \in \primes {\pi}$ which intersects 
    the strict transform of $A$ on $X_{\pi}$ is called the {\bf representing 
    divisor} of $A$ on $X_{\pi}$. 
 \end{defi}
  
  Using the notion of total transform of divisors from 
  \refdef{totransf},  Mumford defined in the following way in \cite{mumford:1961} the intersection  
  number of two divisors without common branches on $X$: 

\begin{defi} \label{def:mumfint}
   Let $A, B$ be two divisors on $X$ without common branches. Then their 
   {\bf intersection number} $\boxed{A \cdot B} \in \Q$ is defined by:
       $$A \cdot B := \pi^* A \cdot \pi^* B,$$
    for any resolution $\pi$ of $X$. 
\end{defi}

This definition is independent of the resolution. 
In the special case in which both $A$ and $B$ are branches, 
we get the following interpretation of the {\em bracket}:

\begin{prop}  \label{prop:intexcep}
   Let $A, B$ be two distinct branches on $X$.  Consider 
   an embedded resolution $X_{\pi}$ of the divisor $A+B$. If $E_a$ and 
   $E_b$ are the possibly coinciding representing divisors of $A$ and $B$ on $X_{\pi}$, then:  
      $$A \cdot B = \bra{a,b}.$$
\end{prop}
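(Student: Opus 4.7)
The plan is to unfold the definition of the intersection number from \refdef{mumfint}, apply \refprop{branchcase} to identify the exceptional transforms of $A$ and $B$ as $-\check{E}_a$ and $-\check{E}_b$, and then collapse the cross-terms using the fact that total transforms are intersection-orthogonal to every exceptional divisor. More precisely, I would start by writing
$$A\cdot B \;=\; \pi^*A \cdot \pi^*B \;=\; (\strt{A}{\pi} - \check{E}_a)\cdot (\strt{B}{\pi} - \check{E}_b),$$
using \refdef{mumfint}, \refdef{totransf} and \refprop{branchcase}.

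The key observation is that by condition (3) in \refdef{totransf}, the total transform $\pi^*A$ satisfies $\pi^*A \cdot E_v = 0$ for every $v\in \primes{\pi}$, and hence $\pi^*A \cdot D = 0$ for any exceptional $\Q$-divisor $D\in \ExcQ{\pi}$. Applying this to $D = \check{E}_b$, and symmetrically to $\pi^*B$ against $\check{E}_a$, I would rewrite
$$A\cdot B \;=\; \pi^*A \cdot \strt{B}{\pi} \;=\; (\strt{A}{\pi}-\check{E}_a)\cdot\strt{B}{\pi}.$$

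Next I need the vanishing $\strt{A}{\pi}\cdot\strt{B}{\pi} = 0$. This is where the embedded resolution hypothesis enters: since $\pi^*(A+B)$ has normal crossings, the smooth strict transforms $\strt{A}{\pi}$ and $\strt{B}{\pi}$ cannot meet. If $E_a\neq E_b$, three distinct components cannot meet at one point; if $E_a = E_b$, then the strict transforms each cross $E_a$ transversally, but at distinct points, again by the normal crossings condition. I expect this to be the only genuinely non-formal step in the proof, though it is essentially standard.

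Finally, substituting $\strt{B}{\pi} = \pi^*B + \check{E}_b$ and using once more the orthogonality $\check{E}_a \cdot \pi^*B = 0$, we obtain
$$A\cdot B \;=\; -\check{E}_a\cdot \strt{B}{\pi} \;=\; -\check{E}_a\cdot \check{E}_b \;=\; \bra{a,b},$$
which is the desired equality.
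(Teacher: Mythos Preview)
Your proof is correct and follows essentially the same route as the paper's: expand the total transforms, use orthogonality of $\pi^*A$ to exceptional divisors to kill the cross-terms, use disjointness of the strict transforms from the embedded-resolution hypothesis, and identify the remaining term via \refprop{branchcase}. The only cosmetic difference is that the paper invokes the projection formula to obtain $\pi^*A \cdot \exct{B}{\pi}=0$, whereas you appeal directly to condition~(3) of \refdef{totransf}; both are valid and the rest of the argument is identical.
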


\begin{proof}
   According to \refdef{mumfint}, we have $A \cdot B = \pi^* A \cdot \pi^* B$. 
   By bilinearity of the intersection product, $\pi^* A \cdot \pi^* B = \pi^* A \cdot B_{\pi} + 
   \pi^* A \cdot \exct{B}{\pi}$. The second term of this sum vanishes, by the projection formula 
   \eqref{eqn:projform}:
      $\pi^* A \cdot \exct{B}{\pi} = A \cdot \pi_* \exct{B}{\pi} = A \cdot 0 =0.$
Hence, we get $A \cdot B = \pi^* A \cdot B_{\pi}  = A_{\pi} \cdot B_{\pi} + \exct{A}{\pi} \cdot B_{\pi}.$ 
The first term of this last sum vanishes, because our hypothesis that $\pi$ is an embedded resolution 
of the divisor $A + B$ shows that the strict transforms $A_{\pi}$ and $B_{\pi}$ are disjoint. 
Consider now 
the relation $\exct{A}{\pi} \cdot \pi^* B =0$, symmetrical of the relation 
$\pi^* A   \cdot  \exct{B}{\pi} =0$ 
used before.  Using again the bilinearity of the intersection product, it may be written 
$\exct{A}{\pi} \cdot B_{\pi}  + \exct{A}{\pi} \cdot \exct{B}{\pi}=0$. 
Therefore: 
   \begin{equation}   \label{for:intexcep}
   A \cdot B =  \exct{A}{\pi} \cdot B_{\pi} = - \exct{A}{\pi} \cdot \exct{B}{\pi} = 
       - \check{E}_a \cdot \check{E}_b = \bra{a,b}, 
    \end{equation}
  the penultimate equality being a consequence of \refprop{branchcase}, and the last one 
  being just the definition of the bracket. 
\end{proof}

Notice that the case $a=b$  in  \refprop{intexcep} may occur 
when the strict transforms $A_{\pi}$ and $B_{\pi}$ intersect the same irreducible  
component of $E(\pi)$.

The next consequence of \refprop{intexcep} will be used in the proof of \refprop{noud}:

\begin{cor}  \label{cor:intexcep}
     Let $\pi$ be a resolution of 
   $X$. Let $A, B$ be two distinct branches on $X$ such that 
   the strict transforms $A_\pi$ and $B_\pi$ are disjoint. Then: 
\[
A \cdot B = - \exct{A}{\pi} \cdot \exct{B}{\pi}. 
\]
\end{cor}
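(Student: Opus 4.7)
The plan is to mimic the proof of \refprop{intexcep} directly, with only one substantive difference: in \refprop{intexcep} the vanishing of $A_{\pi} \cdot B_{\pi}$ came from the fact that $\pi$ was an embedded resolution of $A+B$, while here we have it for free because the disjointness of the strict transforms is part of the hypothesis.

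The starting point is Mumford's definition \refdef{mumfint}: since the intersection number $A \cdot B$ can be computed on \emph{any} resolution, we may take the one $\pi$ given in the statement and write $A \cdot B = \pi^* A \cdot \pi^* B$, where $\pi^* A = A_\pi + \exct{A}{\pi}$ and $\pi^* B = B_\pi + \exct{B}{\pi}$. I would then expand this product by bilinearity. Two instances of the projection formula \eqref{eqn:projform} handle the mixed terms: applied with $D_1 = B$ and $D_2 = \exct{A}{\pi}$ (respectively $D_1 = A$, $D_2 = \exct{B}{\pi}$), it gives $\exct{A}{\pi} \cdot \pi^* B = \pi_* \exct{A}{\pi} \cdot B = 0$ (respectively $\pi^* A \cdot \exct{B}{\pi} = 0$), since any exceptional divisor pushes forward to zero. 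The remaining term $A_\pi \cdot B_\pi$ vanishes immediately by the hypothesis that the strict transforms are disjoint.

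Rewriting the two projection-formula identities as $\exct{A}{\pi} \cdot B_\pi = -\exct{A}{\pi} \cdot \exct{B}{\pi}$ and $A_\pi \cdot \exct{B}{\pi} = -\exct{A}{\pi} \cdot \exct{B}{\pi}$, and substituting into the expansion of $\pi^* A \cdot \pi^* B$, yields
\[
A \cdot B = A_\pi \cdot B_\pi + A_\pi \cdot \exct{B}{\pi} + \exct{A}{\pi} \cdot B_\pi + \exct{A}{\pi} \cdot \exct{B}{\pi} = 0 - 2\,\exct{A}{\pi} \cdot \exct{B}{\pi} + \exct{A}{\pi} \cdot \exct{B}{\pi},
\]
which equals $-\exct{A}{\pi} \cdot \exct{B}{\pi}$, as desired.

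There is no real obstacle here: all the ingredients are already available in the excerpt, and the argument is essentially a sub-computation of the proof of \refprop{intexcep}. The only point to be slightly careful with is that we are \emph{not} assuming $\pi$ is an embedded resolution of $A+B$, so \refprop{branchcase} is unavailable and we cannot identify $\exct{A}{\pi}$ and $\exct{B}{\pi}$ with dual divisors $-\check{E}_a$, $-\check{E}_b$; the statement is therefore correctly phrased directly in terms of the exceptional transforms rather than in terms of brackets.
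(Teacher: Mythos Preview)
Your proof is correct and follows exactly the same approach as the paper: both observe that the chain of equalities in the proof of \refprop{intexcep} up through $A \cdot B = -\exct{A}{\pi} \cdot \exct{B}{\pi}$ uses only the projection formula and the disjointness of $A_\pi$ and $B_\pi$, not the embedded-resolution hypothesis. The paper simply refers back to that proof rather than writing out the expansion again, but the content is identical.
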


\begin{proof}
This results from the proof of \refprop{intexcep}, which uses 
the fact that the modification $\pi$ is an embedded resolution of $A+B$ only in
the last two equalities in (\ref{for:intexcep}), what precedes them needing only 
the hypothesis of disjointness of the strict transforms.
\end{proof}

\medskip
\subsection{The angular distance} 
$\:$  

\medskip
    In this section we recall the notion of \textit{angular distance} $\logfunc$ of prime 
    divisorial valuations (see \refdef{angdist}),  
    introduced in a greater generality by Gignac and the last author in 
    \cite{gignac-ruggiero:locdynnoninvnormsurfsing} and by the first three authors in a slightly different 
    form in \cite{gbgppp:2016} for the restricted class of \textit{arborescent} singularities. The definition 
    uses the bracket of \refdef{bra}. The fact that $\logfunc$ is indeed a 
    distance depends on a crucial inequality of Gignac and the last author, which we recall 
    in \refprop{crucial}. We conclude the section with a list of reformulations 
    of this inequality (see \refprop{MW}). 
\bigskip

Let $X_{\pi}$ be a model of $X$ and let $u,v \in \primes {\pi}$ be two prime divisorial  
valuations appearing on it. By \refthm{intersform} (\ref{negdef}), the intersection 
form on  $\ExcR {\pi}$ is negative definite. Let us apply the 
Cauchy-Schwartz inequality to its opposite bilinear form and to the vectors 
$\check{E}_u, \check{E}_v \in \ExcR {\pi}$. Using  
\refprop{negdual} and \refdef{bra}, we 
get the following inequalities: 
  \begin{equation} \label{eqn:CS}
        0 <  \bra{u,v}^2 \leq \bra{u,u} \cdot \bra{v,v}, 
  \end{equation}
  with equality if and only $u = v$. This allows to define: 

\begin{defi}   \label{def:angdist}
    The {\bf angular distance} of the prime divisorial valuations $u, v \in \primes {X}$ is: 
    \begin{equation} 
          \boxed{\logfunc(u,v) }:= - \log \dfrac{\bra{u,v}^2}{\bra{u,u} \cdot \bra{v,v}} \in [0, \infty). 
    \end{equation}
  \end{defi}
  
  As an immediate consequence of inequality \eqref{eqn:CS} and of the 
  characterization of the case of equality, one gets: 
  
  \begin{prop}  \label{prop:nonneg}
        For every pair of prime divisorial valuations $(u,v)$ of $X$, one has 
          $\logfunc(u,v) \geq 0$, with equality if and only if $u =v$. 
    \end{prop}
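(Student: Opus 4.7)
The statement is essentially a packaging of the Cauchy--Schwartz inequality \eqref{eqn:CS} and of the negativity property of \refprop{negdual}, composed with the elementary properties of $-\log$. The plan is to unwind the definition of $\logfunc$ and observe that each piece is already in place.

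First, I would recall that by \refthm{intersform}(\ref{negdef}) the opposite of the intersection form is a positive definite scalar product on $\ExcR{\pi}$ for any model $X_\pi$ on which both $u$ and $v$ appear, and by \refprop{negdual} the dual divisors $\check{E}_u$ and $\check{E}_v$ are nonzero in $\ExcR{\pi}$. Applying Cauchy--Schwartz in this Euclidean structure to $\check{E}_u$ and $\check{E}_v$ yields, via \refdef{bra}, exactly the inequality \eqref{eqn:CS}, namely
\[
  0 < \bra{u,v}^2 \leq \bra{u,u}\cdot\bra{v,v},
\]
with equality if and only if $\check{E}_u$ and $\check{E}_v$ are proportional.

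From this, the ratio $\bra{u,v}^2 / (\bra{u,u}\bra{v,v})$ lies in $(0,1]$, so its $-\log$ lies in $[0,+\infty)$; invoking \refdef{angdist} gives $\logfunc(u,v)\geq 0$. Moreover $\logfunc(u,v)=0$ if and only if this ratio equals $1$, which by the Cauchy--Schwartz equality case is equivalent to the existence of a scalar $c\in\bR$ with $\check{E}_v = c\, \check{E}_u$.

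The one step requiring a small verification is to deduce $u=v$ from proportionality (the converse being tautological). Here I would test the proportionality $\check{E}_v = c\,\check{E}_u$ against the prime divisors $E_w$ using the defining relations \eqref{eqn:dualdiv}: for $w=u$ one gets $c = \delta_{v,u}$, while for $w=v$ one gets $1 = c\,\delta_{u,v}$. If $u\neq v$ both equalities force $c=0$ and $1=0$, a contradiction; hence $u=v$. This is the only place where a tiny argument beyond citing \eqref{eqn:CS} is needed, and it is immediate from the definition of the dual basis, so no real obstacle arises.
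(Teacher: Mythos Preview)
Your proof is correct and follows exactly the approach indicated by the paper, which simply states the proposition as an immediate consequence of inequality \eqref{eqn:CS} and its equality case. The only difference is that you spell out why proportionality of $\check{E}_u$ and $\check{E}_v$ forces $u=v$, a detail the paper leaves implicit when asserting the equality characterization in \eqref{eqn:CS}.
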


  \begin{rmk}
     A slightly different notion was introduced before by the first three authors 
     in \cite[Definition 4.11]{gbgppp:2016}, in the special case of \textit{arborescent} 
     normal surface singularities. It was introduced almost simultaneously by the 
     last author and Gignac for arbitrary \textit{semivaluations} of 
     $X$ in \cite[Definition 2.39]{gignac-ruggiero:locdynnoninvnormsurfsing}.
  \end{rmk}
          
    As indicated by the name chosen in \refdef{angdist}, 
    $\logfunc$ is indeed a metric on the set $\primes {X}$ (see \refprop{MW} 
    \ref{distform} below). But this fact is not immediate. 
    It is a consequence of an inequality of Gignac and the last author 
    (see  \refprop{crucial} below). 
    In order to state this inequality, we need the following graph-theoretical notion 
    (see \refssec{blockvertextree} for our vocabulary concerning graphs):

\begin{defi}   \label{def:separ}
      Let $a,b,c$ be three not necessarily pairwise distinct vertices of the connected 
       graph $\Gamma$. 
     One says that $c$ {\bf separates $a$ from $b$} in $\Gamma$ if:
   \begin{itemize}
        \item  either $c \in \{a,b \}$; 
        \item  or $a$ and $b$ belong to distinct connected components of the topological 
           space $\Gamma \setminus \{c \}$. 
   \end{itemize}
\end{defi}

We apply the previous notion of separation to the \textit{dual graphs} of the 
\textit{good models}  of $X$:

\begin{defi}  \label{def:dualgraph}
     Let $\pi: X_{\pi} \to X$ be a resolution of $X$. The resolution $\pi$ and 
     the model $X_{\pi}$ are called {\bf good} if their exceptional divisor has 
     normal crossings and its prime components are smooth. The {\bf dual graph} 
     $\boxed{ \dgr{\pi} }$ of a good model $X_{\pi}$  
     has vertex set $\primes {\pi}$ and set of edges between any two vertices 
     $u, v \in \primes {\pi}$ in bijection with the intersection points on $X_{\pi}$ between 
     the associated prime divisors $E_u$ and $E_v$. 
\end{defi} 

Here comes the announced inequality of Gignac and the last author (see
\cite[Proposition 1.10]{gignac-ruggiero:locdynnoninvnormsurfsing}), which is crucial for 
the present paper:

\begin{prop}(\cite[Proposition 1.10]{gignac-ruggiero:locdynnoninvnormsurfsing}) \label{prop:crucial}
     Let $X_\pi$ be a good model of the normal surface singularity 
     $X$, and let $E_u$, $E_v$ and $E_w$ 
     be not necessarily distinct exceptional prime divisors of $\pi$. Then one has the inequality:
    \begin{equation}\label{eqn:positivity_EFH}
         (- \check{E}_u \cdot \check{E}_v)(- \check{E}_v \cdot \check{E}_w) \leq 
         (- \check{E}_v \cdot \check{E}_v)(- \check{E}_u \cdot \check{E}_w),
     \end{equation}
    with equality if and only if $v$ separates $u$ and $w$ 
   in the dual graph $\dgr{\pi}$ of $X_{\pi}$.
\end{prop}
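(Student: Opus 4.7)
The plan is to reformulate the desired inequality as the non-negativity of the coefficient of $E_w$ in an auxiliary $\bR$-divisor $F$, and then to localize the support of $F$ on the connected component of $\dgr{\pi} \setminus \{v\}$ containing $u$, exploiting the fact that principal submatrices of a negative-definite intersection form remain negative definite. I would first dispose of the degenerate cases: if $u = v$ or $v = w$ then both sides of the inequality agree and $v$ trivially separates $u$ from $w$, while if $u = w \neq v$ the inequality is exactly the Cauchy--Schwarz inequality \eqref{eqn:CS} applied to $\check{E}_u, \check{E}_v$, with equality iff $u = v$; so it suffices to treat $u, v, w$ pairwise distinct.

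Set
\[
F := \frac{-\check{E}_u}{\bra{u,v}} + \frac{\check{E}_v}{\bra{v,v}}.
\]
Expanding in the basis $(E_z)_z$ via formula \eqref{eqn:devdiv} and \refdef{bra}, the coefficient of $E_z$ in $F$ equals $\bra{u,z}/\bra{u,v} - \bra{v,z}/\bra{v,v}$; so the claimed inequality at $w$ is equivalent to the non-negativity of this coefficient at $z = w$, and the equality case to its vanishing. The key observation is that, by direct substitution, the coefficient of $E_v$ in $F$ is $0$, whereas from $\check{E}_u \cdot E_z = \delta_{uz}$ one computes $F \cdot E_u = -1/\bra{u,v}$, $F \cdot E_v = 1/\bra{v,v}$, and $F \cdot E_z = 0$ for every $z \notin \{u, v\}$.

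Next I would decompose $\dgr{\pi} \setminus \{v\} = \sqcup_i \Gamma_i$ into connected components and let $\Gamma_{i_0}$ be the one containing $u$. Writing $F = c_v E_v + \sum_i F_i$ with $F_i$ supported on the vertex set $V(\Gamma_i)$, the vanishing of the $E_v$-coefficient gives $c_v = 0$. For $j \neq i_0$ and any $z \in V(\Gamma_j)$, one has $E_z \cdot E_{z'} = 0$ for every $z' \in V(\Gamma_i)$ with $i \neq j$ (no edge of $\dgr{\pi}$ crosses between distinct components of $\dgr{\pi} \setminus \{v\}$), and the $c_v E_v$ term contributes nothing either; thus $F_j \cdot E_z = F \cdot E_z = 0$. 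Since the intersection form restricted to the subspace spanned by $\{E_z : z \in V(\Gamma_j)\}$ is negative definite (as a principal submatrix), hence non-degenerate, this forces $F_j = 0$. Therefore $F$ is supported entirely on $V(\Gamma_{i_0})$.

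Intersecting inside the subspace spanned by $\{E_z : z \in V(\Gamma_{i_0})\}$ now yields $F = -\tilde{E}_u^{i_0}/\bra{u,v}$, where $\tilde{E}_u^{i_0}$ denotes the dual-basis element of $E_u$ for the restricted intersection form. Since $\Gamma_{i_0}$ is connected and its intersection matrix is negative definite, the argument of \refprop{negdual} (which uses only \refthm{intersform}\eqref{antinef} and \refprop{dualcone}, both valid in this restricted setting) applies verbatim to $\Gamma_{i_0}$ and yields that $-\tilde{E}_u^{i_0}$ is effective with full support on $V(\Gamma_{i_0})$. Consequently the $E_w$-coefficient of $F$ is strictly positive when $w \in V(\Gamma_{i_0})$ and vanishes otherwise, which is precisely the dichotomy ``$v$ does not separate $u$ from $w$'' versus ``$v$ separates $u$ from $w$''. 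The main obstacle is the careful distinction, throughout the argument, between the coefficient of $E_v$ in $F$ (which vanishes) and the intersection number $F \cdot E_v$ (which does not): it is the former that decouples $F$ along the components of $\dgr{\pi} \setminus \{v\}$ and makes the separation condition appear algebraically.
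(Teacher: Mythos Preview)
Your proof is correct and follows essentially the same route as the paper: both arguments rewrite the inequality as the effectivity of an auxiliary divisor (your $F$ is $-1/\bra{u,v}$ times the paper's $\check{E}_u - \frac{\bra{u,v}}{\bra{v,v}}\check{E}_v$), identify it with a scalar multiple of the dual-basis vector of $E_u$ after removing $E_v$, and conclude via \refprop{dualcone}/\refthm{intersform}\eqref{antinef} applied component-by-component to $\dgr{\pi}\setminus\{v\}$. Your explicit decomposition $F = \sum_i F_i$ and the non-degeneracy argument forcing $F_j = 0$ for $j\neq i_0$ is just the hands-on version of the paper's orthogonal direct-sum remark, and your separate treatment of the degenerate cases $u=v$, $v=w$, $u=w$ is a welcome clarification.
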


\begin{proof}
    Let us sketch a slight variant of the original proof. We work with the opposite 
    of the intersection form, which is positive definite. Denote therefore 
    $\bra{V_1, V_2} := - V_1 \cdot V_2$ for any $V_1, V_2 \in \ExcR {\pi}$. Inequality 
    \eqref{eqn:positivity_EFH} may be rewritten as: 
       \begin{equation}\label{eqn:positivity_bis}
         \bra{ \check{E}_u   - 
              \frac{\bra{ \check{E}_u , \check{E}_v}}{\bra{ \check{E}_v , \check{E}_v}} 
                   \check{E}_v  \:  , \:  \check{E}_w } \geq 0.
        \end{equation}
     Using Equation \eqref{eqn:devdiv},  we see that the truth of the previous inequality 
     for all $w \in \primes{\pi} $ and fixed $u, v \in \primes{\pi} $ is equivalent to the 
     following statement: 
         \begin{equation} \label{eqn:statem}
              \mbox{the divisor } \: \: \check{E}_u   - 
              \frac{\bra{ \check{E}_u , \check{E}_v}}{\bra{ \check{E}_v , \check{E}_v}}  \check{E}_v
                    \: \: \mbox{  is effective}.
         \end{equation}
     The key of the proof of \eqref{eqn:statem}   is to understand geometrically the 
     previous expressions. Consider the linear hyperplane 
     $\mathcal{H}_w$ of      
    $\ExcR {\pi}$ spanned by the vectors $E_a$, for $a \in \primes{\pi} \setminus \{w\}$. 
    Those vectors form a basis of the hyperplane $\mathcal{H}_w$. Look at the dual basis relative 
    to the restriction of $\bra{\cdot , \cdot }$ to $\mathcal{H}_w$. As can be verified by an immediate 
    computation, the vector corresponding to $E_u$ in this dual basis is 
    exactly the vector occuring in \eqref{eqn:statem}. Now let us apply 
    \refprop{dualcone} to the Euclidean space 
    $\left(\mathcal{H}_w, \bra{\cdot , \cdot }\right)$ 
    and the basis $(E_a)_{a \in \primes{\pi} \setminus \{w\}}$. We deduce that the 
    coefficients of the elements of its dual basis in the starting basis are non-negative, 
    which is exactly the statement \eqref{eqn:statem}. 
    
    There is a slight difference with the hypotheses of \refprop{dualcone}. 
    There one assumed that the basis could not be partitioned in two non-empty orthogonal 
    subsets. Here we are 
    in a situation in which the dual graph is not necessarily connected. Namely, as we 
    work in the hyperplane $\mathcal{H}_w$, we drop the component $E_w$ from the 
    exceptional divisor, therefore the dual graph of the remaining components gets 
    decomposed in a finite positive number of connected components. The associated 
    partition of $\primes{\pi} \setminus \{w\}$ induces an 
    orthogonal direct sum decomposition of $\mathcal{H}_w$, each term of this sum 
    having a connected dual graph. The dual basis of 
    $(E_a)_{a \in \primes{\pi} \setminus \{w\}}$ is the union of the dual bases of the individual 
    terms of this orthogonal direct sum. Apply then \refprop{dualcone} 
    to each such term. One gets in this way easily the characterization 
    of the case of equality in (\ref{eqn:positivity_bis}).     
\end{proof}

The point \ref{ineqspher} in the following reformulation of  \refprop{crucial}
was already stated by the third author in the summary \cite{popescu-pampu:2016} of the 
work \cite{gbgppp:2016}.

\begin{prop}  \label{prop:MW}
      Let  $X_\pi$ be a good model  of $X$, and let $E_u$, $E_v$ and $E_w$ 
     be not necessarily distinct exceptional prime divisors of $\pi$. Then the following 
      statements hold:
         \begin{enumerate}[label=(\Roman*)]
            \item   \label{ineqbra}
               $\bra{u,v} \cdot \bra{v, w} \leq \bra{v,v} \cdot \bra{u, w}$, with equality if and only 
               if $v$ separates $u$ from $w$ in the dual graph $\dgr{\pi}$.
            \item \label{distform} 
                  The function $\logfunc$ is a metric on the finite set $\primes {\pi}$, 
                 with equality in the triangle inequality  
                 $\logfunc(u,v) + \logfunc(v,w) \geq \logfunc(u,w)$  
                 if and only if $v$ separates $u$ from $w$ in $\dgr{\pi}$. 
            \item  \label{ineqspher}
                 Endow the real vector space $\ExcR{\pi}$ with the Euclidean structure
                 equal to the opposite of the intersection form. On its unit 
                 sphere, consider the pairwise distinct vectors which are positively proportional to 
                 $\check{E}_u, \check{E}_v, \check{E}_w$. Join them by shortest geodesics, 
                 obtaining a spherical triangle called simply $uvw$. This triangle has all its 
               angles in the interval $(0, \pi/2]$. Moreover, it is rectangle at $v$ if and only if 
               $v$ separates $u$ from $w$ in $\dgr{\pi}$.
         \end{enumerate}
\end{prop}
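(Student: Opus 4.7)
The plan is to deduce all three statements as essentially immediate consequences of \refprop{crucial}, which supplies the key inequality once it is translated into the language of brackets and then fed into the definitions of $\logfunc$ and of spherical angles.

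For \ref{ineqbra}, I would simply rewrite the inequality \eqref{eqn:positivity_EFH} of \refprop{crucial} using \refdef{bra}, which says $\bra{a,b} = -\check{E}_a \cdot \check{E}_b$ for any pair $a,b \in \primes{\pi}$. Substituting gives exactly $\bra{u,v} \cdot \bra{v,w} \leq \bra{v,v} \cdot \bra{u,w}$, and the characterization of the equality case transfers verbatim.

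For \ref{distform}, symmetry of $\logfunc$ is immediate from the symmetry of the intersection form, while non-negativity and the identity of indiscernibles are exactly \refprop{nonneg}. The content is the triangle inequality. Using \refdef{angdist}, one computes
\[
\logfunc(u,v) + \logfunc(v,w) - \logfunc(u,w)
 = -\log \frac{\bra{u,v}^2 \: \bra{v,w}^2 \: \bra{u,u} \bra{w,w}}{\bra{u,u} \bra{v,v}^2 \bra{w,w} \: \bra{u,w}^2}
 = -2\log \frac{\bra{u,v}\bra{v,w}}{\bra{v,v}\bra{u,w}},
\]
and this is $\geq 0$ precisely because of \ref{ineqbra}, with equality iff $v$ separates $u$ from $w$ in $\dgr{\pi}$. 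This also gives the metric property on $\primes{X}$ at large: given any three prime divisorial valuations, pick a common good model on which they all appear and apply the previous inequality there (independence of the model is guaranteed by \refprop{invdual}).

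For \ref{ineqspher}, denote by $\bra{\cdot,\cdot}$ also the Euclidean scalar product on $\ExcR{\pi}$ opposite to the intersection form, and let $\hat{e}_u, \hat{e}_v, \hat{e}_w$ be the unit vectors positively proportional to $\check{E}_u, \check{E}_v, \check{E}_w$. By \refprop{negdual} one has $\bra{\check{E}_a,\check{E}_b} > 0$ for all $a,b$, so the spherical sides $a := \widehat{\hat{e}_v \hat{e}_w}$, $b := \widehat{\hat{e}_u \hat{e}_w}$ and $c := \widehat{\hat{e}_u \hat{e}_v}$ all lie in $[0,\pi/2)$, and they are strictly positive since $u,v,w$ are pairwise distinct so that the unit vectors are distinct. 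The spherical law of cosines gives
\[
\cos(\text{angle at } v) = \frac{\cos b - \cos a \cos c}{\sin a \sin c}.
\]
Writing out the cosines in terms of brackets, $\cos a \cos c = \bra{u,v}\bra{v,w}/(\bra{v,v}\sqrt{\bra{u,u}\bra{w,w}})$ and $\cos b = \bra{u,w}/\sqrt{\bra{u,u}\bra{w,w}}$, so the numerator equals $(\bra{v,v}\bra{u,w} - \bra{u,v}\bra{v,w})/(\bra{v,v}\sqrt{\bra{u,u}\bra{w,w}})$, which is $\geq 0$ by \ref{ineqbra}. Hence the angle at $v$ lies in $(0,\pi/2]$, and it equals $\pi/2$ iff equality holds in \ref{ineqbra}, iff $v$ separates $u$ from $w$. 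The same argument applied by permuting the roles of the three vertices shows the analogous statement for the angles at $u$ and $w$, so all three angles lie in $(0,\pi/2]$.

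The only subtlety I would expect is checking that $a,b,c \in (0,\pi/2)$ so that the spherical triangle is genuinely non-degenerate and the law of cosines applies in the expected form; this uses \refprop{negdual} for strict positivity of $\cos$ and the pairwise distinctness hypothesis for strict positivity of the sides. Everything else is algebraic manipulation on top of \refprop{crucial}.
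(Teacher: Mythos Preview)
Your proof is correct and follows essentially the same route as the paper: translating \refprop{crucial} into brackets for \ref{ineqbra}, deducing the triangle inequality for $\logfunc$ by the logarithmic manipulation together with \refprop{nonneg}, and handling \ref{ineqspher} via the spherical law of cosines after noting that \refprop{negdual} forces all sides into $(0,\pi/2)$. Your treatment is in fact slightly more explicit than the paper's (you write out the triangle-inequality computation and the permutation argument for the other two angles), but the underlying argument is the same.
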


\begin{proof} The equivalence of the inequality \eqref{eqn:positivity_EFH} with the inequality
    \ref{ineqbra} and the assertion on the triangle inequality in \ref{distform} 
    are a simple consequence of \refdefs{bra}{angdist} and  \refprop{nonneg}.
        
    The reformulation \ref{ineqspher} needs a little more explanations. First, note that 
    inequality (\ref{eqn:positivity_EFH}) may be rewritten as: 
       $$     \dfrac{-\check{E}_u \cdot \check{E}_v}{\sqrt{(-\check{E}_u \cdot \check{E}_u) 
                      (-\check{E}_v  \cdot \check{E}_v)}}  
            \cdot \dfrac{-\check{E}_v  \cdot \check{E}_w}{\sqrt{(-\check{E}_v \cdot \check{E}_v) 
                      (-\check{E}_w  \cdot \check{E}_w)}}
             \leq   \dfrac{-\check{E}_u \cdot \check{E}_w}{\sqrt{(-\check{E}_u \cdot \check{E}_u) 
                      (-\check{E}_w  \cdot \check{E}_w)}}  .$$
     Measuring the angles using the opposite of the intersection form (which is indeed 
     a Euclidean metric on the real vector space $\ExcR{\pi}$, by  
     \refthm{intersform} (\ref{negdef})), the previous inequality 
     may be rewritten as: 
          \begin{equation} \label{eqn:spherineq}  
               \cos (\angle \check{E}_u \check{E}_v) \cdot \cos (\angle \check{E}_v \check{E}_w) 
                      \leq 
                \cos (\angle \check{E}_u \check{E}_w) .
           \end{equation}
             
      Recall now the spherical law of cosines for a geodesic triangle on a unit sphere, 
      whose edges have lengths denoted $a, b, c \in (0, \pi)$, the angle opposite to the 
      edge of length $a$ being denoted $A \in (0, \pi)$ (see for instance Prasolov and Tikhomirov 
      \cite[Section 5.1, page 87]{prasolov-tikhomirov}, Ratcliffe \cite[Theorem 2.5.3]{ratcliffe} or  
      Van Brummelen \cite[Chapter 6]{vanbrummelen}):
          \[ \cos a = \cos b \cdot \cos c +  \sin b \cdot \sin c  \cdot \cos A.\]  
      Applying it to the spherical triangle $uvw$, with preferred vertex $v$, 
      we see that the inequality \eqref{eqn:spherineq} 
      is equivalent to the fact that the angle at vertex $v$  
      belongs to the interval $(0, \pi/2]$. The fact that one has equality if and only if 
      the angle is $\pi/2$ is the content of the \textit{spherical Pythagorean theorem}, which may also 
      be obtained as a consequence of the spherical law of cosines.
\end{proof}

\begin{rmk} $\,$  
     We may speak about the spherical triangle with vertices at $u, v, w$, 
         without mentioning the model on which we work because, by \refprop{invdual}, 
          this triangle is independent of the model up to isometry. Note that a spherical triangle 
           may have $2$ or $3$ angles $\geq \pi /2$, but that in our case at most one angle 
          is equal to $\pi /2$, the two other ones being acute. This results from the fact that 
           \textit{if $v$ separates $u$ from $w$, then neither $u$ separates $v$ from $w$, nor 
            $w$ separates $u$ from $v$. }

   There exist other kinds of extensions of the usual Pythagorean theorem to the three kinds of bidimensional Riemannian geometries of constant curvature (see for instance 
      Maraner \cite{maraner} and Foote \cite{foote}).

For the moment we have no applications of the spherical geometrical 
       viewpoint \ref{ineqspher},  
       but we think that it is intriguing and that it is worth formulating, as a very 
       vivid way of remembering the inequality of \refprop{crucial}. 
\end{rmk}

\medskip
\subsection{A reformulation of the ultrametric problem} \label{ssec:reformultrametric}
$\:$  

\medskip
    In this section we begin the study of the function $\udb{L}$ 
    introduced by the first three authors in \cite{gbgppp:2016}, defined whenever $L$ is a fixed 
    branch on the normal surface singularity $X$. 
    Given a finite set $\cF$  of branches,  in \refcor{equivultr} 
    we  reformulate the condition that for every  branch $L \in \cF$
    the function $u_L$ is an ultrametric on $\cF \setminus \{L\}$
    as the condition that the angular distance on 
    $\cF$ is \textit{tree-like}.
   Then we recall the correspondence between tree-like distances
    on finite sets $\cF$ and metric trees having a subset of vertices labeled by $\cF$ 
    (see \refprop{addtree}).
\bigskip

Let $L$ be a fixed branch on $X$. If $A, B$ are two other branches, assumed 
to be distinct from $L$, let us define the following (see \cite{gbgppp:2016}): 

\begin{equation}  \label{eqn:potultram}
 \boxed{\udb{L}(A, B)} := \left\{ \begin{array}{lcl}
              \dfrac{(L \cdot A) \: (L \cdot B)}{A \cdot B}, \: & \mbox{ if }  & A \neq B, \\
              \\
              0 , \:  & \mbox{ if }  & A = B .
                    \end{array} \right. 
\end{equation}

The following vocabulary was introduced in \cite{gbgppp:2016}:

\begin{defi}   \label{def:arbsing}
   A normal surface singularity is called {\bf arborescent} if the dual graphs of its 
   good models are trees. 
\end{defi}

In  \cite[Theorem 4.18]{gbgppp:2016}, the first three authors proved the following theorem, 
as a generalization of a theorem of P\l oski \cite{ploski:1985} 
concerning the case where $X$ is \textit{smooth}: 

\begin{thm}  \label{thm:initultram}
     If $X$ is an arborescent singularity, then for every branch $L$ on $X$, the 
     function $u_L$ is an ultrametric on the set 
     $\branches {X} \setminus \{ L \}$ of branches on $X$ which are distinct from $L$. 
\end{thm}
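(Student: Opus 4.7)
The plan is to reformulate the ultrametric inequality for $\udb{L}$ as the classical four-point condition for the angular distance $\logfunc$, and then exploit the tree structure of the dual graph of a good embedded resolution. Concretely, I fix four pairwise distinct branches $L, A, B, C$ on $X$ and a good embedded resolution $\pi$ of $L+A+B+C$. Since $X$ is arborescent, $\dgr{\pi}$ is a tree. Let $\ell, a, b, c \in \primes{\pi}$ be the corresponding representing prime divisorial valuations. By \refprop{intexcep},
\[
\udb{L}(A, B) = \frac{\bra{\ell, a}\bra{\ell, b}}{\bra{a, b}},
\]
and analogously for $\udb{L}(A, C)$ and $\udb{L}(B, C)$.

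Next, from \refdef{angdist} I extract the identity $\bra{u, v} = \sqrt{\bra{u,u}\bra{v,v}}\,\exp(-\logfunc(u, v)/2)$, which yields
\[
\log\udb{L}(A, B) = \log\bra{\ell,\ell} - \tfrac{1}{2}\bigl(\logfunc(\ell, a) + \logfunc(\ell, b) - \logfunc(a, b)\bigr).
\]
The expression in parentheses is twice the Gromov product of $a, b$ with basepoint $\ell$. A short algebraic check then shows that the full ultrametric property---that the two largest values among $\udb{L}(A, B)$, $\udb{L}(A, C)$, $\udb{L}(B, C)$ coincide---is equivalent to the Buneman four-point condition for $\logfunc$ on $\{\ell, a, b, c\}$: among the three sums $\logfunc(\ell, a) + \logfunc(b, c)$, $\logfunc(\ell, b) + \logfunc(a, c)$, $\logfunc(\ell, c) + \logfunc(a, b)$, the two largest coincide.

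The last step is to verify Buneman's condition on the four vertices $\ell, a, b, c$ in the tree $\dgr{\pi}$. By point \ref{distform} of \refprop{MW}, the triangle equality $\logfunc(u, w) = \logfunc(u, v) + \logfunc(v, w)$ holds whenever $v$ separates $u$ from $w$ in $\dgr{\pi}$; since $\dgr{\pi}$ is a tree, every interior vertex of the unique path between two of its vertices separates them, so $\logfunc$ is additive along paths of the tree. A case analysis on the minimal subtree of $\dgr{\pi}$ spanning $\{\ell, a, b, c\}$ then settles the condition: either this subtree is a star with a single Steiner point, in which case all three sums are equal, or it has two distinct Steiner vertices $s_1, s_2$ pairing the four points, say as $\{\ell, a\}|\{b, c\}$, in which case the sum $\logfunc(\ell, a) + \logfunc(b, c)$ is strictly smaller than the other two, which both exceed it by $2\logfunc(s_1, s_2) > 0$.

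The conceptual heart of the argument is the translation to $\logfunc$: expressed directly in brackets, the ultrametric inequality is a rather opaque rational comparison, whereas in terms of $\logfunc$ it becomes the Buneman condition, which the tree hypothesis disposes of immediately via additivity along paths.
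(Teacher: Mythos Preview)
Your proof is correct and follows essentially the same route as the paper: translate the ultrametric inequality for $\udb{L}$ into the four-point condition for the angular distance $\logfunc$ (this is the content of \refprop{reformultra} and \refcor{equivultr}), then use the additivity of $\logfunc$ along separation paths from \refprop{MW}\ref{distform} together with the tree structure of $\dgr{\pi}$ to verify that condition. The paper packages the final step inside the more general \refthm{valblocks} (which handles all five shapes of the convex hull of four points and works even in the presence of bricks), obtaining \refthm{initultram} as the brick-free special case via \refthm{ultramthm}; your direct Steiner-tree dichotomy is precisely what that general argument collapses to when $\dgr{\pi}$ is a tree. One small point worth making explicit: your case analysis tacitly allows the Steiner points $s_1, s_2$ to coincide with some of $\ell, a, b, c$ (and the representing divisors themselves to coincide), which covers the degenerate $Y$-, $F$-, and $C$-shapes that the paper treats separately.
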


The present paper is an outgrowth of our desire to understand in which measure 
\refthm{initultram} extends to other normal surface singularities. 

Let us begin with a reformulation of the ultrametric inequality for $\udb{L}$, whose 
simple proof is left to the reader:

\begin{prop}  \label{prop:reformultra}
Let  $L, A, B, C$ be four pairwise distinct branches on $X$.
Consider an embedded resolution $\pi$ of their sum. Denote by $l, a, b, c$ the 
prime divisorial valuations corresponding to the representing divisors  on $X_{\pi}$ of $L, A, B$ 
and respectively $C$  (see \refdef{reprdiv}).
Then the following inequalities are equivalent, as well as the corresponding equalities:
\begin{enumerate}
\item $\udb{L}(A, B) \leq \max \{ \udb{L}(A, C), \udb{L}(B, C)\}$. 
\item $(A \cdot B)  (L \cdot C) \geq \min  \{ 
                   (A \cdot C)  (L \cdot B), \:   (B \cdot C)  (L \cdot A) \}$.
\item $\bra{a, b} \cdot \bra{l, c} \geq \min  \{ 
                  \bra{a, c} \cdot \bra{l, b}, \:   \bra{b, c} \cdot \bra{l, a} \}$.
\item $\rho(a, b) + \rho(l, c) \leq \max  \{ 
                   \rho(a, c) + \rho(l, b), \:  \rho(b, c) + \rho(l, a) \}$.
\end{enumerate} 
\end{prop}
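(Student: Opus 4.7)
The plan is to prove the four conditions equivalent through direct algebraic manipulation, exploiting the positivity of all intersection numbers and brackets involved. There is no conceptual obstacle, only some bookkeeping of pairings; the main ingredient from the preceding material is \refprop{intexcep}, which identifies intersection numbers of branches with brackets of their representing divisors.

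For the equivalence of (1) and (2): since the branches $L,A,B,C$ are pairwise distinct, all the intersection numbers $L\cdot A$, $L\cdot B$, $L\cdot C$, $A\cdot B$, $A\cdot C$, $B\cdot C$ are strictly positive rationals (combine \refprop{intexcep} with \refprop{negdual}). Substituting the definition \eqref{eqn:potultram} of $\udb{L}$ into (1), then multiplying both sides by the positive quantity $(A\cdot B)(A\cdot C)(B\cdot C)/\bigl[(L\cdot A)(L\cdot B)(L\cdot C)\bigr]$ and taking reciprocals (which reverses $\leq$ into $\geq$ and exchanges $\max$ with $\min$), yields (2). Each step is reversible, so the corresponding equalities also match.

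For the equivalence of (2) and (3): this is a literal substitution. Since $\pi$ is an embedded resolution of $L+A+B+C$, \refprop{intexcep} gives $A\cdot B=\bra{a,b}$, $L\cdot C=\bra{l,c}$, and similarly for each of the six pairs of distinct branches in $\{L,A,B,C\}$. Rewriting (2) under these identifications produces (3).

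For the equivalence of (3) and (4): square (3), which preserves the inequality since all brackets are positive by \refprop{negdual}, and then apply $-\log$, which reverses $\geq$ to $\leq$ and swaps $\min$ with $\max$. Using the definition of $\rho$ from \refdef{angdist}, one computes
\[ \rho(a,b)+\rho(l,c) = -\log\bra{a,b}^2 - \log\bra{l,c}^2 + \log\bigl(\bra{a,a}\bra{b,b}\bra{c,c}\bra{l,l}\bigr), \]
and analogously for $\rho(a,c)+\rho(l,b)$ and $\rho(b,c)+\rho(l,a)$. The key observation is that the last term, $K := \log\bigl(\bra{a,a}\bra{b,b}\bra{c,c}\bra{l,l}\bigr)$, is the \emph{same constant} in all three sums, because each pair in $\{(a,b),(l,c)\}$, $\{(a,c),(l,b)\}$, $\{(b,c),(l,a)\}$ exhausts the four labels $a,b,c,l$ exactly once. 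Subtracting $K$ from both sides of (4) turns it into the log-version of the squared form of (3). Reversibility of $-\log$ gives the converse implication, and equality cases correspond throughout. The only delicate point in the whole argument is to keep track of which variables are paired with which in the $\min/\max$ expressions so that this cyclic pairing is respected.
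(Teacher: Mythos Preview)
Your argument is correct and is exactly the routine verification the paper has in mind; indeed the paper simply states that the proposition has a ``simple proof \dots\ left to the reader'' and gives no further details. Your identification of the key points---positivity of all brackets via \refprop{negdual}, the substitution $A\cdot B=\bra{a,b}$ from \refprop{intexcep}, and the fact that the additive constant $K=\log\bigl(\bra{a,a}\bra{b,b}\bra{c,c}\bra{l,l}\bigr)$ cancels across the three sums in (4)---is precisely what is needed.
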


The next proposition is subtler:

\begin{prop}   \label{prop:subtle}
    Let $\cF$ be a set of branches on $X$. If $u_L$ is an ultrametric on $\cF \setminus \{L\}$
    for one branch $L$ in $\cF$, then the same is true for any branch of $\cF$. 
\end{prop}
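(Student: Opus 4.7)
My plan is to translate the hypothesis into a condition symmetric in the four branches $L, A, B, C$ involved, and then to upgrade it so that $L$ is no longer distinguished; the conclusion will then follow by symmetry. Fix an embedded resolution $\pi$ of the sum of the branches in $\cF$ and, for each $F \in \cF$, denote by $f$ its representing prime divisorial valuation on $X_{\pi}$. By the equivalence (1)$\Leftrightarrow$(4) of \refprop{reformultra}, the ultrametric inequality $\udb{L}(A,B) \leq \max\{\udb{L}(A,C), \udb{L}(B,C)\}$ for pairwise distinct $A, B, C \in \cF \setminus \{L\}$ is equivalent to $\logfunc(a,b) + \logfunc(l,c) \leq \max\{\logfunc(a,c) + \logfunc(l,b), \logfunc(b,c) + \logfunc(l,a)\}$. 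Together with its two analogues obtained by permuting $A, B, C$, this is exactly the statement that among the three pairing sums $\logfunc(l,a)+\logfunc(b,c)$, $\logfunc(l,b)+\logfunc(a,c)$, $\logfunc(l,c)+\logfunc(a,b)$ the two largest are equal -- the classical four-point condition for $\logfunc$ on the 4-tuple $\{l,a,b,c\}$, a condition symmetric in the four valuations. So the hypothesis amounts to asserting the four-point condition on every 4-subset of representing valuations of $\cF$ containing $l$.

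The heart of the argument is to upgrade this to the four-point condition on every 4-subset of representing valuations of $\cF$. Introduce $\delta_l(x,y) := \logfunc(x,y) - \logfunc(l,x) - \logfunc(l,y)$, which is non-positive by the triangle inequality for $\logfunc$ established in \refprop{MW}. Subtracting the constant $\logfunc(l,a)+\logfunc(l,b)+\logfunc(l,c)$ from the three pairing sums on $\{l,a,b,c\}$ yields $\delta_l(b,c), \delta_l(a,c), \delta_l(a,b)$, so the first step says exactly that $\delta_l$ is an ultrametric on the representing valuations of $\cF \setminus \{L\}$. From the dendrogram of $\delta_l$ I construct a metric tree rooted at $l$ whose leaves are the representing valuations of $\cF$: each internal node, which is the common ancestor of a pair $(x,y)$, is placed at distance $-\delta_l(x,y)/2$ from $l$, and each leaf $x$ at distance $\logfunc(l,x)$ from $l$. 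The triangle inequalities for $\logfunc$ make all resulting edge lengths non-negative, and the identity $\logfunc(x,y) = \logfunc(l,x)+\logfunc(l,y)+\delta_l(x,y)$ shows that the induced tree metric equals $\logfunc$. By \refprop{addtree}, $\logfunc$ is tree-like on the representing valuations of $\cF$, so the four-point condition holds on every 4-subset.

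For any other branch $L' \in \cF$ the four-point condition then holds on every 4-subset containing $l'$, and reversing the first paragraph gives the ultrametric property of $\udb{L'}$ on $\cF \setminus \{L'\}$. A small detail is that distinct branches in $\cF$ may share the same representing valuation on $X_\pi$, but in such degenerate cases the ultrametric inequality reduces to the inequality of \refprop{crucial} and holds automatically, so one may harmlessly work throughout with the set of \emph{distinct} representing valuations. The main obstacle lies in the middle step -- producing the metric tree that realizes $\logfunc$ -- after which the symmetry argument is immediate.
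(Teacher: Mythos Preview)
Your approach is correct and genuinely different from the paper's. The paper argues directly: given a new base $M$ and three branches $A, B, C$, it reformulates the ultrametric inequality for $u_M$ as a statement about products of $u_L$-values, then verifies it by a case analysis on the ultrametric tetrahedron $\{M, A, B, C\}$ for $u_L$ (this is the B\"ocker--Dress argument it cites). You instead pass entirely to the angular distance $\rho$ and invoke the Gromov-product viewpoint: the hypothesis says that the product $(x\mid y)_l := -\delta_l(x,y)/2$ satisfies $(x\mid z)_l \geq \min\{(x\mid y)_l, (y\mid z)_l\}$, which is precisely what is needed to build a rooted metric tree realizing $\rho$, whence $\rho$ is tree-like on all of $\cF$ and the conclusion follows by symmetry. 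This is the classical ``$0$-hyperbolicity is independent of the base point'' argument. Your route is more conceptual and plugs directly into the tree-hull machinery of the paper; the paper's route is more self-contained and avoids building any auxiliary tree.

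A few small points to tighten. First, $\delta_l$ is not literally an ultrametric: it is non-positive and $\delta_l(x,x)=-2\rho(l,x)\neq 0$. What you actually derive is that among $\delta_l(a,b),\delta_l(a,c),\delta_l(b,c)$ the maximum is achieved twice, equivalently the Gromov product $-\delta_l/2$ satisfies the min-twice inequality; your tree construction then correctly uses $-\delta_l/2$ as the depth of the common ancestor, so the slip is purely terminological. Second, the implication you invoke at the end (existence of a realizing metric tree $\Rightarrow$ four-point condition) is the easy direction stated just before \refprop{addtree}, not \refprop{addtree} itself. Third, fixing a single embedded resolution $\pi$ for all of $\cF$ requires $\cF$ to be finite; for arbitrary $\cF$ one should first restrict to the five branches $\{L, L', A, B, C\}$ under consideration before choosing $\pi$.
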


  \begin{proof} This proof is inspired by the explanations of B\"ocker and Dress in 
       \cite[Lemma 6, Corollary 7, Remark 5]{bocker-dress:1998}. 
       Let $L$ and $M$ be two distinct branches on $X$. We assume that $u_L$ is an 
       ultrametric on $\cF \setminus \{L\}$. We want to prove that $u_M$ is an ultrametric 
       on $\cF \setminus \{M\}$. 
       
       Consider three pairwise distinct branches 
       $A, B, C$ in $\cF \setminus \{M\}$ 
       (if this set has less then three elements, then there is nothing to prove). If 
       $L \in \{ A, B, C \}$, then the equivalence of (1) and (2) 
       in \refprop{reformultra} shows that the ultrametric inequalities of the restriction of $u_M$ 
       to $\{A, B, C\}$ are equivalent to the ultrametric inequalities of the restriction 
       of $u_L$ to $\{M, A, B, C\} \setminus \{L\}$. 
       
       Assume now that $L \notin \{A, B, C\}$. Using again the equivalence of (1) and (2) 
       in \refprop{reformultra}, we see that the fact that $u_M$ is an ultrametric in restriction to 
       $\{A, B, C\}$ is equivalent to the fact that among the products 
       $ (B \cdot C)  (M \cdot A), (A \cdot C)  (M \cdot B),  (A \cdot B)  (M \cdot C)$, 
       two are equal and the third one is not less than them. An immediate computation 
       shows that this is equivalent to the fact that:  
         \begin{equation}  \label{eq:reformbranch}
            \begin{array}{c}
                 \mbox{\em among the products }  u_L(B, C) \cdot u_L(M, A), u_L(A, C) \cdot  u_L(M, B), \\  
                u_L(A,B) \cdot  u_L(M,C),  \mbox{\em two are equal and the third one is 
                  not greater than them. }
            \end{array}
         \end{equation}
         
        This is the statement which we will prove. 
        If the six values taken by $u_L$ in restriction to pairs of distinct elements of the set 
        $\{M, A, B, C\}$ are equal, then the assertion (\ref{eq:reformbranch}) is obvious, the three 
        products being equal. 
        
        Assume therefore that not all six values are equal. In order to follow the next reasoning, 
        we recommend to the reader to draw the edges of a tetrahedron with vertices 
        $M, A, B, C$ and to look successively at its faces. The basic fact which will be used 
        many times for various triples, is that in an ultrametric space, among the distances 
        between three points, two are equal and the third one is not bigger than them.

        Up to permuting the labels $M, A, B, C$, 
        we may consider that $u_L(M, A) > u_L(A, B)$. As $u_L$ is ultrametric on 
        $\{M, A, B \}$, we get the relations 
        $u_L(M, A) = u_L(M, B) > u_L(A, B)$. Let us compare now $u_L(M, A)$ to $u_L(M, C)$:
        \medskip
                       
           \noindent
           $\bullet$ {\bf Suppose that $u_L(M, C) < u_L(M, A)= u_L(M, B)$.} As $u_L$ is ultrametric on  
                   $\{M, A, C\}$ and on $\{M, B, C\}$, we deduce that $u_L(A, C)= u_L(M, A)$ and 
                   $u_L(M, B) = u_L(B, C)$. Therefore $u_L(A, C)= u_L(M, A) = u_L(M, B) = u_L(B, C)$ 
                   and this number is strictly bigger than both $u_L(A, B)$ and $u_L(M, C)$. Therefore:
                      $$  u_L(B, C) \cdot u_L(M, A) =  u_L(A, C) \cdot  u_L(M, B) > 
                             u_L(A,B) \cdot  u_L(M,C).$$

           \noindent
              $\bullet$ {\bf Suppose that $u_L(M, C) = u_L(M, A)= u_L(M, B)$.} As $u_L$ is ultrametric 
                   on $\{A, B, C\}$, we have the relations $u_L(A, B) \leq u_L (B, C) = u_L(C, A)$, 
                   up to permutation.   Therefore:
                       $$  u_L(B, C) \cdot u_L(M, A) =  u_L(A, C) \cdot  u_L(M, B) \geq 
                             u_L(A,B) \cdot  u_L(M,C).$$

           \noindent
              $\bullet$ {\bf Suppose that $u_L(M, C) > u_L(M, A)= u_L(M, B)$.} Using again the fact 
                  that $u_L$ is ultrametric on  
                   $\{M, A, C\}$ and on $\{M, B, C\}$, we deduce that $u_L(C, A) = u_L(M, C) = u_L(B, C)$. 
                   Therefore we get again:
                      $$  u_L(B, C) \cdot u_L(M, A) =  u_L(A, C) \cdot  u_L(M, B) > 
                             u_L(A,B) \cdot  u_L(M,C).$$ 
         We see that the assertion (\ref{eq:reformbranch}) is true in all cases, which proves the 
         proposition. 
  \end{proof}

In \refprop{reformultra}, the branches $L, A, B, C$ were fixed. 
By applying this proposition to all the quadruples in a finite set of branches $\cF$, and by using also  
\refprop{subtle}, we get immediately:

\begin{cor}  \label{cor:equivultr}
    Let $\cF \subset \branches{X}$ be a finite set of branches on $X$. Consider an 
    embedded resolution $\pi$ of their sum and denote by $\cF_{\pi} \subset \primes {\pi}$ 
    the set of prime exceptional divisors representing the elements 
    of $\cF$ in $X_{\pi}$ according to \refdef{reprdiv}. Then the following properties are equivalent: 
       \begin{enumerate}
             \item 
                 For some $L \in  \cF$, the function $\udb{L}$ is an 
                 ultrametric on $\cF \setminus \{L\}$.

             \item For every $L \in \cF$, the function $\udb{L}$ is an 
                 ultrametric on $\cF \setminus \{L\}$.
                 
             \item The bracket  $\bra{\cdot, \cdot}$ satisfies the inequality: 
                 $$\bra{a, b} \cdot \bra{l, c} \geq \min  \{ 
                   \bra{a, c} \cdot \bra{l, b}, \:   \bra{b, c} \cdot \bra{l, a} \}, \: \: \mbox{for all} \: \:  (a,b,c,l) \in 
                        (\cF_{\pi})^4 .$$
                   
             \item \label{reform4point} 
                 The angular distance  $\rho$ satisfies the inequality: 
                  $$\rho(a, b) + \rho(l, c) \leq \max  \{ 
                   \rho(a, c) + \rho(l, b), \:  \rho(b, c) + \rho(l, a) \},  \: \: \mbox{for all} \: \:  
                        (a,b,c,l) \in (\cF_{\pi})^4.$$
        \end{enumerate}
\end{cor}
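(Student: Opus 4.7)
The plan is to assemble the result from the two propositions immediately preceding it. The equivalence $(1) \Leftrightarrow (2)$ is exactly the content of \refprop{subtle}. For the remaining equivalences, the key observation is that \refprop{reformultra}, applied to any quadruple of pairwise distinct branches $(L, A, B, C) \in \cF^4$ with representing divisors $(l, a, b, c) \in (\cF_{\pi})^4$, identifies the ultrametric inequality for $\udb{L}$ on the triple $\{A, B, C\}$ with the bracket inequality (3) and the angular four-point inequality (4). Letting $L$ range over $\cF$ and the triple over $(\cF \setminus \{L\})^3$ shows at once that (2) is equivalent to the inequalities (3) and (4) being imposed on all 4-tuples of pairwise distinct representing divisors.

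To reach the statement as written, I would then check that extending the quantifier in (3) and (4) from 4-tuples of pairwise distinct points to all of $(\cF_{\pi})^4$ is automatic. In each degenerate case---either a formal repetition among the indices $a,b,c,l$, or the situation where two distinct branches of $\cF$ share the same representing divisor because their strict transforms meet the same prime exceptional component---inspection of the possible coincidence patterns shows that the inequality in (3) either becomes tautological (one of the two terms of the $\min$ equals the left-hand side) or reduces to an instance of \refprop{crucial} applied with a suitable choice of separating vertex, for instance $\bra{u,v}\bra{v,w} \leq \bra{v,v}\bra{u,w}$. Both possibilities hold unconditionally, and their logarithmic translations give the corresponding statement for (4). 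Hence the extension to all of $(\cF_{\pi})^4$ costs nothing, and the same one-to-one correspondence from \refprop{reformultra} yields simultaneously $(3) \Leftrightarrow (4)$.

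The main step is the appeal to \refprop{reformultra}, which is already in place, and the observation that ranging $L$ over $\cF$ converts a statement about the representing divisor of one distinguished branch into a statement about all four entries of the 4-tuple. The only real obstacle is the combinatorial bookkeeping for the degenerate 4-tuples, but this is entirely routine once one notices that \refprop{crucial} is tailored to handle precisely the non-trivial degenerate patterns.
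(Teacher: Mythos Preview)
Your approach is correct and essentially the same as the paper's: the paper states just before the corollary that it follows ``immediately'' by applying \refprop{reformultra} to all quadruples of $\cF$ and invoking \refprop{subtle}. You supply more detail than the paper does, in particular by checking that the inequalities in (3) and (4) hold automatically for degenerate $4$-tuples (via either tautology or \refprop{crucial}); the paper leaves this implicit, but your verification is correct and closes a minor gap in the exposition.
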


Let us introduce the following vocabulary concerning  the metrics which 
satisfy condition (\ref{reform4point}) of  \refcor{equivultr}: 

\begin{defi}   \label{def:treelike}
Let $S$ be a finite set. One says that a distance $\delta$ on $S$ is {\bf tree-like} if,  for all $(a,b,c,d) \in S^4$, one has the following \emph{4-point condition}:
\begin{equation}   \label{eqn:4pointmax}
\delta(a,b) + \delta(c,d) \leq \max \{ \delta(a,c) + \delta(b,d) , \delta(a,d) + \delta(b,c) \}.
\end{equation}

This means that, up to a permutation of the three sums, one has:
\begin{equation}   \label{eqn:4point}
\delta(a,b) + \delta(c,d) \leq  \delta(a,c) + \delta(b,d)  =  \delta(a,d) + \delta(b,c).
\end{equation}
\end{defi}

The term \textit{4-point condition} was introduced by Buneman in \cite{buneman:1974}. 
We chose the name tree-like for the previous kind of metrics because such finite 
metric spaces may be interpreted geometrically as special kinds of trees (see 
\refprop{addtree} below). Let us introduce first more vocabulary about trees: 

 \begin{defi}  \label{def:labtree}
       A {\bf finite tree} is a finite simply connected simplicial complex of dimension $1$. 
       The {\bf convex hull} $\boxed{\cvxh {\cF}}$ of a set $\cF$ of vertices of a tree 
       is the subtree obtained as the union of the paths joining pairwise the elements 
       of $\cF$. 
       If $S$ is a finite set, then an {\bf $S$-tree} is a finite tree whose set of vertices 
       contains the set $S$ and such that all its vertices of valency $1$ or $2$ 
       are elements of $S$. An {\bf isomorphism} of $S$-trees is an isomorphism of 
       trees which is the identity in restriction to the set $S$. 
 \end{defi}

 Given two $S$-trees, the fact that all their vertices of valency $1$ are elements 
  of $S$ implies that there exists at most one isomorphism between them. 
  When $S$ has $4$ elements,  there are exactly $5$ different $S$-trees up to isomorphism. 
  They are represented in  \reffig{fivetrees}, together with the names we will 
  use for them in the sequel. 
  
  \begin{figure}
\centering
\begin{tikzpicture} [scale=0.6]

    \draw[very thick](0,-2) -- (0,2) ;
     \draw[very thick](2,-2) -- (2,2) ;
     \draw[very thick](0,0) -- (2,0) ; 
         \node[draw,circle,inner sep=2pt,fill=red] at (0,-2) {};
       \node[draw,circle,inner sep=2pt,fill=red] at (0,2) {};
       \node[draw,circle,inner sep=2pt,fill=red] at (2,-2) {};
       \node[draw,circle,inner sep=2pt,fill=red] at (2,2) {};
            \draw   (1,-2.5) node[below] {H-shaped};
       
       \draw[very thick](3,-2) -- (5,2) ;
     \draw[very thick](5,-2) -- (3,2) ;
         \node[draw,circle,inner sep=2pt,fill=red] at (3,-2) {};
       \node[draw,circle,inner sep=2pt,fill=red] at (3,2) {};
       \node[draw,circle,inner sep=2pt,fill=red] at (5,-2) {};
       \node[draw,circle,inner sep=2pt,fill=red] at (5,2) {};
            \draw   (4,-2.5) node[below] {X-shaped};
       
       \draw[very thick](7,-2) -- (7,0) ;
     \draw[very thick](7,0) -- (6,2) ;
     \draw[very thick](7,0) -- (8,2) ; 
         \node[draw,circle,inner sep=2pt,fill=red] at (7,-2) {};
       \node[draw,circle,inner sep=2pt,fill=red] at (7,0) {};
       \node[draw,circle,inner sep=2pt,fill=red] at (6,2) {};
       \node[draw,circle,inner sep=2pt,fill=red] at (8,2) {};
           \draw   (7,-2.5) node[below] {Y-shaped};
       
        \draw[very thick](9,-2) -- (9,2) ;
     \draw[very thick](9,2) -- (11,2) ;
     \draw[very thick](9,0) -- (11,0) ; 
         \node[draw,circle,inner sep=2pt,fill=red] at (9,-2) {};
       \node[draw,circle,inner sep=2pt,fill=red] at (9,2) {};
       \node[draw,circle,inner sep=2pt,fill=red] at (11,2) {};
       \node[draw,circle,inner sep=2pt,fill=red] at (11,0) {};
           \draw   (10,-2.5) node[below] {F-shaped};
       
        \draw[very thick](12,-2) -- (12,2) ;
     \draw[very thick](12,-2) -- (14,-2) ;
     \draw[very thick](12,2) -- (14,2) ; 
         \node[draw,circle,inner sep=2pt,fill=red] at (12,-2) {};
       \node[draw,circle,inner sep=2pt,fill=red] at (12,2) {};
       \node[draw,circle,inner sep=2pt,fill=red] at (14,-2) {};
       \node[draw,circle,inner sep=2pt,fill=red] at (14,2) {};
           \draw   (13,-2.5) node[below] {C-shaped};
    
           \end{tikzpicture}  
        \caption{The $5$ possible $S$-trees, when $S$ has $4$ elements.} 
        \label{fig:fivetrees}
     \end{figure}

 \begin{defi} \label{def:metrictree}
       A {\bf metric tree} is a finite tree endowed with a map from its set of edges 
       to the set of positive real numbers. The number associated to an edge is called 
       its {\bf length}. The {\bf induced distance} of a  
       metric $S$-tree is the distance on $S$ associating to each pair of elements  
       of $S$ the sum of length of the edges lying on the unique path joining them in the tree. 
 \end{defi}
 
 An example of metric $S$-tree is shown in  \reffig{lengthtree}. Here 
 $S  = \{a, \dots , e\}$. Denoting by 
 $\delta$ the induced distance on $S$, one has for instance $\delta(a, d) = 3+2+2$ and 
 $\delta(b, c)= 2 + 1$. 
  
  \begin{figure}
\centering
\begin{tikzpicture} [scale=0.7]

    \draw[very thick](0.4,0) -- (-1,0) ;
     \draw[very thick](0,0) -- (2,0) ;
    \draw[very thick](2,0) -- (4,2) ; 
     \draw[very thick](2,0) -- (4,0) ; 
    \draw[very thick](2,0) -- (4,-2) ;
    
       \draw (1.2,0) node[above]{$2$} ;
       \draw (-0.3,0) node[above]{$3$} ;
         \draw (3,1.2) node[above]{$1$} ;
         \draw (3.1,0) node[above]{$2$} ;
         \draw (3,-1.2) node[below]{$3$} ;
         
           \draw   (-1,-0.2) node[below] {$a$};
           \draw  (0.4,-0.2) node[below] {$b$};
           \draw  (4,1.8) node[below] {$c$};
            \draw (4,-0.2) node[below] {$d$};
            \draw (4,-2.2) node[below] {$e$};

       \node[draw,circle,inner sep=2pt,fill=red] at (0.4,0) {};
       \node[draw,circle,inner sep=1pt,fill=black] at (2,0) {};
       \node[draw,circle,inner sep=2pt,fill=red] at (-1,0) {};
       \node[draw,circle,inner sep=2pt,fill=red] at (4,2) {};
       \node[draw,circle,inner sep=2pt,fill=red] at (4,0) {};
       \node[draw,circle,inner sep=2pt,fill=red] at (4,-2) {};
    \end{tikzpicture}  
        \caption{An $\{a,b,c,d,e\}$-tree endowed with a length function.} 
        \label{fig:lengthtree}
     \end{figure}

\medskip

It is immediate to check that the distance induced by a metric $S$-tree on the 
finite set $S$ satisfies the $4$-point condition. Therefore, it is tree-like, in the sense 
of \refdef{treelike}. Conversely, one has the 
following proposition (see Buneman's paper \cite{buneman:1974} and 
the successive generalizations of Bandelt and Steel \cite{bandelt-steel:1995} and 
 B\"ocker and Dress \cite{bocker-dress:1998}):

   \begin{prop}  \label{prop:addtree}
          Let $S$ be a finite set and $\delta$ be a distance on it. 
         If $\delta$ is tree-like, then there exists a unique $S$-tree $T$ endowed with 
         a length function such that the induced distance on $S$ is equal to $\delta$.             
  \end{prop}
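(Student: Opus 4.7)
The plan is to separate uniqueness from existence. Uniqueness follows from a quartet-rigidity argument, while existence proceeds by induction on $n := |S|$, attaching one label at a time.

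For uniqueness, the key observation is that $\delta$ determines the restriction of $(T,\ell)$ to every quartet of $S$. Indeed, for each $\{a,b,c,d\}\subset S$, the 4-point condition \eqref{eqn:4point} forces two of the three sums $\delta(a,b)+\delta(c,d)$, $\delta(a,c)+\delta(b,d)$, $\delta(a,d)+\delta(b,c)$ to be equal and to dominate the third. Either all three are equal, corresponding to a star configuration in which the four labels share a common Steiner point, or one is strictly smaller, corresponding to an H-shaped quartet whose central edge separates the pair giving the smallest sum from its complement, and whose length is one half of the common difference. Since a finite $S$-tree is classically determined up to isomorphism by its quartet topologies, and every edge length is recovered as the length of the central edge of a suitable H-shaped quartet, both $T$ and $\ell$ are uniquely determined by $\delta$.

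For existence, proceed by induction on $n$. The cases $n\leq 2$ are trivial. Assume the statement for $n-1$, and let $|S|=n\geq 3$. Pick $x\in S$, set $S':=S\setminus\{x\}$, and let $(T',\ell')$ be the metric $S'$-tree realising $\delta|_{S'\times S'}$. The triangle inequality, which is a specialisation of \eqref{eqn:4pointmax} obtained by repeating one of the four labels, guarantees that for every pair $y,z\in S'$ the quantities
\[
g_x(y,z) := \tfrac{1}{2}\bigl(\delta(x,y)+\delta(x,z)-\delta(y,z)\bigr), \qquad t_{y,z} := \tfrac{1}{2}\bigl(\delta(x,y)+\delta(y,z)-\delta(x,z)\bigr)
\]
are non-negative and satisfy $t_{y,z}\in[0,\delta(y,z)]$; hence $t_{y,z}$ defines a candidate attachment point $p_{y,z}$ on the geodesic $[y,z]_{T'}$, at distance $t_{y,z}$ from $y$. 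Attaching $x$ to $T'$ by a new edge of length $g_x(y,z)$ emanating from $p_{y,z}$, with an unlabelled valency-$3$ vertex inserted if $p_{y,z}$ lies in the interior of an edge, realises $\delta(x,y)$ and $\delta(x,z)$; the induction will close provided $p_{y,z}$ and $g_x(y,z)$ are independent of the chosen pair $y,z\in S'$.

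The main obstacle is precisely this compatibility step, and it is where the full strength of \eqref{eqn:4pointmax} enters. For any third element $w\in S'$, apply the 4-point condition to the quartet $\{x,y,z,w\}$: either one of the three sums $\delta(x,y)+\delta(z,w)$, $\delta(x,z)+\delta(y,w)$, $\delta(x,w)+\delta(y,z)$ is the strict minimum, or all three are equal. A short case analysis, combined with the known tree structure of $T'$ restricted to $\{y,z,w\}$, shows that $p_{y,z}$, $p_{y,w}$ and $p_{z,w}$ all coincide with a common point lying on (or at) the Steiner median of $y,z,w$ in $T'$, and that $g_x(y,z)=g_x(y,w)=g_x(z,w)$. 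Letting $w$ range over $S'$ pins down a single attachment point $p\in T'$ and a single edge length, yielding the desired metric $S$-tree and completing the induction.
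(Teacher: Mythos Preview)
The paper does not give a self-contained proof of this proposition; it cites Buneman, Bandelt--Steel, and B\"ocker--Dress, and then sketches only the \emph{uniqueness} idea: an $S$-tree is determined by the shapes of the convex hulls of all quadruples of elements of $S$, and those shapes are read off from which inequalities in the $4$-point and triangle conditions are equalities. Your uniqueness paragraph is in the same spirit, though it is a bit imprecise (not every edge is the central edge of an $H$-shaped quartet --- pendant edges are recovered from triples via the Gromov product, not from quartets --- and you only discuss the $H$- and $X$-shapes, omitting the $Y$, $F$, $C$ shapes arising when labels sit at interior vertices).

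Your existence argument, however, contains a genuine error. The claim that for any triple $y,z,w\in S'$ the three projection points $p_{y,z}$, $p_{y,w}$, $p_{z,w}$ coincide and that $g_x(y,z)=g_x(y,w)=g_x(z,w)$ is false. Take $T'$ to be a tripod with centre $m$ and leaves $y,z,w$, and suppose the correct attachment point for $x$ is a point $p$ strictly inside the leg $[y,m]$. Then $p_{y,z}=p_{y,w}=p$, but $p_{z,w}=m\neq p$, and correspondingly $g_x(z,w)=d(x,m)>d(x,p)=g_x(y,z)$. What the $4$-point condition actually gives is the ultrametric-type inequality $g_x(y,z)\geq\min\{g_x(y,w),g_x(z,w)\}$: among the three Gromov products, the two smallest coincide, but the third may be strictly larger. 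The correct way to close the induction is to set $g:=\min_{y,z\in S'} g_x(y,z)$ and show that there is a unique point $p\in T'$ with $d_{T'}(p,s)=\delta(x,s)-g$ for all $s\in S'$ (equivalently: fix a basepoint $y_0\in S'$, note that the points $p_{y_0,z}$ all lie on geodesics issuing from $y_0$ and that the one closest to $y_0$ works for every $z$). Your ``short case analysis'' needs to be replaced by this minimisation argument.
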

  
  The main idea of the proof of the previous proposition is that an $S$-tree is determined 
  up to isomorphism by the isomorphism types of the convex hulls of all quadruples of 
  elements of $S$, which are in turn determined by the inequalities which are equalities 
  in the $4$-point 
  condition and in the triangle inequalities concerning them. 
  More precisely, given a quadruple $Q \subset S$ (see  \reffig{fivetrees}): 
      \begin{itemize} 
         \item the $H$-shaped and $X$-shaped $Q$-trees 
               are those $Q$-trees for which one has only strict triangle inequalities;  
          % \item 
          among them, the $H$-shaped tree is characterized by the fact that 
             one has a strict inequality in the $4$-point condition \eqref{eqn:4point}, 
             for a convenient labeling of the elements of $Q$ by the letters $a,b,c,d$; 
          \item the $Y$-shaped $Q$-trees are those $Q$-trees such that for 
              exactly one triple of points of $Q$, all the corresponding 
              triangle inequalities are strict;
          \item the $F$-shaped $Q$-trees are those $Q$-trees such that 
             for exactly two triples of points of $Q$, one of the corresponding triangle inequalities 
             is an equality;
          \item the $C$-shaped $Q$-trees are those $Q$-trees such that for 
            all triple of points of $Q$, one of the corresponding triangle inequalities is an 
            equality.
       \end{itemize}

  \refprop{addtree} allows us to define:
  
  \begin{defi}  \label{def:treehull}
      Let $\delta$ be a tree-like metric on a finite set $S$. Then the unique 
      $S$-tree endowed with a length function such that the induced distance 
      on $S$ is equal to $\delta$ is called the {\bf tree hull} of the metric 
      space $(S,\delta)$. 
  \end{defi}

\medskip
\subsection{A theorem about special metrics on the set of vertices of a graph}
\label{ssec:blockvertextree}
$\:$  

\medskip
      Let $X_{\pi}$ be a good model of $X$. Consider the angular distance $\logfunc$ 
      on the vertex set $\Vgr{\dgr {\pi}}= \primes {\pi}$ of the associated dual graph 
      $\dgr {\pi}$.
     In \refprop{MW}, we saw that the cases of equality in the  
     triangle inequalities associated to the metric space $(\Vgr{\dgr {\pi}}, \logfunc)$ 
     are characterized by separation properties in $\dgr {\pi}$.          
     The aim of this section 
     is to prove that if a metric $\delta$ on the set of vertices $\Vgr{\Gamma}$ 
     of a connected graph $\Gamma$ satisfies this 
     kind of constraint, then it becomes tree-like (in the sense of \refdef{treelike}) 
     in restriction to special types of subsets $\cF$ of $\Vgr{\Gamma}$
     (see \refthm{valblocks}). Moreover, the tree hull of $(\cF, \delta)$ 
     (according to \refdef{treehull}) may be described as the convex hull 
     of $\cF$ in a tree canonically associated to the graph $\Gamma$, its \emph{brick-vertex tree}  
     $\bvt{\Gamma}$ (see \refdef{brick-vertex}). 
\bigskip

In the sequel, we will use the following notion of graph:

\begin{defi}   \label{def:graphcell}
   A {\bf graph} $\Gamma$ is a finite cell complex of dimension 
at most $1$. In particular, it may have loops or multiple edges, and it may have 
connected components which are simply points. We will denote 
by $\boxed{\Vgr {\Gamma}}$ its set of vertices and by $\boxed{\Egr {\Gamma}}$ its set of edges. 
The {\bf valency} of a vertex $v$ of $\Gamma$ 
is the number of germs of edges adjacent to $v$ (a loop based at $v$ counting twice, 
as it contributes with two germs in this count). 
\end{defi}

If we want to insist on the graph $\Gamma$ in which 
we compute the valency (in situations where we deal with several graphs at the same time), 
we will speak about the {\bf $\Gamma$-valency} of a vertex $v$. 

It will be important for us to look at the edges of a connected graph 
$\Gamma$ according to their separation properties: 

\begin{defi} \label{def:sepgraph}
    Let $\Gamma$ be a connected graph. 
    A {\bf cut-vertex} of $\Gamma$ is a vertex whose removal disconnects $\Gamma$. 
    A {\bf bridge} of $\Gamma$ is an edge 
    such that the removal of its interior disconnects $\Gamma$.  
      The graph $\Gamma$ is called {\bf  separable} if it admits at least one cut-vertex 
      (see \reffig{sep}). 
    Otherwise, it is called  {\bf nonseparable}. 
  \end{defi}
  
  \begin{figure}
\centering
\begin{tikzpicture} [scale=1.5]

    \draw[very thick](0,0) -- (0.5,0) -- (0.5,1) ;
    \draw[very thick](1,0) -- (0.5,0) ;
       \node[draw,circle,inner sep=1.5pt,fill=black] at (0,0) {};
       \node[draw,circle,inner sep=2.5pt,fill=red] at (0.5,0) {};
       \node[draw,circle,inner sep=1.5pt,fill=black] at (0.5,1) {};
       \node[draw,circle,inner sep=1.5pt,fill=black] at (1,0) {};

     \draw[very thick](2,0) -- (3,0) -- (3,1) -- (2,1) -- (2,0)--(3,1);
     \draw[very thick](3,0) -- (4,0) -- (3.5,1) -- (3,0) -- (2,0);
        \node[draw,circle,inner sep=1.5pt,fill=black] at (2,0) {};
        \node[draw,circle,inner sep=2.5pt,fill=red] at (3,0) {};
        \node[draw,circle,inner sep=1.5pt,fill=black] at (3,1) {};
        \node[draw,circle,inner sep=1.5pt,fill=black] at (2,1) {};
         \node[draw,circle,inner sep=1.5pt,fill=black] at (4,0) {};
        \node[draw,circle,inner sep=1.5pt,fill=black] at (3.5,1) {};
     
     \draw[very thick](6,0.5) -- (6.5,0.5) -- (7,0) ;
     \draw[very thick](6.5,0.5) -- (7,1) ;
     \draw[very thick](5,0) -- (5,1) -- (6, 0.5) -- (5,0) ;
        \node[draw,circle,inner sep=1.5pt,fill=black] at (5,0) {};
        \node[draw,circle,inner sep=1.5pt,fill=black] at (5,1) {};
        \node[draw,circle,inner sep=2.5pt,fill=red] at (6,0.5) {};
        \node[draw,circle,inner sep=2.5pt,fill=red] at (6.5,0.5) {};
        \node[draw,circle,inner sep=1.5pt,fill=black] at (7,0) {};
        \node[draw,circle,inner sep=1.5pt,fill=black] at (7,1) {};
       
    \end{tikzpicture}  
        \caption{A few separable graphs and their cut-vertices marked in red.} 
        \label{fig:sep}
     \end{figure}

    The only nonseparable graphs which are trees are the segments. 
     All the other nonseparable graphs have the property that any two 
    of their edges are contained in a {\bf circuit}, that is, 
    a union of edges whose underlying topological space is homeomorphic to a circle. 
    The trees may be characterized as the connected 
    graphs all of whose edges are bridges. 
       
       Every connected graph contains a distinguished family of nonseparable subgraphs, 
       its \textit{blocks}, among which we distinguish the \textit{bricks}  and the \textit{bridges}:
              
\begin{defi}     \label{def:blocks}   
    The {\bf blocks} of a connected graph $\Gamma$ are its maximal subgraphs 
      which are nonseparable (see \reffig{bricktree}). 
     A block which is equal to an edge of $\Gamma$ is called a {\bf bridge}, 
     otherwise it is called a {\bf brick}. 
\end{defi}

The notions of bridge introduced in Definitions \ref{def:sepgraph} and \ref{def:blocks}   are 
equivalent. 

The blocks of a connected graph $\Gamma$ may be characterized as the unions of edges of each equivalence class for the following equivalence relation on the set $\Egr{\Gamma}$: 
two edges are equivalent if they are either equal or if they are both contained in the same circuit.  
Trees may be characterized as the connected finite graphs which have no bricks.

It is elementary to check that the following construction leads indeed to a tree: 

\begin{defi}  \label{def:brick-vertex}
     The {\bf brick-vertex tree} $\boxed{\bvt{\Gamma}}$ of a connected graph $\Gamma$ 
     is the tree whose  vertex set is the union of the set of 
     bricks of $\Gamma$ and of the set of its vertices. 
     The set of its edges consists of the bridges of $\Gamma$, 
     and of new edges connecting a brick of $\Gamma$ 
     to a vertex of $\Gamma$ (seen as vertices of $\bvt{\Gamma}$)  
     if and only if the brick contains the vertex.
     A vertex of $\bvt{\Gamma}$ associated to a brick of $\Gamma$ 
     will be called a \emph{brick-vertex}. 
         \end{defi}

     If $a$ is a vertex (resp.  if $B$ is a brick) of $\Gamma$,  we will denote by $\overline{a}$  
     (resp.   $\overline{B}$) 
     the vertex of $\bvt{\Gamma}$ defined by it. If $e= \{ a, b \}$ is a bridge of $\Gamma$, 
     then $\overline{e}
     = \{ \overline{a}, \overline{b} \}$ is also a bridge of $\bvt{\Gamma}$.
     Similarly, if $\cF$ is a set of vertices 
     of $\Gamma$, we denote by $\overline{\cF}$ the same set seen as a set of 
     vertices of $\bvt{\Gamma}$.

     \medskip 

Examples of planar brick-vertex trees are shown in \reffigs{bricktree}{biggraph}.
The bricks are emphasized by shading the plane regions spanned by their vertices and edges.

\begin{figure}[ht]
\centering
\def\svgwidth{0.4 \columnwidth}
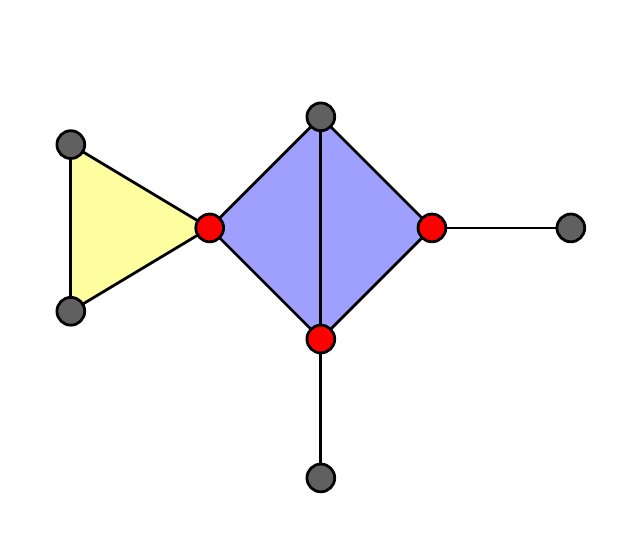
\hspace{1cm}
\def\svgwidth{0.4 \columnwidth}
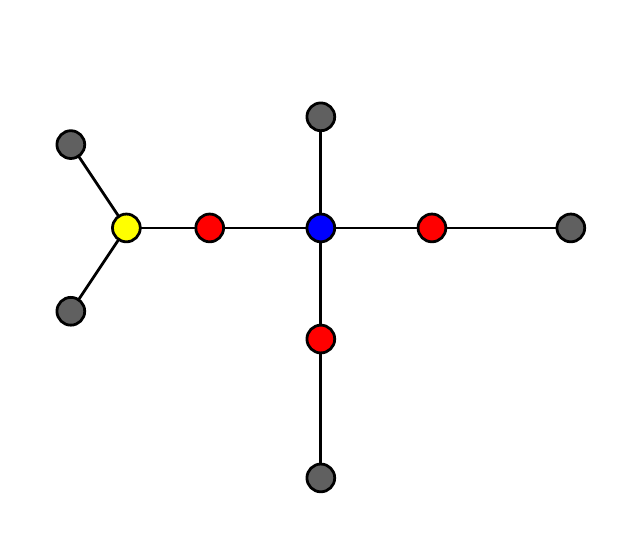
\caption{The brick-vertex tree of a connected graph.}\label{fig:bricktree}
\end{figure}

\begin{rmk}   \label{rmk:prevblocktree}
    Whitney introduced the blocks of a finite graph in his 1932 paper \cite{whitney:1932}, 
    under the name of  \textit{components}. His definition was slightly different: the blocks were the 
    final graphs (necessarily inseparable) of a process which chooses at each step a cut-vertex 
    of the graph and decomposes the connected component which contains it into the 
    connected subgraphs which are joined at that vertex. The term \textit{block} seems to have 
    been introduced for this concept in Harary's 1959 paper \cite{harary:1959}. In Tutte's 1966 book 
    \cite{tutte:1966}, the blocks are called \textit{cyclic elements}, 
    a term originating from general topology (see \refrmk{cyclelemtheory}). The use of the term 
    \textit{brick} for the blocks which are not bridges seems to be new. 
   A construction related to the brick-vertex tree is known under the name of 
   \textit{cut-tree} (see Tutte's book \cite[Section 9.5]{tutte:1966}),  \textit{block-cut tree} 
   (see Harary's book \cite[Page 36]{harary:1969}), or  \textit{block tree} 
   (see Bondy and Murty's book \cite[Section 5.2]{bondy-murty:2008}). 
   In that construction, which was introduced by Gallai \cite{gallai:1964} and 
   Harary and Prins \cite{harary-prins:1966}, one considers only the set of cut-vertices 
   of $\Gamma$, instead of the full set of vertices, and all the 
   blocks, not only the bricks.  Later on, Kulli \cite{kulli:1976} 
   introduced the \textit{block-point tree} of a connected graph, in which one still 
   considers all the blocks, but also all the vertices, not only the cut-vertices.
\end{rmk}

The following proposition, which uses the notations explained after \refdef{brick-vertex}, 
is the reason why we introduced the notion of brick-vertex tree:

\begin{prop}  \label{prop:equivsep}
       Let $a, b,c$ be three not necessarily pairwise distinct vertices of 
       the connected graph $\Gamma$. Then the following properties are 
       equivalent: 
         \begin{enumerate}
           \item $a$ separates $b$ from $c$ in the graph $\Gamma$; 
           \item  $\overline{a}$ separates $\overline{b}$ from $\overline{c}$ 
               in the brick-vertex tree $\bvt{\Gamma}$. 
         \end{enumerate}
\end{prop}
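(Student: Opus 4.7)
The plan is to prove the equivalence by contraposition in both directions, exploiting the fact that $\bvt{\Gamma}$ is a tree, so $\overline{a}$ separates $\overline{b}$ from $\overline{c}$ if and only if $\overline{a}$ lies on the unique path in $\bvt{\Gamma}$ joining $\overline{b}$ to $\overline{c}$. First I would dispose of the trivial case $a \in \{b,c\}$, where both conditions hold by definition; so one may assume $a,b,c$ pairwise distinct.

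The core observation is a dictionary between walks in $\Gamma$ and walks in $\bvt{\Gamma}$. Each edge of $\bvt{\Gamma}$ is either a bridge of $\Gamma$ (linking two $\Gamma$-vertices) or an edge linking a $\Gamma$-vertex to a brick-vertex, so any walk in $\bvt{\Gamma}$ between $\overline{b}$ and $\overline{c}$ determines an alternating sequence of $\Gamma$-vertices $b=v_0,v_1,\ldots,v_k=c$, where between consecutive $v_{i-1}$ and $v_i$ one either uses a bridge $\{v_{i-1},v_i\}$ of $\Gamma$ directly, or passes through a single intermediate brick-vertex $\overline{B_i}$ with $v_{i-1},v_i \in B_i$.

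For $(\neg 1) \Rightarrow (\neg 2)$, given a path in $\Gamma$ from $b$ to $c$ avoiding $a$, I would cut it along the blocks it traverses. Each traversal of a brick $B$, entering at some vertex $v_{\text{in}}$ and exiting at some vertex $v_{\text{out}}$, is replaced by the length-two walk $\overline{v_{\text{in}}},\overline{B},\overline{v_{\text{out}}}$ in $\bvt{\Gamma}$; each bridge traversal is kept as the corresponding edge of $\bvt{\Gamma}$. Concatenating, one obtains a walk in $\bvt{\Gamma}$ from $\overline{b}$ to $\overline{c}$ that avoids $\overline{a}$: the brick-vertices visited are distinct from the $\Gamma$-vertex $\overline{a}$, and the $\Gamma$-vertices visited lie on the original path, hence are different from $a$. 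For the converse $(\neg 2) \Rightarrow (\neg 1)$, take the unique path in $\bvt{\Gamma}$ from $\overline{b}$ to $\overline{c}$, which does not contain $\overline{a}$, decompose it as above, and replace each piece $\overline{v_{i-1}},\overline{B_i},\overline{v_i}$ by a path in $\Gamma$ from $v_{i-1}$ to $v_i$ lying inside $B_i$, while keeping bridge steps as they are.

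The main obstacle is ensuring in this last step that the inner paths inside each brick $B_i$ can be chosen to avoid $a$: if $a \notin B_i$ any path works, and if $a \in B_i$ one uses the nonseparability of $B_i$, which by \refdef{sepgraph} means that $a$ is not a cut-vertex of $B_i$, so $B_i \setminus \{a\}$ is still connected; since $v_{i-1},v_i \in B_i \setminus \{a\}$ (because the path in $\bvt{\Gamma}$ avoids $\overline{a}$), a suitable inner path exists. The resulting walk in $\Gamma$ from $b$ to $c$ avoids $a$, which is the required conclusion. Apart from this key use of nonseparability, the proof reduces to careful bookkeeping of how walks correspond between the two graphs.
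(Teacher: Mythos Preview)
Your proposal is correct and follows essentially the same approach as the paper: both argue by contraposition, building a dictionary between walks in $\Gamma$ and walks in $\bvt{\Gamma}$, and the key point in the direction $(\neg 2)\Rightarrow(\neg 1)$ is exactly that inside a brick one can route around $a$ (the paper phrases this via two internally disjoint paths, you via nonseparability, which is the same thing). The only cosmetic omission is the trivial case $b=c\neq a$, which the paper mentions explicitly but which your argument covers implicitly via the length-zero walk.
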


\begin{proof}
First notice that if $b=c \neq a$, then $a$ does not separate $b$ and $c$ neither in 
$\Gamma$ nor in $\bvt{\Gamma}$, while if $a$ coincides with either $b$ or $c$, 
then it separates $b$ from $c$ both in $\Gamma$ and $\bvt{\Gamma}$ (see 
\refdef{separ}).
Hence, we may suppose that $a,b,c$ are pairwise distinct.

\medskip
\noindent
$\bullet$ {\bf Suppose first that $a$ does not separate $b$ from $c$ in $\Gamma$.}  
Therefore, there exists a path 
$\gamma$ joining $b$ and $c$ in $\Gamma \setminus \{a\}$.
Decompose $\gamma$ in a finite sequence of concatenating edges $e_j$ with 
endpoints $v_{j-1},v_{j}$ for $j=1, \ldots, n$, with $v_0=b$, $v_n = c$, and $v_j \neq a$ for all $j$.
We construct a path $\tilde{\gamma}$ joining $\overline{b}$ and $\overline{c}$ 
in $\bvt{\Gamma} \setminus \{a\}$ as follows:
  \begin{itemize} 
      \item[--] If $v_{j-1}$ and $v_j$ belong to a brick $B$, then we replace the edge $e_j$ with 
          the concatenation of the two edges $\{\overline{v_{j-1}}, \overline{B} \}$, $\{\overline{B},\overline{v_{j}}\}$ 
          of $\bvt{\Gamma}$.
      \item[--] If the edge $e_j$ connecting $v_{j-1}$ and $v_j$ is a bridge, then we 
      consider the associated edge $\overline{e_j} = \{\overline{v_{j-1}},\overline{v_j}\}$ of $\bvt{\Gamma}$. 
   \end{itemize}

\medskip
\noindent
$\bullet$ {\bf Suppose now that $\overline{a}$ does not separate $\overline{b}$ from 
$\overline{c}$ in $\bvt{\Gamma}$. } 
Therefore, there exists a path $\tilde{\gamma}$ joining $\overline{b}$ and $\overline{c}$ in 
$\bvt{\Gamma} \setminus \{\overline{a}\}$. Denote by $\epsilon_j=\{w_{j-1}, w_j\}$ 
the sequence of edges of $\tilde{\gamma}$   
(notice that, as $\bvt{\Gamma}$ is a \textit{tree},  the edges are determined by their extremities). 
Therefore, every vertex $w_j$ of this path corresponds to a vertex or to a brick of $\Gamma$. 
We construct a path $\gamma$ joining $b$ and $c$ in $\Gamma \setminus \{a\}$ as follows.  
The endpoints of every edge $\epsilon_j$ of $\tilde{\gamma}$ either correspond simultaneously  to 
vertices of $\Gamma$, or one corresponds to a vertex and the other to a brick of $\Gamma$.
In the first case, we define $e_j$ to be the unique bridge of $\Gamma$ which projects to 
$\epsilon_j$. 
In the second case, since the vertices $\overline{b}$ and $\overline{c}$ of $\bvt{\Gamma}$ 
correspond to vertices of $\Gamma$ we can assume, up to replacing $j$ by $j+1$ if necessary, that 
$w_{j-1} = \overline{v_{j-1}}$ and $w_{j+1} = \overline{v_{j+1}} $ correspond to vertices $v_{j-1}$ and $v_{j+1}$ of $\Gamma$, and that 
$w_j = \overline{B}$ corresponds to the unique brick $B$ containing them.
Notice that $a$ could be a vertex of $B$. Since $v_{j-1}$ and $v_{j+1}$ belong to $B$, 
there exist two paths of $\Gamma$ inside the brick $B$ joining $v_{j-1}$ to $v_{j+1}$, 
which intersect only at their endpoints. Therefore, at least one of them does not pass through $a$.
We define then $\gamma_{j-1, j+1}$ to be such a path avoiding $a$ and contained 
inside the brick $B$ of $\Gamma$. Finally, the path $\gamma$ of $\Gamma$ 
obtained as the union of all the  
previous elementary paths $e_j$ and $\gamma_{j-1, j+1}$  
joins indeed $b$ and $c$ without passing through $a$.
\end{proof}

\begin{rmk}\label{rmk:bridges_are_not_blocks}
    \refprop{equivsep} holds also if we replace the brick-vertex tree by Kulli's block-point tree 
    (see \refrmk{prevblocktree} for its definition), 
    the proof being completely analogous.
    In fact, we could work in this first part of the paper with the block-point tree of $\Gamma$.
    We chose to work with \refdef{brick-vertex} since it has the advantage 
    of extending directly to graphs of $\nR$-trees (see \refssec{brickrtree}). 
     Notice that for a tree $\Gamma$, its brick-vertex tree coincides with $\Gamma$, 
     while its block-point tree is isomorphic to the barycentric subdivision of $\Gamma$. 
\end{rmk}

By \refprop{equivsep}, the brick-vertex tree of $\Gamma$ encodes precisely the way in which 
the vertices of $\Gamma$ get separated by the elimination of one of them. 

Recall the reformulation of \refprop{crucial} 
given in  \refprop{MW} \ref{distform}. It states 
that if one looks at the angular distance $\logfunc$ on the vertex set $\Vgr{\dgr{\pi}}$ 
of the dual graph $\dgr{\pi}$ of a good model $X_{\pi}$ of $X$, then one has an equality 
$\logfunc(u,v) + \logfunc(v,w) = \logfunc(u,w)$  in the triangular inequality associated 
to the triple $(u,v,w)$ of vertices of  $\dgr{\pi}$ if and only if $v$ separates $u$ from 
$w$ in $\dgr{\pi}$. The following theorem, which is the main result of this section, 
describes special subsets of vertices of the graphs endowed with metrics having the 
same formal property (recall that the convex hull of a finite set of vertices of 
a tree was introduced in \refdef{labtree}):

\begin{thm}  \label{thm:valblocks}
   Let $\Gamma$ be a finite connected graph and $\delta : \Vgr{\Gamma}^2 \to [0, \infty)$ 
   a metric such that one has the equality:
   \begin{equation}\label{eqn:additive_graph_metric}
\delta(a,b) + \delta(b,c) = \delta(a,c)
   \end{equation}
   if and only if the vertex $b$ separates $a$ from $c$ in $\Gamma$. 
   Consider a set $\cF$ of vertices of $\Gamma$, and their convex hull $\cvxh{\overline{\cF}}$  
   in the brick-vertex tree $\bvt{\Gamma}$ of $\Gamma$.  
   If each brick of $\Gamma$ has $\cvxh{\overline{\cF}}$-valency at 
   most $3$, then the restriction of $\delta$ to $\cF$ is tree-like  
   and its tree hull (see \refdef{treehull}) is isomorphic as an $\cF$-tree to 
   $\cvxh{\overline{\cF}}$. 
\end{thm}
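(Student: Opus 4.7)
The plan is to endow $\cvxh{\overline{\cF}}$ with a length function $\ell$ so that the metric it induces on $\cF$ coincides with $\delta|_\cF$; the $4$-point condition for $\delta|_\cF$ will then be automatic, and by the uniqueness part of \refprop{addtree} the tree hull of $(\cF,\delta|_\cF)$ will be identified with $\cvxh{\overline{\cF}}$ (after suppressing any valency-$2$ vertices not in $\cF$ so as to obtain an honest $\cF$-tree in the sense of \refdef{labtree}).

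I would construct $\ell$ edge-by-edge using the classification of edges of $\bvt{\Gamma}$. Every edge of $\cvxh{\overline{\cF}}$ is either the image $\overline{e}$ of a bridge $e=\{v,w\}$ of $\Gamma$, in which case I set $\ell(\overline{e}) := \delta(v,w)$, or of the form $\{\overline{B},\overline{v}\}$ with $B$ a brick and $v \in B$. For the latter type, I group the edges incident to each brick vertex $\overline{B}$: since $\overline{B}$ is never a leaf of $\cvxh{\overline{\cF}}$ and has $\cvxh{\overline{\cF}}$-valency at most $3$ by hypothesis, its valency is either $2$ or $3$. If the neighbors are $\overline{w_1},\dots,\overline{w_k}$, I define $\ell_i := \ell(\{\overline{B},\overline{w_i}\})$ by the system $\ell_i + \ell_j = \delta(w_i,w_j)$ for all $i \neq j$. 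For $k=2$ any positive split works; for $k=3$ Cramer's rule yields the unique solution $\ell_i = \tfrac{1}{2}(\delta(w_i,w_j)+\delta(w_i,w_k)-\delta(w_j,w_k))$.

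The main obstacle will be positivity of these $\ell_i$ when $k=3$, which is precisely where the valency hypothesis bites: for $k \geq 4$ the system would be overdetermined and generically have no solution. Positivity amounts to the strict triangle inequality among the three values $\delta(w_i,w_j)$, and the key claim supporting this is: no vertex $v \notin \{w_i,w_j\}$ of $\Gamma$ separates two distinct vertices $w_i,w_j$ of the same brick $B$. Indeed, if $v \notin B$ then $B \subseteq \Gamma \setminus \{v\}$ still connects $w_i$ to $w_j$, while if $v \in B \setminus \{w_i,w_j\}$ the nonseparability of $B$ ensures that $B \setminus \{v\}$ is connected; either way $w_i$ and $w_j$ lie in the same component of $\Gamma \setminus \{v\}$. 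By the defining property \eqref{eqn:additive_graph_metric} of $\delta$, no corresponding triangle equality can occur, so the relevant triangle inequalities are strict.

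It will then remain to verify that the $\ell$-length of the path from $\overline{a}$ to $\overline{b}$ in $\cvxh{\overline{\cF}}$ equals $\delta(a,b)$ for all $a,b \in \cF$. I would list the graph-vertices along this path in order as $a = v_0, v_1, \dots, v_n = b$: each interior $v_j$ corresponds to an interior vertex $\overline{v_j}$ of the tree-path, hence by \refprop{equivsep} separates $a$ from $b$ in $\Gamma$. Applying \eqref{eqn:additive_graph_metric} inductively then gives $\delta(a,b) = \sum_{j=1}^n \delta(v_{j-1},v_j)$, and each summand matches the $\ell$-length of the corresponding sub-path by construction, either directly in the bridge case or via the defining system at the intervening brick vertex. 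This will finish the verification and complete the plan.
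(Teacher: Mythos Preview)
Your proof is correct and takes a genuinely different route from the paper's. The paper argues by fixing four points $a,b,c,d \in \cF$, looking at the shape of $\cvxh{\overline{a},\overline{b},\overline{c},\overline{d}}$ inside $\bvt{\Gamma}$ (it falls into one of the five types of \reffig{fivetrees}), and in each case verifying the $4$-point condition \eqref{eqn:4point} directly from the separation data, using \refprop{equivsep} to translate separation in $\bvt{\Gamma}$ into equalities or strict inequalities for $\delta$. Only afterwards is \refprop{addtree} invoked to identify the tree hull.

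Your argument instead builds the metric $\cF$-tree in one stroke: you assign lengths to the edges of $\cvxh{\overline{\cF}}$ and check that the induced metric on $\cF$ is $\delta|_{\cF}$. This is more constructive, and it makes the valency-$\leq 3$ hypothesis completely transparent: it is exactly the condition under which the local linear system $\ell_i+\ell_j=\delta(w_i,w_j)$ at each brick vertex is solvable (for $k\leq 3$ it is; for $k\geq 4$ it is overdetermined). The positivity step is where the ``only if'' direction of the hypothesis \eqref{eqn:additive_graph_metric} enters, via nonseparability of bricks, and the telescoping step uses the ``if'' direction together with \refprop{equivsep}, just as the paper does. What the paper's case-by-case analysis buys, on the other hand, is an explicit record of which quadruples give equality versus strict inequality in \eqref{eqn:4point}; this matches the shape dictionary spelled out after \refprop{addtree} and is useful for seeing directly how the combinatorics of $\bvt{\Gamma}$ controls the tree hull.
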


\begin{proof}
   Assume that $\cF \subset \Vgr{\Gamma}$ satisfies the hypotheses of the theorem. 
  Consider four pairwise 
  distinct points $a,b,c,d \in \cF$ and the convex hull 
    $\cvxh{\overline{a}, \overline{b}, \overline{c}, \overline{d}}$ 
    of their images in the brick-vertex  tree $\bvt{\Gamma}$.

    We will consider several cases, according to the shape of this convex hull.
   In every case we will prove that in restriction to $\{a,b,c,d\}$, 
   the metric $\delta$  satisfies the 4-point condition and 
    that the shape of $\cvxh{\overline{a}, \overline{b}, \overline{c}, \overline{d}}$ 
    is determined by the cases of equality in the $4$-point conditions and in 
    the triangle inequalities associated to the four triples of points 
    among $a,b,c$ and $d$ (see the explanations following 
    \refprop{addtree}). Then,  thanks to  \refprop{addtree}, we conclude 
     that the tree hull of $(\{a,b,c,d\}, \delta)$ in the sense of \refdef{treehull} 
     is indeed isomorphic as a $\{a,b,c,d\}$-tree to the convex hull 
   $\cvxh{\overline{a}, \overline{b}, \overline{c}, \overline{d}}$, 
   finishing the proof of the proposition.

   \medskip
   
  \noindent
  $\bullet$  {\bf Assume that $\cvxh{\overline{a}, \overline{b}, \overline{c}, \overline{d}}$   is 
           H-shaped.} 
           Denote by $\mu$ and $\nu$ the two $3$-valent vertices of 
 $\cvxh{\overline{a}, \overline{b}, \overline{c}, \overline{d}}$. We may assume, up to renaming 
 the four points, that $\mu$ and $\nu$ separate $\overline{a}$ and $\overline{b}$ from 
 $\overline{c}$ and $\overline{d}$, as illustrated in \reffig{Hshaped}.  
     We claim that there exists then a cut-vertex $p$ of $\Gamma$ with the following properties:
     \begin{enumerate}
         \item[(a)] $p$ separates both $a$ and $b$ from both $c$ and $d$; 
         \item[(b)] either $p$ does not separate $a$ from $b$ or it does not 
              separate $c$ from $d$. 
     \end{enumerate}

     \begin{figure}
\centering
\begin{tikzpicture} [scale=0.7]

    \draw[very thick](0,0) -- (-1,1) ;
    \draw[very thick] (0,0) -- (-1,-1) ; 
     \draw[very thick](0,0) -- (2,0) ;
    \draw[very thick](2,0) -- (3,1) ; 
     \draw[very thick](2,0) -- (3,-1) ; 
    
       \draw (-1,1) node[above]{$\overline{a}$} ;
       \draw (-1,-1) node[below]{$\overline{b}$} ;
       \draw (3, 1) node[above]{$\overline{c}$} ;
       \draw (3,-1) node[below]{$\overline{d}$} ;
       \draw (0, 0.2) node[above]{$\mu$} ;
       \draw (2, 0.2) node[above]{$\nu$} ;

       \node[draw,circle,inner sep=1pt,fill=black] at (0,0) {};
       \node[draw,circle,inner sep=1pt,fill=black] at (2,0) {};
       \node[draw,circle,inner sep=1.5pt,fill=red] at (-1,1) {};
       \node[draw,circle,inner sep=1.5pt,fill=red] at (-1,-1) {};
       \node[draw,circle,inner sep=1.5pt,fill=red] at (3,1) {};
       \node[draw,circle,inner sep=1.5pt,fill=red] at (3,-1) {};
    \end{tikzpicture}  
    \caption{The case of an $H$-shaped tree  in the proof of \refthm{valblocks}.} 
    \label{fig:Hshaped}
     \end{figure}

     In order to prove this, let us consider two cases: 
     \medskip

       \noindent 
       (i) {\em One of the points $\mu$ and $\nu$ of $\bvt{\Gamma}$ is a cut-vertex 
              of $\Gamma$.}
              % 
             % \noindent
            Assume for instance that $\mu = \overline{p}$, where $\overline{p}$ is a cut-vertex 
            of $\bvt{\Gamma}$. The convex hull 
            $\cvxh{\overline{a}, \overline{b}, \overline{c}, \overline{d}}$ 
            having the shape illustrated in  \reffig{Hshaped}, we see that $p$ has the 
            announced properties. 
            \medskip

        %  \item 
        \noindent
        (ii) {\em Both points $\mu$ and $\nu$ of $\bvt{\Gamma}$ are bricks of $\Gamma$.} 
             By construction, all edges of $\bvt{\Gamma}$ join either two vertices 
             coming from $\Gamma$, or a brick-vertex with a vertex coming from $\Gamma$. 
             We deduce that there exists necessarily a cut-vertex 
             $\overline{p}$ in the interior of the 
             geodesic $[\mu \nu]$ of $\bvt{\Gamma}$. Again, since the convex hull 
            $\cvxh{\overline{a}, \overline{b}, \overline{c}, \overline{d}}$ 
            has  the shape illustrated in \reffig{Hshaped}, we see that $p$ has the 
            announced properties. 
    \medskip

     Using the fact 
     that $p$ satisfies properties (a) and (b) above, the hypothesis that 
     $\delta$ is a distance on $\Vgr{\Gamma}$  and 
     % as well as  
     the characterization of the equality 
     in the triangle inequality, we get: 
     $$\begin{array}{c}
            \delta(a,b)  + \delta(c,d)     <  
            %  \\
         %<
          (\delta(a,p) + \delta (b,p) ) + (\delta(c,p) + \delta(d,p)) = 
           \\
             % =
            (\delta(a,p) + \delta (c,p) ) + (\delta(b,p) + \delta(d,p))   =   
            %\\
            % = 
            \delta(a,c) + \delta (b,d) =
             \\
            %=
             (\delta(a,p) + \delta (d,p) ) + (\delta(b,p) + \delta(c,p))  =  
            %\\
             % = 
              \delta (a,d) + \delta(b,c).
          \end{array}$$
   This shows that $\delta$ satisfies the $4$-point condition in restriction to 
   $\{a,b,c,d\}$, and that one has a strict inequality in this condition.  
  In addition, one has by \refprop{equivsep} and the hypothesis
    that there is no equality among the $4$ triangle 
   inequalities concerning triples of points among $a,b,c,d$.

      \medskip
\noindent
      $\bullet$ {\bf Assume that $\cvxh{\overline{a}, \overline{b}, \overline{c}, \overline{d}}$   is 
           X-shaped.}   
           Denote by $\mu$ the unique point of this graph which is of valency $4$ 
           (see Figure \ref{fig:Xshaped}). 
           By hypothesis, no brick of $\cvxh{\cF}$ is of valency $\geq 4$. Therefore, 
           $\mu = \overline{p}$, where $p$ is a separating vertex of $\Gamma$.  
           Moreover, $p$ separates pairwise the points $a,b,c,d$. Therefore: 
               $$\begin{array}{c} 
            \delta(a,b)  + \delta(c,d)   =  
            %  \\
            % = 
            (\delta(a,p) + \delta (b,p) ) + (\delta(c,p) + \delta(d,p)) = \\
            % =
             (\delta(a,p) + \delta (c,p) ) + (\delta(b,p) + \delta(d,p)) =  
            %  \\
            % = 
            \delta(a,c) + \delta (b,d) = 
             \\
             % = 
            (\delta(a,p) + \delta (d,p) ) + (\delta(b,p) + \delta(c,p)) = 
            % \\
            % =  
            \delta (a,d) + \delta(b,c).
          \end{array}$$
           This shows again that $\delta$ satisfies the $4$-point relation in restriction to 
   $\{a,b,c,d\}$. As in the previous case, one has no equality among the $4$ triangle 
   inequalities concerning triples of points among $a,b,c,d$.

      \begin{figure}
         \centering
         \begin{tikzpicture} [scale=0.6]

    \draw[very thick](0,0) -- (-1,1) ;
    \draw[very thick] (0,0) -- (-1,-1) ; 
    \draw[very thick](0,0) -- (1,1) ; 
     \draw[very thick](0,0) -- (1,-1) ; 
    
       \draw (-1,1) node[above]{$\overline{a}$} ;
       \draw (-1,-1) node[below]{$\overline{b}$} ;
       \draw (1, 1) node[above]{$\overline{c}$} ;
       \draw (1,-1) node[below]{$\overline{d}$} ;
       \draw (0, 0.2) node[above]{$\mu$} ;

       \node[draw,circle,inner sep=1pt,fill=black] at (0,0) {};
        \node[draw,circle,inner sep=1.5pt,fill=red] at (-1,1) {};
       \node[draw,circle,inner sep=1.5pt,fill=red] at (-1,-1) {};
       \node[draw,circle,inner sep=1.5pt,fill=red] at (1,1) {};
       \node[draw,circle,inner sep=1.5pt,fill=red] at (1,-1) {};
    \end{tikzpicture}  
    \caption{The case of an $X$-shaped tree in the proof of  \refthm{valblocks}.} 
    \label{fig:Xshaped}
     \end{figure}

     \medskip

     In the remaining cases we assume that $\bar{a}$, $\bar{b}$, $\bar{c}$ and $\bar{d}$ are as in 
     \reffig{YFC}.

  \begin{figure}
\centering
\begin{tikzpicture} [scale=0.6]

       \draw[very thick](7,-2) -- (7,0) ;
     \draw[very thick](7,0) -- (6,2) ;
     \draw[very thick](7,0) -- (8,2) ; 
         \node[draw,circle,inner sep=2pt,fill=red] at (7,-2) {};
       \node[draw,circle,inner sep=2pt,fill=red] at (7,0) {};
       \node[draw,circle,inner sep=2pt,fill=red] at (6,2) {};
       \node[draw,circle,inner sep=2pt,fill=red] at (8,2) {};
       
       \draw (6,2) node[above]{$\overline{a}$} ;
       \draw (8,2) node[above]{$\overline{b}$} ;
       \draw (7, 0) node[right]{$\overline{d}$} ;
       \draw (7,-2) node[right]{$\overline{c}$} ;

           \draw   (7,-2.5) node[below] {Y-shaped};
       
        \draw[very thick](9,-2) -- (9,2) ;
     \draw[very thick](9,2) -- (11,2) ;
     \draw[very thick](9,0) -- (11,0) ; 
         \node[draw,circle,inner sep=2pt,fill=red] at (9,-2) {};
       \node[draw,circle,inner sep=2pt,fill=red] at (9,2) {};
       \node[draw,circle,inner sep=2pt,fill=red] at (11,2) {};
       \node[draw,circle,inner sep=2pt,fill=red] at (11,0) {};
       
       \draw (11,0) node[above]{$\overline{b}$} ;
       \draw (11,2) node[above]{$\overline{d}$} ;
       \draw (9, 2) node[above]{$\overline{c}$} ;
       \draw (9,-2) node[right]{$\overline{a}$} ;

           \draw   (10,-2.5) node[below] {F-shaped};
       
        \draw[very thick](12,-2) -- (12,2) ;
     \draw[very thick](12,-2) -- (14,-2) ;
     \draw[very thick](12,2) -- (14,2) ; 
         \node[draw,circle,inner sep=2pt,fill=red] at (12,-2) {};
       \node[draw,circle,inner sep=2pt,fill=red] at (12,2) {};
       \node[draw,circle,inner sep=2pt,fill=red] at (14,-2) {};
       \node[draw,circle,inner sep=2pt,fill=red] at (14,2) {};

       \draw (14,-2) node[right]{$\overline{d}$} ;
       \draw (14,2) node[above]{$\overline{a}$} ;
       \draw (12, 2) node[above]{$\overline{b}$} ;
       \draw (12,-2) node[left]{$\overline{c}$} ;

           \draw   (13,-2.5) node[below] {C-shaped};
    
           \end{tikzpicture}  
        \caption{The $Y$-shaped, $F$-shaped and $C$-shaped trees in the proof of     
                \refthm{valblocks}.} 
        \label{fig:YFC}
     \end{figure}

             \medskip
             \noindent
      $\bullet$
       \textbf{Assume that $\cvxh{\overline{a}, \overline{b}, \overline{c}, \overline{d}}$   is 
           Y-shaped}. 
        By  \refprop{equivsep}, the point $d$ separates simultaneously $a$ from $b$,  
        $b$ from $c$ and $a$ from $c$. 
        Using this fact and the hypotheses of the theorem, we get that: 
         \[
        \delta (a, b) + \delta (c, d) =   \delta (a, c) + \delta (b, d)=  \delta (a, d) + \delta (b, c)  = \delta (a, d) + \delta (b,d) + \delta (c,d).  
        \]
          Thus the 4-point condition \eqref{eqn:4point}
   is verified with equalities in this case. Reasoning as in the previous cases,  one gets 
   that the only equalities among the triangle inequalities are of the form 
    $\delta(x,y) = \delta(x,d) + \delta(d,y)$ for $x, y \in \{a, b, c\}$, $x\ne y$. 

\medskip 
\noindent
$\bullet$
 \textbf{Assume that $\cvxh{\overline{a}, \overline{b}, \overline{c}, \overline{d}}$   is 
           F-shaped.}  
  By  \refprop{equivsep}, we have that neither $c$ nor $d$ separates $a$ from $b$ 
  but $c$ separates $b$ from $d$ and also $c$ separates $a$ from $d$. We obtain the following 
  triangle (in)equalities:
  \[ 
  \begin{array}{c}
   \delta (a,b) < \delta(a, c) + \delta(b,c), \quad  
  % \mbox{and }
   \quad  \delta (a,b) < \delta(a, d) + \delta(b,d); 
  \\
   \delta (b,d) = \delta(b, c) + \delta(c,d), \quad
   % \mbox{ and}
     \quad \delta (a,d) = \delta(a, c) + \delta(c,d).
  \end{array}
  \] 
It is immediate to see from these relations that the four point condition 
\eqref{eqn:4point} holds with a strict inequality, where 
the right hand side of \eqref{eqn:4point}  is equal to $\delta (a,c) + \delta(b,c) + \delta (c,d) $.

  \medskip
  \noindent
  $\bullet$          
 \textbf{Assume that $\cvxh{\overline{a}, \overline{b}, \overline{c}, \overline{d}}$   is 
           C-shaped.}  
    By \refprop{equivsep}, we have that
  $b$  separates $a$ from $d$, that $b$ separates $a$ from $c$ and that 
  $c$ separates $b$ from $d$.  The triangle inequalities become equalities in this case:         
  \[
   \delta (a,d) = \delta(a, b) + \delta(b,d),
\quad 
   \delta (a,c) = \delta(a, b) + \delta(b,c)
 \quad \mbox{and} \quad 
   \delta (b,d) = \delta(b, c) + \delta(c,d).
   \]
     It follows that  the 4-point condition \eqref{eqn:4point} holds with a strict inequality, where 
    the right hand side  \eqref{eqn:4point}  is equal to $\delta(a, b) + 2 \delta(b,c) + \delta(c,d)$.           
\end{proof}

\begin{ex}   \label{ex:lightgreen}
Consider \reffig{biggraph}. In the {\em left picture}, we have a graph $\Gamma$. 
Here $\cF=\{a_1, \ldots, a_{13}\}$ is depicted in light-green.  In this example, 
all the vertices in $\cF$ are of valency $1$ (which is not a hypothesis of \refthm{valblocks}). 
The cut vertices are in red. Shaded areas correspond to bricks.
Dark-green shaded edges represent some of the bridges (the one whose endpoints are both cut points).
In the {\em right picture}, we have represented the brick-vertex tree $\bvt{\Gamma}$. 
The light-green shaded subgraph is the set $\cvxh{\cF} \subset \bvt{\Gamma}$.
Notice that there are $4$ brick-vertices of $\bvt{\Gamma}$ which have valency at least $4$ 
($3$ of them have valency $4$ and one of them has valency $5$).
But at those vertices the convex hull $\cvxh{\bvt{\cF}}$ has only valency $3$.
This convex hull has also two points of valency $4$, but both of them are cut-vertices. 
    Therefore, we have here a situation in which is satisfied the hypothesis of 
    \refthm{valblocks} that each brick of $\Gamma$ has $\cvxh{\overline{\cF}}$-valency at most $3$. 
\end{ex}

\begin{figure}[ht]
\centering
\def\svgwidth{0.4 \columnwidth}
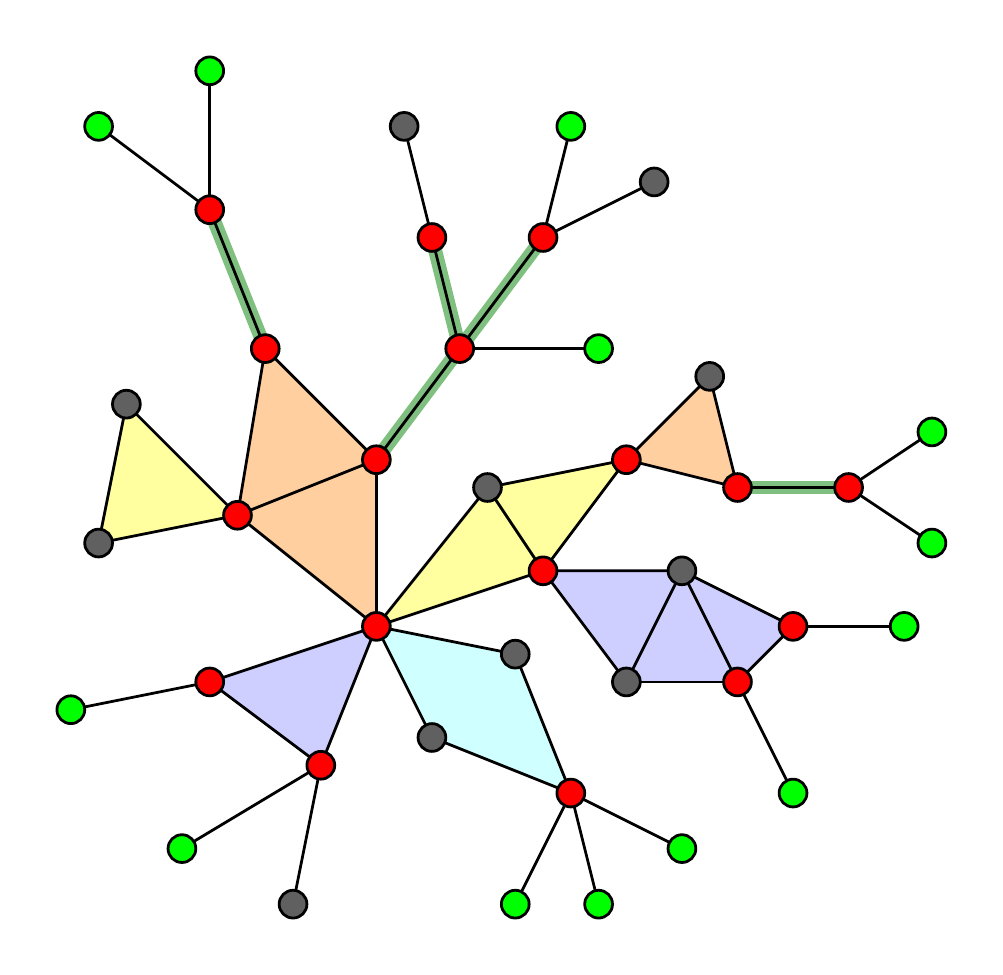
\def\svgwidth{0.4 \columnwidth}
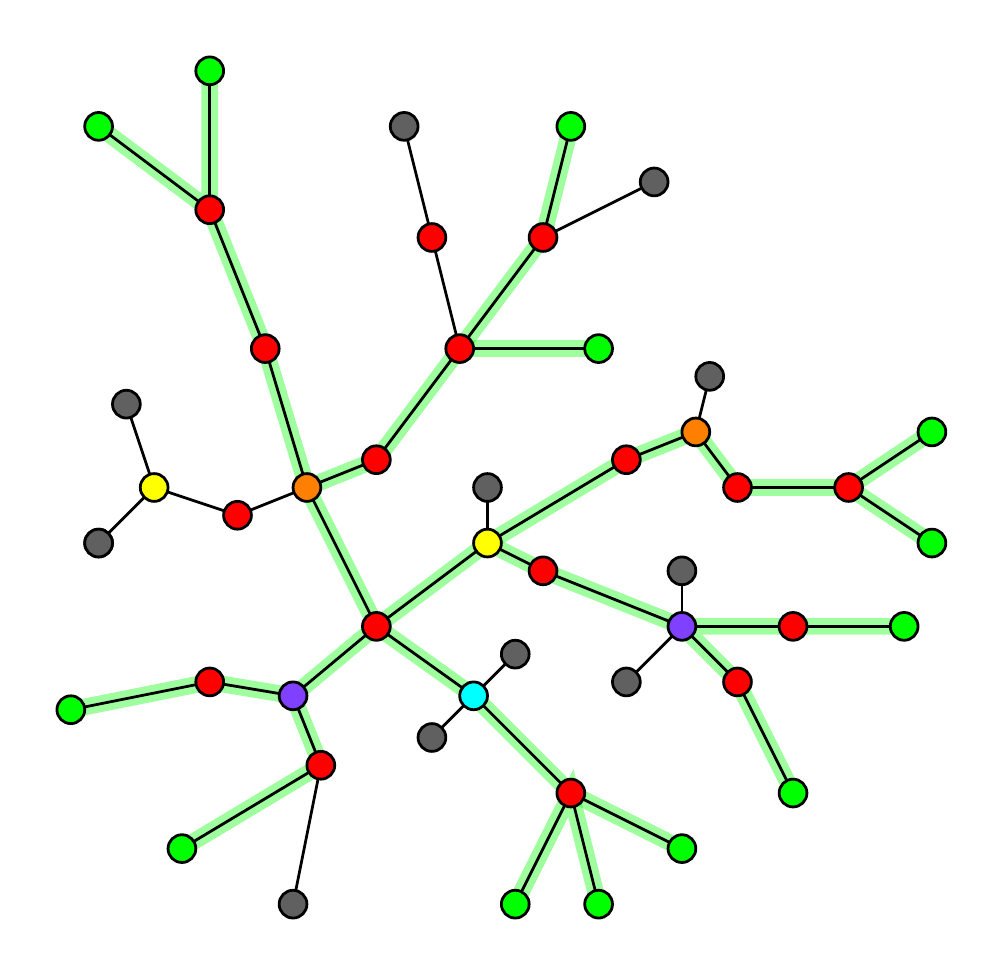
\caption{\refex{lightgreen}, in which the hypothesis of 
     \refthm{valblocks} about valencies of bricks is satisfied.}\label{fig:biggraph}
\end{figure}

\medskip
\subsection{Applications to finite sets of branches on normal surface singularities}
$\:$  

\medskip
        The main result of this section (\refthm{ultramthm}) is the announced 
        generalization to arbitrary 
        normal surface singularities of the fact that $\udb{L}$ is an ultrametric on 
        arborescent singularities (see \refthm{initultram}). This generalization,  
        stating that in general $\udb{L}$ is an ultrametric in restriction to special sets 
        of branches, describable topologically on any embedded resolution of their sum, 
        is an immediate corollary of \refthm{valblocks} of the previous section.

\medskip
 Applying \refthm{valblocks}   to the angular distance $\rho$, we get:

\begin{cor}   \label{cor:logcosfour}
   Let $X$ be a normal surface singularity 
   and $\pi$ be a good resolution of $X$. 
   Consider a subset $\cF $ of the set 
   of vertices of the dual graph $\dgr{\pi}$ and its convex hull $\cvxh{\cF}$  
   in the brick-vertex tree $\bvt{\dgr{\pi}}$ of $\dgr{\pi}$.  
   If each brick of $\dgr{\pi}$ has $\cvxh{\cF}$-valency at 
   most $3$, then the restriction of $\logfunc$ to $\cF$ is tree-like  
   and the associated tree is isomorphic as an $\cF$-tree to $\cvxh{\cF}$. 
\end{cor}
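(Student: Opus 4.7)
The plan is to observe that this corollary is an immediate specialization of \refthm{valblocks} to the particular metric $\logfunc$ on the vertex set $\Vgr{\dgr{\pi}} = \primes{\pi}$ of the dual graph of the good resolution $\pi$. To apply that theorem, the only thing to verify is that $\logfunc$ satisfies the separation/additivity hypothesis \eqref{eqn:additive_graph_metric}: namely that for any three vertices $u,v,w \in \primes{\pi}$ one has $\logfunc(u,v) + \logfunc(v,w) = \logfunc(u,w)$ if and only if $v$ separates $u$ from $w$ in $\dgr{\pi}$.

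First I would check that $\logfunc$ is indeed a metric on $\primes{\pi}$. Symmetry is clear from \refdef{angdist}, and \refprop{nonneg} ensures non-negativity with vanishing exactly on the diagonal; the triangle inequality (and thus the fact that $\logfunc$ is a distance) together with the characterization of its equality case is precisely the content of \refprop{MW}\ref{distform}. This gives exactly the required hypothesis of \refthm{valblocks}, so the proof is essentially a one-line invocation.

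Then, under the assumption that each brick of $\dgr{\pi}$ has $\cvxh{\cF}$-valency at most $3$, \refthm{valblocks} applied to $\Gamma = \dgr{\pi}$ and $\delta = \logfunc$ yields both conclusions: the restriction of $\logfunc$ to $\cF$ is tree-like in the sense of \refdef{treelike}, and its tree hull (in the sense of \refdef{treehull}) is isomorphic as an $\cF$-tree to $\cvxh{\cF} \subset \bvt{\dgr{\pi}}$. There is no genuine obstacle here: the work has been done in proving \refprop{crucial}, its spherical/metric reformulation \refprop{MW}, and the graph-theoretic \refthm{valblocks}. The only small point to mention is that, strictly speaking, \refthm{valblocks} is stated for a metric on $\Vgr{\Gamma}$ of a general finite connected graph, which fits perfectly with $\Gamma = \dgr{\pi}$ being connected (this connectedness follows from the normality of $X$ via Zariski's main theorem, as noted after \refprop{dualcone}).
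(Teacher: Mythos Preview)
Your proposal is correct and follows exactly the paper's approach: the corollary is stated there as an immediate application of \refthm{valblocks} to the angular distance $\logfunc$, with the separation/additivity hypothesis supplied by \refprop{MW}\ref{distform}. Your write-up is simply a slightly more explicit unpacking of that one-line invocation.
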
 
 
In order to state the next results, it is convenient to introduce the following vocabulary:

\begin{defi} \label{defi:injres}
    If $\cF \subset \branches{X}$ is a finite set of branches on $X$, 
    then an {\bf injective resolution} 
   of $\cF$ is an embedded resolution of their sum 
   such that different branches in $\cF$ have different representing divisors 
   (in the sense of \refdef{reprdiv}).
\end{defi}

If $\pi$ is an injective resolution of $\cF$, then we have a canonical injection of $\cF$ in $\primes {\pi}$. 
We will identify sometimes $\cF$ and its image, saying for instance that $\cF$ 
is a subset of the set of vertices of $\dgr{\pi}$.

We deduce immediately from \refcors{logcosfour}{equivultr} 
the following theorem:

\begin{thm}  \label{thm:ultramthm}
    Let $X$ be a normal surface singularity. Consider a finite set $\cF$ of branches 
    on it and denote by $L$ one of them. Let $\pi$ be an injective resolution of 
    the sum of branches in $\cF$. Identify 
    $\cF$ with the set of prime divisors representing its elements.   
     If each brick of $\dgr{\pi}$ has $\cvxh{\cF}$-valency at 
    most $3$, then the function $\udb{L}: (\cF \setminus \{L\})^2 \to [0, \infty)$ 
    is an ultrametric and the associated rooted 
    $\cF$-tree is isomorphic to $\cvxh{\cF}$.     
\end{thm}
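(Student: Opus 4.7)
The plan is to reduce the statement to the two corollaries already established, namely \refcor{logcosfour} and \refcor{equivultr}. First, because $\pi$ is an \emph{injective} resolution of $\cF$, \refdef{injres} gives us a canonical embedding $\cF \hookrightarrow \primes{\pi} = \Vgr{\dgr{\pi}}$ sending each branch to its representing divisor. We regard $\cF$ henceforth as a set of vertices of $\dgr{\pi}$, so the hypothesis on valencies in $\cvxh{\overline{\cF}} \subset \bvt{\dgr{\pi}}$ is exactly the one feeding \refcor{logcosfour}.

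Next, I apply \refcor{logcosfour} with $\Gamma = \dgr{\pi}$ and $\delta = \rho$. This is legitimate because \refprop{MW}\ref{distform} shows that $\rho$ is a metric on $\primes{\pi}$ and the cases of equality in its triangle inequalities are governed precisely by separation in $\dgr{\pi}$, which is the hypothesis \eqref{eqn:additive_graph_metric} of \refthm{valblocks}. The corollary then yields that the restriction of $\rho$ to $\cF$ is tree-like in the sense of \refdef{treelike}, and that its tree hull (\refdef{treehull}) is isomorphic as an $\cF$-tree to $\cvxh{\overline{\cF}}$.

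Being tree-like is exactly the four-point condition of \refdef{treelike}, which is item (\ref{reform4point}) of \refcor{equivultr}. Thus, applying that corollary (with the quadruple $(a,b,c,l)$ ranging over $\cF$ and using the identification of $\cF$ with its image $\cF_\pi$), we deduce item (2): for every $L \in \cF$, the function $\udb{L}$ is an ultrametric on $\cF \setminus \{L\}$. In particular this holds for the given $L$.

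Finally, for the identification of the rooted tree of the ultrametric $\udb{L}$ with $\cvxh{\overline{\cF}}$ rooted at $\overline{L}$: the standard correspondence between an ultrametric on a finite set and its rooted hierarchical tree (equivalently, the tree hull of \refprop{addtree} rooted at the chosen basepoint) shows that the rooted $\cF$-tree of $(\cF \setminus \{L\}, \udb{L})$ coincides with the tree hull of $(\cF, \rho)$ rooted at $L$. Since that tree hull was identified with $\cvxh{\overline{\cF}}$ in the previous step, rooting both at $\overline{L}$ gives the desired isomorphism. The only step requiring a small verification is the compatibility between the root induced by fixing $L$ on the ultrametric side and the distinguished vertex $\overline{L}$ on the tree-hull side; this follows directly from the equivalence of items (1)--(4) in \refprop{reformultra}, which translates the ultrametric inequality for $\udb{L}$ to the four-point condition for $\rho$ in a way that treats $L$ as a distinguished coordinate. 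I expect no substantial obstacle beyond keeping track of this bookkeeping between the coordinates in \refprop{reformultra}.
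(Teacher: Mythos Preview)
Your proposal is correct and follows essentially the same approach as the paper, which states that the theorem is deduced immediately from \refcor{logcosfour} and \refcor{equivultr}. You have simply unpacked those two invocations with more care (and added a brief discussion of the rooted-tree identification, which the paper leaves implicit).
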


Note that \refthm{initultram} is indeed a special case of 
\refthm{ultramthm}. This is a consequence of the fact that for arborescent 
singularities, $\dgr{\pi}$ has no bricks. 

\begin{rmk}
    The rooted tree associated to $u_L$ in \refthm{ultramthm} is \textit{end-rooted} 
    in the sense of \cite[Definition 3.5]{gbgppp:2016}, that is, its root is of valency $1$. It corresponds 
    to a supplementary element associated to the set of closed balls of the ultrametric, 
    which may be thought as a ball of infinite radius. The approach of the paper 
    \cite{gbgppp:2016} was to work exclusively with rooted trees associated to ultrametrics. 
    By contrast, in the present paper our trees are associated to metrics satisfying the $4$-point 
    condition (see \refdef{treelike}), therefore they are not canonically rooted. 
    One may translate one approach into the other one using \refprop{reformultra}.
\end{rmk}

An important aspect of \refthm{ultramthm} is that 
\textit{it depends only on the topology of the 
total transform of the branches on an embedded resolution of their sum}, and neither  
on special properties of the values of the intersection numbers of the prime 
exceptional divisors, nor on their genera. 

\begin{ex}\label{ex:tetrahedron}
The condition on the valency of brick-points in \refthm{ultramthm} (and of analogous theorems like \refthm{udbv_val}) is not necessary in general.
For example, 
 consider  a singularity $X$ whose 
minimal good resolution has a tetrahedral dual graph. Denote by $E_1, E_2, E_3, E_4$ 
the exceptional primes, and assume that they all have the same self-intersection $-k$, where $k \geq 4$.
By symmetry, $\check{E}_i \cdot \check{E}_j$ is constant for any $1 \leq i \neq j \leq 4$.
The brick-vertex tree has here a brick-vertex of valency $4$, but the $4$-point condition is satisfied.
See \refexs{tetrahedron_val}{tetrahedron_val_modif} for a deeper analysis of this example. 
\end{ex}

\medskip
\subsection{An ultrametric characterization of arborescent singularities}
$\:$  

\medskip
  The aim of this section is to prove a converse to \refthm{initultram}. 
  Namely, we prove that if $\udb{L}$ is an ultrametric for 
  every branch $L$ on $X$, then $X$ is arborescent (see \refthm{arborcase}). 
\bigskip

In the next proposition we show that if the normal surface singularity is not 
arborescent, then one may find four branches on it such that for any one of them, 
called $L$, the associated function $\udb{L}$ is not an ultrametric on the set of 
remaining three branches (even if the proposition is not stated like this, 
the fact that its conclusion may be formulated in this way  
is a consequence of \refprop{reformultra}):

\begin{prop}\label{prop:noud}
   Let $X_{\pi}$ be a good model of $X$. 
   Assume that $a, b, m, p$ are four pairwise distinct vertices of the dual graph 
   $\dgr{\pi}$, such that: 
      \begin{itemize}
          \item  both $m$ and $p$ are adjacent to $a$; 
          \item  $a$ does not separate $b$ from either $m$ or $p$. 
      \end{itemize}
   Denote by $x_m$ the intersection point of $E_a$ and $E_m$  and 
   by $x_p$ the intersection point of $E_a$ and $E_p$.
   Let $A$ and $B$ be branches on $X$ whose representing divisors on $X_{\pi}$ 
   are $E_a$ and $E_b$ respectively.   
   Then there exist branches $C_m$ and $C_p$ whose strict transforms on $X_{\pi}$ 
   pass through $x_m$ and 
   $x_p$ respectively, such that: 
     \begin{equation} \label{eqn:noeq}
           (A \cdot B)  (C_m \cdot C_p)  < (C_m \cdot A) (C_p \cdot B) < 
                   (C_m \cdot B) (C_p \cdot A). 
      \end{equation}
\end{prop}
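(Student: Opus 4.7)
The plan is to take $C_m$ and $C_p$ to be generic smooth branches whose strict transforms on $X_\pi$ meet $x_m$ and $x_p$ with tangent directions distinct from those of $E_a, E_m$ (respectively $E_a, E_p$) at the corresponding corner. With this choice the four strict transforms on $X_\pi$ are pairwise disjoint, so every one of the six pairwise intersection numbers may be read off from the exceptional transforms alone via \refcor{intexcep}. Using \refprop{branchcase} for $A$ and $B$, and applying \refdef{totransf} directly for each curvetta (which intersects both prime components at its corner transversally with multiplicity one), the exceptional transforms take the explicit form
\[
\exct{A}{\pi} = -\check{E}_a, \quad \exct{B}{\pi} = -\check{E}_b, \quad \exct{C_m}{\pi} = -\check{E}_a - \check{E}_m, \quad \exct{C_p}{\pi} = -\check{E}_a - \check{E}_p.
\]

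Expanding, the three products appearing in \eqref{eqn:noeq} become quadratic polynomials in the seven brackets $\bra{u,v}$ with $u,v \in \{a,b,m,p\}$:
\[
(A\cdot B)(C_m\cdot C_p) = \bra{a,b}\bigl(\bra{a,a} + \bra{a,m} + \bra{a,p} + \bra{m,p}\bigr),
\]
\[
(A\cdot C_m)(B\cdot C_p) = \bigl(\bra{a,a} + \bra{a,m}\bigr)\bigl(\bra{a,b} + \bra{b,p}\bigr),
\]
\[
(A\cdot C_p)(B\cdot C_m) = \bigl(\bra{a,a} + \bra{a,p}\bigr)\bigl(\bra{a,b} + \bra{b,m}\bigr).
\]
The key numerical input comes from \refprop{crucial}: the hypothesis that $a$ does not separate $b$ from $m$ (resp.\ from $p$) is precisely the failure of equality for the triple $(b,a,m)$ (resp.\ $(b,a,p)$), producing the strict inequalities $\bra{a,b}\bra{a,m} < \bra{a,a}\bra{b,m}$ and $\bra{a,b}\bra{a,p} < \bra{a,a}\bra{b,p}$. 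Analogous strict inequalities follow from the fact that no vertex separates the adjacent pair $a,p$ (nor the pair $a,m$) in $\dgr{\pi}$, and these control the terms involving $\bra{m,p}$.

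The hard part is combinatorial: assembling these strict inequalities into the particular asymmetric chain in \eqref{eqn:noeq}, rather than merely ensuring that the three products are not all equal. Since the hypothesis on the dual graph is symmetric in $m$ and $p$, one may swap them (and correspondingly $C_m \leftrightarrow C_p$) so as to arrange $(A \cdot C_m)(B \cdot C_p) \le (A \cdot C_p)(B \cdot C_m)$, reducing the task to two separate strict comparisons. If the generic-curvetta choice accidentally equalises the two rightmost products (a degeneracy occurring only under very special bracket symmetries), the tie is broken by replacing $C_m$ with a branch obtained through an additional blow-up along $E_m$: this asymmetrically perturbs $\bra{c_m,\cdot}$ without changing $C_p$ or the other strict inequalities. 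The delicate final step is to show that the strict inequalities propagate cleanly through the algebraic expressions for the differences of the three products, so that one obtains a genuine strict chain.
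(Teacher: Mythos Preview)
Your approach has a genuine gap. By taking transversal curvettas at the corners $x_m$ and $x_p$, you are fixing the exceptional transforms to be $-\check{E}_a - \check{E}_m$ and $-\check{E}_a - \check{E}_p$, which corresponds to the special case $s=t=1$ of the paper's construction. The paper instead takes $C_m$ \emph{tangent} to $E_a$ with contact order $s$ (and similarly $C_p$ with contact order $t$), giving $\exct{(C_m)}{\pi}=-\check{E}_m - s\check{E}_a$ and $\exct{(C_p)}{\pi}=-\check{E}_p - t\check{E}_a$. These two free integer parameters are the essential missing idea.

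Concretely, with your choice the left inequality in \eqref{eqn:noeq} reduces to
\[
\bigl(\bra{a,a}\bra{b,p}-\bra{a,b}\bra{a,p}\bigr)\;+\;\bigl(\bra{a,m}\bra{b,p}-\bra{a,b}\bra{m,p}\bigr)>0.
\]
The first parenthesis is indeed strictly positive by \refprop{crucial} (since $a$ does not separate $b$ from $p$), but the second parenthesis involves four \emph{distinct} indices and is not an instance of the crucial inequality at all: its sign is uncontrolled by the hypotheses. So the inequality can fail. The paper's remedy is that with the parameter $s$ present, the left inequality becomes $(\bra{a,a}\bra{b,p}-\bra{a,b}\bra{a,p})\,s + (\bra{a,m}\bra{b,p}-\bra{a,b}\bra{m,p})>0$; since the coefficient of $s$ is strictly positive, one simply takes $s$ large. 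Then, with $s$ fixed, the right inequality is linear in $t$ with strictly positive leading coefficient $\bra{a,a}\bra{b,m}-\bra{a,b}\bra{a,m}$, and one takes $t$ large. Your swapping of $m$ and $p$ and your ad hoc blow-up perturbation address only the possibility of a tie between the two right-hand products, not the potential strict failure of the left inequality; the perturbation you sketch does not supply a parameter that dominates the uncontrolled constant term.
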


\begin{figure}[ht]
\centering
\def\svgwidth{0.5 \columnwidth}
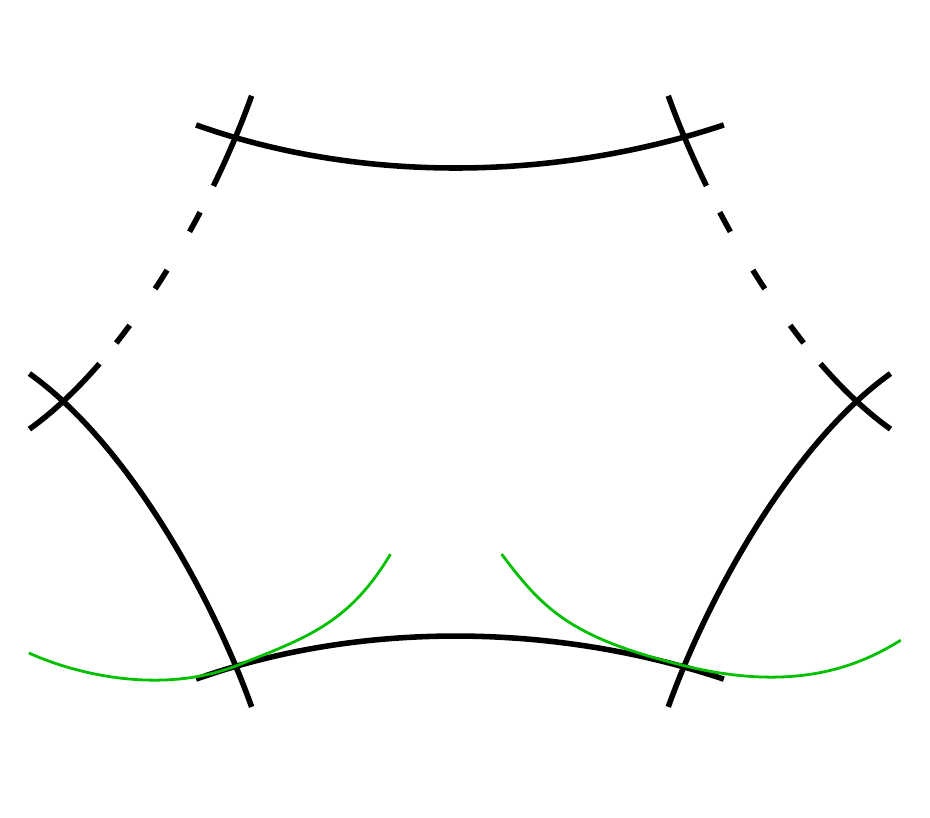
\caption{Geometric situation of \refprop{noud}.}\label{fig:cycle}
\end{figure}

\begin{proof}
Consider a branch $C_m$ whose strict transform $\strt{(C_m)}{\pi}$ passes through the point $x_m$, 
is smooth and tangent to the prime exceptional divisor $E_a$.
Denote by $s \in \N^*$ the intersection number $\strt{(C_m)}{\pi} \cdot E_a$.
As $\strt{(C_m)}{\pi} \cdot E_m =1$ and the intersection numbers of $\strt{(C_m)}{\pi}$ with the 
other irreducible components of the exceptional divisor of $\pi$ are all $0$, we deduce that:
$$ \exct{(C_m)}{\pi}= - \check{E}_m - s \check{E}_a.$$

Consider an analogous branch $C_p$ whose strict transform passes through $x_p$, and such 
that one has $\strt{(C_p)}{\pi} \cdot E_a=t \in \N^*$.
One gets:
$$  \exct{(C_{p})}{\pi}= - \check{E}_p - t \check{E}_a.$$
See \reffig{cycle} for the relative positions of prime exceptional divisors and 
strict transforms of branches.

    As the strict transforms $\strt{(C_m)}{\pi}$ and $\strt{(C_p)}{\pi}$ are disjoint, 
    \refcor{intexcep} implies that:
    $$ C_m \cdot C_p = -  \exct{(C_m)}{\pi} \cdot \exct{(C_p)}{\pi}. $$
  We use the analogous equalities for the other intersection numbers appearing in 
  \eqref{eqn:noeq} (in each case, the strict transforms of the corresponding branches 
  by the modification $\pi$ are again disjoint). As $\exct{A}{\pi} = - \check{E}_a$ 
  and $ \exct{B}{\pi} = - \check{E}_b$, 
  the system of inequalities 
  \eqref{eqn:noeq}  becomes: 
      \begin{equation}    \label{eqn:doublineq}
          \begin{array}{c}
                \bra{a, b} \cdot (\bra{m, p} + t \bra{m, a} + s \bra{a, p} + ts \bra{a,a})  <   \\
                (\bra{m, a} + s \bra{a,a}) (\bra{p,b} + t \bra{a,b}) < \\
                 (\bra{m, b} + s \bra{a,b}) (\bra{p,a} + t \bra{a,a}) . 
          \end{array}
      \end{equation}
                  We want to show that we may find pairs $(s,t) \in \N^* \times \N^*$ such that 
      \eqref{eqn:doublineq} holds. Let us consider in turn both inequalities. 
      
      \noindent
        $\bullet$ 
         The left-hand 
         inequality in (\ref{eqn:doublineq}) becomes:
       \begin{equation}   \label{eqn:leftsimpl}
            ( \bra{a,a} \bra{b,p} - \bra{a,b} \bra{a,p}) s + (\bra{a,m} \bra{b,p} - \bra{a,b} \bra{m,p}) >0.
       \end{equation}
      Note that the left-hand side of (\ref{eqn:leftsimpl}) is a polynomial of degree $1$ in the 
      variable $s$. 
      By     \refprop{MW} and the hypothesis that $a$ does not separate $b$ from $p$ 
      in the dual graph of $\pi$, the  coefficient 
      $\bra{a,a} \bra{b,p} - \bra{a,b} \bra{a,p}$ of $s$  is positive. 
      Therefore, the inequality \eqref{eqn:leftsimpl} becomes true for $s$ big enough. 
      
      \noindent
      $\bullet$
      Similarly, the right-hand inequality of \eqref{eqn:doublineq} becomes: 
          \begin{equation}   \label{eqn:rightsimpl}
            (\bra{a,a} \bra{b,m} - \bra{a,b} \bra{a,m}) t - ( \bra{a,a} \bra{b,p} - \bra{a,b} \bra{a,p}) s + 
               \bra{a,p} \bra{b,m} - \bra{a,m} \bra{b,p} >0.
       \end{equation}
           Assume that $s$ was chosen such that \eqref{eqn:leftsimpl} holds. 
           The left-hand side of \eqref{eqn:rightsimpl} is then a polynomial of degree $1$ in the 
           variable $t$. Its dominating coefficient $\bra{a,a} \bra{b,m} - \bra{a,b} \bra{a,m}$ is $>0$, 
           because $a$ does not separate $b$ from $m$. Therefore, the inequality    
            \eqref{eqn:rightsimpl} 
           becomes true for $t$ big enough.  
\end{proof}

We get the announced characterization of arborescent singularities:

\begin{thm} \label{thm:arborcase}
     Let $X$ be a normal surface singularity. Then the following properties are equivalent:
         \begin{enumerate}
              \item \label{forany}  
                 For every branch $L \in \branches{X}$, the function $\udb{L}$  is 
                  an ultrametric on the set $\branches{X} \setminus \{L\}$. 
             \item \label{exists} 
                  There exists a branch $L \in \branches{X}$, such that the function $\udb{L}$  is 
                 an ultrametric on the set $\branches{X} \setminus \{L\}$. 
              \item  \label{bracketultra}
                  The bracket $\bra{ \cdot, \cdot}$ satisfies the following inequality: 
                     $$\bra{a, b} \cdot \bra{l, c} \geq \min  \{ 
                   \bra{a, c} \cdot \bra{l, b}, \:   \bra{b, c} \cdot \bra{l, a} \}, \: \: \mbox{for all} 
                        \: \:  (a,b,c,l) \in 
                        (\genprimes{X})^4 .$$
              \item   \label{arbsingul}
                  The singularity $X$ is arborescent. 
         \end{enumerate}
\end{thm}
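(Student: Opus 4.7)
The plan is to organize the proof around the cycle $(4) \Rightarrow (1) \Rightarrow (2) \Rightarrow (4)$, with $(1) \Leftrightarrow (3)$ handled separately. The implication $(4) \Rightarrow (1)$ is precisely \refthm{initultram}, and $(1) \Rightarrow (2)$ is immediate. For $(1) \Leftrightarrow (3)$, I would apply \refcor{equivultr} to arbitrary finite subsets $\cF \subset \branches{X}$: to pass from bracket inequalities to intersection numbers one uses \refprop{intexcep}, and conversely, given four pairwise distinct prime divisorial valuations $a,b,c,l \in \primes{X}$, one passes to a common good model on which they all appear and chooses smooth curvettes through generic points of the corresponding prime exceptional divisors to realize $E_a, E_b, E_c, E_l$ as the representing divisors of four branches.

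The substantive implication is $(2) \Rightarrow (4)$, which I would prove by contraposition. Assume $X$ is not arborescent, so some good model $X_\pi$ has a dual graph $\dgr{\pi}$ containing a circuit. Fix an arbitrary branch $L$ on $X$; the goal is to produce three branches on which $u_L$ fails the ultrametric inequality. Pick a vertex $a$ on the circuit and let $m,p$ be its two neighbors along it. The remainder of the circuit joins $m$ to $p$ without passing through $a$, so $m$ and $p$ lie in a common connected component of $\dgr{\pi} \setminus \{a\}$; I would then pick a fourth vertex $b \neq a,m,p$ in that component (blowing up further in a way that preserves non-arborescence, should the circuit have length $3$ and the graph be otherwise too small).

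With $a,b,m,p$ in hand, \refprop{noud} produces branches $A,B,C_m,C_p$ satisfying
\[
(A \cdot B)(C_m \cdot C_p) \;<\; (C_m \cdot A)(C_p \cdot B) \;<\; (C_m \cdot B)(C_p \cdot A).
\]
In particular $(A \cdot B)(C_m \cdot C_p)$ is strictly less than the minimum of the two products on the right, so by the equivalence of (1) and (2) in \refprop{reformultra} the function $u_{C_p}$ fails to be an ultrametric on the triple $\{A,B,C_m\}$.

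Finally, to transfer this failure to the prescribed branch $L$, I would form the finite set $\cF = \{L, A, B, C_m, C_p\}$: the failure of $u_{C_p}$ on $\{A,B,C_m\}$ persists on the larger set $\cF \setminus \{C_p\}$, and \refprop{subtle} in contrapositive form then propagates this failure to $u_F$ for every $F \in \cF$, in particular to $u_L$ on $\cF \setminus \{L\} \subset \branches{X} \setminus \{L\}$. The main obstacle is not technical but conceptual: \refprop{noud} produces a failure of ultrametricity only for the specific branch $C_p$ it constructs, and the cute point of the argument is that \refprop{subtle} allows one to bundle an arbitrary $L$ into a finite set and thereby obtain failure of $u_L$ itself, completing the proof.
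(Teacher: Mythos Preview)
Your proof is correct. The organization and the handling of $(1)\Leftrightarrow(3)$ via \refcor{equivultr} together with curvettes matches the paper. The genuine difference lies in the key implication $(2)\Rightarrow(4)$.

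The paper applies \refprop{noud} \emph{directly with $b=l$}: it first takes a good model which is an embedded resolution of $L$, lets $l$ be the vertex representing $L$, and then carefully chooses three successive vertices $m,a,p$ on a cycle so that $a$ does not separate $l$ from either $m$ or $p$ (distinguishing the cases $l\in\Theta$ and $l\notin\Theta$). The branches $A,C_m,C_p$ produced by \refprop{noud} then give a direct failure of $\udb{L}$ on $\{A,C_m,C_p\}$, with no further transfer needed.

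Your route is different: you pick $a,m,p,b$ on the cycle without reference to $L$, obtain a failure of $\udb{C_p}$ on $\{A,B,C_m\}$, and then invoke the contrapositive of \refprop{subtle} on $\cF=\{L,A,B,C_m,C_p\}$ to propagate the failure to $\udb{L}$. This trades the paper's geometric case analysis (positioning $l$ relative to the cycle) for an extra appeal to \refprop{subtle}; it is arguably cleaner, since the construction of $a,m,p,b$ is entirely local to the cycle and independent of $L$. Both arguments require blowing up to enlarge the cycle when it is too short, and both are valid.
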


\begin{proof} $\:$ 
        The equivalences (\ref{forany}) $\Longleftrightarrow$ (\ref{exists})
             $\Longleftrightarrow$ (\ref{bracketultra}) are direct consequences of 
             \refcor{equivultr}.

         \noindent
         The implication (\ref{arbsingul}) $\Longrightarrow$ (\ref{forany}) 
             is a direct consequence of \refthm{initultram}.

          \noindent  
              In order to prove the implication (\ref{exists})  $\Longrightarrow$ (\ref{arbsingul}) 
              we proceed by contradiction, and suppose that $X$ is not arborescent. 
              We will show that for every choice of branch $L$, there exist branches $A,C_m,C_p$ 
              such that the quadruple $L,A,C_m,C_p$ does not satisfy the $4$-point condition.
              Fix a good model $X_\pi$ of $X$, which is an embedded resolution of the branch $L$. 
              Denote by $E_l$ the exceptional prime representing $L$ in $X_\pi$, and look at $l$ 
              as a vertex in the dual graph $\dgr{\pi}$ of $\pi$.
              By \refprop{noud}, it suffices to find three vertices $a,m,p$ in $\dgr{\pi}$ 
              such that $m$ and $p$ are adjacent to $a$, and $a$ does not separate $l$ 
              from either $m$ or $p$.

              As $X$ is not arborescent, the dual graph $\dgr{\pi}$ contains a cycle $\Theta$. 
              Replacing perhaps $X_\pi$ by another model obtained from it by blowing up points  
              of the divisor represented by $\Theta$, we may assume that  $\Theta$ 
              has at least $4$ vertices. If $l$ is a vertex of $\Theta$ we take $a,m,p$ 
             three other successive  vertices of $\Theta$ and apply \refprop{noud}.
              Otherwise, $l$ does not belong to $\Theta$. As $\dgr{\pi}$ 
              is connected, there exists a path $\Pi$ inside it connecting $l$ to a vertex $d$ 
              of $\Theta$ such 
              that $d$ is the only vertex common to $\Theta$ and to this path. 
              As $\Theta$ has at least $4$ vertices, one may find three successive vertices 
              $m, a, p$ of it, which are different from $d$. 
              Then the vertices $a$, $m$ and $p$ satisfy the condition we were looking for.
\end{proof}

%%%%%%%%%%%%%%%%%%%%%%%%%%%%%%%%%%%%%%%%%%%%%%%%%%%%%%%%%% VALUATIONS

\section{Ultrametric distances on valuation spaces}\label{sec:valuations}

In this second part of the paper,  we generalize the results of \refsec{branches} 
to the setting of valuation spaces.
We keep denoting by $(X, x_0)$ a normal surface singularity and by $\mc{O}_X$ its local 
ring. We denote by $R$  the completion $\hat{\mc{O}}_{X}$ of its local ring relative to 
its maximal ideal  and by  $\mf{m}$  the unique maximal ideal of $R$.

\medskip

\subsection{Semivaluation spaces of normal surface singularities}\label{ssec:valuation_spaces}
$\:$

\medskip

In this section we recall the definitions of \textit{semivaluations} and 
\textit{valuations} of $X$, as well as that of \textit{normalized} such objects. 
Then we recall the classification of semivaluations into 
\textit{divisorial}, \textit{quasi-monomial} (in particular \textit{irrational}), \textit{curve} and \textit{infinitely singular}.

\medskip

Let $[0, + \infty]$ be the union of the set of non-negative real numbers and of the 
single-element set $\{ + \infty \}$, endowed with the usual total order. 
In this paper we will consider the following  notion of semivaluation:

\begin{defi}\label{def:semival}
 A \emph{semivaluation} on $X$  (or on $R$) is a function $\nu\colon R\to [0, + \infty]$ satisfying the following axioms:  
        \begin{enumerate} 
            \item $\nu(0) = + \infty$ and $\nu(1) =0$; 
            \item $\nu(\phi\psi) = \nu(\phi) + \nu(\psi)$  for all $\phi, \psi\in R$; 
            \item $\nu(\phi + \psi)\geq \min\{\nu(\phi), \nu(\psi)\}$ for all $\phi, \psi\in R$;
           \item $ 0 < \nu ( \mf{m} ) < + \infty$; \label{fval}
         \end{enumerate}
where $\nu(\mf{m}) := \min\{\nu(\phi) \st \phi \in \mf{m}\}$.
The semivaluation $\nu$ is \emph{normalized} if in addition $\nu(\mf{m}) = 1$.
The semivaluation $\nu$ is a \emph{valuation} if 
$\nu^{-1}(+ \infty) = \{ 0\}$.  
The set of semivaluations on $X$ will be denoted by $\boxed{\FVal{X}}$, while the set of 
normalized semivaluations will be denoted by $\boxed{\Val{X}}$.
\end{defi}

\begin{rmk}
There are more general notions of semivaluations,  
which do not require the condition (\ref{fval}) on \refdef{semival}, or which take values on the non-negative part of the additive semigroup $\R^2$, with respect to the lexicographical ordering.
In the literature, the semivaluations of \refdef{semival} 
are usually called \textit{centered} 
(which makes reference to the condition  
$\nu(\mf{m})>0$), \textit{finite} (meaning that $\nu(\mf{m}) < +\infty$) and of \textit{rank $1$} 
(since they take values on the non-negative part of $(\R,+)$). 
\end{rmk}

If $\nu$ is a semivaluation on $X$, so is $\lambda \nu$ for any 
$\lambda \in \nR_+^* := (0,+\infty)$. In particular, any semivaluation is proportional 
to a normalized one.

\begin{rmk}
The normalization with respect to the maximal ideal is not the only possible one.
It is sometimes useful to normalize with respect to other ideals of $R$.  
A typical choice (see \cite{favre-jonsson:valtree,favre-jonsson:eigenval} for the smooth setting) 
is to normalize with respect to the value taken on a given irreducible element $x$ of $R$, that is, by considering only semivaluations which satisfy $\nu(x)=1$.
In this case a special care must be taken for the curve semivaluation $\nu_C$ with $C=\{x = 0\}$, 
since $\inte_C(x)=+\infty$ (see below for the definitions of $\nu_C$ and $\inte_C$).
\end{rmk}

 If $\mf{a}$ is an ideal of $R$,  we denote
 $\nu(\mf{a}) := \min\{\nu(\phi) \st \phi \in \mf{a}\}$ for any semivaluation $\nu$.
One may  define equivalently a semivaluation $\nu$ as a function on the set of ideals of $R$
 satisfying similar properties as those in  \refdef{semival} (see \cite{gignac-ruggiero:locdynnoninvnormsurfsing}).
 
 \medskip 
Note that for any semivaluation $\nu$, the set $\nu^{-1}(+ \infty)$ is a prime 
ideal of $R$. Therefore, it defines either the point $x_0$ or a branch on $X$.

\begin{defi}  \label{def:specsemival}
The \emph{support} of a semivaluation of $R$ is the vanishing locus of the prime 
ideal $\nu^{-1}(+ \infty)$.
\end{defi}

The spaces $\FVal{X}$ and $\Val{X}$ come equipped with 
natural topologies:

\begin{defi} \label{def:weaktop}
  The \emph{weak topologies} on the sets $\FVal{X}$ and $\Val{X}$ are the weakest ones 
   such that the maps $\nu \mapsto \nu(\phi)$ 
   are continuous for any $\phi \in R$.
\end{defi}

In the foundational work \cite{zariski:resolutionofsingularities}, 
Zariski gave a classification of semivaluations according to 
some algebraic invariants (\textit{rank}, \textit{rational rank}, \textit{transcendence degree}). 
Those different kinds of semivaluations can also be characterized by their geometric properties.
We recall here a few facts about this classification in our setting.

\medskip 
\noindent
$\bullet$ \emph{Divisorial valuations}.
They are the valuations associated to the prime exceptional divisors, 
as seen in \refssec{mumfordint}.
Let $X_\pi$ be a good model of $X$, and $E \in \primes{\pi}$ be any irreducible 
(and reduced) component of the exceptional divisor $\pi^{-1}(x_0)$. Then the map 
$\boxed{\divi_E}$, 
which associates to a function $\phi \in R$ the order of vanishing of $\phi \circ \pi$ along $E$, 
defines a valuation of $X$. 
We say that a valuation is \emph{divisorial} if it is of the form 
$\lambda\:   \divi_E$, with $\lambda \in \nR_+^*$. When $\lambda = 1$, the 
divisorial valuation is called \emph{prime}, a denomination already 
used in \refsec{branches}.
For any exceptional prime $E \in \primes{\pi}$, we denote by $\boxed{\nu_E} := b_E^{-1}\divi_E$ 
the normalized valuation proportional to $\divi_E$, where $\boxed{b_E}:=\divi_E(\mf{m}) \in \nN^*$ 
is the \emph{generic multiplicity} of $\nu_E$.
Finally, for any good model $X_{\pi}$ of $X$, we denote by $\boxed{\skeldiv{\pi}}$ 
the set of normalized divisorial valuations associated to the primes of $\pi$.

\medskip
\noindent 
$\bullet$ \emph{Quasi-monomial and irrational valuations}. 
Quasi-monomial valuations of $X$ are constructed as follows.
Let $X_\pi$ be a good model of $X$, 
and let $P \in \exc{\pi}$ be any point in the exceptional divisor $\exc{\pi}$ of $\pi$.
Pick local coordinates $(x,y)$ at $P$ adapted to $\exc{\pi}$ 
(i.e., so that $\exc{\pi} \subseteq \{xy=0\}$ locally at $P$).
For any $(r,s) \in (\nR_+^*)^2$, we may consider the monomial valuation $\mu_{r,s}$ 
on the local ring of $X_{\pi}$ at $P$, defined on the set of monomials in $x$ and $y$ 
by setting $\mu_{r,s}(x)=r$ and $\mu_{r,s}(y)=s$, and extended to any element 
$\phi$ of this ring by taking the minimum of $\mu_{r,s}$ on the set of monomials appearing in $\phi$.
The valuation $\nu_{r,s}$ defined by 
$\boxed{\nu_{r,s}}:=\pi_* \mu_{r,s}: \phi \mapsto \mu_{r,s}(\phi \circ \pi)$ 
is an element of $\FVal{X}$, called a \emph{quasi-monomial valuation}.
If $r$ and $s$ are rationally dependent, it turns out that $\nu_{r,s}$ is a divisorial valuation 
(associated to an exceptional prime obtained after a toric modification of $X_{\pi}$  
in the coordinates $(x,y)$).
If $r$ and $s$ are rationally independent, we call the valuation $\nu_{r,s}$ an \emph{irrational valuation}.
Notice that we can also define $\nu_{r,s}$ when either $r$ or $s$ vanishes.
For example, suppose that $\exc{\pi}=\{x=0\}=E$ locally at $P$.
Then the valuation $\nu_{1,0}$ coincides with $\divi_E$, while $\nu_{0,1}$ 
is not a centered valuation: it would correspond up to a multiplicative constant 
to the order of vanishing along the branch determined by the projection of $\{y=0\}$ to $X$.

\medskip
\noindent 
$\bullet$ \emph{Curve semivaluations}. 
They are the semivaluations associated to branches in $\branches{X}$.
Given such a branch  $L$,  a \emph{curve semivaluation associated to $L$} 
is any positive real multiple of 
$\boxed{\inte_L}$, which in turn is defined by $\inte_L(\phi) := L \cdot (\phi) $, where 
$\phi \in R$ and $(\phi)$ denotes the divisor of $\phi$.
As for divisorial valuations, we denote by $\boxed{\nu_L} :=m(L)^{-1} \inte_L$ the normalized 
semivaluation proportional to $\inte_L$, where $m(L) \in \nN^*$ is the multiplicity of $L$.
Notice that curve semivaluations are never valuations, since $\inte_L(\phi)= + \infty$ 
for any $\phi \in R$ vanishing on $L$. In fact, the support of $\inte_L$ according to 
\refdef{specsemival} is exactly $L$.

\medskip 
\noindent 
$\bullet$ \emph{Infinitely singular valuations}. These are the remaining elements of $\FVal{X}$. They are characterized by having rank and rational rank equal to $1$, and transcendence degree equal to $0$. They are also characterized as valuations whose value group is not finitely generated over $\nZ$.
They can be thought as curve semivaluations associated to branches of infinite multiplicity (see \cite[Chapter 4]{favre-jonsson:valtree}).

\medskip 
\begin{defi} \label{def:skeleton}
Given a good model $X_{\pi}$, we denote by $\boxed{\skel{\pi}}$ the set of centered
normalized quasi-monomial valuations 
described above, for all the points 
$p \in \pi^{-1}(x_0)$, and call it the \emph{skeleton of $X_{\pi}$}. 
\end{defi}

Notice that $\skel{\pi}$ 
admits a structure of finite connected graph, with set of vertices $\skeldiv{\pi}$, and edges between 
two points $\nu_E$ and $\nu_F$ for each intersection point between $E$ and $F$ in $\pi^{-1}(x_0)$. 
This graph is homeomorphic to the dual graph $\Gamma_{\pi}$ introduced in Definition 
\ref{def:dualgraph}.

\begin{rmk}
In \refsec{branches}, we considered only divisorial valuations. 
Given such a valuation $u$, we denoted by $E_u$ the exceptional prime associated to it.
Since here we consider other types of valuations, not associated to exceptional primes, 
we prefer to denote by $\nu \in \Val{X}$ any kind of valuation, and write $\nu=\nu_E$ 
if $\nu$ is the divisorial valuation associated to the exceptional prime $E$.
\end{rmk}

\medskip

\subsection{Valuation spaces as projective limits of dual graphs}\label{ssec:projlim}
$\:$

\medskip

The aim of this section is to explicit some basic relations   
between dual graphs, skeleta and the valuation space.
\medskip

Let $\pi \colon X_\pi \to X$ be a good resolution of the normal surface singularity $X$ and 
$\nu \in \FVal{X}$ a semivaluation of $X$. By the valuative criterion of properness, 
$\nu$ has a unique \emph{center} in $X_\pi$, which lies in the exceptional 
divisor of $\pi$.
The center is characterized as the unique scheme-theoretic point $\xi \in X_\pi$ 
so that $\nu$ takes non-negative values on the local ring $\mc{O}_{X_\pi, \xi}$ of elements of the fraction field of $R$ whose pullbacks to $X_{\pi}$ 
are regular at $\xi$, and strictly positive values exactly on its maximal ideal $\mf{m}_\xi$.

Then one can define as follows a retraction $r_\pi$ from $\Val{X}$ to the skeleton $\skel{\pi}$ 
of the good model $X_\pi$ (see \refdef{skeleton}).
Let $\nu \in \Val{X}$ be a normalized semivaluation, and $\xi \in \pi^{-1}(x_0)$ its center.
If $\xi$ is the generic point of an exceptional prime $E$, or if it is a closed point belonging to a unique exceptional prime $E$ of $\primes{\pi}$, then we set $r_\pi(\nu) :=\nu_E$, 
the divisorial valuation associated to $E$.
If $\xi$ is a closed point $P$ belonging to the intersection of two exceptional primes $E$ and $F$, 
then $\nu=\pi_* \mu$, where $\mu$ is a semivaluation centered at $P$. Pick local coordinates $(x,y)$ 
at $P$ so that $E=\{x=0\}$ and $F=\{y=0\}$. Then we set $r_\pi(\nu)$ to be the quasi-monomial 
valuation $\pi_* \mu_{r,s}$ at $P$ with weights $r=\mu(x)$ and $s=\mu(y)$.
By a result of Thuillier's paper \cite{thuillier:homotopy}, the map $r_\pi:\Val{X} \to \skel{\pi}$ is 
in fact a strong deformation retract.

If $\pi':X_{\pi'}\to (X,x_0)$ is another good resolution dominating $\pi$, then we have  
$r_\pi = r_\pi \circ r_\pi'$. Hence we get a natural continuous map from the valuation space $\Val{X}$ to the projective limit $\displaystyle \lim_{\substack{\longleftarrow\\\pi}} \skel{\pi}$ of the skeleta, which turns out to be a homeomorphism (see \cite[Theorem 7.5]{vaquie:valuations} and \cite[p. 399]{favre:holoselfmapssingratsurf}).
This approach can be taken in order to construct the valuation space $\mc{V}_X$ directly as the projective limit of the dual graphs of the good resolutions of $(X,x_0)$.

In particular, we can characterize arborescent singularities as the normal surface singularities $X$ for which the valuation space $\Val{X}$ is contractible.
Indeed, if $X$ is arborescent, then the dual graph of each good resolution $\pi$ is a tree, 
hence $\skel{\pi}$ is contractible, and so is $\Val{X}$ that deformation retracts onto it. 
Similarly, if $X$ is not arborescent, then we can find a non-trivial loop on the dual graph 
of a good resolution $\pi$, and its image inside $\skel{\pi} \subset \Val{X}$ gives a non-trivial 
loop inside $\Val{X}$.

\subsection{B-divisors on normal surface singularities}
$\:$

\medskip

In the first part of the paper, it was crucial to associate a dual to any prime 
divisor on a model of $X$. By looking at the divisor as a prime divisorial valuation, 
and by collecting its associated dual divisors on all the models, one gets a particular 
b-divisor, in the sense of \refdef{bdiv}. In this section we explain how to  
extend the previous construction to all semivaluations on $X$ (see \refdef{bdivsemival}). 
As an application, we show how to extend to the space of normalized 
semivaluations the notions of bracket (see \refdef{bracketval}) and of angular distance 
(see \refdef{angdistval}).

\medskip

Let $\nu \in \FVal{X}$. 
One may define unambiguously the value $\nu(D)$ taken by $\nu$ on any divisor $D \in \ExcR {\pi}$ 
(see for instance \cite[Section 7.5.2]{jonsson:berkovich} for the case where $R$ is regular, 
which extends without changes to our case, or \cite[Section 2.2]{gignac-ruggiero:locdynnoninvnormsurfsing}).
The idea is to define first $\nu(D)$ when $D$ is prime, by evaluating $\nu$ on a local defining function of $D$, and to extend it 
then by linearity. Such local defining functions may be taken as pull-backs of elements of 
the localization of $R$ at the defining prime ideal $\nu^{-1}(+ \infty)$ of the support of $\nu$, 
to which $\nu$ extends canonically.

Any semivaluation on $X$ induces a dual divisor on $X_\pi$, according 
to the next proposition (see \cite[Page 400]{favre:holoselfmapssingratsurf} or 
\cite[Proposition 2.5]{gignac-ruggiero:locdynnoninvnormsurfsing}):

\begin{prop}  \label{prop:dualdivprop}
    For any semivaluation $\nu \in \FVal{X}$, there exists a unique divisor $Z_\pi(\nu) \in \ExcR{\pi}$ 
    such that $ \nu(D)=Z_\pi(\nu) \cdot D$ for each $D \in \ExcR {\pi}$.
\end{prop}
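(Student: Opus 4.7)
The plan is to observe that this is essentially a dressed-up statement of Riesz representation for the (nondegenerate) intersection form on the finite-dimensional real vector space $\ExcR{\pi}$. The only ingredients used are Theorem \ref{thm:intersform}(\ref{negdef}), the existence of the dual basis $(\check{E}_u)_{u \in \primes{\pi}}$ from formula \eqref{eqn:dualdiv}, and the fact that $\nu$ extends to a well-defined $\R$-linear functional on $\ExcR{\pi}$ via $E_u \mapsto \nu(E_u)$ (this is what was recalled just before the statement, following the paragraph on the extension of $\nu$ through the localization at $\nu^{-1}(+\infty)$).

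First I would dispatch uniqueness: if $Z_1, Z_2 \in \ExcR{\pi}$ both satisfy $Z_i \cdot D = \nu(D)$ for all $D \in \ExcR{\pi}$, then $(Z_1 - Z_2) \cdot D = 0$ for every $D$, and by the negative definiteness of the intersection form on $\ExcR{\pi}$ (\refthm{intersform}(\ref{negdef})), this forces $Z_1 = Z_2$.

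For existence, the natural candidate is
\[
   Z_\pi(\nu) := \sum_{u \in \primes{\pi}} \nu(E_u) \, \check{E}_u \; \in \; \ExcR{\pi}.
\]
I would check directly that this divisor does the job: by the defining property \eqref{eqn:dualdiv} of the dual basis, for each $v \in \primes{\pi}$ one has
\[
   Z_\pi(\nu) \cdot E_v = \sum_{u \in \primes{\pi}} \nu(E_u) \, (\check{E}_u \cdot E_v) = \sum_{u \in \primes{\pi}} \nu(E_u) \, \delta_{u,v} = \nu(E_v).
\]
Then, using the expansion \eqref{eqn:devdiv} that writes an arbitrary $D \in \ExcR{\pi}$ as an $\R$-linear combination of the $E_v$'s, the equality $Z_\pi(\nu) \cdot D = \nu(D)$ follows by $\R$-bilinearity of the intersection form and $\R$-linearity of $D \mapsto \nu(D)$.

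The only point requiring a small comment, rather than being a genuine obstacle, is the $\R$-linearity of $D \mapsto \nu(D)$ on $\ExcR{\pi}$. This is not part of the axioms in \refdef{semival} (which only give $\nu(\phi\psi) = \nu(\phi) + \nu(\psi)$ on $R$), but is a direct consequence of the discussion preceding the proposition: after extending $\nu$ to the localization of $R$ at $\nu^{-1}(+\infty)$ and evaluating on local defining functions of prime exceptional divisors, one sets $\nu(\sum_u a_u E_u) := \sum_u a_u \, \nu(E_u)$ for $a_u \in \R$, which is $\R$-linear by construction. With this in hand, the argument above is complete.
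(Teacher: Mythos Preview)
Your argument is correct and is exactly the standard Riesz representation argument one would expect here. Note that the paper does not actually supply a proof of this proposition: it merely states the result and refers to \cite[Page 400]{favre:holoselfmapssingratsurf} and \cite[Proposition 2.5]{gignac-ruggiero:locdynnoninvnormsurfsing}. Your explicit formula $Z_\pi(\nu) = \sum_{u \in \primes{\pi}} \nu(E_u)\,\check{E}_u$ together with the uniqueness from nondegeneracy of the intersection form is precisely what underlies those references, so there is nothing to add.
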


We will use the following name for this divisor:

\begin{defi}
     The divisor $\boxed{Z_\pi(\nu)}$ characterized in \refprop{dualdivprop} 
     is called the \emph{dual divisor} of $\nu$ in the model $X_{\pi}$.
\end{defi}

The name alludes to the fact that for a divisorial valuation $\divi_E$, 
we have $Z_\pi(\divi_E)=\check{E}$. Here $\check{E}$ denotes the dual divisor 
of $E$, as defined by relations \eqref{eqn:dualdiv}.

\begin{defi}  \label{def:bdivsemival}
    The collection $\boxed{Z(\nu)}=(Z_\pi(\nu))_\pi$, where $\pi$ varies among all good 
   resolutions of $X$, is called the \emph{b-divisor associated to $\nu$}.
\end{defi}

This name is motivated by the fact that $Z(\nu)$ is a b-divisor in the 
following sense, due to Shokurov \cite{shokurov:prelimitingflips} (the letter ``b'' is the initial 
of ``birational''):

\begin{defi}  \label{def:bdiv}
    A collection $(Z_\pi)_\pi$, where $\pi$ varies among all good 
   resolutions of $X$ and $Z_{\pi} \in \ExcR {\pi}$, is called a \emph{b-divisor} of $X$ if 
   for any pair of models $(\pi, \pi')$ such that $\pi'$ dominates $\pi$, one has 
     $  \psi_* Z_{\pi'} = Z_{\pi}$,  
    if $\pi' = \pi \circ \psi$.
\end{defi}

In  \refsec{branches}, we noticed that the intersection of two dual divisors does 
not depend on the model used to compute it (see \refprop{invdual}).
This allows to define the intersection number $Z(\nu) \cdot Z(\mu)$ of two 
b-divisors associated to divisorial valuations $\nu, \mu \in \FVal{X}$.
In the general case of an arbitray pair of semivaluations $(\nu, \mu)$ of $X$, 
the intersection number $Z_\pi(\nu) \cdot Z_\pi(\mu)$ 
\textit{may depend} on the model $\pi$. In fact, we always have
$Z_{\pi'}(\nu) \cdot Z_{\pi'}(\mu) \leq Z_\pi(\nu) \cdot Z_\pi(\mu)$, 
for any model $\pi'$ dominating $\pi$. 
More precisely, the intersection remains constant as far as $\nu$ and $\mu$ 
have different centers in $X_\pi$ 
(see \cite[Proposition 2.13]{gignac-ruggiero:locdynnoninvnormsurfsing}), 
while it decreases if the centers coincide 
(see \cite[Proposition 2.17]{gignac-ruggiero:locdynnoninvnormsurfsing}).
This allows to define:
\[     \boxed{Z(\nu) \cdot Z(\mu)} := \inf_\pi \big(Z_\pi(\nu) \cdot Z_\pi(\mu)\big) \in [-\infty,0).  \]

We refer to \cite{boucksom-favre-jonsson:degreegrowthmeromorphicsurfmaps, favre:holoselfmapssingratsurf, gignac-ruggiero:locdynnoninvnormsurfsing} 
for further details on b-divisors associated to semivaluations.

Recall that in \refdef{bra} was introduced the \textit{bracket} of two prime divisorial valuations.
The next definition extends the bracket to arbitrary pairs of semivaluations:

\begin{defi}    \label{def:bracketval}
Let $\nu,\mu \in \FVal{X}$ be two semivaluations of $X$. 
Their \emph{bracket} is defined by: 
 \[    \boxed{ \bra{\nu,\mu} }:= - Z(\nu) \cdot Z(\mu) \in (0,+\infty].   \]
When $\nu = \mu$, the self-bracket $\boxed{\alpha(\nu)} :=\bra{\nu,\nu}$ 
is called the \emph{skewness} of $\nu$.
\end{defi}

\begin{rmk}
The skewness $\alpha(\nu)$ has been analysed for germs of smooth surfaces in \cite{favre-jonsson:valtree}, where it was defined as the supremum of the ratio between the values of $\nu$ and of the multiplicity function.
With this interpretation, the skewness is sometimes called the \textit{Izumi constant} of $\nu$, a denomination which refers to the works \cite{izumi:LCIordergerms,izumi:integritylocalanalalgebras} of Izumi.
Its study has been the focus of several works, see e.g. \cite{rees:izumi,ein-lazarsfeld-smith:uniformapproxabhyankar,moghaddam:izumi,rond-spivakovsky:izumiabhyankar,boucksom-favre-jonsson:izumi}.
The b-divisor interpretation given by Favre and Jonsson is more recent, and it has been used to study several properties of valuation spaces for smooth and singular surfaces (see e.g. \cite{jonsson:berkovich,gignac-ruggiero:locdynnoninvnormsurfsing}). 
\end{rmk}

Let us consider now the restriction of the bracket to the space $\Val{X}$ of 
normalized semivaluations. 
The skewness is always finite for quasi-monomial valuations, while it is always infinite 
for curve semivaluations; it can be any value in $(0,+\infty]$ for infinitely 
singular valuations (see \cite[Theorem 3.26]{favre-jonsson:valtree} for the smooth case, and \cite[Proposition 2.17]{gignac-ruggiero:locdynnoninvnormsurfsing} for the singular case). 
We denote by $\boxed{\mc{V}_X^\alpha}$ the set of normalized valuations 
with finite skewness.

More generally, one can show (see \cite[Proposition 2.13]{gignac-ruggiero:locdynnoninvnormsurfsing}) that $\bra{\nu,\mu}$ is determined on a model $X_\pi$, i.e., 
$\bra{\nu,\mu}=-Z_\pi(\nu) \cdot Z_\pi(\mu)$ as far as $\nu$ and $\mu$ have different 
centers on $X_\pi$. As for two distinct normalized semivaluations, there is always a model 
on which their centers are disjoint, we deduce that: 

\begin{prop}\label{prop:finbra}
The bracket of two distinct normalized semivaluations is always finite.
\end{prop}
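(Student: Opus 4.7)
The strategy is to reduce the claim to finding, for any two distinct normalized semivaluations $\nu, \mu \in \Val{X}$, a good model $X_\pi$ on which the centers $c_\pi(\nu)$ and $c_\pi(\mu)$ are distinct. Indeed, the assertion quoted just before the proposition, namely \cite[Proposition 2.13]{gignac-ruggiero:locdynnoninvnormsurfsing}, ensures that on such a model and on any further refinement $\pi'$ dominating $\pi$, one has $Z_{\pi'}(\nu) \cdot Z_{\pi'}(\mu) = Z_{\pi}(\nu) \cdot Z_{\pi}(\mu)$. Therefore the infimum defining $\langle \nu, \mu \rangle$ is attained on $X_\pi$, giving $\langle \nu, \mu \rangle = - Z_{\pi}(\nu) \cdot Z_{\pi}(\mu) \in (0, +\infty)$, which is finite since it is an intersection number in the finite-dimensional space $\ExcR{\pi}$.

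To produce such a model, I would use the homeomorphism $\Val{X} \cong \varprojlim_\pi \skel{\pi}$ recalled in \refssec{projlim}. Since $\nu \neq \mu$, there exists some good model $X_{\pi_0}$ with $r_{\pi_0}(\nu) \neq r_{\pi_0}(\mu)$. If the two centers on $X_{\pi_0}$ are already distinct, we are done. Otherwise they share a common center $P \in \exc{\pi_0}$, and inspection of the definition of $r_{\pi_0}$ given in \refssec{projlim} shows that $P$ must then be an intersection point of two exceptional primes $E$ and $F$ of $\pi_0$: indeed, if $P$ were the generic point of a prime $E$, both semivaluations would be forced to coincide with the normalized divisorial valuation $\nu_E$; and if $P$ were a closed point lying on a unique prime $E$, both retractions would still equal $\nu_E$. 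The assumption $r_{\pi_0}(\nu) \neq r_{\pi_0}(\mu)$ thus forces the quasi-monomial weights at $P$ read off local coordinates $(x,y)$ adapted to $E \cup F$ to differ. Blowing up $P$ then yields a new good model on which $\nu$ and $\mu$ have centers at two distinct closed points of the new exceptional prime, corresponding to the two distinct weight ratios, and we are reduced to the situation of the first paragraph.

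The point requiring care is this last separation assertion: one must check that the blow-up of $P$ sends the centers of $\nu$ and $\mu$ to the points of the new exceptional prime $E'$ determined by the ratios $\nu(x)/\nu(y)$ and $\mu(x)/\mu(y)$. This is a direct computation in local toric coordinates at $P$, using that the semivaluations push forward through the blow-up morphism; the main obstacle is purely notational, as the substantive content of the proposition is the simple dichotomy between coinciding and non-coinciding centers provided by the projective limit description.
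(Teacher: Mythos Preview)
Your overall strategy is exactly the paper's: invoke \cite[Proposition 2.13]{gignac-ruggiero:locdynnoninvnormsurfsing} to say the bracket stabilizes once the centers differ, and then assert that distinct normalized semivaluations always admit a separating model. The paper simply states this last fact without argument; you go further and try to justify it, which is commendable, but your justification contains a small gap.

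The problematic claim is that a \emph{single} blow-up of the node $P = E \cap F$ sends $\nu$ and $\mu$ to distinct closed points of the new prime $E'$ ``determined by the ratios $\nu(x)/\nu(y)$ and $\mu(x)/\mu(y)$''. This is not true: after one blow-up, the center of a semivaluation with $\nu(x) = r$, $\nu(y) = s$ is governed only by the trichotomy $r<s$, $r=s$, $r>s$. If, say, both ratios satisfy $r<s$, then both $\nu$ and $\mu$ are again centered at the same node $E' \cap (\text{strict transform of }F)$, and nothing has been gained. Concretely, monomial valuations with weights $(1,3)$ and $(1,4)$ both land at the same node after one blow-up. The correct statement is that a \emph{finite sequence} of blow-ups of nodes --- a toric modification dictated by the continued-fraction expansion of a rational number separating the two slopes --- will eventually separate the centers. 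Once you make this iteration explicit (or simply note that the two distinct points $r_{\pi_0}(\nu) \neq r_{\pi_0}(\mu)$ of the segment $[\nu_E,\nu_F] \subset \skel{\pi_0}$ can be separated by a divisorial point, whose associated model does the job), your argument is complete and matches the paper's.
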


Carrying on the analogies with the divisorial case of \refsec{branches}, 
we define the notion of angular distance of semivaluations, 
as introduced in \cite{gignac-ruggiero:locdynnoninvnormsurfsing}.

\begin{defi}    \label{def:angdistval}
The {\bf angular distance} of two normalized semivaluations $\mu, \nu \in \Val{X}$ is: 
\begin{equation} 
\boxed{\logfunc(\nu,\mu)}:= - \log \dfrac{\bra{\nu,\mu}^2}{\alpha(\nu) \cdot \alpha(\mu)} \in [0, \infty]
\end{equation}
if $\nu \neq \mu$, and $0$ if $\nu = \mu$.
\end{defi}

\begin{rmk}
The function $\logfunc$ defines an \emph{extended distance} on $\Val{X}$ 
(see \cite[Proposition 2.40]{gignac-ruggiero:locdynnoninvnormsurfsing}), in the sense that 
it vanishes exactly on the diagonal, it is symmetric, and it satisfies the triangular inequality 
(like a standard distance), but it may take the value $+\infty$ in some cases.
In fact, $\logfunc(\nu,\mu) = +\infty$ exactly when $\nu \neq \mu$ and at least 
one of the semivaluations $\nu$ and $\mu$ has infinite skewness. 
This locus can be precisely determined, by reducing first to the smooth case 
using \cite[Lemma 2.43]{gignac-ruggiero:locdynnoninvnormsurfsing}, 
and by describing then the skewness of a semivaluation in terms of 
its \textit{Puiseux parameterization}, as in \cite[Chapter 4]{favre-jonsson:valtree} (when 
one works over $\C$) or using Jonsson's approach in \cite[Section 7]{jonsson:berkovich} 
(when one works over an 
arbitrary field, possibly of positive characteristic). 
In particular, $\logfunc$ defines a distance on $\mc{V}_X^\alpha$, hence on 
the set of normalized quasi-monomial valuations.
The topology induced by $\logfunc$ on $\Val{X}$ is usually called the \emph{strong topology}, 
in order to distinguish it from the weak topology introduced in \refdef{weaktop}.
\end{rmk}

\subsection{Ultrametric distances on semivaluation spaces of arborescent singularities}
$\:$

\medskip

In \refssec{reformultrametric} we started the study of the function $\udb{L}$, that culminated with the characterization of arborescent singularities given in \refthm{arborcase}.
This section is devoted to the proof of an analog for semivaluation spaces 
(see \refthm{arborcase_val}).
We will study functions $\udv{\lambda}$ depending on an arbitrary semivaluation} 
$\lambda \in \Val{X}$, defined on $\Val{X} \times \Val{X}$.
In the particular case in which $\lambda$ is the curve semivaluation $\inte_L$ associated to a branch $L$ on $X$, 
we get $\udb{\inte_L}= \udb{L}$ (see \refrmk{ud_val_properties}).

\begin{defi}\label{def:ud_val}
Let $X$ be a normal surface singularity, and let  
$\lambda \in \FVal{X}$ be any semivaluation.
Let $\nu_1, \nu_2 \in \Val{X}$ be any normalized semivaluations on $X$.
We set: 
\begin{equation}\label{eqn:def_udv}
\boxed{\udv{\lambda}(\nu_1, \nu_2)}    :=
\begin{cases}
    \displaystyle\frac{\bra{\lambda,\nu_1} \cdot \bra{\lambda,\nu_2}}{\bra{\nu_1,\nu_2}} & 
     \text{if } \nu_1 \neq \nu_2,\\
    0 & \text{if } \nu_1 = \nu_2.
\end{cases}
\end{equation}
\end{defi}

\begin{rmk}\label{rmk:ud_val_properties}
Since $\bra{\nu_1,\nu_2} < +\infty$ when $\nu_1 \neq \nu_2$ (see \refprop{finbra}), 
the function $\udv{\lambda}$ is well defined with values in $[0,+\infty]$, 
and it vanishes if and only if $\nu_1=\nu_2$.
The value $+\infty$ is sometimes achieved. In fact, while the denominator 
is always strictly positive, if $\lambda$ is normalized 
we have $\bra{\lambda,\nu} = +\infty$ 
if and only if $\lambda=\nu$ and $\alpha(\lambda)=+\infty$.
In particular, $\udv{\lambda}$ takes only finite values if $\alpha(\lambda) < +\infty$, 
while it always takes finite values on $(\Val{X} \setminus \{\lambda\})^2$.

Notice that if $\nu_1$ and $\nu_2$ tend to the same semivaluation $\nu$ 
in the strong topology, then 
$\frac{\bra{\lambda,\nu_1}\cdot \bra{\lambda,\nu_2}}{\bra{\nu_1,\nu_2}}$ tends to 
$\frac{\bra{\lambda,\nu}^2}{\alpha(\nu)}$.
This value is finite as long as $\nu \neq \lambda$, and it is $0$ if and only if 
$\alpha(\nu)= +\infty$. This always happens when $\nu$ is a curve semivaluation, 
and never happens for quasi-monomial valuations.

Notice also that $\udv{\lambda}$ can be extended to $(\FVal{X})^2$,  
setting $\udv{\lambda} (\nu_1, \nu_2 ) := \frac{\bra{\lambda,\nu_1} \cdot \bra{\lambda,\nu_2}}{\bra{\nu_1,\nu_2}}$ if $\nu_1$ and $\nu_2$ are non-proportional, and equal to zero otherwise. 
In fact, by homogeneity of the bracket, we have 
$\udv{\lambda}(b_1 \nu_1,b_2 \nu_2)=\udv{\lambda}(\nu_1,\nu_2)$ 
for any $b_1, b_2 \in (0, + \infty)$ and also  $\udv{b  \lambda} = b^2 
\udv{\lambda}$, for any $b \in (0, + \infty)$.

Finally, \refdef{ud_val} clearly generalizes \eqref{eqn:potultram}.
In fact, if $L,A,B$ are branches on $X$, then $\udb{L}(A,B)=\udv{\inte_L}(\inte_A,\inte_B)$, 
where $\inte_L, \inte_A, \inte_B$ are the curve semivaluations associated to $L,A,B$ respectively.
\end{rmk}

The aim of this section is to prove the following generalization of \refthm{arborcase}:

\begin{thm} \label{thm:arborcase_val}
      Let $X$ be a normal surface singularity.  Then the following properties are equivalent:
\begin{enumerate}
   \item \label{forany_val}  
       For every semivaluation $\lambda \in \FVal{X}$, the function $\udv{\lambda}$ 
      is an extended ultrametric distance on $\Val{X}$. 
   \item \label{exists_val} 
      There exists a semivaluation $\lambda \in \FVal{X}$, such that the function $\udv{\lambda}$ 
     is an extended ultrametric distance on $\Val{X}$. 
   \item \label{arbsingul_val}
     The singularity $X$ is arborescent. 
\end{enumerate}
\end{thm}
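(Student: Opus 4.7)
The plan is to close the cycle of implications (\ref{forany_val}) $\Rightarrow$ (\ref{exists_val}) $\Rightarrow$ (\ref{arbsingul_val}) $\Rightarrow$ (\ref{forany_val}); the first implication is immediate.

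For (\ref{exists_val}) $\Rightarrow$ (\ref{arbsingul_val}), I would first establish a semivaluation analog of \refprop{subtle}: if $\udv{\lambda_0}$ is an extended ultrametric on $\Val{X}$ for some $\lambda_0\in\FVal{X}$, then $\udv{\mu}$ is an extended ultrametric on $\Val{X}$ for every $\mu\in\Val{X}$. By \refrmk{ud_val_properties} we may assume $\lambda_0\in\Val{X}$, and the proof is a verbatim transcription of the purely combinatorial argument of \refprop{subtle}, which uses only the formal ultrametric axiom together with the symmetric four-bracket reformulation of the ultrametric inequality (analog of \refprop{reformultra}). I then specialize $\mu=\nu_L$, the normalized curve semivaluation of an arbitrary branch $L$, and restrict $\udv{\nu_L}$ to pairs $(\nu_A,\nu_B)$ of normalized curve semivaluations attached to distinct branches. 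The identity $\bra{\inte_A,\inte_B}=A\cdot B$, which follows from \refprop{intexcep} together with the equality $Z_\pi(\inte_A)=\exct{A}{\pi}=-\check{E}_a$ on any embedded resolution of $A+B$ separating them (cf.\ \refprop{branchcase}), yields $\udv{\nu_L}(\nu_A,\nu_B)=m(L)^{-2}\,\udb{L}(A,B)$. As multiplication by the positive constant $m(L)^{-2}$ preserves the ultrametric inequality, $\udb{L}$ is an ultrametric on $\branches{X}\setminus\{L\}$, and \refthm{arborcase} then forces $X$ to be arborescent.

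For (\ref{arbsingul_val}) $\Rightarrow$ (\ref{forany_val}), fix $\lambda\in\FVal{X}$ and three normalized semivaluations $\nu_1,\nu_2,\nu_3\in\Val{X}$. The coincidence cases are handled directly from \refrmk{ud_val_properties}, and by \refprop{finbra} all remaining brackets involved are finite, so the ultrametric inequality for $\udv{\lambda}$ on $\{\nu_1,\nu_2,\nu_3\}$ reduces to a finite symmetric four-bracket inequality on $(\lambda,\nu_1,\nu_2,\nu_3)$. I pick a good model $X_\pi$ sufficiently refined so that the centers of these four semivaluations are pairwise distinct on $X_\pi$; by the stabilization property from \cite[Proposition~2.13]{gignac-ruggiero:locdynnoninvnormsurfsing} each of the six brackets then equals $-Z_\pi(\cdot)\cdot Z_\pi(\cdot)$, and using \eqref{eqn:devdiv} it expands as a non-negative linear combination of brackets among the prime divisorial valuations supporting the b-divisors on $\skel{\pi}$. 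Since $X$ is arborescent, $\dgr{\pi}$ is a tree, so \refprop{MW}\,(II) provides the tree-like four-point condition on the full vertex set $\primes{\pi}$, and the desired inequality then follows by propagating that condition through the b-divisor expansions.

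The main obstacle will be this last propagation step: checking that a tree-like four-point condition on a finite set of prime divisorial valuations is preserved when one takes non-negative linear combinations inside the bracket. On a tree the four-point condition is enhanced by systematic equalities in the triangle inequalities (the C-, F-, and Y-shape cases from \refprop{addtree}), and a careful bookkeeping based on this rigidity should make the combined inequality survive. A cleaner alternative, which I would probably favor in the final write-up, is to bypass the linear argument entirely: when $X$ is arborescent, $\Val{X}$ itself is an $\nR$-tree (see \refssec{projlim}), and one can interpret $\udv{\lambda}$ geometrically via meets in this $\nR$-tree rooted at $\lambda$ and parametrized by the skewness $\alpha$, reducing (\ref{forany_val}) to a general tree-theoretic computation in the spirit of Favre--Jonsson \cite{favre-jonsson:valtree}.
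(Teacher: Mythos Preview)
Your approach for (\ref{exists_val}) $\Rightarrow$ (\ref{arbsingul_val}) is different from the paper's and it works: the analog of \refprop{subtle} transfers verbatim (the argument is purely combinatorial on any finite subset $\{\lambda_0,\mu,\nu_1,\nu_2,\nu_3\}$, and all brackets involved are finite by \refrmk{ud_val_properties} once one handles the trivial coincidence cases separately), and the identity $\udv{\nu_L}(\nu_A,\nu_B)=m(L)^{-2}\udb{L}(A,B)$ then lets you invoke \refthm{arborcase}. The paper instead argues directly: given any $\lambda$ and a loop in some $\skel{\pi}$, it manufactures three quasi-monomial valuations violating the four-bracket inequality, by running the valuative version of \refprop{noud} with \refprop{positivity_bdivisors} replacing \refprop{crucial}. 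Your route is slicker in that it recycles \refthm{arborcase}; the paper's is more self-contained.

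The real issue is your primary approach to (\ref{arbsingul_val}) $\Rightarrow$ (\ref{forany_val}). The ``linear propagation'' step does not go through, and the obstacle is structural, not bookkeeping. Writing $Z_\pi(\nu_i)=\sum_u \nu_i(E_u)\,\check{E}_u$ expresses each $\bra{\nu_i,\nu_j}$ as a non-negative bilinear combination of the divisorial brackets $\bra{u,v}$, so each of the three products $I_1,I_2,I_3$ becomes a sum over quadruples $(u,v,w,x)\in\primes{\pi}^4$. The tree-like condition on $\primes{\pi}$ says that for each such quadruple the minimum of the three local products is achieved at least twice, but \emph{which} pair realizes that minimum depends on the quadruple. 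A schematic counterexample: summing one term with $(I_1,I_2,I_3)=(1,1,2)$ and one with $(1,2,1)$ gives $(2,3,3)$, where the minimum is achieved only once. Nothing in the bilinear expansion prevents exactly this phenomenon, and the additional rigidity you allude to (equalities in the triangle inequalities on a tree) does not by itself force a uniform labeling across all terms.

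Your ``cleaner alternative'' is precisely the paper's argument, and it is where the actual content lies. The paper does not pass through the prime-divisorial brackets at all; it works directly in the $\nR$-tree $\Val{X}$, proves the auxiliary inequality of \refprop{quadineg} (a two-line consequence of \refprop{positivity_bdivisors} applied four times to a separating valuation $\mu$), and then runs a case analysis on the shape of the convex hull $\cvxh{\lambda,\nu_1,\nu_2,\nu_3}$ to extract the ultrametric inequality. If you want to keep your write-up close to yours, you should drop the propagation idea and flesh out this tree-theoretic computation; as stated, the (\ref{arbsingul_val}) $\Rightarrow$ (\ref{forany_val}) direction is not yet proved.
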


Before starting the proof, let us give some definitions and preliminary results, 
analogous to those described in \refsec{branches}.

\begin{defi}
   Let $X$ be a normal surface singularity, and $\mu, \nu_1, \nu_2 \in \Val{X}$ 
   be three normalized semivaluations.
    We say that \emph{$\mu$ separates $\nu_1$ and $\nu_2$} 
    (or \emph{the couple $(\nu_1, \nu_2)$}) if either $\mu\in \{\nu_1, \nu_2\}$, or 
    $\nu_1$ and $\nu_2$ belong to different connected components of $\Val{X} \setminus \{\mu\}$.
\end{defi}

Notice that in the previous definition we can consider $\Val{X}$ endowed indifferently 
with either the weak or the strong topology, since the connected components of 
$\Val{X} \setminus \{\mu\}$ are the same for the two topologies.

\begin{prop}[{\cite[Proposition 2.15]{gignac-ruggiero:locdynnoninvnormsurfsing}}]\label{prop:positivity_bdivisors}
    Let $X$ be a normal surface singularity 
     and $\mu, \nu_1, \nu_2 \in \Val{X}$ be three normalized semivaluations. Then we have:
  \begin{equation}\label{eqn:positivity_bdivisors}
     \bra{\mu,\nu_1}\cdot \bra{\mu,\nu_2} \leq \bra{\mu,\mu} \cdot \bra{\nu_1,\nu_2}.
  \end{equation}
    Moreover, the equality holds if and only if $\mu$ separates $\nu_1$ and $\nu_2$.
\end{prop}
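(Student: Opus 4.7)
The plan is to reduce the inequality for arbitrary normalized semivaluations to the divisorial inequality already established in \refprop{crucial}. The key facts are: (i) for a divisorial valuation $\nu_E$ on a good model, $Z_\pi(\nu_E)=b_E^{-1}\check{E}$, so that $\bra{\nu_E,\nu_F}=b_E^{-1}b_F^{-1}(-\check{E}\cdot\check{F})$; (ii) by \refprop{invdual}, for divisorial valuations this quantity is independent of the chosen model; and (iii) the b-divisor bracket $\bra{\nu,\mu}$ equals the supremum over good models $\pi$ of $-Z_\pi(\nu)\cdot Z_\pi(\mu)$, attained on any model where $\nu$ and $\mu$ have distinct centers (and, for the self-bracket of a divisorial valuation, on any model containing it as a prime). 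Combining these observations, one obtains \eqref{eqn:positivity_bdivisors} in the case where $\mu,\nu_1,\nu_2$ are three divisorial valuations appearing as primes $E_u, E_v, E_w$ on a common good model, directly from \eqref{eqn:positivity_EFH} rescaled by $(b_u^2 b_v b_w)^{-1}$, together with the characterization of equality by the fact that $u$ separates $v$ from $w$ in the dual graph $\Gamma_\pi$.

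Before extending to the general setting, I would dispose of two degenerate situations. If $\mu\in\{\nu_1,\nu_2\}$, both sides of \eqref{eqn:positivity_bdivisors} equal $\alpha(\mu)\cdot\bra{\mu,\nu_i}$ (with the convention $(+\infty)\cdot r=+\infty$ for $r>0$), yielding equality, and $\mu$ separates the pair by definition. If $\alpha(\mu)=+\infty$ with $\mu\notin\{\nu_1,\nu_2\}$, then by \refprop{finbra} the left-hand side is finite while the right-hand side is $+\infty$, so the inequality is strict; correspondingly, such a $\mu$ is a curve semivaluation or an infinitely singular valuation of infinite skewness, lying at an end of $\Val{X}$, whose removal does not disconnect any other pair of normalized semivaluations, consistent with the strict inequality.

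In the main case $\mu\in\mc{V}_X^\alpha\setminus\{\nu_1,\nu_2\}$, with arbitrary $\nu_1,\nu_2\in\Val{X}$, I would approximate $\mu,\nu_1,\nu_2$ by sequences $\mu^{(n)},\nu_1^{(n)},\nu_2^{(n)}$ of divisorial valuations appearing on finer and finer good models and converging in the strong topology; density holds by the construction of $\Val{X}$ as $\varprojlim_\pi\skel{\pi}$ described in \refssec{projlim}. Apply the divisorial inequality to each triple and pass to the limit, using continuity of the bracket on $\mc{V}_X^\alpha$ in the strong topology, which follows from the bilinearity of $(\nu,\mu)\mapsto Z_\pi(\nu)\cdot Z_\pi(\mu)$ on a fixed model combined with the stabilization of model-level brackets on sufficiently fine models for valuations with distinct centers. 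The main obstacle is the characterization of equality under this limiting procedure: one must argue that equality in the limit forces $\mu$ to separate $\nu_1$ from $\nu_2$ in $\Val{X}$. Using the projective limit description of \refssec{projlim} and \refprop{equivsep} applied to each $\Gamma_\pi$, this reduces to the observation that $\mu$ separates $\nu_1$ from $\nu_2$ in $\Val{X}$ if and only if, for every sufficiently fine good model $\pi$, the retraction $r_\pi(\mu)$ separates $r_\pi(\nu_1)$ from $r_\pi(\nu_2)$ in $\skel{\pi}$ (equivalently, in $\Gamma_\pi$), which is precisely the condition for equality to hold in the divisorial inequalities on every sufficiently refined $\pi$.
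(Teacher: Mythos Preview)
The paper does not supply its own proof of this proposition: it is quoted verbatim from \cite[Proposition 2.15]{gignac-ruggiero:locdynnoninvnormsurfsing} and used as a black box. So there is no internal argument to compare against; I can only assess your sketch on its own merits.

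Your reduction gives the inequality cleanly. The density of divisorial valuations, together with the continuity of the bracket in the strong topology and the model-independence from \refprop{invdual}, pushes \eqref{eqn:positivity_EFH} (rescaled by the generic multiplicities) to arbitrary semivaluations in $\mc{V}_X^\alpha$, and your handling of the degenerate cases $\mu\in\{\nu_1,\nu_2\}$ and $\alpha(\mu)=+\infty$ is correct (the paper itself uses the fact that infinite-skewness semivaluations are ends of $\Val{X}$ in the proof of \refprop{quadineg}).

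The gap is in the equality characterization. Your final paragraph slides between two different approximations: sequences of divisorial $\mu^{(n)}\to\mu$, and retractions $r_\pi(\mu)$. For the first, even if every divisorial triple $(\mu^{(n)},\nu_1^{(n)},\nu_2^{(n)})$ satisfies a \emph{strict} inequality, nothing prevents equality in the limit, so you cannot conclude ``equality $\Rightarrow$ separation'' this way without a quantitative gap. For the second, $r_\pi(\mu)$ need not be a vertex of $\Gamma_\pi$: if $\mu$ is an irrational quasi-monomial valuation lying in $\skel{\pi}$, then $r_\pi(\mu)=\mu$ is never a vertex of any dual graph, so \refprop{crucial} (which is stated for triples of \emph{vertices}) does not apply to the triple $(r_\pi(\mu),r_\pi(\nu_1),r_\pi(\nu_2))$. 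Your claimed equivalence ``$r_\pi(\mu)$ separates $r_\pi(\nu_1),r_\pi(\nu_2)$ in $\Gamma_\pi$ $\Leftrightarrow$ equality in the divisorial inequality'' is therefore circular at exactly the non-divisorial points where the extension is needed. A correct treatment of the equality case requires either working directly at the level of a single model with the nef/anti-effective divisor argument (as in the cited reference), or first establishing the quasi-monomial case by interpolating $Z_\pi(\mu_t)=(1-t)\check{E}+t\check{F}$ and analyzing equality there, before passing to general semivaluations.
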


Notice that, by homogeneity, \refprop{positivity_bdivisors} holds also for non-normalized valuations.

\begin{prop}\label{prop:quadineg}
      Let $X$ be a normal surface singularity, and $\nu_j \in \Val{X}$, for $j=1, \ldots, 4$, 
      be four  normalized semivaluations.
      Suppose that there exists $\mu \in \Val{X}$ that separates simultaneously 
      the couple $(\nu_1,\nu_2)$ and the couple $(\nu_3,\nu_4)$.
     Then: 
  \begin{equation}\label{eqn:ABCD}
      \bra{\nu_1,\nu_2} \cdot \bra{\nu_3,\nu_4} \leq \bra{\nu_1,\nu_3} \cdot \bra{\nu_2,\nu_4} .
  \end{equation}
     Moreover, the equality in \eqref{eqn:ABCD} holds if and only if $\mu$ also separates 
     simultaneously the couple $(\nu_1,\nu_3)$ and the couple $(\nu_2, \nu_4)$.
\end{prop}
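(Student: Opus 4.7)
The plan is to derive \eqref{eqn:ABCD} by combining four instances of \refprop{positivity_bdivisors}: two of them applied to the separated pairs $(\nu_1,\nu_2)$ and $(\nu_3,\nu_4)$, where the equality clause turns them into identities, and two applied to the ``diagonal'' pairs $(\nu_1,\nu_3)$ and $(\nu_2,\nu_4)$, which give only inequalities. The equality case of \eqref{eqn:ABCD} should then be read off from the equality cases of those two latter inequalities.

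Concretely, in the generic situation where $\mu\notin\{\nu_1,\nu_2,\nu_3,\nu_4\}$ and $\alpha(\mu)<+\infty$, the two separation hypotheses and \refprop{positivity_bdivisors} yield the identities
\[
\bra{\mu,\nu_1}\bra{\mu,\nu_2}=\alpha(\mu)\,\bra{\nu_1,\nu_2},\qquad
\bra{\mu,\nu_3}\bra{\mu,\nu_4}=\alpha(\mu)\,\bra{\nu_3,\nu_4},
\]
while the same proposition applied to the two remaining pairs gives
\[
\bra{\mu,\nu_1}\bra{\mu,\nu_3}\leq\alpha(\mu)\,\bra{\nu_1,\nu_3},\qquad
\bra{\mu,\nu_2}\bra{\mu,\nu_4}\leq\alpha(\mu)\,\bra{\nu_2,\nu_4}.
\]
Multiplying the two equalities and, separately, the two inequalities leads to
\[
\alpha(\mu)^2\,\bra{\nu_1,\nu_2}\,\bra{\nu_3,\nu_4}=\prod_{j=1}^{4}\bra{\mu,\nu_j}\leq\alpha(\mu)^2\,\bra{\nu_1,\nu_3}\,\bra{\nu_2,\nu_4},
\]
and cancellation of the strictly positive, finite factor $\alpha(\mu)^2$ yields \eqref{eqn:ABCD}. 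Since all quantities involved are strictly positive and finite, equality in the displayed chain is equivalent to equality in each of the two diagonal inequalities separately, which by the equality clause of \refprop{positivity_bdivisors} is equivalent to $\mu$ separating simultaneously both $(\nu_1,\nu_3)$ and $(\nu_2,\nu_4)$.

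The degenerate cases reduce to the same proposition. If $\alpha(\mu)=+\infty$, then $\mu$ is a topological end of $\Val{X}$ (a curve semivaluation, or an infinitely singular valuation of infinite skewness), so $\Val{X}\setminus\{\mu\}$ remains connected and $\mu$ can separate a pair only trivially; this forces $\mu\in\{\nu_1,\nu_2\}\cap\{\nu_3,\nu_4\}$, and after relabeling \eqref{eqn:ABCD} collapses to a single instance of \refprop{positivity_bdivisors}. If instead $\alpha(\mu)<+\infty$ but $\mu$ coincides with some $\nu_j$, substituting the corresponding identity of \refprop{positivity_bdivisors} into \eqref{eqn:ABCD} reduces the claim to that proposition applied to the remaining triple. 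The main obstacle I expect is the bookkeeping of the equality case: the combined inequality can be an equality only if \emph{both} diagonal inequalities are equalities at once, and this has to be argued without loss of information when one of the brackets happens to be infinite; a careful split according to whether $\mu$ is interior or an end of $\Val{X}$ should take care of this issue.
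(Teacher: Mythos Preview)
Your proposal is correct and follows essentially the same approach as the paper's own proof: both arguments apply \refprop{positivity_bdivisors} four times---twice as equalities on the separated pairs $(\nu_1,\nu_2)$ and $(\nu_3,\nu_4)$, twice as inequalities on the diagonal pairs---and then combine them multiplicatively, cancelling $\alpha(\mu)^2$; the paper applies the two diagonal inequalities sequentially via an intermediate step while you multiply them at once, but this is a cosmetic difference. Your treatment of the degenerate case $\alpha(\mu)=+\infty$ (forcing $\mu$ to be an end, hence to coincide with one element of each pair) also mirrors the paper's case analysis.
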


\begin{proof}
{\bf Suppose first that $\alpha(\mu)=+\infty$.} In this case, $\mu$ is necessarily an end of $\Val{X}$, 
  i.e., $\Val{X} \setminus \{\mu\}$ is connected.
It follows that, up to permuting the roles of $\nu_1, \nu_2$ and of $\nu_3, \nu_4$, 
we have either $\nu_1=\nu_3 = \mu$ or $\nu_1=\nu_4=\mu$.

In the first case, if either $\nu_2$ or $\nu_4$ coincides with $\mu$, 
then both sides of \eqref{eqn:ABCD} are $+\infty$, 
and we have equality, in agreement with the statement.
If both $\nu_2$ and $\nu_4$ differ from $\mu$, the left hand side of \eqref{eqn:ABCD} is finite, 
while the right hand side is $+\infty$, again in agreement with the statement, since $\mu$ 
does not separate $\nu_2$ and $\nu_4$.

In the second case, the left and right hand sides of \eqref{eqn:ABCD} coincide, 
and in fact $\mu$ separates also the couple $(\nu_1, \nu_3)$ and $(\nu_2, \nu_4)$.

\medskip
\noindent
{\bf Suppose now that $\alpha(\nu) < +\infty$.} 
By \refprop{positivity_bdivisors}, we have:
\begin{align}
\bra{\mu,\nu_1} \cdot \bra{\mu,\nu_3} &\leq \bra{\mu,\mu} \cdot \bra{\nu_1,\nu_3}, \label{eqn:ACEE}\\
\bra{\mu,\nu_2} \cdot \bra{\mu,\nu_4} &\leq \bra{\mu,\mu} \cdot \bra{\nu_2,\nu_4}. \label{eqn:BDEE}
\end{align}
We want to prove the inequality: 
\begin{equation}\label{eqn:ABCDEE}
    \bra{\nu_1,\nu_2} \cdot \bra{\nu_3,\nu_4} \cdot \bra{\mu,\mu} \leq \bra{\mu,\nu_2} \cdot \bra{\mu,\nu_4}    
     \cdot \bra{\nu_1,\nu_3},
\end{equation}
which implies the statement \eqref{eqn:ABCD} by applying \eqref{eqn:BDEE}.
Now, again by \refprop{positivity_bdivisors}, we have:
\begin{align}
\bra{\mu,\nu_1} \cdot \bra{\mu,\nu_2} &= \bra{\mu,\mu} \cdot \bra{\nu_1,\nu_2}, \label{eqn:ABEE}\\
\bra{\mu,\nu_3} \cdot \bra{\mu,\nu_4} &= \bra{\mu,\mu} \cdot \bra{\nu_3,\nu_4}, \label{eqn:CDEE}
\end{align}
where the equalities are given by the fact that $\mu$ separates both couples 
$(\nu_1,\nu_2)$ and $(\nu_3,\nu_4)$.
From these equalities, together with \eqref{eqn:ACEE}, we deduce that:
\begin{align*}
\bra{\nu_1,\nu_2} \cdot \bra{\nu_3,\nu_4} \cdot \bra{\mu,\mu}^2
&= \bra{\mu,\nu_1} \cdot \bra{\mu,\nu_3} \cdot \bra{\mu,\nu_2} \cdot \bra{\mu,\nu_4}\\
&\leq \bra{\mu,\mu} \cdot \bra{\nu_1,\nu_3} \cdot \bra{\mu,\nu_2} \cdot \bra{\mu,\nu_4},
\end{align*}
which gives the desired inequality \eqref{eqn:ABCDEE}.

Finally, by \refprop{positivity_bdivisors},  the inequalities \eqref{eqn:ACEE} and \eqref{eqn:BDEE} 
are equalities if and only if $\mu$ separates both the couple $(\nu_1, \nu_3)$ 
and the couple $(\nu_2, \nu_4)$.
This concludes the proof.
\end{proof}

\begin{proof}[{\bf Proof of \refthm{arborcase_val}}]
   By homogeneity of the bracket, we can assume that the semivaluation 
   $\lambda$ is normalized (see \refrmk{ud_val_properties}). 
   Clearly, (\ref{forany_val}) implies (\ref{exists_val}).

 \medskip
\noindent
{\bf Let us prove that (\ref{arbsingul_val}) $\Longrightarrow$ (\ref{forany}).} 
Let $\lambda \in \Val{X}$ be any normalized semivaluation. 
Since by construction $\udv{\lambda}$ is symmetric and vanishes only on the diagonal, 
it is enough to show that the ultrametric triangular inequality holds.

Let $\nu_1, \nu_2, \nu_3 \in \Val{X}$, and assume that  
$c := \bra{\lambda,\nu_1} \cdot \bra{\lambda,\nu_2} \cdot \bra{\lambda,\nu_3} \in [0, + \infty]$ 
is finite. This is guaranteed for example if the three semivaluations are taken in 
$\Val{X} \setminus \{\lambda\}$.
Let us define $I_1, I_2, I_3$ by:
\begin{align*}
\udv{\lambda}(\nu_1, \nu_2)=\frac{\bra{\lambda,\nu_1} \cdot \bra{\lambda,\nu_2}}{\bra{\nu_1, \nu_2}} = \frac{c}{\bra{\nu_1,\nu_2} \cdot \bra{\lambda,\nu_3}}=:\frac{c}{I_3},\\ 
\udv{\lambda}(\nu_1, \nu_3)=\frac{\bra{\lambda,\nu_1} \cdot \bra{\lambda,\nu_3}}{\bra{\nu_1, \nu_3}} = \frac{c}{\bra{\nu_1,\nu_3} \cdot \bra{\lambda,\nu_2}}=:\frac{c}{I_2},\\ 
\udv{\lambda}(\nu_2, \nu_3)=\frac{\bra{\lambda,\nu_2} \cdot \bra{\lambda,\nu_3}}{\bra{\nu_2, \nu_3}} = \frac{c}{\bra{\nu_2,\nu_3} \cdot \bra{\lambda,\nu_1}}=:\frac{c}{I_1}.
\end{align*}
We want to show that if $X$ is arborescent, then 
among the quantities $I_1, I_2, I_3$, at least two coincide, 
and they are smaller or equal than the third one.

Since $X$ is arborescent, 
the convex hull $\cvxh{\nu_1, \nu_2, \nu_3, \lambda}$ 
of $\{\nu_1, \nu_2, \nu_3, \lambda\}$ has one of the shapes represented in \reffig{fivetrees}. 
In this setting, the convex hull of a finite subset $S \subset \Val{X}$ may be defined as the 
union of the images of all injective continuous paths $\gamma \colon [0,1] \to \Val{X}$ 
(the latter considered with its weak topology) joining any two (distinct) points of $S$ 
(see Remark \ref{ch} below for an explicit description of this convex hull).

Possibly reordering the four semivaluations, we may assume that they are in counter-clockwise order, starting from the top right corner. In the case of the $Y$-shape, 
assume that the branch point is $\lambda$ (in other cases the argument is the same).
We study case by case, according to the shape of $\cvxh{\nu_1, \nu_2, \nu_3, \lambda}$:

\begin{itemize}[leftmargin=30pt, itemindent=40pt]
\item[{\bf $\bullet$ $H$-shaped.}] Let $\mu$ be any point in the horizontal segment. 
    It separates all couples, excepted at least one between $\nu_1, \lambda$ and $\nu_2, \nu_3$. 
     By \refprop{quadineg} we deduce that $I_3=I_2 < I_1$.
\item[{\bf $\bullet$ $X$-shaped.}] The branch point $\mu$ separates all couples, and $I_1=I_2=I_3$.
\item[{\bf $\bullet$ $Y$-shaped.}] The branch point $\mu=\lambda$ separates all couples, 
     and again $I_1=I_2=I_3$. 
\item[{\bf $\bullet$ $F$-shaped.}] Let $\mu$ be the branch point. It separates all couples, excepted  
      $\nu_1,\nu_2$. We get $I_1=I_2 < I_3$.
\item[{\bf $\bullet$ $C$-shaped.}] Let $\mu$ be any point in the vertical segment. 
      It separates all couples, excepted $\nu_1,\nu_2$ and $\nu_3, \lambda$. We get $I_1=I_2 < I_3$.
\end{itemize}

The case when some of the semivaluations $\nu_1, \nu_2, \nu_3, \lambda$ coincide is easier, 
and is left to the reader. 
We conclude that $\udv{\lambda}$ defines an ultrametric distance on 
$\Val{X} \setminus \{\lambda\}$ (and an extended ultrametric on $\Val{X}$).

\medskip
\noindent
{\bf We conclude the proof of \refthm{arborcase_val} by showing that (\ref{exists_val}) 
$\Longrightarrow$ (\ref{arbsingul_val}).} 
We proceed by contradiction, and assume that $X$ is not arborescent,  i.e., 
there exists a good model $\pi$ such that its dual graph $\Gamma_\pi$ has a loop. 
Denote by $E_1, \ldots, E_r$ the vertices of such a loop, where $E_j \in \primes{\pi}$ 
are exceptional primes satisfying  $E_j \cdot E_{j+1}=1$ for all $j=1, \ldots, r$ (with cyclic indices).
It follows that $\Val{X}$ has itself a loop $S$, given by the quasi-monomial valuations which 
are either the divisorial valuations $\nu_{E_j}$, or the quasi-monomial ones at $p_j = E_j \cap E_{j+1}$, 
for all $j \in \{1, \dots, r\}$. 
We have fixed a semivaluation $\lambda$ for which $\udv{\lambda}$ is an ultrametric distance.
We will show that there exist $\nu_1, \nu_2, \nu_3 \in \Val{X}$ satisfying
\begin{equation}\label{eqn:noud_val}
\bra{\nu_3,\lambda} \cdot \bra{\nu_1,\nu_2} < \bra{\nu_2,\lambda} \cdot \bra{\nu_1, \nu_3} < \bra{\nu_1, \lambda} \cdot \bra{\nu_2,\nu_3},
\end{equation}
or $I_3 < I_2 < I_1$, if we use the notations introduced in the previous part of the proof. 
This would contradict the hypothesis that $\udv{\lambda}$ is an ultrametric distance.

But this is the valuative counterpart of \refprop{noud}, which can be proved in this more general 
setting by using \refprop{positivity_bdivisors} instead of \refprop{crucial}.
The role of $a,b,m,p$ will be played by $\nu_3,\lambda,\nu_1,\nu_2$ respectively.
In particular, given $b$, it suffices to pick $\nu_3$ as any point in $S$ so that $\lambda$ 
is in the connected component of $\Val{X} \setminus \{\nu_3\}$ containing 
$S \setminus \{\nu_3 \}$. We may assume that $\nu_3$ is divisorial, associated 
to an exceptional prime divisor $E_a$.
Fix a model $X_\pi$ such that $\lambda$ and $\nu_3$ have different centers on it.  
Denote by $E_m$ and $E_p$ the exceptional prime divisors adjacent to $E_a$, whose 
associated valuations belong to $S$. Up to taking a higher model, we may also 
assume that the center of $\lambda$ is disjoint from $E_m$ and $E_p$, 
and that $\nu_3$ does not separate $\lambda$ from either $\nu_{E_m}$ or $\nu_{E_p}$.
\refprop{noud} gives two valuations $\nu_1$ and $\nu_2$, corresponding respectively 
to monomial valuations at the points $x_m$ and $x_p$ of \reffig{cycle}, which 
satisfy \eqref{eqn:noud_val}.
\end{proof}

\begin{rmk} \label{ch}
   The convex hull mentioned in  the previous proof 
   can be described in terms of the skeleton of a model.
   Fix a good resolution $\pi$, and for any closed point $P \in \pi^{-1}(x_0)$, 
   denote by $\mc{V}_P$ the topological closure of the set of semivaluations in $X_\pi$ 
   centered at $P$. This set $\mc{V}_P$ can be naturally identified with the valuative tree $\Val{}$ 
   of \cite{favre-jonsson:valtree}. 
   If $S$ is contained in $\mc{V}_P$ for some $P$, the convex hull $\cvxh{S}$ is taken in $\mc{V}_P$, 
   with respect to its tree structure inherited by $\Val{}$.
   If this is not the case, then there exist finitely many points 
   $P_1, \ldots, P_r$  (with $r \geq 2$) such that 
   $S \subset \bigcup_j \mc{V}_{P_j}$. In this situation,  one has to consider first for each 
   $j \in \{1, \dots, r\}$ the convex hull inside $\mc{V}_{P_j}$ of the union of $S \cap \mc{V}_{P_j}$ with 
   $r_\pi(S \cap \mc{V}_{P_j})$, as defined above, 
   where $r_\pi:\Val{X}\to \skel{\pi}$ is the retraction defined in \refssec{projlim}. 
   Then the convex hull $\cvxh{S}$ 
   is obtained as the union of those convex hulls with the convex hull of 
   $r_\pi(S)$ inside $\skel{\pi}$ (which is a tree, since $X$ is arborescent by hypothesis). 
   In fact, in this case $\mc{V}_X$ itself has a structure of $\nR$-tree 
   (see \refprop{allblocksaretrivial}).
\end{rmk}

\medskip
\subsection{$\nR$-trees and graphs of $\nR$-trees}\label{ssec:graphsofRtrees}
$\:$

\medskip

In  \refssec{blockvertextree}, we associated to any finite connected graph $\Gamma$ a tree $\bvt{\Gamma}$, called its \textit{brick-vertex tree}.  Then we applied 
this construction to the dual graph of the embedded resolution of 
the sum of a finite set $\mathcal{F}$ of branches on a normal surface singularity $X$, 
and we were able to describe using it a situation in which 
$\udb{L}$ defines an ultrametric distance on $\mathcal{F} \setminus \{L\}$ (see \refthm{ultramthm}).

In Section \ref{ssec:brickrtree} 
we construct an analog of the brick-vertex tree for the space $\Val{X}$.
With this scope in mind, we first recall the tree structure carried by the space of normalized 
semivaluations of a smooth surface singularity. Then we introduce the more general concept of 
\textit{graph of $\nR$-trees} (see \refdef{goRt}) and we explain how to associate to such 
a graph a topological space, called its \textit{realization} (see \refdef{realizgraph}). 
We conclude the section by introducing several operations on graphs of $\nR$-trees, \textit{regularizations} (see \refdef{regraph}) and \textit{refinements} (see \refdef{refin}), 
which will be used in the next section 
in the construction of the \textit{brick-vertex tree} of a graph of $\R$-trees.

\medskip

When $X$ is smooth, the space of normalized semivaluations $\boxed{\Val{}} :=\Val{X}$ has been 
deeply studied by Favre and Jonsson in \cite{favre-jonsson:valtree} 
(see also Jonsson's course \cite{jonsson:berkovich}). 
It is referred to as the \textit{valuative tree}, since it carries the structure of a \textit{$\nR$-tree} 
in the sense of \cite[Definition 2.2]{jonsson:berkovich}. Let us first recall the definition 
of this notion:

\begin{defi}\label{def:Rtree}
An \emph{interval structure} on a set $I$ is a partial order $\leq$ on $I$ under which 
$I$ becomes isomorphic as a poset to the real interval $[0,1]$ or to the trivial 
real interval $\{0\}$ (endowed with the standard total order of the real numbers).
A sub-interval $J \subseteq I$ is a subset of $I$ that becomes a subinterval of $[0,1]$ 
under such an isomorphism.
If $I$ is a set with an interval structure, we denote by $I^-$ the same set with the opposite interval structure.

An \emph{$\nR$-tree} is a set $W$ together with a family 
$\{\boxed{[x,y] }\subseteq W\ |\ x,y \in W\}$ of subsets endowed with interval structures, 
and satisfying the following properties:
\begin{enumerate}[label=(T\arabic*),leftmargin=0pt, itemindent=40pt]
  \item\label{item:T1} $[x,x]=\{x\}$;
   \item\label{item:T2} if $x \neq y$, then $[x,y] = [y,x]^-$ as posets; 
        moreover, $x= \min [x,y]$ and $y=\min [y,x]$;
    \item\label{item:T3} if $z \in [x,y]$, then $[x,z]$ and $[z,y]$ are subintervals 
       of $[x,y]$ such that $[x,z] \cup [z,y]=[x,y]$ and $[x,z]\cap[z,y] = \{z\}$;
    \item\label{item:T4} for any $x,y,z \in W$, there exists a unique element 
        $w= \boxed{x \wedge_z y} \in [x,y]$ such that $[z,x] \cap [y,x]=[w,x]$ and $[z,y] \cap [x,y] = [w,y]$;
    \item\label{item:T5} if $x \in W$ and $(y_\alpha)_{\alpha \in A}$ is a net in $W$ such that 
       the segments $[x,y_\alpha]$ increase with $\alpha$ 
       (relative to the inclusion partial order of the subsets of $W$), then there exists $y \in W$ 
        such that $\bigcup_\alpha [x,y_\alpha) = [x,y)$.
\end{enumerate}
Here we used the notation $\boxed{[x,y)} := [x,y] \setminus \{y\}$. 
We define analogously $\boxed{(x,y]}$ and $\boxed{(x,y)}$.
\end{defi}

Recall that a \emph{net} is a sequence indexed by a directed set, not necessarily countable. 

An $\nR$-tree structure on the set $W$ induces a natural topology, called \emph{weak topology}.
It is constructed as follows.
Fix any $z \in W$, and pick any two points $x,y \in W \setminus \{z\}$. 
We say that $x \sim_z y$ if $z \not \in [x,y]$ (a condition equivalent to 
$(z,x] \:  \cap \:  (z,y] \neq \emptyset$, found sometimes in the literature). 
An equivalence class is called a \emph{tangent direction} $\boxed{\vect{v}}$ at $z$, 
and the set of all such classes is denoted by $\boxed{T_z W}$ (see \refex{tangdir}).
Tangent directions need to be thought as \textit{branches} at a point $z$ of $W$, 
and in some way as infinitesimal objects (hence the name \textit{tangent direction}). 
For this reason we distinguish an element $\vect{v} \in T_z W$ from the set $\boxed{U_z(\vect{v})}$ 
of points $x \in W \setminus \{z\}$ representing $\vect{v}$, which is seen as a subset of $W$. 
We declare $U_z(\vect{v})$ to be open for any $z$ varying in $W$ and 
$\vect{v}$ varying among all tangent directions at $z$.
The weak topology is generated by such open sets (i.e., it is the weakest topology for which all 
the sets $U_z(\vect{v})$ are open).
When considering the $\nR$-tree structure of $\Val{}$, the weak topology defined here coincides with the weak topology defined in \refssec{valuation_spaces}.

% {\green
% \subsubsection*{Graphs of $\nR$-trees}
% $\:$
% }

\medskip

The structure of the space of normalized semivaluations $\Val{X}$ associated to a normal 
surface singularity $X$ has been investigated from a viewpoint similar to that of the present 
paper by Favre \cite{favre:holoselfmapssingratsurf}, and by Gignac and the last-named author in 
\cite{gignac-ruggiero:locdynnoninvnormsurfsing}.
It has also been investigated from somewhat different perspectives 
by Fantini \cite{fantini:normalizedlinks, fantini:normalizedberkovich}, 
Thuillier \cite{thuillier:homotopy} and 
de Felipe  \cite{defelipe:topspacesvalgeomsing}.
Roughly speaking, $\Val{X}$ is obtained patching together copies of the valuative tree $\Val{}$ 
along any skeleton $\mc{S}$ associated to a good resolution $\pi$ (see \refprop{valXisagraph}). 
As the name suggests, the space $\Val{X}$ admits an $\nR$-tree structure if and only 
if the singularity $X$ is arborescent (see \refprops{blocksinskeleta}{allblocksaretrivial}).

To cover the general case, we introduce the concept of \textit{graph of $\nR$-trees}, 
which combines the concepts of $\nR$-trees and finite graphs.

Seen combinatorially, a finite graph is given by a set of vertices $V$ and a set of edges $E$, 
both seen abstractly and related by incidence maps. One may then consider a topological 
realization of it: the edges can be seen as real segments $I_e=[0,1]$, and the incidences 
may be realized by maps $i_e\colon\{0,1\} \to V$, 
which give the identifications between the ends of the segment $I_e$ and some vertices of $V$.
We may assume that every vertex in $V$ is in the image of one such map $i_e$.
The graph can be then realized topologically as the disjoint union of all segments $I_e$ 
(and of the set $V$)
quotiented by the identification of the ends to vertices according to the maps $i_e$.
In order to define graphs of $\nR$-trees, we replace in this construction 
the segments with $\nR$-trees:

\begin{defi}\label{def:goRt}
   A \emph{graph of $\nR$-trees of finite type} is defined by the following data: 
\begin{enumerate}[label=(G\arabic*),leftmargin=0pt, itemindent=40pt]
  \item\label{item:G1} Three sets $V,E,D$, with $V$ and $E$ finite. 
   \item A family $(W_e)_{e \in E}$ of $\nR$-trees with two distinct marked points 
       $x_e, y_e \in W_e$, together with a map $i_e \colon V_e:=\{x_e,y_e\} \to V$. 
  \item A family $(W_d)_{d \in D}$ of $\nR$-trees with a marked point $x_d \in W_d$, 
      together with a map $i_d \colon V_d:=\{x_d\} \to V$.
\end{enumerate}
   We denote such a structure by $\boxed{(V,W)}$, where $W:=(W_a)_{a \in A}$ 
    is a family of $\nR$-trees as described above, with $A: =E \sqcup D$.
     An element $W_a$ is called a \emph{tree element} of $(V,W)$. If $a \in E$, $W_a$ is called an    
     \emph{edge element}, while if $a \in D$, $W_a$ is called a \emph{decoration element} of $(V,W)$.
      The maps $i_a$ are called \emph{identification maps}.
 \end{defi}

The previous definition has both topological aspects (as we consider $\R$-trees as building blocks) 
and combinatorial ones (as one has incidence maps). 
As for finite graphs, this definition allows to get a topological space:

\begin{defi}\label{def:realizgraph}
Given a graph of $\nR$-trees $(V,W)$, its \emph{realization} $Z$ is the set defined as
\[
      \boxed{Z(V,W)} :=\left.\bigsqcup_{a \in A} W_a\right/ \sim,
\]
    where $W_a \ni x \sim x' \in W_{a'}$ if and only if $x \in V_a, x' \in V_{a'}$ and $i_a(x)=i_{a'}(x')$.
\end{defi}

\begin{rmk} \label{rmk:notcorrectop}
Notice that we defined the realization $Z$ of a graph of $\nR$-trees $(V,W)$ merely as a set, 
and not as a topological space, even though it is endowed naturally with the topology induced 
by the one on the tree elements through the quotient by the equivalence relation $\sim$.
This topology, to which we will refer as the \emph{quotient topology}, is not well adapted to  
our purposes (see \refrmk{graphsofRtrees-topologies}). We will introduce a second topology, 
called the \emph{weak topology} (see \refdef{goRt_weaktop}), 
and we will consider a realization of $Z$ as a topological space with respect to the weak topology.
\end{rmk}

Up to restricting $V$ if necessary, we will always assume that for any $v \in V$, 
there exists an $a \in A$ such that $v \in i_a(V_a)$.
In this case, we can identify $v$ with the class of elements of the form $i_a(x)$ that satisfy $i_a(x)=v$.

Denote by $\boxed{\on{pr}}$ the natural projection from $\bigsqcup_{a \in A} W_a$ to $Z$. 
Let $x,y \in Z$ be two points, and suppose that there exists $a \in A$ such that $x,y \in \pr(W_a)$.
If $W_a$ is an edge element (i.e., $a \in E$), and $x=y=\pr(v)$ with $v \in V$, we denote by $[x,y]$ 
the singleton $\{\pr(v)\}$, and by $[x,y]_a$ the projection of the segment 
$[x_a,y_a]_a \subseteq W_a$ given by the $\nR$-tree structure of $W_a$, 
where $x_a, y_a$ are the marked points of $W_a$.

If all other situations, there exists unique $\tilde{x}$ and $\tilde{y}$ in $W_a$ 
so that $\pr(\tilde{x})=x$ and $\pr(\tilde{y})=y$.
In this case we denote by $[x,y]_a$ the projection of the unique segment $[\tilde{x},\tilde{y}]_a$ 
in $W_a$.

To ease notation, if clear from the context, we will omit the projection map and denote $\pr(W_a)\subseteq Z$ simply by $W_a$.

\begin{rmk}
We say that the graph in \refdef{goRt} is of finite type because we impose 
both the set of vertices $V$ and the set $E$ parametrizing the edge elements  to be finite.
One can remove these conditions in \ref{item:G1} and get more general objects.
Since our interest in graphs of $\nR$-trees lies solely in the description of valuation spaces, 
we will only need to work with graphs of $\nR$-trees of finite type.
We will hence assume all graphs of $\nR$-trees to be of finite type, without further mention.

Nevertheless, most of the results in this section will apply for general graphs of $\nR$-trees. 
We will use the finiteness of $V$ and $E$ in the next sections, to deduce the finiteness 
of the number of bricks (see \refssec{brickrtree}).

Moreover, the definition of graphs of $\nR$-trees can be easily adapted to other situations, 
for example to $\nQ$-trees, or trees of spheres, etc.
\end{rmk}

From a graph of $\nR$-trees, we can easily extract a finite graph (in the sense of 
\refdef{graphcell}), which encodes its geometric complexity:

\begin{defi}
     Let $(V,W)$ be a graph of $\nR$-trees, with realization $Z(V,W)$. Its \emph{skeleton} 
     $\boxed{S(V,W)}$ is the subset of $Z(V,W)$ obtained as the union 
     of the projected segments $[x_e,y_e]_e$, while $e$ varies in $E$.
\end{defi}

\begin{ex}\label{ex:goRt0}
The top left part of \reffig{goRt1} depicts an example of graph of $\nR$-trees $(V,W)$, 
where $V$ consists of two points $\{v_r, v_g\}$ (depicted in red and green), and $W$ 
consists of four tree elements: one decoration element and three edge elements. 
Marked points are colored red or green according to the identification maps.
On the right part, we can see its realization, obtained by gluing together the tree elements 
along the marked points according to the identification maps.
Its skeleton $S(V,W)$, represented by thick lines, consists of the projection to $Z$ of the 
three segments between the marked points of the three edge elements. 
The lower left part of \reffig{goRt1} depicts the regularization of $(V,W)$, 
a notion introduced below in 
\refdef{regraph}.
\end{ex}

\begin{figure}[ht]
\centering
\begin{minipage}{0.5 \columnwidth}
\def\svgwidth{0.97 \columnwidth}
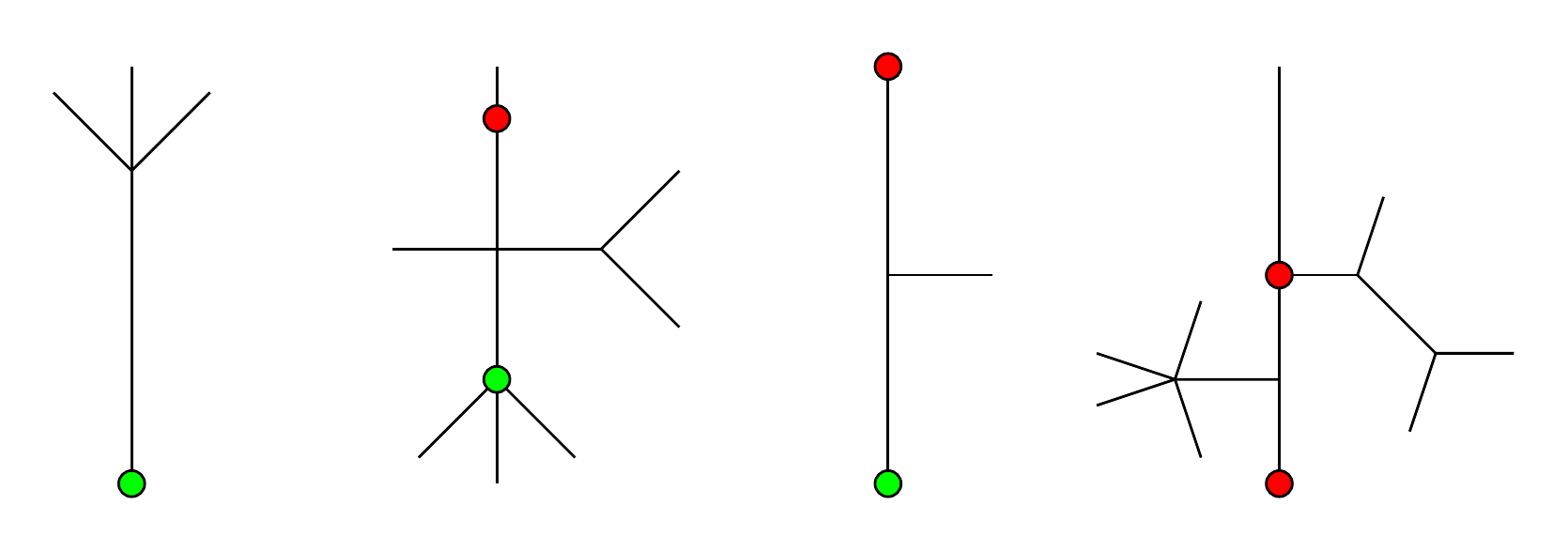

\vspace{8mm}

\def\svgwidth{1.0 \columnwidth}
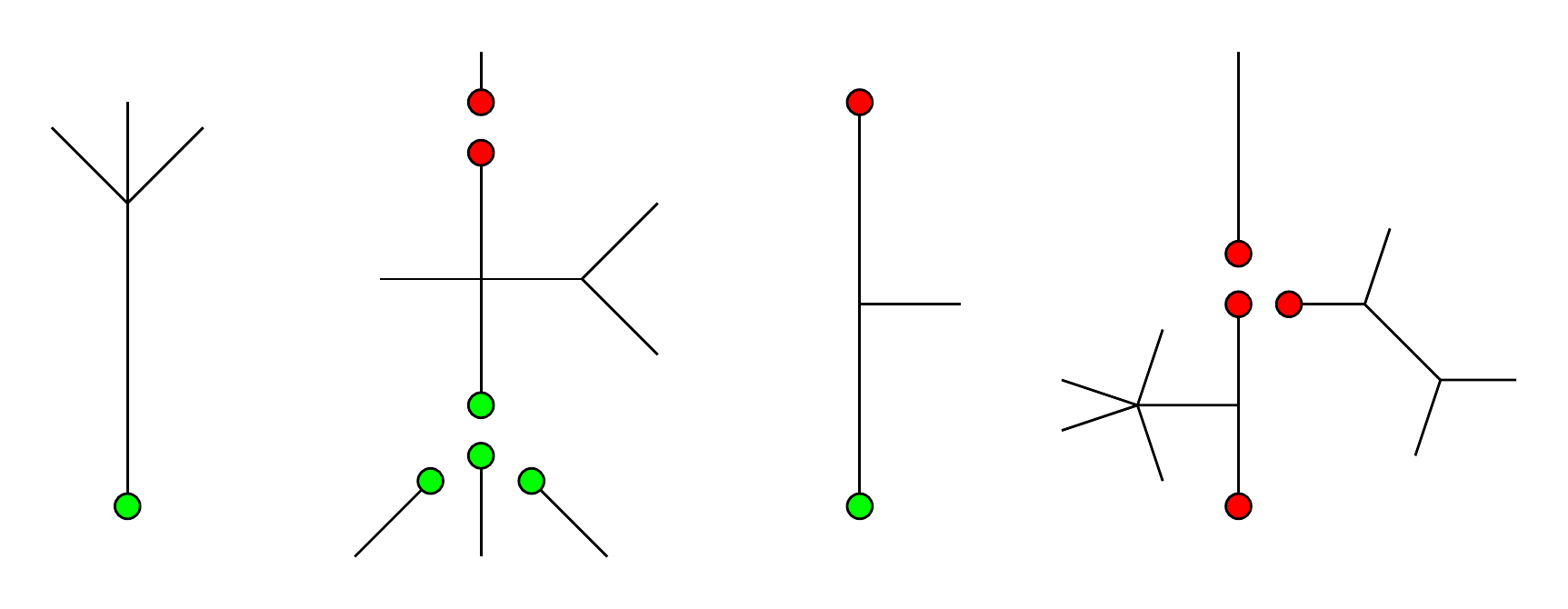
\end{minipage}
\begin{minipage}{0.45 \columnwidth}
\def\svgwidth{1.0 \columnwidth}
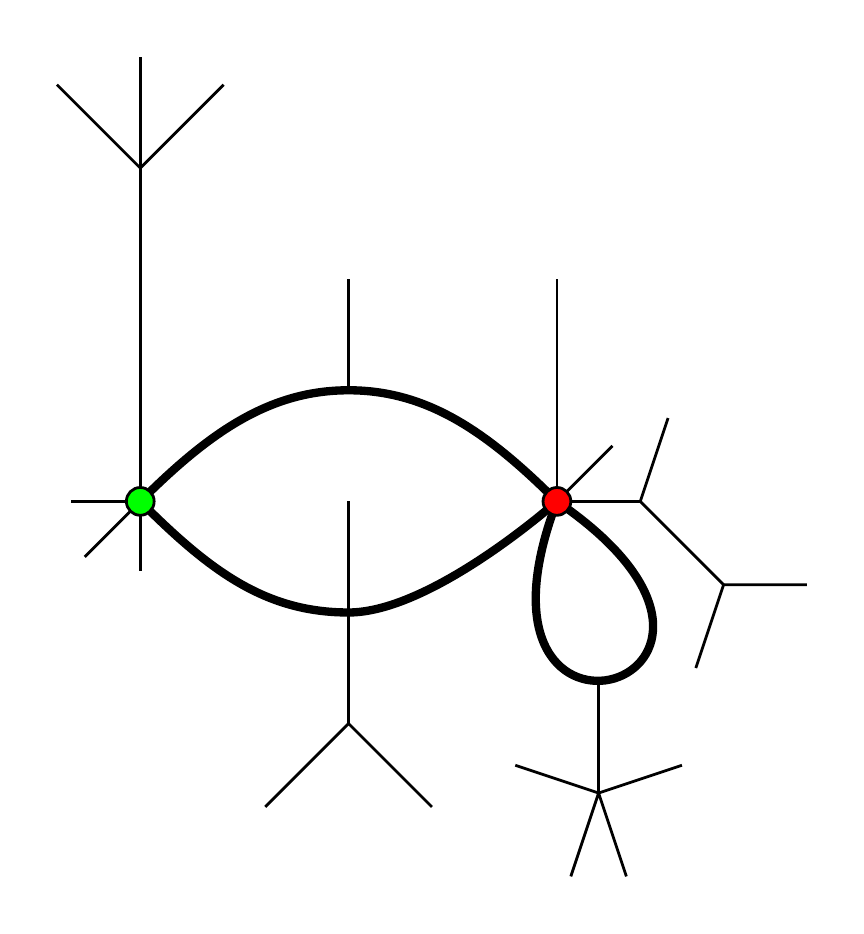
\end{minipage}
\caption{A graph of $\nR$-trees, its regularization, their realization and the 
   corresponding skeleton.}\label{fig:goRt1}
\end{figure}

As indicated in \refrmk{notcorrectop}, the quotient topology on the realization 
of a graph of $\nR$-trees is not well adapted. Another topology can be introduced, 
using the notion of \textit{arc}  between two points of the realization:

\begin{defi}
Let $(V,W)$ be a graph of $\nR$-trees, with realization $Z$. Let $x,y$ be two points in 
$Z$.
An \emph{arc} $\gamma$ between $x$ and $y$ is a subset of $Z$ obtained 
as a finite concatenation of segments $[s_j, s_{j+1}]_{a_j}$, $j=0, \ldots, n$, where 
\begin{itemize}
\item $s_0=x$, $s_{n+1}=y$, and $s_j \in V$ for all $j=1, \ldots n$;
\item $s_j, s_{j+1} \in W_{a_j}$ for all $j=0, \ldots, n$;
\item any two segments in the concatenation intersect in at most finitely many points.
\end{itemize}
\end{defi}

Here comes the definition of the topology on the realization:

\begin{defi}\label{def:goRt_weaktop}
Let $(V,W)$ be a graph of $\nR$-trees, with realization $Z$.
For any $z \in Z$, and any $x,y \in Z \setminus \{z\}$, we say that $x \sim_z y$ if there 
exists an arc between $x$ and $y$, which does not contain $z$.
The \emph{weak topology} on $Z$ is the weakest topology for which any 
subset $U$ of $Z$ representing an equivalence 
class for $\sim_z$, for any $z \in Z$, is a open set.
\end{defi}

Notice that, in contrast with the situation for $\nR$-trees, the equivalence classes 
for $\sim_z$ do not correspond directly with tangent vectors at $z$.
In fact, one can define tangent vectors at a point $z \in Z$ as the union of tangent 
vectors at $z \in W_a$ for all $a \in A$.
When $Z$ admits cycles, the spaces associated to two tangent 
vectors at a point $z$ of the cycle could belong to the same equivalence class 
with respect to $\sim_z$.
See \cite[Section 2.4]{gignac-ruggiero:locdynnoninvnormsurfsing} for a description 
of this phenomenon for normalized semivaluation spaces attached to normal surface singularities.

\begin{ex}  \label{ex:tangdir}
  Consider again the graph of $\nR$-trees $(V,W)$ described in \refex{goRt0}, 
  and its realization $Z$, depicted on the top left and right part of \reffig{goRt1} respectively.
  The tangent space at the green point $v_g$ consists of $6$ tangent vectors, 
  associated to the $1+4+1$ tangent vectors appearing on the first $3$ tree elements.
  By contrast, $Z \setminus \{v_g\}$ has $5$ connected components. The discrepancy 
  is due to the fact that $v_g$ belongs to a cycle of the realization $Z$ of $(V, W)$.
   Similarly, the red point $v_r$ has $7$ tangent directions, while $Z \setminus \{v_r\}$ 
  has $5$ connected components. 
\end{ex}

$\nR$-trees and more generally graphs of $\nR$-trees should not be thought only as 
topological spaces.
In fact for applications to semivaluation spaces, one usually needs to go back and forth 
from the weak topology to the strong topology induced by $\logfunc$ 
(see \cite{favre-jonsson:valtree, favre-jonsson:eigenval, jonsson:berkovich, gignac-ruggiero:attractionrates, favre-jonsson:dynamicalcompactifications, gignac-ruggiero:locdynnoninvnormsurfsing}).
Nevertheless, the weak topology will be very handy, for example in order to 
be able to talk about connected components of cofinite subsets of $Z(V,W)$ and to define bricks.

\begin{rmk}\label{rmk:graphsofRtrees-topologies}
Let us compare the two topologies introduced for the realization $Z$ of a graph of $\nR$-trees: the quotient topology and the weak topology.
On the one hand, it is easy to see that the topology induced on $W_a$ by the weak topology on 
$Z$ does coincide with the weak topology on $W_a$ given by 
its $\nR$-tree structure.
On the other hand, the weak topology on $Z$ does not coincide in general  
with the quotient topology.

Consider for example the graph $(V,W)$ where $V$ consists of just one element $V=\{p\}$, 
and the family $W=(W_d)_{d \in D}$ is an infinite family of decoration elements 
(not reduced to a point).
In this case, the realization $Z$ admits a structure of $\nR$-tree, and the topology 
induced by this $\nR$-tree structure coincides with the weak topology of its graph 
of $\nR$-tree structure.
In particular, an open connected neighborhood of $p$ would contain all decoration elements 
$W_d$, but for a finite number of $d \in D$.
In contrast, an open connected neighborhood of $p$ for the quotient topology is the union of open connected neighborhoods of $p$ in any 
decoration element $W_d$, and in particular it need not contain any $W_d$. 
\end{rmk}

\medskip

Since it is not the aim of this paper to develop a complete theory of graphs of $\nR$-trees, 
we will not give a definition of morphisms of graphs of $\nR$-trees, nor of isomorphic 
graphs of $\nR$-trees. 
Nevertheless, we will consider in this subsection a few operations on graphs of $\nR$-trees, 
which will change the graph structure without changing the underlying realization 
(seen as a topological space).
With this in mind, we will say that two graphs of $\nR$-trees are \emph{equivalent} 
if their realizations are homeomorphic with respect to the weak topologies.

The first operation is related to the choice of the marked points in the tree elements.
In fact, following the parallel with classical graphs, we consider the additional condition:
\begin{enumerate}[label=(G\arabic*),leftmargin=0pt, itemindent=40pt]\setcounter{enumi}{3}
\item \label{item:G4} the marked points $V_a$ of a tree element $W_a$ are \emph{ends} 
of $W_a$ (i.e., elements that do not disconnect $W_a$).
\end{enumerate}

\begin{defi} 
   The graphs of $\nR$-trees satisfying the additional condition \ref{item:G4} are called \emph{regular}.
 \end{defi}

Given any graph of $\nR$-trees $(V,W)$, one can consider the following construction.
For any $d \in D$, the tree $W_d$ has a marked point $x=x_d$.
For any tangent vector $\vect{v} \in T_{x}W_d$, set 
$W_{d, \vect{v}} := U_x(\vect{v}) \cup\{x\}$. 
The set $W_{d, \vect{v}}$ is an $\nR$-tree, with marked point $x$.
Set $i_{d,\vect{v}}(x) :=i_d(x)$.
We replace $W_d$ by the family $(W_{d,\vect{v}})_{\vect{v} \in T_{x}W_d}$.

Analogously, for any $e \in E$, the tree $W_e$ has two marked points $x=x_e$ and $y=y_e$.
Consider the set of connected components of $W_e \setminus V_e$. 
For any such component $U$, set $W_{e,U} :=\overline{U}$.
Notice that there is a unique component $U$ such that $W_{e,U}$ contains $V_e$, 
namely, the one containing the open segment $(x,y)$. 
We set $V_{e,U} := W_{e,U} \cap V_e$, and $i_{e,U} \colon V_{e,U} \to V$ 
so that it coincides with $i_e$ on its domain of definition.
We replace $W_e$ with the family $(W_{e,U})_{e, U}$.

Clearly $(V,(W_{d,\vect{v}}, W_{e,U})_{d, \vect{V}, e, U})$ defines a graph of $\nR$-trees 
equivalent to $(V,W)$, and satisfying property \ref{item:G4}. Therefore it is regular.

\begin{defi}   \label{def:regraph}
   The graph of $\nR$-trees $(V,(W_{d,\vect{v}}, W_{e,U})_{d, \vect{V}, e, U})$ constructed 
    above is called the \emph{regularization} of $(V,W)$.
\end{defi}

\begin{ex}\label{ex:goRt1}
On the bottom left part of \reffig{goRt1}, we can see the regularization 
$(V,W')$ of $(V,W)$ considered in \refex{goRt0}. In this case, $W'$ 
consists of ten tree elements. Notice that the number of edge elements remains unchanged.
\end{ex}

Given a graph of $\nR$-trees $(V,W)$, one can define refinements of its structure 
by adding new vertices.
Assume for simplicity that $(V,W)$ is regular (analogous constructions can be done 
in the non-regular case).
Denote by $Z$ the realization of $(V,W)$, and let $p \in Z\setminus V$ be any point.
Since $p$ is not a vertex, it belongs to a unique tree element $W_a$.

If $W_a$ is a decoration element with marked point $x$, we consider the $\nR$-tree 
$W'_a=W_a$ with marked points $x$ and $p$.
Set $V'=V \cup \{p\}$, then $i'_a(x)=i_a(x)$ and $i'_a(p)=p$.
Taking $V'$ as set of vertices, and the family $W'$ obtained from $W$ by replacing 
$W_a$ with $W'_a$, we get a new (in general non-regular) graph of $\nR$-trees, 
equivalent to $(V,W)$. Notice that in this case the number of vertices and edges increases by one.
Moreover, the skeleton $S(V',W')$ strictly contains $S(V,W)$.

If $W_a$ is an edge element with marked points $x$ and $y$, set $z=x \wedge_p y$ 
and $V'=V \cup \{p,z\}$. For any tangent vector $\vect{v} \in T_z W_a$, define 
$W'_a(\vect{v})$ as the closure of $U_x(\vect{v})$ in $W_a$. 
Set $V'_a(\vect{v}) := W'_a(\vect{v}) \cap V'$. Notice that $V'_a(\vect{v})$ 
always contains $z$, and contains another point in $V'$ in at most three cases 
(associated to the tangent vectors towards the elements $p,x,y$).
We define $i'_{a,\vect{v}} \colon V'_a(\vect{v}) \to V'$ 
similarly to the previous case.
The couple $(V',W')$, where $W'$ is the family obtained from $W$ 
by replacing $W_a$ with the family $W'_a(\vect{v})$, defines again a graph of 
$\nR$-trees equivalent to $(V,W)$.
In this case the numbers of vertices and of edges increase either by $1$ or by $2$,  according to 
the cases $p \in (x, y)$ or $p \notin (x,y)$.
Finally, also in this case $S(V',W') \supseteq S(V,W)$, with equality if and only if $p \in S(V,W)$.

\begin{defi}\label{def:refin}
Any finite composition of the operation described above and regularizations will be called a \emph{refinement} of the graph structure $(V,W)$.
\end{defi}

\begin{figure}[ht]
\centering
\begin{minipage}{0.45 \columnwidth}
\def\svgwidth{0.97 \columnwidth}
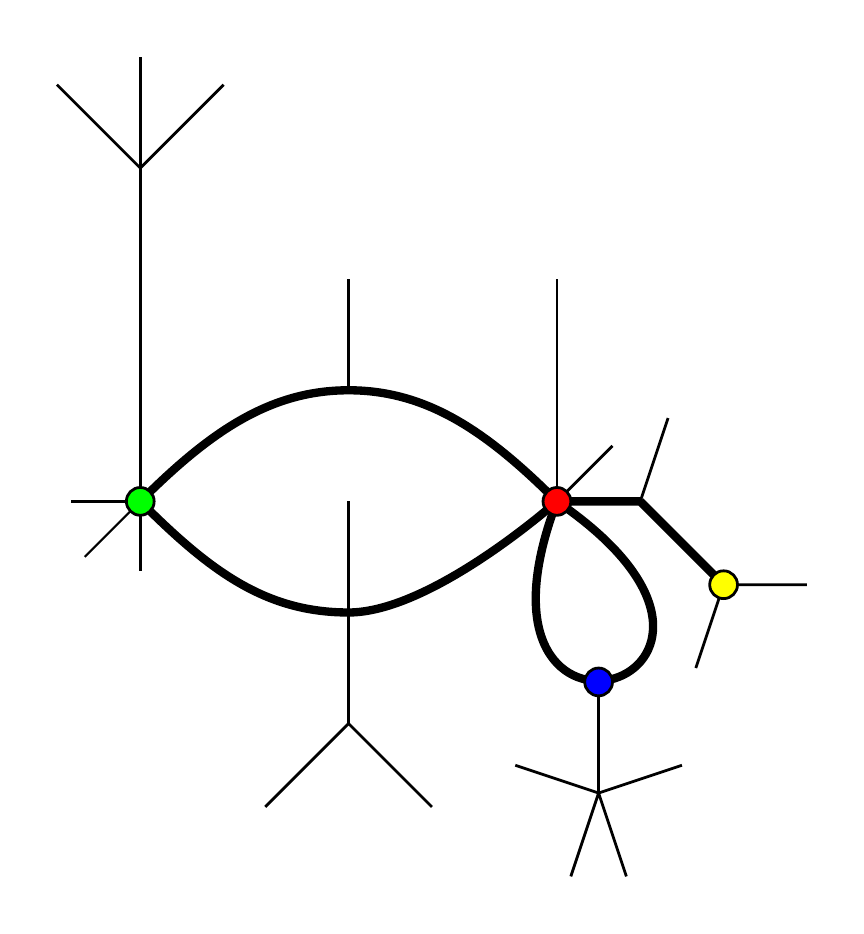
\end{minipage}
\begin{minipage}{0.5 \columnwidth}
\def\svgwidth{1.0 \columnwidth}
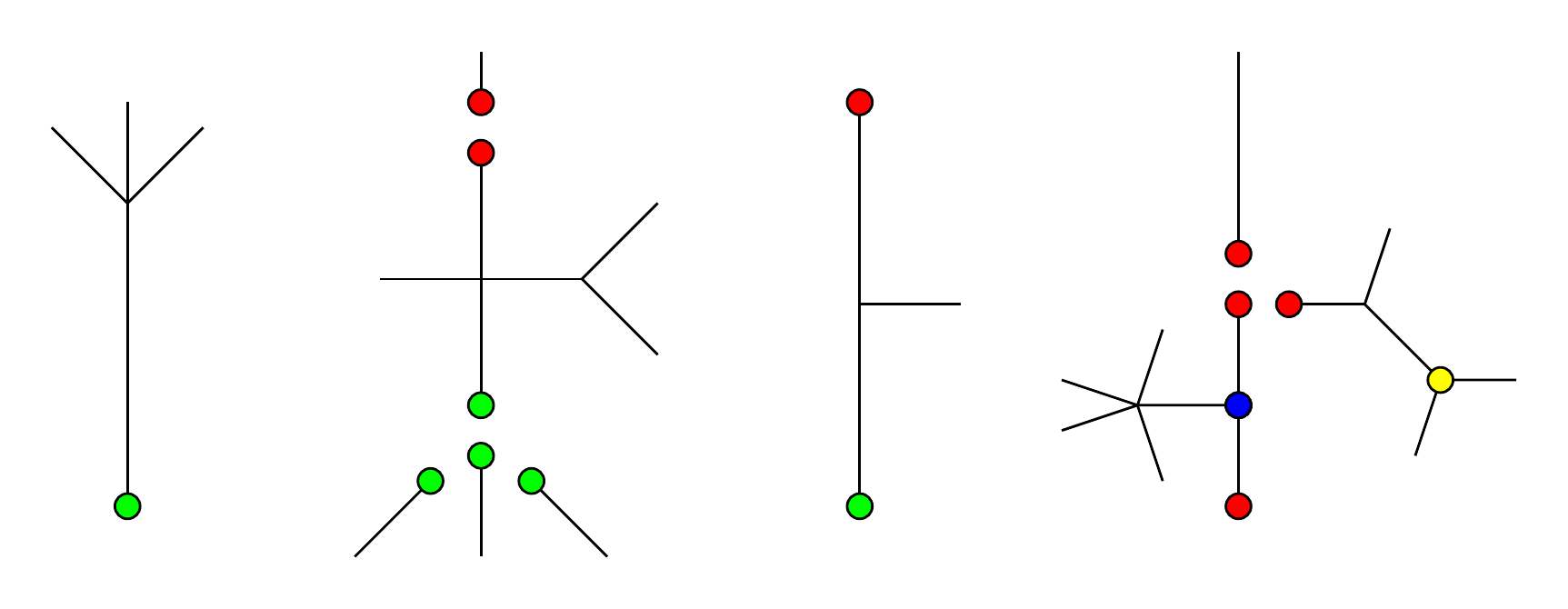

\vspace{8mm}

\def\svgwidth{1.0 \columnwidth}
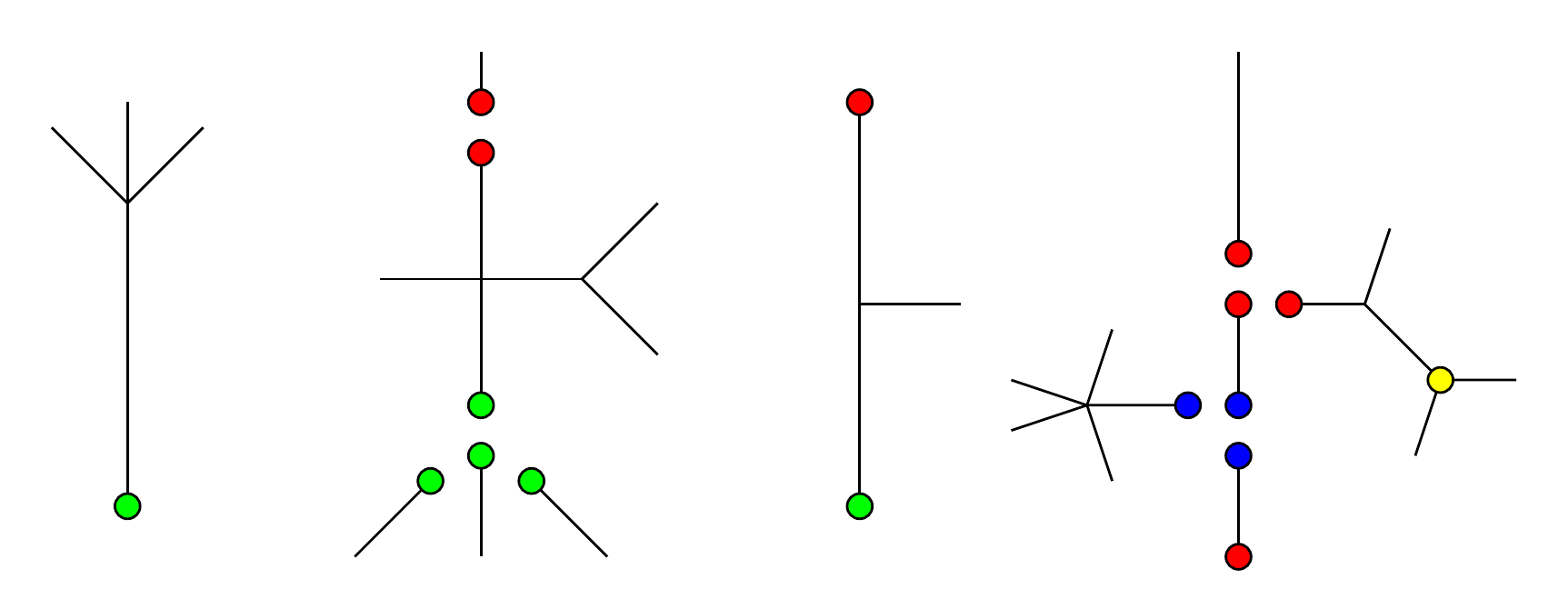
\end{minipage}
\caption{Refinement of a graph of $\nR$-trees.}\label{fig:goRt2}
\end{figure}

\begin{ex}
Consider again the regular graph $(V,W')$ described by \refex{goRt1}, 
with realization $Z$, depicted in \reffig{goRt1}. In the left part of 
\reffig{goRt2} we added two vertices, depicted in blue and yellow, 
obtaining four vertices $V'=\{r_e,r_g, r_b, r_y\}$.
The two new vertices belong to unique tree elements, that one can see in the 
top right part of the picture.
In the bottom right, we describe the (double) refinement $(V',W'')$ of $(V,W')$ 
with respect to these two new vertices.
The yellow vertex belongs to a decoration element. In this case the new 
element associated becomes an edge element, and we add a segment to the 
skeleton (represented by thick segments).
The blue vertex belongs to an edge element, and to the skeleton $S(V,W')$. 
In this case, this edge element splits in two edge elements, plus a decoration element.
\end{ex}

\begin{rmk}\label{rmk:transversetree}
Let $W$ be an edge element of some graph of $\nR$-trees, with marked points $x,y$.
For any point $z \in [x,y]$, define $\boxed{N_z}$ as  
${\displaystyle \bigcup}_{\vect{v}} U_z(\vect{v}) \cup \{z\}$, 
where $\vect{v}$ varies among the tangent vectors at $z$ not represented 
by either $x$ nor $y$.
It can be also described as the set of points $w \in W$ such that $[w,z] \cap [x,y] = \{z\}$.
The set $N_z$ admits a natural $\nR$-tree structure, as a subtree of the tree element $W$. 
It can be also seen as an $\nR$-tree \emph{rooted} at $z$, or again as a graph of $\nR$-trees 
with a single vertex $z$ and a single decoration tree. 
We will refer to $N_z$ as the \emph{tree at $z$ transverse to $[x,y]$}. 
It will be used below to define \textit{implosions} of graphs of 
$\R$-trees (see \refdef{blockimplosion}). 
\end{rmk}

\medskip
\subsection{Bricks and the brick-vertex tree of a graph of $\R$-trees} 
\label{ssec:brickrtree}
$\:$

\medskip

In this section we extend the notions of \emph{brick} and of \emph{brick-vertex tree} 
to graphs of $\R$-trees (see \refdef{genbvtree}).
In the next section, we will apply this extended notion of brick-vertex tree to 
the semivaluation space $\Val{X}$ of a normal surface singularity $X$, proving first 
that it has a structure of graph of $\nR$-trees, and getting then \refthm{udbv_val}, 
which is the counterpart of \refthm{ultramthm} for semivaluation spaces.
\medskip

The following is an analog of \refdef{separ}:

\begin{defi}
Let $Z$ be the realization of a graph of $\nR$-trees, and $x,y,z$ three points of $Z$.
We say that $z$ \emph{separates} $x$ and $y$ if 
either $z \in \{x,y\}$ or $x$ and $y$ belong 
to different connected components of $Z\setminus \{z\}$.
\end{defi}

Notice that $z$ separates $x$ and $y$ if and only if all arcs between $x$ and $y$ contain $z$.

In this section, unless it is specified differently, we will assume that the point 
$z$ separating $x$ and $y$ never belongs to $\{x,y\}$.

Let us formulate now an analog of \refdef{blocks}:

\begin{defi}  \label{def:blocksbis}
Let $Z$ be the realization of a graph of $\nR$-trees. A subset $C \subseteq Z$ is called 
\emph{cyclic} if for every couple $(x,y)$ of distinct points of $C$,  
no point $z \in C \setminus \{x,y\}$ separates them.
A \emph{cyclic element} of $Z$ is a cyclic subset which is maximal with respect to inclusion.
A cyclic element is called a \emph{brick} if it does not consist of a single point.
\end{defi}

Notice that if $C= \{x\}$, then $C$ is a cyclic element if and only if for all $y \in Z  \setminus \{x\}$ 
there exists $z \in Z  \setminus \{x,y\}$ such that $z$ separates $x$ and $y$ in $Z$.

\begin{prop}\label{prop:blocksinskeleta}
Let $Z$ be the realization of a graph $(V,W)$ of $\nR$-trees.
Then any brick of $Z$ is contained in the skeleton $S(V,W)$.
\end{prop}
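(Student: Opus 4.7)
My plan is to argue by contradiction, assuming that $C$ is a brick of $Z$ and that some point $x \in C$ does not belong to the skeleton $S(V,W)$. A crucial input is that a brick, viewed as a cyclic element in the sense of the classical theory of cyclic elements alluded to in \refrmk{cyclelemtheory}, is \emph{connected} in the weak topology on $Z$ of \refdef{goRt_weaktop}; I will use this connectedness in the final case analysis. The first step is structural: since $x \notin S$, it lies in the open subset $Z \setminus S$, and I will denote by $T$ the connected component of $x$ in $Z \setminus S$. The key claim is that the closure $\overline{T}$ in $Z$ has the form $T \cup \{p\}$ for a single point $p = p(x) \in S$, that $p$ disconnects $Z$ into the two disjoint open subsets $T$ and $B := Z \setminus (T \cup \{p\})$, and that $T \cup \{p\}$ inherits an $\nR$-tree structure, identifiable with a transverse tree as in \refrmk{transversetree}. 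After performing if needed a refinement of $(V,W)$ (see \refdef{refin}) making the attaching point $p$ an actual vertex adjacent to an edge element, this claim reduces to the construction of the realization in \refdef{realizgraph}.

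Granted this reduction, the proof splits into two cases. \emph{First}, suppose $C \subseteq T \cup \{p\}$. Then $C$ is a connected subset of the $\nR$-tree $T \cup \{p\}$ with at least two points; by arc-uniqueness in $\nR$-trees it is a sub-$\nR$-tree, so it contains the segment $[a,b]$ for every $a,b \in C$. Taking $a \neq b$ in $C$ and an interior point $z$ of $[a,b]$, one has $z \in C \setminus \{a,b\}$, and one checks that $z$ separates $a$ from $b$ in $Z$: any arc in $Z$ from $a$ to $b$ either remains inside $T \cup \{p\}$ (and then visits $z$ by arc-uniqueness) or exits and re-enters $T$ via $p$, in which case it covers $[a,p] \cup [p,b] \supseteq [a,b]$ and still hits $z$. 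This contradicts the cyclicity of $C$. \emph{Second}, suppose there exists $y \in C \cap B$. If $p \in C$, then $p \in C \setminus \{x,y\}$ separates $x \in T$ from $y \in B$ in $Z$ (since $T$ and $B$ are distinct connected components of $Z \setminus \{p\}$), contradicting cyclicity. If $p \notin C$, then $C \subseteq Z \setminus \{p\} = T \sqcup B$, a disjoint union of open subsets of $Z$; since $C$ meets both $T$ (at $x$) and $B$ (at $y$), $C$ is disconnected, contradicting the connectedness of the brick.

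The main obstacle is the structural claim in the first paragraph, namely the single-attaching-point description of a component of $Z \setminus S$. In full generality, vertices of $V$ that do not lie on any edge element could cause several tree elements to be glued at such a vertex, producing components of $Z \setminus S$ whose closure meets $S$ in several points. The cure is a preliminary refinement of the graph of $\nR$-trees, pushing all such attaching vertices onto edge elements, so that every component of $Z \setminus S$ lies in the complement of the skeleton in a single tree element and meets $S$ in exactly one point. Once this reduction is in place, the two-case analysis above concludes the proof.
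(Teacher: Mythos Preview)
Your argument is correct in outline but considerably more elaborate than the paper's, and it leans on an input that is not fully justified in this setting. The paper never fixes a brick $C$ and analyses it by contradiction; instead it shows directly that every point $x \in Z \setminus S(V,W)$ is a singleton cyclic element. Concretely, for each $y \neq x$ one exhibits a third point $z$ separating $x$ from $y$: if $x$ lies in a decoration element $W_d$ with marked point $x_d$, take $z = x_d$ when $y \notin W_d$ and any $z \in (x,y)$ when $y \in W_d$; if $x$ lies in an edge element $W_e$ off the segment $[x_e,y_e]$, take $z = x_e \wedge_x y_e$ when $y$ is outside the transverse tree at that wedge point, and any interior point of $(x,y)$ otherwise. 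That is the whole proof: no structural claim about components of $Z \setminus S$, no preliminary refinement, no case split on the position of $C$.

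The specific concern with your approach is the appeal to connectedness of bricks via the classical cyclic-element theory alluded to in \refrmk{cyclelemtheory}. That theory is developed for Peano continua, whereas $Z$ with the weak topology is in general non-metrisable, so the result does not transfer automatically and the paper does not establish it. In fact it is unnecessary: under the reading of \refdef{blocksbis} in which the separating point is taken in $Z$ (this is the reading used in the sentence immediately following that definition, and the one that makes the paper's own proof work), your Case~1 already yields a point $z \in (a,b)$ separating $a$ from $b$ in $Z$, and your Case~2 with $p \notin C$ already has $p \in Z$ separating $x$ from $y$, directly contradicting cyclicity without invoking connectedness of $C$. Once you drop connectedness this way, your two cases collapse into the paper's single observation that $x$ is separated from every other point.
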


\begin{proof}
Let $x$ be any point in $Z \setminus S(V,W)$. We want to prove that $\{x\}$ is a cyclic element of $Z$.
This is equivalent to showing that for any point $y \in Z \setminus \{x\}$, there exists a third 
point $z$ that separates $x$ and $y$.

Since $x \not \in S(V,W)$, there exists a unique $a \in A$ so that $x \in W_a$.
We first assume that $W_a$ is a decoration element, and denote by $z$ 
the unique point marked point of $W_a$.
Then $z$ separates $x$ and any point $y$ in $Z \setminus W_a$.
Let now $y$ be any point in $W_a \setminus \{x\}$. In this case, any point in $(x,y)$ 
separates $x$ and $y$.

Suppose now that $W_a$ is an edge element, say with ends $x_a,y_a$. By definition we have 
$W_a \cap S(V,W)=[x_a,y_a]$.
Set $z :=x_a \wedge_x y_a$. It belongs to $[x_a,y_a]$, and by our assumption it is different from $x$.
In this case, $z$ separates $x$ and any point outside the connected component $U$ of 
$W_a \setminus [x_a,y_a]$ containing $x$ (i.e. any point representing the tangent 
vector at $z$ towards $x$).
Finally, let $y$ be any point in $\overline{U} \setminus \{x\}$, 
where $\overline{U}= U \cup \{z\}$. Then the segment $[x,y]$ is contained in 
$\overline{U} \subseteq W_a$, and any point in $(x,y)$ separates $x$ and $y$.
\end{proof}

We deduce that the bricks of $Z$ may be identified with  
the bricks of the skeleton $S(V,W)$ with respect to its finite graph structure.

As an immediate consequence of \refprop{blocksinskeleta}, we get the following property 
of graphs of $\R$-trees, assumed as usual to be of finite type:

\begin{cor}\label{cor:finiteblocks}
Let $Z$ be the realization of a graph of $\nR$-trees.
     Then $Z$ has a finite number of bricks.
\end{cor}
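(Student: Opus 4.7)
The plan is to reduce the problem from the realization $Z$ to the finite graph $S(V,W)$, and then invoke finiteness of the underlying combinatorial data. First I would apply Proposition \ref{prop:blocksinskeleta}, which guarantees that every brick $B$ of $Z$ is contained in the skeleton $S(V,W)$. This is the only substantive analytic input needed.

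Second, I would observe that the skeleton $S(V,W)$ is by construction a finite topological graph: it is the union of the finitely many projected segments $[x_e,y_e]_e$, indexed by the finite set $E$, glued at points of the finite set $V$. Thus $S(V,W)$ admits a natural structure of finite connected graph (possibly after subdividing at vertices of $V$ lying in the interior of some $[x_e,y_e]_e$, which does not change the underlying topological space). In particular, by \refdef{blocks} applied to $S(V,W)$, the latter has only finitely many bricks as a finite graph, since there are only finitely many subsets of the finite edge set $E$.

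Third, I would argue that the map sending a brick $B$ of $Z$ to the associated subset of $S(V,W)$ is injective and lands in the set of bricks of $S(V,W)$ as a finite graph. Injectivity follows trivially from set-theoretic containment. The fact that $B$ is a brick of $S(V,W)$ in the sense of \refdef{blocks} is where one needs a small verification: a point $z \in S(V,W)$ separates two other points $x,y \in S(V,W)$ in the topological space $Z$ if and only if it separates them in $S(V,W)$, because the connected components of $S(V,W) \setminus \{z\}$ are exactly the intersections with $S(V,W)$ of the connected components of $Z \setminus \{z\}$ (the ``transverse trees'' $N_z$ of \refrmk{transversetree} hanging off $S(V,W)$ being connected). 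Hence the notion of cyclic subset (\refdef{blocksbis}), restricted to points of the skeleton, coincides for $Z$ and for $S(V,W)$, and maximality in $Z$ implies maximality within $S(V,W)$.

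The main (very mild) obstacle is precisely this last compatibility between the separation relations in $Z$ and in its skeleton; once it is noted, the corollary follows from the chain of inequalities: (number of bricks of $Z$) $\leq$ (number of bricks of $S(V,W)$) $< +\infty$.
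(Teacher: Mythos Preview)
Your proposal is correct and follows essentially the same route as the paper: both reduce to the skeleton via \refprop{blocksinskeleta} and then exploit that $S(V,W)$ is a finite graph. The paper argues slightly more tersely via the edge dichotomy (each edge of $S$ is either a bridge or lies in a brick, so the number of bricks is bounded by the number of edges), whereas you take the extra step of verifying explicitly that separation in $Z$ and in $S(V,W)$ agree on points of the skeleton; this verification is exactly what underlies the paper's remark, just after \refprop{blocksinskeleta}, that bricks of $Z$ may be identified with bricks of the skeleton.
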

\begin{proof}
Pick any graph structure $(V,W)$ whose realization is $Z$, and denote by 
$S=S(V,W)$ the skeleton associated to it, with its structure of finite graph.
Let $E=[x,y]$ be an edge of $S$. Then either $E$ is a bridge of $S$, 
in which case every point in $(x,y)$ is a cyclic element, or $E$ is not a bridge, 
and in this case $E$ belongs to a brick.
Since the number of edges is finite, so is the number of bricks.
\end{proof}

The absence of bricks characterizes the graphs of $\R$-trees whose 
realizations have again a  structure of $\R$-tree:

\begin{prop}\label{prop:allblocksaretrivial}
    Let $Z$ be the realization of a graph of $\nR$-trees. 
    Suppose that no cyclic element of $Z$ is a brick. Then $Z$ admits a structure of $\nR$-tree.
\end{prop}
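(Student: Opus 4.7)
The plan is to leverage \refprop{blocksinskeleta}: the bricks of the realization $Z$ of any graph of $\R$-trees structure $(V,W)$ are contained in and correspond to the bricks of the skeleton $S(V,W)$ seen as a finite graph. Assuming $Z$ connected (else one works on each connected component), the hypothesis that no cyclic element of $Z$ is a brick forces every edge of $S(V,W)$ to be a bridge, so $S(V,W)$ is a finite tree.

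For any two distinct points $x,y \in Z$, I would construct a canonical segment $[x,y]$ as follows. Refining the graph structure by promoting both $x$ and $y$ to vertices (see \refdef{refin}) yields an equivalent graph of $\R$-trees $(V',W')$ whose skeleton $S(V',W')$ contains $x$ and $y$. Since refinements subdivide existing tree elements or turn decoration elements into edge elements without identifying previously distinct points, no new cycle is introduced, so this enlarged skeleton is again a finite tree. There is therefore a unique injective edge-path in it joining $x$ to $y$, which is a finite concatenation of segments $[s_j,s_{j+1}]_{a_j}$ lying in edge tree elements, each inheriting its own interval structure from the $\R$-tree $W'_{a_j}$. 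One defines $[x,y]$ to be the union of these segments, equipped with the ordered concatenation of the interval structures.

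Properties \ref{item:T1}--\ref{item:T3} of \refdef{Rtree} are then essentially immediate from the construction, upon noting that refining by a point lying in an already constructed segment merely subdivides its concatenation without altering the underlying set or order. For property \ref{item:T4}, given three points $x,y,z$, one refines by all three simultaneously; inside a single tree element one invokes \ref{item:T4} for the underlying $\R$-tree, while across different tree elements the point $x \wedge_z y$ is read off as the vertex of the refined skeleton at which the unique injective paths from $z$ to $x$ and from $z$ to $y$ first diverge.

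The main obstacle will be axiom \ref{item:T5}: given $x \in Z$ and a net $(y_\alpha)$ such that the segments $[x,y_\alpha]$ increase, produce a limit $y \in Z$ with $\bigcup_\alpha [x,y_\alpha) = [x,y)$. Here the finite-type assumption on $(V,W)$ is essential. Each segment $[x,y_\alpha]$ traverses a finite ordered sequence of tree elements of $(V,W)$, and since there are only finitely many such sequences, this sequence of tree elements must stabilize past some index $\alpha_0$. All points $y_\beta$ with $\beta \geq \alpha_0$ then lie in a single tree element $W_{a_\star}$, and the induced net of subsegments of $W_{a_\star}$ issuing from the last common vertex is itself an increasing net in the $\R$-tree $W_{a_\star}$, to which axiom \ref{item:T5} of $W_{a_\star}$ applies, producing the desired limit point $y \in W_{a_\star} \subseteq Z$.
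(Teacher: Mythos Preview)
Your approach is correct and close in spirit to the paper's, but with a noteworthy organizational difference: you build the segment $[x,y]$ by refining the graph structure so that $x,y$ become vertices and then reading off the unique path in the resulting (tree) skeleton, whereas the paper argues directly that any two arcs between $x$ and $y$ must coincide (two distinct arcs would produce a cycle, hence a brick), and then declares $[x,y]$ to be this unique arc. Both routes yield the same segments; the paper's is slightly more economical since it avoids invoking the refinement machinery, while yours makes the tree structure of the skeleton do the uniqueness work.

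One point to tighten: in your verification of \ref{item:T5}, the claim that ``there are only finitely many such sequences'' of tree elements is not literally true, since the index set $D$ of decoration elements is allowed to be infinite in \refdef{goRt}. What actually makes the argument work is that the segments $[x,y_\alpha]$ are nested, so the ordered sequences of tree elements they traverse are nested as initial subsequences; moreover, every intermediate tree element in such a sequence must be an edge element (one enters and exits through distinct marked points), and these cannot repeat (a repeat would force a cycle in the skeleton), so the length of each sequence is bounded by $|E|+2$. Nested sequences of bounded length stabilize, giving the fixed terminal tree element $W_{a_\star}$ you need. The paper's own treatment of \ref{item:T5} is equally terse here, so this is a clarification rather than a substantive gap.
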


\begin{proof}  Let us introduce an $\nR$-tree structure on $Z$ satisfying the conditions of \refdef{Rtree}.

Since all cyclic elements of $Z$ are points, we infer that for every couple of points $(x,y)$ in $Z$, 
there exists a unique arc $\gamma=\gamma(x,y)$ between $x$ and $y$.
To show this, suppose by contradiction that there are two such arcs that do not coincide. 
Then in the union of the two we have a cycle, which would be contained in a brick, against the assumption.

Fix any regular structure $(V,W)$ of graph of $\R$-trees, whose realization is $Z$.
Then $\gamma$ is a finite concatenation of segments $I_j=[s_j,s_{j+1}]$ 
contained in tree elements $W_{a_j}$.
We set $[x,y] = \gamma$, with the segment structure obtained by taking a concatenation 
of the orders given by the segment structures on $I_j$.
It is easy to see that \ref{item:T2} is satisfied for this family of intervals, while property 
\ref{item:T3} holds directly by construction.

To verify property \ref{item:T4}, we have to show that for any triple $x,y,z$ of points in $Z$, 
there exists a unique element $w=x \wedge_z y$ so that $[z,x] \: \cap\:  [y,x] = [w,x]$ and 
$[z,y] \: \cap \: [x,y] = [w,y]$.
The uniqueness of such $w$ is trivial, hence we only need to show its existence.
Consider the set $I=[z,x] \cap [z,y]$, with the partial order induced by the one in $[z,x]$. 
By uniqueness of arcs between two points, we infer that $I$ is itself a (possibly not closed) interval. 
Decompose $[z,x]=\bigcup_{j} [s_j,s_{j+1}]_{a_j}$ where $[s_j,s_{j+1}]_{a_j}$ belongs 
to $W_{a_j}$.
Let $k$ be the highest index for which $W_{a_k} \cap I \neq \emptyset$.
Notice that if $y \not\in W_{a_k}$, then $[z,y]$ intersects $W_{a_k} \: \cap \: V$ 
in a point $\tilde{s}$ different from $s_{k}$. 
Set:
\begin{itemize}
\item $x_k = x$ if $x \in W_{a_k}$, and $x_k=s_{k+1}$ otherwise;
\item $y_k = y$ if $y \in W_{a_k}$, and $y_k=\tilde{s}$ otherwise;
\item $z_k = z$ if $z \in W_{a_k}$, and $z_k=s_k$ otherwise.
\end{itemize}
Set now $w=x_k \wedge_{z_k} y_k$, the wedge being taken with respect  
to the tree structure on $W_{a_k}$. Clearly, $w$ satisfies property \ref{item:T4}.

Finally, property \ref{item:T5} clearly holds for $Z$. In fact, for any sequence of segments 
$[x,y_\alpha)$ in $Z$, there exists $z \in Z$ so that $[z,y_\alpha]$ belongs to a certain tree element 
$W_a$ for $\alpha$ big enough. Then property \ref{item:T5} derives directly from the analogous 
property for $W_a$.
\end{proof}

We want now to generalize the brick-vertex trees we defined for finite graphs to the case 
of graphs of $\nR$-trees. In order to get such a definition, we need first to introduce a few 
more constructions.

There is a natural way to associate an $\R$-tree to any non-empty set:

\begin{defi}
  Let $B$ be any non-empty set.
   Let $\sim$ be the equivalence relation on $B \times [0,1]$ defined by by $(x,s) \sim (y,t)$ 
   if and only if $(x,s)=(y,t)$ or $t=s=0$.
   The quotient
   $$\etoile{B}=\left. B \times [0,1]\right/ \sim$$
   is called the \emph{star} over $B$.
   We will denote by $\boxed{x_t}$ the class in $\etoile{B}$ corresponding to the point $(x,t)$, 
   and by $\boxed{\apex{B}}$ the \emph{apex} of $\etoile{B}$, 
   which is represented by $(x,0)$ for any $x \in B$.
\end{defi}

Each star $\etoile{B}$ is endowed with a natural structure of $\nR$-tree, whose definition 
we leave to the reader.

Let $(V,W)$ be a regular graph of $\nR$-trees, $Z$ be its realization, and $B$ be a 
brick of $Z$.
For any point $z \in B \setminus V$, there exists a unique edge element $W_{e(z)}$ 
containing $z$. We denote by $N_z$ the $\nR$-subtree at $z$ transverse to $e$ 
as defined in \refrmk{transversetree}.
Then, we consider the graph of $\nR$-trees $\boxed{N'_z}$ which has one vertex $\{z\}$, 
and two decorative elements:
\begin{itemize}
\item $N_z$, with marked point $\{z\}$, 
\item the segment $[\apex{B}, z_1] \subset \etoile{B}$, with marked point 
$z_1 =(z,1)$,
\end{itemize}
with natural identification maps. It is easy to see that $N'_z$ has no bricks.
In  \refdef{blockimplosion}, $N'_z$ will be considered just as an $\nR$-tree, 
with its structure given by \refprop{allblocksaretrivial}.

Given a brick $B$, let us denote by $\boxed{E(B)}$ the set of indices $e \in E$ 
such that the edge $[x_e,y_e]$ between the two marked points of an edge 
element $W_e$ is contained in $B$.

\begin{defi}\label{def:blockimplosion}
Let $(V,W)$ be a regular graph of $\nR$-trees, $Z$ be its realization, $B$ be a brick of $Z$.
For any $z \in B \setminus V$, consider the $\nR$-tree $N'_z$ as defined above.
Set $V'= V \cup \{\apex{B}\}$, and consider the family $W'$ of $\nR$-trees given by:
\begin{itemize}
\item the decorative elements $W_d$, $d \in D$, of $W$, with same marked point and 
     same identification map;
\item the edge elements $W_e$ with $e \in E \setminus E(B)$, with same marked points 
     and same identification map;
\item the decorative elements $N'_z$ for $z \in B \setminus V$, with marked point $\{v_B\}$ 
     and natural identification map;
\item the edge elements $[\apex{B},v_1] \subset \etoile{B}$, for any $v\in B \cap V$, 
     with marked points $\apex{B}$ and $v_1$, and identifications $i(\apex{B})=\apex{B}$ 
     and $i(v_1)=v$.
\end{itemize}
   Then $(V',W')$ is a graph of $\nR$-trees, which we call the \emph{implosion} 
    of $(V,W)$ along the brick $B$. 
    We denote by $Z'$ the realization of the graph $(V',W')$ 
    and by $i_B:Z \to Z'$ the associated natural injection.
\end{defi}

Note that the  injection $i_B:Z \to Z'$  is not continuous with respect to the weak topologies 
in $Z$ and $Z'$. 
This is due to the fact that the topology induced on $i_B(B)$ by the topology on $Z'$ 
is the discrete topology, which does not coincide with the topology induced on $B$ by 
the weak topology of $Z$ (which is the standard topology defined on a graph, see \refprop{blocksinskeleta}).
In other terms, we replaced the brick $B$ with its star $\etoile{B}$, 
and not with the \textit{cone} with base $B$, which corresponds to the analogous 
construction done by replacing the discrete topology on $B$ 
with the standard topology of its finite graph structure. 

\begin{prop}\label{prop:blockimplosion}
Let $(V,W)$ be a regular graph of $\nR$-trees, and $Z$ be its realization. 
Assume that $Z$ has $n\geq 1$ bricks, and let $B$ be any one of them.
Let $(V',W')$ be the implosion of $(V,W)$ along the brick $B$, and $Z'$ its realization.
Then $Z'$ has exactly $n-1$ bricks, given by the images through the natural 
injection $i_B$ of the bricks of $Z$ different from $B$.
\end{prop}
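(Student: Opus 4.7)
The plan is to reduce the proposition to a combinatorial statement about the skeleta of $(V,W)$ and $(V',W')$. First, I would observe that the implosion $(V',W')$ is again regular (the marked points of the new edge elements $[v_B,v_1]$ and of the decoration elements $N'_z$ are ends of the corresponding trees), so that \refprop{blocksinskeleta} and the subsequent discussion apply to both $(V,W)$ and $(V',W')$: in each case, the bricks of the realization are identified with the bricks of the skeleton, viewed as a finite graph. By construction, $S(V',W')$ is obtained from $S(V,W)$ by removing all edges in $E(B)$ (which together constitute the brick $B$ in the skeleton) and gluing, at the vertices $v\in B\cap V$, the tree consisting of the star-edges $[v_B,v_1]$ meeting at the new vertex $v_B$.

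I would then verify first that $i_B(B)$ is not a brick of $Z'$: any two distinct points of $i_B(B)$ lie in different connected components of $Z'\setminus\{v_B\}$ (they belong either to distinct star-edges $[v_B,v_1]$ or to distinct decoration elements $N'_z$, which meet only at $v_B$), so $v_B$ separates them. Next comes the main step: showing that the bricks of the finite graph $S(V',W')$ are exactly the bricks of $S(V,W)$ other than $B$. For the preservation direction, any two distinct bricks of $S(V,W)$ share at most a cut-vertex, so any brick $B'\neq B$ is disjoint from $E(B)$, sits unchanged inside $S(V',W')$, and remains maximal non-separable there (enlarging it by any extra edge would also have given a strictly larger non-separable subgraph of $S(V,W)$, contradicting the maximality of $B'$).

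The delicate part, which I expect to be the main obstacle, is showing that \emph{no new brick appears}. The star glued at $v_B$ is itself a tree, so it contributes no cycle by itself. Any hypothetical new cycle in $S(V',W')$ must therefore use at least two star-edges $[v_B,v_1]$ and $[v_B,v_2]$, joined outside the star by a path $P\subseteq S(V,W)\setminus E(B)$ between two distinct vertices $v_1,v_2\in B\cap V$. But then the union $B\cup P$ would be a non-separable subgraph of $S(V,W)$ strictly larger than $B$ (one checks that $2$-connectedness is preserved when adding a path between two vertices of a $2$-connected subgraph, by a vertex-removal analysis), contradicting the maximality of $B$ as a brick. Hence no new cycle, and no new brick, can appear. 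Once the combinatorial claim about $S(V',W')$ is established, the identification of the $n-1$ bricks of $Z'$ with the images $i_B(B')$ of the bricks $B'\neq B$ of $Z$ follows from the brick-skeleton correspondence.
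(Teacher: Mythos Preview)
Your proposal is correct and follows essentially the same strategy as the paper: both reduce to the skeleton via \refprop{blocksinskeleta} and then use the maximality of $B$ to rule out any new brick, by observing that a hypothetical cycle through the star would yield a path in $S(V,W)\setminus E(B)$ between two vertices of $B$, hence a cyclic (equivalently, non-separable) subset of $Z$ strictly containing $B$. The paper phrases this last step topologically (a brick $B'$ of $Z'$ containing a point of $\etoile{B}\setminus i_B(B)$ would give a cyclic subset $B\cup i_B^{-1}(B'\setminus\etoile{B})$ of $Z$), whereas you argue combinatorially at the level of the finite graph $S(V',W')$; you are also more explicit than the paper in checking that the old bricks $B'\neq B$ survive unchanged, which the paper leaves implicit in its opening ``we only need to check''.
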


\begin{proof}
We only need to check that all points in $\etoile{B} \setminus i_B(B)$ form singleton
cyclic elements of $Z'$. 
By \refprop{blocksinskeleta}, the bricks of $Z'$ are contained in the skeleton $S(V',W')$, 
which intersects $\etoile{B}$ exactly in the edge elements $[\apex{B},v_1]$ with 
$v \in V \cap B$ (see \refdef{blockimplosion}).
Let $w$ be any point in $\etoile{B} \setminus i_B(B)$, and assume by contradiction 
that $w$ is contained in a brick $B'$.
Since $\etoile{B}$ is a tree, we get that $B' \cap (Z' \setminus \etoile{B}) =:C \neq \emptyset$.
But then, $B \cup i_B^{-1}(C)$ would be a cyclic subset of $Z$ strictly containing $B$, 
which is in contradiction with the maximality of $B$ with respect to inclusion.
\end{proof}

Given any graph of $\nR$-trees, we can apply recursively regularizations and brick implosions, 
in order to \textit{kill} all bricks. In fact, by \refcor{finiteblocks}, the number of bricks is finite, 
and by \refprop{blockimplosion}, the number of bricks strictly decreases under brick implosion.
The final product of this process will be a graph of $\nR$-trees $(V',W')$, in which 
all cyclic elements are  singletons. 
By \refprop{allblocksaretrivial}, its realization $Z'$ admits a structure of $\nR$-tree.
It is the brick-vertex tree of the starting graph of $\R$-trees:

\begin{defi}   \label{def:genbvtree}
     Let $Z$ be the realization of a graph of $\nR$-trees $(V,W)$, and $Z'$ be the $\nR$-tree 
     described above, obtained by recursive regularizations and brick implosions of all bricks of $Z$. 
     Then $Z'$ is called the \emph{brick-vertex tree} of $Z$, and denoted by 
     $\boxed{\Rbvt{Z}}$.
     The points of $Z'$ corresponding to apices of bricks of $Z$ are called \emph{brick points} 
     of the brick-vertex tree.
    We denote by $\Rbv \colon Z \to Z'$ the natural injection obtained by the composition 
    of the natural injections $i_B$ described above for brick implosions.
\end{defi}

Note that if $B, B'$ are two bricks of a graph of $\nR$-trees $Z$, and $Z'$ is the implosion of $B$, 
then $i_B(B')$ is a brick in $Z'$. It follows that the brick-vertex tree of $Z$ does not depend 
on the order in which we perform the brick implosions.

We end this section with a remark about the notion of cyclic element from a topological perspective.

\begin{rmk}  \label{rmk:cyclelemtheory}
The term \textit{cyclic element} is standard in general topology, while that of  \textit{brick} 
was introduced by us in order to get a common denomination for the graph-theoretic blocks 
which are not bridges and for the cyclic elements which are not points.
Indeed, while the notion of block is combinatorial and that of cyclic element is topological, 
the underlying topological space of a brick of a finite graph is a brick of its underlying 
topological space (see \refprop{blocksinskeleta}).

Cyclic elements can be defined for much more general topological spaces 
than for finite graphs or realization spaces of graphs of $\nR$-trees. 
This notion was introduced by Whyburn in his 1927 paper \cite{whyburn:cyclconncontcurves},  
as a mean to describe the overall structure of \textit{Peano continua}, i.e., 
the compact connected metric spaces which may be obtained as continuous 
images of the real interval $[0,1]$ inside some Euclidean space $\nR^n$. He defined the 
\textit{cyclic elements} of such a topological space as its maximal subsets $C$ 
such that any two distinct points of them are contained in a circle topologically embedded in $C$. 
In fact, he initially studied only \textit{plane Peano continua}, and he extended in later papers 
the theory to arbitrary ones using ingredients from Ayres' 1929 paper \cite{ayres:1929}. 
Later on, in the 1930 paper \cite{kuratowski-whyburn:elemcyclappl},  
Kuratowski and Whyburn simplified the theory of cyclic elements by defining them as 
in \refdef{blocksbis} above. 

The main point of this theory was to explain that the cyclic elements of a Peano continuum 
are organized in a tree-like manner. For instance, given any two cyclic elements, there is 
a unique connected union of cyclic elements which contains them and is minimal for inclusion -- 
this is an analog of the uniqueness of path joining two points of a tree. 

Later, the theory of cyclic elements was extended to more general settings (see e.g. \cite{whyburn:cutpointsgentopspaces,lehman:cyclicelemtheory, nikiel-tunkali-tymchatyn:contimagesarcs} 
as well as the references in McAllister's surveys \cite{mcallister:1966}, \cite{mcallister:1981}
 of the theory up to 1966 and in the interval 1966--81 respectively). 
In fact, as pointed out by Rado and Reichelderfer in \cite{rado-reichelderfer:cyclictrans}, 
most of the results 
of the theory can be obtained in the very general situation of a set endowed with a 
``cyclic transitive relation'' (a cyclic transitive relation $\mc{R}$ on a set $S$ is a binary relation 
which is reflexive, symmetric, and such that if 
$x_1\ \mc{R}\ x_2\ \mc{R} \ldots \mc{R}\ x_n\ \mc{R}\ x_1$, then 
$x_i\ \mc{R}\ x_j$ for all $i,j = 1\ldots, n$). 
In particular, in this generality one does not need topological spaces in order to talk about 
cyclic elements.  This last aspect is very interesting in our setting, since as already pointed out, 
valuative spaces carry two natural topologies, with quite different properties 
(the weak topology is non-metrizable, and the space is compact and locally compact, 
while the strong topology is metrizable, but the space is not locally compact).

Let us mention that the Peano spaces in which all the cyclic elements 
are points are called \textit{dendrites} 
(see \cite{whyburn:whatisacurve}). Wa\.zewski proved in \cite{wazewski:1923} the existence of a 
\textit{universal dendrite}, in which embed all other dendrites. Recently, 
Hrushovski, Loeser and Poonen found in 
\cite[Corollary 8.2]{hrushovski-Loeser-Poonen:berkspacesembed} 
a representation of it as a special type of valuation space, under a countability hypothesis on the base field. 

In what concerns the relation between cyclic element theory of topological spaces and block theory 
of graphs, it is interesting to note that in the paper \cite{whitney:1932}, 
in which Whitney introduced the notion of \textit{nonseparable graph} (see \refdef{sepgraph}), 
he quoted an article of Whyburn on cyclic element theory, 
but after that date the two fields seem to have evolved quite independently of each other.
\end{rmk}

\medskip
\subsection{Valuation spaces as graphs of $\nR$-trees}
\label{ssec:semivalgraphRtree}
$\:$

\medskip

In this section we apply the constructions of the previous section to the space 
of normalized semivaluations associated to a normal surface singularity. 
We first prove that it 
admits a structure of connected graph of $\R$-trees (see \refprop{valXisagraph}). 
Then we prove the valuative analog of \refthm{ultramthm}, stating that the functions 
$\udv{\lambda}$ 
are ultrametrics on special types of subspaces of the space of normalized 
semivaluations (see \refthm{udbv_val}). We conclude the paper with several 
examples which show that the hypotheses of the theorem are not necessary in order 
to get ultrametrics.

\begin{prop}\label{prop:valXisagraph}
      Let $X$ be a normal surface singularity, and $\Val{X}$ its associated space of 
      normalized semivaluations. Then $\Val{X}$ admits a structure of 
      connected graph of $\nR$-trees, that is, it is a connected realization space 
      of a graph of $\nR$-trees. More precisely, any good resolution defines 
      canonically such a structure.
\end{prop}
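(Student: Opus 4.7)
The plan is to build the graph-of-$\R$-trees structure on $\Val{X}$ directly from the geometry of the exceptional fiber of a good resolution $\pi: X_\pi \to X$, using the valuative criterion of properness to decompose $\Val{X}$ according to the center of each semivaluation in $X_\pi$. I will set $V := \skeldiv{\pi}$, the finite set of normalized divisorial valuations associated to the prime components of $\exc{\pi}$; for each intersection point $P = E \cap F$ of two distinct primes of $\pi$, I will introduce an edge element $W_P$ given by the closure in $\Val{X}$ of the set of normalized semivaluations centered at $P$, with marked points $\nu_E, \nu_F$ and obvious identification map $i_P \colon \{x_P, y_P\} \to V$ sending them to $\nu_E, \nu_F$; for each closed point $Q$ of $\exc{\pi}$ lying on a unique prime $E$, I will introduce a decoration element $W_Q$ equal to the closure of semivaluations centered at $Q$, with marked point $\nu_E$.

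First, I would verify that each $W_P$ and each $W_Q$ carries a canonical structure of $\R$-tree. The key point is the discussion in Jonsson's \cite[Section 7]{jonsson:berkovich} that, after picking local coordinates $(x,y)$ adapted to $\exc{\pi}$ at the closed point, the set of normalized semivaluations of $X$ centered at that point is identified (via $\pi_*$) with a subset of the standard valuative tree at a smooth germ of surface. This identification gives an $\R$-tree structure on $W_P$ and $W_Q$, and the divisorial valuations $\nu_E, \nu_F$ appear as the appropriate ends of these trees, making property \ref{item:G4} of regularity automatic and identifying the marked points correctly.

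Second, I would show that the natural map from the realization $Z(V,W)$ to $\Val{X}$ is a bijection. Injectivity follows from the fact that two semivaluations coming from distinct tree elements meet only at divisorial valuations, which are handled by the identification maps $i_P$ and $i_Q$. Surjectivity uses the valuative criterion: each $\nu \in \Val{X}$ has a unique center $\xi$ in $X_\pi$; if $\xi$ is the generic point of a prime $E$ then $\nu = \nu_E \in V$; if $\xi$ is a closed point lying on exactly one prime, then $\nu$ lies in the corresponding decoration element; if $\xi = E \cap F$, then $\nu$ lies in the edge element $W_{E \cap F}$. Connectedness of the resulting graph structure follows from the connectedness of the exceptional fiber (Zariski's main theorem, used already in the proof of \refthm{intersform} to check $\exc{\pi}$ is connected): the underlying combinatorial graph $(V, E)$ is precisely the dual graph $\dgr{\pi}$, which is connected.

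The main obstacle will be checking that the weak topology on $Z(V,W)$ introduced in \refdef{goRt_weaktop} matches the weak topology on $\Val{X}$ of \refdef{weaktop}, and not the quotient topology (see the warning in \refrmk{graphsofRtrees-topologies}). To handle this, I would use the retraction $r_\pi \colon \Val{X} \to \skel{\pi}$ of Thuillier \cite{thuillier:homotopy}, together with the identification of $\Val{X}$ with the projective limit of skeleta recalled in \refssec{projlim}: a basis of neighborhoods of a point $\nu$ for the weak topology on $\Val{X}$ is obtained by pulling back connected neighborhoods of $r_{\pi'}(\nu)$ under $r_{\pi'}$, as $\pi'$ ranges over good resolutions dominating $\pi$. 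Each such neighborhood corresponds, via our graph-of-$\R$-trees structure, to an arc-connected open set in $Z(V,W)$, and conversely the basic open sets for the weak topology on $Z(V,W)$ are pulled back from such retractions after refining $\pi$; this will match the two topologies. Once these identifications are in place, the last clause of the proposition is immediate: changing the choice of good resolution produces an equivalent graph-of-$\R$-trees structure, related to the first by refinements in the sense of \refdef{refin}.
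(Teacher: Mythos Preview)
Your construction is exactly the one the paper gives: set $V=\skeldiv{\pi}$ and, for each closed point $p$ of $\exc{\pi}$, take $W_p=\overline{U_\pi(p)}$, declaring it an edge element with marked points $\nu_E,\nu_F$ when $p\in E\cap F$ and a decoration element with marked point $\nu_E$ when $p$ lies on a unique prime $E$; the paper's proof is only a few lines and simply records this data, noting that each $W_p$ is isomorphic to the valuative tree since $(X_\pi,p)$ is smooth. Your proposal is more detailed than the paper's own argument (you spell out the bijection with the realization, connectedness via $\dgr{\pi}$, and the matching of weak topologies), but these verifications are all correct and the underlying idea is identical.
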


\begin{proof}
Let $\pi \colon X_\pi \to X$ be any good resolution. We set $V$ as the set of 
divisorial valuations associated to the primes of $\pi$.
For any point $p \in \pi^{-1}(x_0)$, we set $W_p = \overline{U_\pi(p)}$, which consists 
in the set $U_\pi(p)$ of all semivaluations whose center in $X_\pi$ is $p$, plus the 
divisorial valuations of the form $\nu_E$ with $E \ni p$ (which belong to $V$).
Since $\pi^{-1}(x_0)$ has simple normal crossings, either $p$ belongs to a unique 
prime $E$ of $\pi$, in which case we declare $W_p$ a decoration element, 
with marked point $\nu_E$, or $p$ belongs to exactly two exceptional primes $E$ and $F$, 
in which case we declare $W_p$ an edge element, with marked points $\nu_E$ and $\nu_F$.
Since for any such $p$, the germ $(X_\pi,p)$ is  smooth, the set $W_p$ is isomorphic to 
the valuative tree, hence it is an $\nR$-tree.
The couple $(V,(W_p)_{p \in \pi^{-1}(x_0)})$ defines a structure of graph of $\nR$-trees on $\Val{X}$.
\end{proof}

\begin{ex}
In \reffig{example01_val}, we may see on the left the dual graph $\Gamma_\pi$ of 
a good resolution $\pi$ of some normal surface singularity $X$.
In this example, there are $3$ bricks, depicted in orange, blue and yellow.
On the right side, we may see a depiction of the semivaluation space $\Val{X}$. The structure 
of a graph of $\nR$-trees induced by $\pi$ in this case has as vertices the vertices 
of $\Gamma_\pi$ under identification with the corresponding valuations (we denoted 
them as $\skeldiv{\pi}$), edge elements correspond to the trees along the edges of 
$\Gamma_\pi$, and all other tree elements are decorations. The thick colored segments 
correspond to bricks of $\Val{X}$ with respect to its structure of graph of $\nR$-trees.
\end{ex}

\begin{figure}[ht]
\centering
\hspace{-2cm}
\def\svgwidth{0.6 \columnwidth}
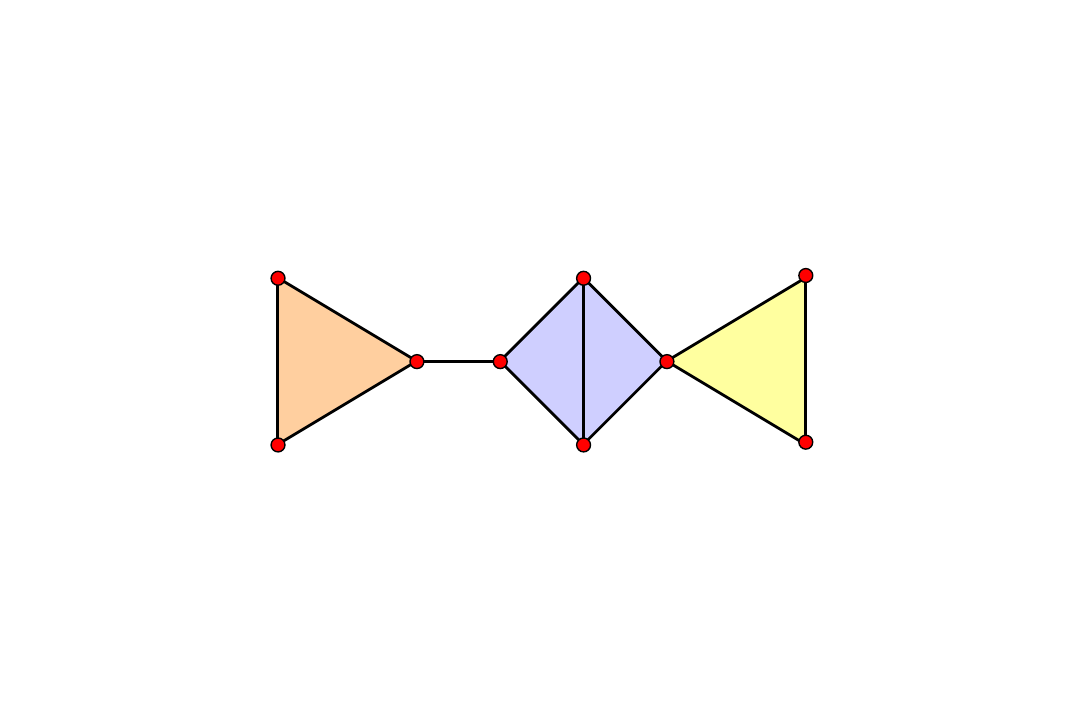
\hspace{-2cm}
\def\svgwidth{0.6 \columnwidth}
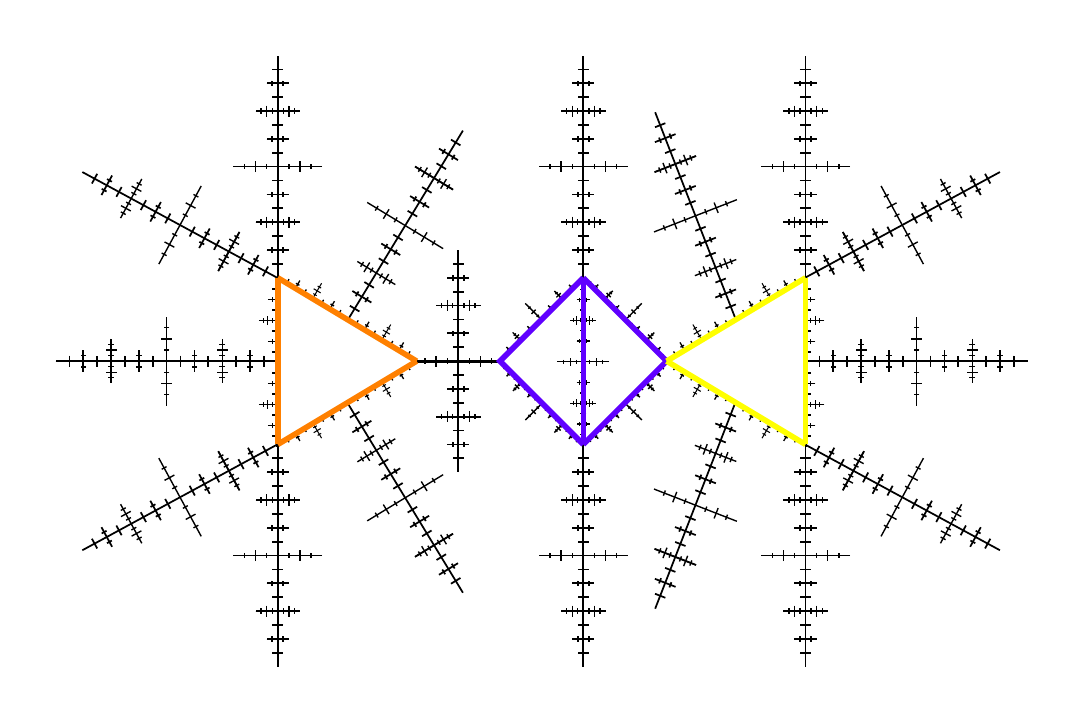
\caption{The dual graph associated to a good resolution $\pi$ of a normal 
surface singularity $X$, with bricks shaded, and its associated 
space $\Val{X}$ of normalized semivaluations.}\label{fig:example01_val}
\end{figure}

We are now able to state and prove the following theorem, which is an analog of \refthm{ultramthm} for valuation spaces: 

\begin{figure}[ht]
\centering
\def\svgwidth{1 \columnwidth}
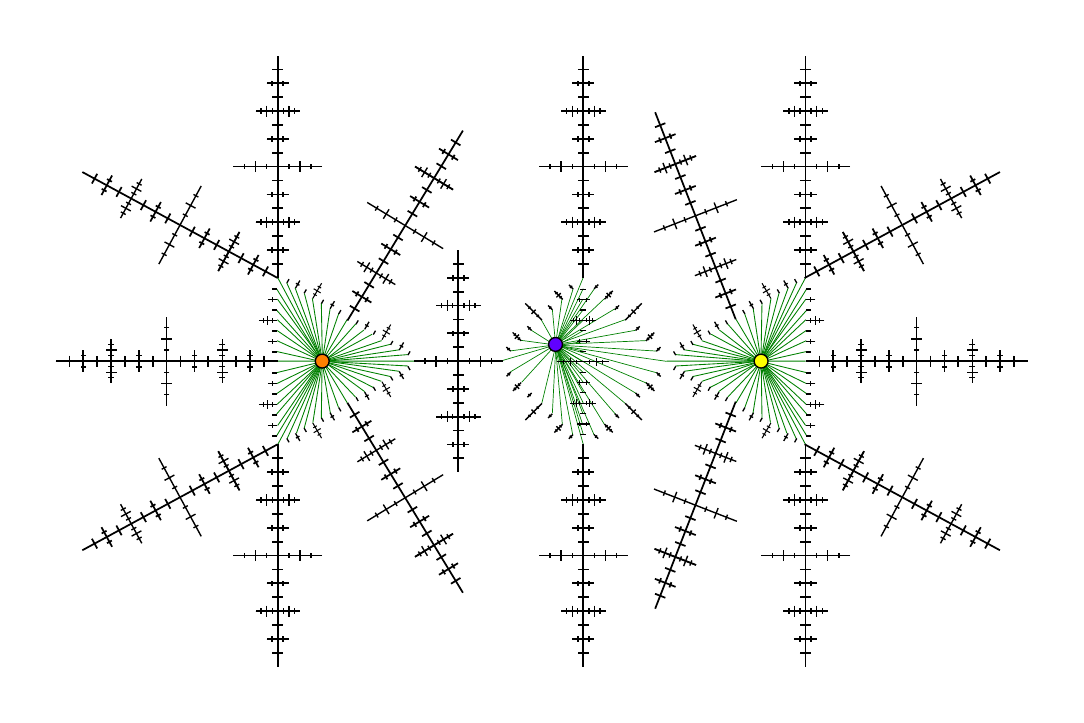
\caption{The brick-vertex tree $\Rbvt{\Val{X}}$ for the example of \reffig{example01_val}.}\label{fig:example01_bvt}
\end{figure}

\begin{figure}[ht]
\centering
\def\svgwidth{1 \columnwidth}
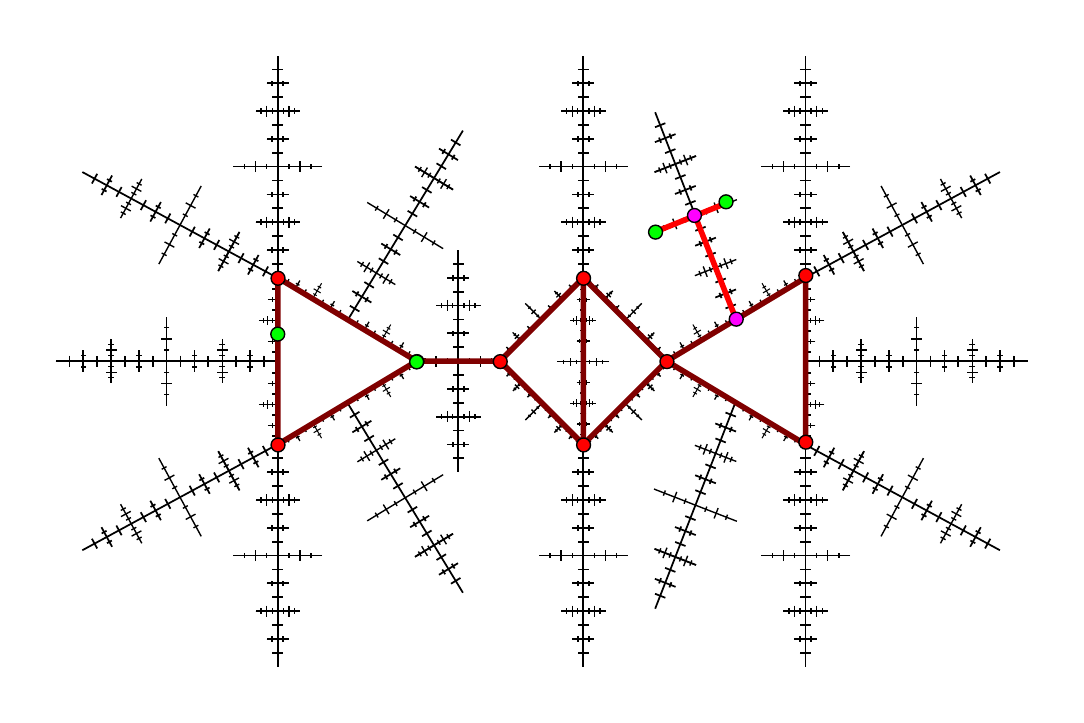
\caption{Graphs embedded in $\Val{X}$, illustrating 
     the proof of \refthm{udbv_val}.}\label{fig:example01_4points}
\end{figure}

\begin{thm}\label{thm:udbv_val}
    Let $X$ be a normal surface singularity, $\Val{X}$ the associated space 
    of normalized semivaluations, and $\mathcal{J} \subseteq \Val{X}$ any subset of it.
   Let $\Rbvt{\Val{X}}$ be the brick-vertex tree of $\Val{X}$, and consider its subtree $W=   
   \cvxh{\Rbv(\mathcal{J})}$.
   If $T_{\apex{B}} W$ consists of at most $3$ points for every brick point $\apex{B} \in W$, 
    then $\udv{\lambda}$ defines an extended ultrametric distance on $\mathcal{J}$, 
    for any $\lambda \in \mathcal{J}$.
\end{thm}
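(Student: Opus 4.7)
The plan is to adapt to this setting the strategy used in the proof of \refthm{valblocks} and \refthm{arborcase_val}, replacing the separation inequality of \refprop{crucial} by its semivaluative counterpart \refprop{positivity_bdivisors} (already available in the text) and the graph-theoretic \refprop{equivsep} by its analog for the realization of a graph of $\R$-trees.

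First I would reduce the problem to checking the ultrametric inequality for $\udv{\lambda}$ on any triple $\{\nu_1,\nu_2,\nu_3\}\subseteq \mathcal{J}$. Using the quantities $I_k := \bra{\nu_i,\nu_j}\cdot\bra{\lambda,\nu_k}$ with $\{i,j,k\}=\{1,2,3\}$ introduced in the proof of \refthm{arborcase_val}, this amounts to showing that two of the $I_k$ coincide and are no larger than the third, i.e.\ to verifying the $4$-point condition for the angular distance $\logfunc$ on the quadruple $\{\lambda,\nu_1,\nu_2,\nu_3\}$.

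Next I would analyze the shape of $W_4 := \cvxh{\Rbv(\{\lambda,\nu_1,\nu_2,\nu_3\})}\subseteq W$ in $\Rbvt{\Val{X}}$. Since $W_4 \subseteq W$ and by hypothesis each brick point of $W$ has $W$-valency at most $3$, the same holds for $W_4$. Hence $W_4$ is one of the five $\{\lambda,\nu_1,\nu_2,\nu_3\}$-trees of \reffig{fivetrees}. The case-by-case analysis then mirrors the proof of \refthm{valblocks}. For the $X$-shape, the unique tetravalent point of $W_4$ cannot be a brick point by the valency hypothesis, so it is the image under $\Rbv$ of an actual semivaluation $\mu\in\Val{X}$ separating all six pairs; applying \refprop{quadineg} (or directly the equality cases of \refprop{positivity_bdivisors}) gives the three $I_k$ equal. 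For the $H$-shape, the two trivalent points of $W_4$ are joined by a geodesic in $\Rbvt{\Val{X}}$; if one of them is not a brick point, it yields a semivaluation simultaneously separating $(\nu_i,\nu_j)$ from $(\nu_k,\lambda)$, and otherwise one can pick any non-brick interior point of that geodesic, which is automatically a semivaluation by the implosion construction of $\Rbvt{\Val{X}}$ recalled in \refssec{brickrtree}; \refprop{quadineg} then yields a strict $4$-point inequality, two of the $I_k$ being equal and strictly greater than the third. The $Y$-, $F$-, and $C$-shapes produce separators which are actually among the $\nu_i$ or $\lambda$ themselves, so no brick point intervenes and the analysis from the proof of \refthm{arborcase_val} applies verbatim.

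The main technical obstacle I anticipate is the valuative analog of \refprop{equivsep}: for $\mu\in\Val{X}$ and $\nu_1,\nu_2\in\mathcal{J}$, one needs the equivalence between \textit{$\mu$ separates $\nu_1$ and $\nu_2$ in $\Val{X}$} and \textit{$\Rbv(\mu)$ separates $\Rbv(\nu_1)$ from $\Rbv(\nu_2)$ in $\Rbvt{\Val{X}}$}. I expect this to follow from the construction of $\Rbvt{\Val{X}}$ via successive brick implosions (\refdef{blockimplosion} and \refprop{blockimplosion}): each such implosion replaces a brick $B$ by its star $\on{Star}(B)$, which transforms arcs through interior points of $B$ into arcs through $\apex{B}$, while preserving the set of connected components of the complement of a point that is not in any brick. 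Once this transfer is established, the proof is a straightforward bookkeeping exercise combining \refprop{positivity_bdivisors} and \refprop{quadineg} in each of the five cases, exactly as in the proof of \refthm{valblocks}.
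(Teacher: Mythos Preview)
Your approach is correct in spirit but takes a different route from the paper, and your $H$-shape step contains an imprecision worth fixing.

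The paper does not work directly in $\Rbvt{\Val{X}}$. Given four semivaluations $J=\{\nu_1,\nu_2,\nu_3,\lambda\}$, it picks a good resolution $\pi$ separating their centers and builds an auxiliary \emph{finite} graph $\Gamma\subset\Val{X}$: the skeleton $\skel{\pi}$ together with the segments joining each $\nu\in J$ to $\skel{\pi}$. The angular distance $\logfunc$ is a metric on $\Vgr{\Gamma}$ satisfying condition \eqref{eqn:additive_graph_metric} (by \refprop{positivity_bdivisors}), and the inclusion $\Gamma\hookrightarrow\Val{X}$ induces an embedding $\bvt{\Gamma}\hookrightarrow\Rbvt{\Val{X}}$ which transports the valency hypothesis on $W$ down to $\cvxh{\overline{J}}\subset\bvt{\Gamma}$. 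The conclusion then follows at once from the already-proven \refthm{valblocks}. This buys the paper freedom from proving any new separation-transfer lemma: only the finite-graph \refprop{equivsep} is used, hidden inside \refthm{valblocks}.

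Your direct case analysis in $\Rbvt{\Val{X}}$ is a legitimate alternative, and the valuative analog of \refprop{equivsep} you anticipate does hold (since $\Rbv$ is a bijection off the apices and each implosion preserves the connected-component structure of the complement of a non-brick point). However, in the $H$-shape case your claim that ``any non-brick interior point of that geodesic is automatically a semivaluation'' is false: interior points of a star segment $[\apex{B},z_1]\subset\etoile{B}$ lie in $\Rbvt{\Val{X}}\setminus\Rbv(\Val{X})$. What is true, and suffices, is that the geodesic between two distinct brick apices always contains at least one point of $\Rbv(\Val{X})$, namely a vertex of the skeleton, since every edge of a star is of the form $[\apex{B},v_1]$ with $v\in V\cap B$ and two bricks meet in at most one vertex. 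With that correction your argument goes through.
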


\begin{proof}
Fixed any $\lambda \in \mathcal{J}$, we need to prove that
\begin{equation}\label{eqn:ineq_udv}
\udv{\lambda}(\nu_1, \nu_3) \leq \max\{\udv{\lambda}(\nu_1, \nu_2), \udv{\lambda}(\nu_2, \nu_3)\}
\end{equation}
for any triple $\nu_1, \nu_2, \nu_3 \in \mathcal{J}$.
Notice that \eqref{eqn:ineq_udv} is satisfied if either $\nu_1$, $\nu_2$ or $\nu_3$ 
coincide with $\lambda$. 
Say for example that $\nu_2=\lambda$. Then $\udv{\lambda}(\nu_1, \nu_2)=\udv{\lambda}(\nu_2, \nu_3)=\bra{\lambda, \lambda} = \alpha(\lambda)$, while $\udv{\lambda}(\nu_1, \nu_3)=\frac{\bra{\lambda, \nu_1} \bra{\lambda, \nu_3}}{\bra{\nu_1, \nu_3}} \leq \bra{\lambda, \lambda}$ by \refprop{positivity_bdivisors}. 
We may hence assume that $\lambda \not \in \{\nu_1, \nu_2, \nu_3\}$. In particular, the three values in \eqref{eqn:ineq_udv} are finite.

By proceeding as in \refprop{reformultra} and \refcor{equivultr}, we get that \eqref{eqn:ineq_udv} 
is equivalent to showing that $\logfunc$ is tree-like, i.e., it satisfies the $4$-point condition 
\eqref{eqn:4pointmax}.  Set $J :=\{\nu_1, \nu_2, \nu_3, \nu_4\} \subset  \mathcal{J}$.

Take any good resolution $\pi \colon X_\pi \to X$. Any semivaluation $\nu \in J$ either 
belongs to $\skeldiv{\pi}$, or it belongs to the weakly open set $U_\pi(p)$ associated 
to the center $p=p(\nu) \in \pi^{-1}(x_0)$ of $\nu$ in $X_\pi$.
Let $\skel{\pi}$ denote the skeleton associated to $\pi$, and let $\Gamma$ be the 
subset of $\Val{X}$ given by the union of $\skel{\pi}$ and the segments 
$[\nu_E, \nu] \subset \ol{U_\pi(p)}$, where $p=p(\nu)$ is as above, $E$ is any 
exceptional prime of $\pi$ containing $p$, and $\nu$ varies in $J$.
The set $\Gamma$ admits a structure of finite graph. In fact, up to taking higher 
good resolutions, we may assume that for any distinct $\nu,\nu' \in J$, their centers 
in $X_\pi$ are also distinct. We may also assume that any semivaluation in $J$ either 
belongs to $\skel{\pi}$, or its center in $X_\pi$ is a smooth point of $\pi^{-1}(x_0)$.
In this case, the structure of finite graph on $\Gamma$ has as vertices $\skeldiv{\pi} \: \cup \: J$, 
and as edges all the edges in $\skel{\pi}$, eventually cut by elements in $J \: \cap \: \skel{\pi}$, 
plus all the edges associated to the segments $[\nu_E, \nu]$ with $\nu \in J$ 
as described above (see \reffig{example01_4points}).

The function $\logfunc$ defines a distance on the set of vertices of $\Gamma$, 
satisfying the condition \eqref{eqn:additive_graph_metric}.
This is a consequence of \refprop{positivity_bdivisors} applied to the reformulations given in \refprop{MW}.

Consider now the brick-vertex tree $\bvt{\Gamma}$ associated to $\Gamma$. 
The embedding of $\Gamma$ in $\Val{X}$ induces an embedding of $\bvt{\Gamma}$ inside $\Rbvt{\Val{X}}$.
Since the tangent space of $W$ at any brick point consists at most of $3$ points, 
the $\cvxh{J}$-valency of any brick point of $\Gamma$ is at most $3$.
We can apply \refthm{valblocks}, and deduce that $\logfunc$ is tree-like on the set $J$, 
and we are done.
\end{proof}

Notice that, as in the case of finite graphs, we get again the proof of the implication 
(\ref{arbsingul_val}) $\Longrightarrow$ (\ref{forany_val}) of \refthm{arborcase_val} 
as a direct corollary of \refthm{udbv_val}.

\begin{ex}
\reffig{example01_bvt} depicts the brick-vertex tree associated to the semivaluation space $\Val{X}$ represented in \reffig{example01_val}.
The thick vertices in orange, blue and yellow 
denote the brick-vertices of $\Rbvt{\Val{X}}$, while the dark green segments belong 
to the stars on them. The image needs to be thought with the green part not intersecting 
the rest of the space.

In \reffig{example01_4points} consider a set $J$ of four semivaluations in $\Val{X}$ as in 
the proof of \refthm{udbv_val}, that are depicted in light green. The dark red area denotes 
the skeleton associated to the minimal good resolution of $X$, while the light red 
part corresponds to the part added to $\skel{\pi}$ to obtain $\Gamma$.
The thick red dots correspond to the divisorial valuations in $\skeldiv{\pi}$ (not belonging to $J$), 
while the pink-purple dots are the rest of divisorial valuations added for describing 
the graph structure on $\Gamma$.
\end{ex}

\begin{ex}\label{ex:tetrahedron_val}
As for its counterpart for finite sets of branches formulated in \refthm{ultramthm}, 
the condition on the valency of brick-points in \refthm{udbv_val} is not necessary in general.
Consider again the singularity studied in \refex{tetrahedron}, whose minimal good 
model $X_\pi$ has four exceptional primes $E_1, \ldots, E_4$ of self-intersection $-4$,  
which intersect transversely each another.
The skeleton associated to it is the $1$-skeleton of a tetrahedron.
Denote by $\nu_j$ the prime divisorial valuation associated to $E_j$ for all $j=1 \ldots, 4$, 
and denote by $\mu_t$ the monomial valuation at the intersection point $p$ between 
$E_1$ and $E_2$, so that $Z_\pi(\mu_t) = (1-t)\check{E_1} + t \check{E_2}$.

Since all these valuations belong to the skeleton $\skel{\pi}$, which is included in a unique brick, any choice of $4$ valuations $a,b,c,d$ among 
$\nu_1, \nu_2, \nu_3, \nu_4, \mu_t$ for $0 < t < 1$ would not satisfy 
the hypotheses of \refthm{udbv_val}.
By computing the bracket between $\mu_t$ and $\nu_j$, we get
\[
   5\bra{\nu_1,\mu_t} = 2-t, \qquad 5\bra{\nu_2,\mu_t} = 1+t, 
     \qquad 5\bra{\nu_3,\mu_t} = 5\bra{\nu_4,\mu_t} = 1.
\]

For any choice of $4$ valuations $a,b,c,d$, we consider now the values 
$I_1=25\bra{a,b}\bra{c,d}$, $I_2=25\bra{a,c}\bra{b,d}$ and $I_3=25\bra{a,d}\bra{b,c}$.
We recall that $a,b,c,d$ satisfy the $4$-point condition if and only if two out of 
these three values coincide and the third is greater or equal to the other two.
First, pick the quadruple $\nu_1, \mu_t, \nu_3, \nu_4$: we get $I_1= 2-t$, $I_2=I_3=1$. 
In this case the $4$-point condition is satisfied.
Then, pick the quadruple $\nu_1, \mu_t, \nu_2, \nu_3$: we get $I_1 = 2-t$, $I_2= 1$, $I_3 = 1+t$. 
In this case the $4$-point condition is never satisfied.
\end{ex}

\begin{ex}\label{ex:tetrahedron_val_modif}
We saw in \refex{tetrahedron_val} how the validity of the $4$-point condition may 
depend on the valuation when it varies inside the same brick. 
We now investigate how it varies when changing the self-intersections of prime 
divisors in some model.
To this end, consider again the singularity $X$ defined in \refex{tetrahedron_val}, 
and the point $p$ of intersection of $E_1$ and $E_2$. 
Denote by $E_5$ the exceptional prime divisor corresponding to the blow-up of $p$.
In this new model $X_{\pi'}$, the self intersections of the strict transforms of 
$E_j$, $j=1, \ldots, 4$, and of $E_5$, are respectively $-5, -5, -4, -4, -1$.

Consider now the normal surface singularity $Y$ whose minimal resolution has the 
same dual graph as of $X_{\pi'}$, but satisfying $E_5^{2} = -2$ instead of $-1$.
Denote by $\nu_j$ the prime divisorial valuation associated to $E_j$ for all 
$j=1 \ldots, 4$ and by $\nu_5$ the one associated to $E_5$. Let  $\mu'_t$ be 
the monomial valuation at the intersection between the strict transform of 
$E_2$ and $E_5$, so that $Z_{\pi'}(\mu'_t) = (1-t)\check{E_2} + t \check{E_5}$.
In this case, we get
\[
      80\bra{\nu_1,\mu'_t} = 7+8t, \qquad 80\bra{\nu_3,\mu'_t} = 80\bra{\nu_4,\mu'_t} = 10.
\]
For the choice of valuations $a,b,c,d$ given by $\nu_1, \mu'_t, \nu_3, \nu_4$, 
we consider $I_1=80^2\bra{a,b}\bra{c,d}$, $I_2=80^2\bra{a,c}\bra{b,d}$ and 
$I_3=80^2\bra{a,d}\bra{b,c}$.
In this case we get   $I_2=I_3 = 100$ and $I_1 =12 (7+8t)$.

In particular, we notice that the $4$-point condition is satisfied for this quadruple 
if and only if $t \geq \frac{1}{6}$.
Notice also that $\mu'_t$ parametrizes the segment $[\nu_2, \nu_5]$, 
which is contained in the segment $[\nu_2, \nu_1]$.
The situation here is quite different from the one described in \refex{tetrahedron_val}, 
where the $4$-point condition of the quadruple $\nu_1, \mu_t, \nu_3, \nu_4$ 
was satisfied for any choice of $\mu_t$.
In particular, the valuations $\nu_1, \nu_2, \nu_3, \nu_4$ satisfy the $4$-point 
condition for $X$, but they do not satisfy the $4$-point condition for $Y$.
\end{ex}

\bibliographystyle{abbrv}
\bibliography{biblio}

\end{document}